\newtheorem{thm}{Theorem}[section]
\newtheorem{cor}[thm]{Corollary}
\newtheorem{lemma}[thm]{Lemma}
\newtheorem{prop}[thm]{Proposition}
\theoremstyle{definition}
\newtheorem{defn}[thm]{Definition}
\newtheorem{remark}[thm]{Remark}
\newtheorem{exam}[thm]{Example}
\newtheorem{notation}[thm]{Notation}
\newcommand{\bb}[1]{\mathbb{#1}}
\newcommand{\cl}[1]{\mathcal{#1}}
\newcommand{\ff}[1]{\mathfrak{#1}}
\newcommand{\inner}[2]{\left\langle {#1}, {#2} \right\rangle}
\newcommand{\tr}{\text{Tr}}
\begin{document}

\title[Absolutely Norming Operators on S.N. Ideals]{A Spectral Characterization of Absolutely Norming Operators on S.N. Ideals}

\author[Satish~K.~Pandey]{Satish K.~Pandey}
\date{July 5, 2016}
\subjclass[2010]{Primary 47B07, 47B10, 47L20, 47A10, 47A75; Secondary 47A05, 47L07, 47L25, 47B65, 46L05}

\begin{abstract}
The class of absolutely norming operators on complex Hilbert spaces of arbitrary dimensions was introduced in \cite{CN} and a spectral characterization theorem for these operators was established in \cite{VpSp}. In this paper we extend the concept of absolutely norming operators to various symmetric norms. We establish a few spectral characterization theorems for operators on complex Hilbert spaces that are absolutely norming with respect to various symmetric norms. It is also shown that for many symmetric norms the absolutely norming operators have the same spectral characterization as proven earlier for the class of operators that are absolutely norming with respect to the usual operator norm.
 Finally, we prove the existence of a symmetric norm on the algebra $\mathcal B (\mathcal H)$ with respect to which even the identity operator does not attain its norm. 
\end{abstract}

\maketitle

\tableofcontents
\section{Introduction} 
Throughout this paper $\cl{H}$ and $\cl{K}$ will denote complex Hilbert spaces and we write $\cl{B}(\cl{H},\cl{K})$ (respectively $\cl{B}(\cl{H}$)) for the set of all bounded linear operators from $\cl{H}$ to $\cl{K}$ (respectively from $\cl{H}$ to $\cl{H}$). We recall that $\cl{B}(\cl{H},\cl{K})$ is a complex Banach space with respect to the operator norm $\|T\| = \text{sup}\{\|Tx\|_{\cl{K}}: x\in \cl{H}, \|x\|_{\cl{H}} \leqslant 1\}.$
We recall the following definition.

\begin{defn}\cite[Definitions 1.1,1.2]{VpSp}\label{Def1}
An operator $T\in \mathcal{B}(\mathcal{H},\mathcal{K})$ is said to be \emph{norming} or \emph{norm attaining} if there is an element $x\in\mathcal{H}$ with $\|x\|=1$ such that $\|T\|=\|Tx\|$. We say that $T\in \mathcal{B}(\mathcal{H},\mathcal{K})$ is \emph{absolutely norming} if for every nontrivial closed subspace $\cl M$ of $\cl H$, $T|_\cl M$ is norming. 

We let $\mathcal N(\mathcal{H},\mathcal{K})$ (or $\cl N$) and $\mathcal{AN}(\mathcal{H},\mathcal{K})$ (or $\cl{AN}$) respectively denote the set of norming and absolutely norming operators in $\mathcal{B}(\mathcal{H},\mathcal{K})$.
\end{defn}

There is a wealth of information on norm attaining operators; see, for instance, \cite{AcRu02, AcRu98, AcAgPa, Acosta, Aguirre, Partington, Scha2, Scha, Shkarin} and references therein. The class of absolutely norming operators, however, was introduced recently in \cite{CN} and studied in \cite{CN},\cite{R}. Carvajal and Neves \cite{CN} proved a partial structure theorem \cite[Theorem 3.25]{CN} for the class of positive operators on complex Hilbert spaces that included an uncharacterized ``remainder'' operator. The result \cite[Theorem 5.1]{VpSp} established a spectral characterization for positive operators which asserts that a positive operator is absolutely norming if and only if it is the sum of a positive compact operator, a self-adjoint finite-rank operator, and a nonnegative scalar multiple of the identity operator. This theorem was then carried over to bounded operators which we recall here.

\begin{thm}[Spectral Theorem for Operators in $\cl{AN}(\cl H,\cl K)$]\cite[Theorem 6.4]{VpSp}\label{Main-result-VpSp}
Let $\cl{H}$ and $\cl{K}$ be complex Hilbert spaces of arbitrary dimensions, let $T\in \cl{B}(\cl{H},\cl{K})$ and let $T=U|T|$ be its polar decomposition. 
Then $T\in\cl{AN}$ if and only if $|T|$ is of the form $|T|= \alpha I +F+K$, where $\alpha \geq 0, K$ is a positive compact operator and $F$ is self-adjoint finite-rank operator. 
\end{thm}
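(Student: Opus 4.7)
The plan is to reduce the statement to the spectral characterization for \emph{positive} absolutely norming operators quoted in the preceding paragraph of the introduction (\cite[Theorem 5.1]{VpSp}). The bridge is provided by the polar decomposition $T=U|T|$ together with the standard identity
\[
\|Tx\|^{2} = \langle T^{*}Tx,x\rangle = \langle |T|^{2}x,x\rangle = \||T|x\|^{2} \qquad (x\in\cl H),
\]
so that $\|Tx\|=\||T|x\|$ for every $x\in\cl H$.

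This identity has two immediate consequences, valid for every nontrivial closed subspace $\cl M\subseteq\cl H$: first, $\|T|_{\cl M}\|=\||T||_{\cl M}\|$, so the two restrictions have the same operator norm; second, a unit vector $x\in\cl M$ attains $\|T|_{\cl M}\|$ if and only if it attains $\||T||_{\cl M}\|$. Taking these together across all nontrivial closed $\cl M$ shows that $T\in\cl{AN}(\cl H,\cl K)$ if and only if $|T|\in\cl{AN}(\cl H,\cl H)$. Since $|T|$ is positive, the positive-case spectral theorem of \cite{VpSp} applies to $|T|$ and yields the desired decomposition $|T|=\alpha I+F+K$ with $\alpha\geq 0$, $F$ self-adjoint finite-rank, and $K$ positive compact; conversely, any $T$ whose modulus takes this form has $|T|\in\cl{AN}$, hence $T\in\cl{AN}$ by the equivalence just established.

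Because the heavy lifting, the structure theorem in the positive case, is already in hand, no genuine obstacle remains. The only point deserving a line of care is that the identity $\|Tx\|=\||T|x\|$ transfers \emph{both} the norm of the restriction and the set of norm-attaining unit vectors from $T|_{\cl M}$ to $|T||_{\cl M}$ simultaneously; this is precisely what makes the equivalence $T\in\cl{AN}\Leftrightarrow|T|\in\cl{AN}$ hold with no additional assumption on $\cl H$, $\cl K$, or $T$, and in particular why the partial isometry $U$ in the polar decomposition plays no role in the final spectral form.
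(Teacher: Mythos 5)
Your reduction is correct and is exactly the route the paper takes for this family of results: establish $T\in\cl{AN}$ if and only if $|T|\in\cl{AN}$ via the identity $\|Tx\|=\||T|x\|$, then invoke the spectral theorem for positive absolutely norming operators (\cite[Theorem 5.1]{VpSp}); this is the same scheme the paper uses to prove its analogues, Theorems \ref{SpThAN}, \ref{SpThAN_[pi, k]} and \ref{SpThAN_(p, k)}, from Proposition \ref{Tiff|T|_[k]} and its variants together with the polar decomposition. No gaps.
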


The above theorem opened up new territories to explore. Can the concept of ``absolutely norming'' be carried over to norms other than the operator norm? If yes, then can we characterize such class of operators?

In this paper we extend the concept of absolutely norming operators to several particular symmetric norms that are equivalent to the operator norm. We do this with an eye towards the objective of characterizing these classes. We single out three of these symmetric norms for more detailed study: the Ky Fan $k$-norm in section 3, the weighted Ky Fan $\pi, k$-norm in section 6, and the $(p,k)$-singular norm in section 9. 

In section 3 and 4 we develop results for $[k]$-norming and absolutely $[k]$-norming operators (see Definitions \ref{N_[k]Def} and \ref{AN_[k]Def}) on $\cl B(\cl H,\cl K)$. These results parallel those for norming and absolutely norming operators on $\cl B(\cl H, \cl K)$. In subsections 4.1 and 4.2 we establish necessary and sufficient conditions for a positive operator to belong to the class $\cl {AN}_{[k]}(\cl H,\cl K)$ (see Definition \ref{AN_[k]Def}) and present a spectral characterization theorem (see Theorem \ref{SpThPAN_[k]}) for such operators. This leads us to extend the result to bounded operators in section 5 where we establish a spectral characterization theorem for the family $\{\cl {AN}_{[k]}(\cl H,\cl K):k\in \bb N\}$(see Theorem \ref{SpThAN}). The operators that belong to this family has the same form as that of those which belong to the class $\cl {AN}(\cl H,\cl K)$ or to the class $\cl {AN}_{[k]}(\cl H,\cl K)$ for any $k\in \bb N$.

Sections 6, 7 and 8 introduce weighted Ky Fan $\pi, k$-norm and study the family $\{\cl{AN}_{[\pi, k]}(\cl H, \cl K):\pi \in \Pi, k\in \bb N\}$ of operators (see Definition \ref{AN_[pi,k]Def}), where we use $\Pi$ to denote the set of all nonincreasing sequences of positive numbers with their first term equal to 1. The family of these classes is large. For instance, it contains $\cl {AN}_{[k]}(\cl H,\cl K)$ for every $k\in \bb N$. We develop the ground results for the operators that belong to this family and present a spectral characterization theorem for the entire family (see Theorems \ref{SpThPAN_[pi,k]} and \ref{SpThAN_[pi, k]}). These results can be thought of as a generalization of those we prove in sections 4 and 5. The operators $T$ that belong to either of these families have property that $|T|$ is also also of the form $\alpha I+K+F$, the notations being as before. 

In sections 9, 10 and 11 we work through the class of absolutely $(p,k)$-norming operators (see Definition \ref{AN_(p,k)Def}) in $\cl B(\cl H,\cl K)$ and establish a spectral characterization theorem for the family $\{\cl {AN}_{(p,k)}(\cl H,\cl K):p\in[1,\infty), k\in \bb N\}$ (see Definition \ref{AN_(p,k)Def}). This family too contains $\cl {AN}_{[k]}(\cl H,\cl K)$ for every $k\in \bb N$ and hence the results that we establish here may as well be considered as a generalization of those we prove in section 4 and 5. We establish a spectral characterization theorem for the operators that belong to this family (see Theorem \ref{SpThAN_(p, k)}) and find that these operators are too of the form $\alpha I+K+F$.

 As a corollary to the several spectral characterization theorems we prove, we see that every positive operator of the form $\alpha I + K + F$ belongs to each of the families $\cl{AN}_{[k]}\cl B(\cl H), \cl{AN}_{[\pi,k]}\cl B(\cl H)$ and $\cl{AN}_{(p,k)}\cl B(\cl H)$. So, it might appear at this stage that with respect to \textit{every} symmetric norm $\|\cdot\|_s$ on $\cl B(\cl H)$, the positive operators on $\cl B(\cl H)$ that are of the above form, are ``absolutely $s$-norming''. In section 12, we prove the following proposition that violates our intuition and renders the identity operator nonnorming. From section 12 onwards all Hilbert spaces are considered to be separable.

\begin{prop}\label{Identity-nonnorming}
There exists a symmetric norm $\|\cdot\|_{\Phi_\pi^*}$ on $\cl B(\ell^2(\bb N))$ such that $I\notin \cl N_{\Phi_\pi^*}(\ell^2(\bb N))$.
\end{prop}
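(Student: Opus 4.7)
The plan is to realize $\Phi_\pi^*$ as a Marcinkiewicz-type symmetric gauge, namely the Köthe/Calkin dual of a Lorentz-type gauge. I would pick a strictly decreasing sequence $\pi = (\pi_n)_{n \in \bb N}$ of positive reals with $\pi_1 = 1$ and $\pi_n \searrow c$ for some $c \in (0,1)$ (explicitly, $\pi_n := c + (1-c)/n$ works), and define
\[
\Phi_\pi^*(a) := \sup_{n \in \bb N} \frac{a_1^* + \cdots + a_n^*}{\pi_1 + \cdots + \pi_n},
\]
the standard Marcinkiewicz gauge attached to the weight $\pi$. Then I verify that $\Phi_\pi^*$ is a symmetric gauge function: subadditive (as a pointwise supremum of ratios whose numerators $\sum_{i \leq n} a_i^*$ are themselves subadditive in $a$), positively homogeneous, permutation-invariant, and normalized with $\Phi_\pi^*(1,0,0,\ldots) = 1/\pi_1 = 1$. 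Extending through singular values gives a symmetric norm $\|T\|_{\Phi_\pi^*} := \Phi_\pi^*(s(T))$ on $\cl B(\ell^2(\bb N))$.

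With the gauge in hand, the next step is to compute $\|I\|_{\Phi_\pi^*}$. Write $S_n := \pi_1 + \cdots + \pi_n$ and $f(n) := n/S_n$. Since $s(I) = (1,1,1,\ldots)$, we have $\|I\|_{\Phi_\pi^*} = \sup_n f(n)$. A direct calculation
\[
f(n+1) - f(n) = \frac{S_n - n\pi_{n+1}}{S_n S_{n+1}} = \frac{\sum_{i=1}^n (\pi_i - \pi_{n+1})}{S_n S_{n+1}} > 0
\]
(using strict monotonicity of $\pi$) shows $f$ is strictly increasing, and Cesàro--Stolz gives $\lim f(n) = 1/c$. Hence $\|I\|_{\Phi_\pi^*} = 1/c$, a value realized only in the limit and at no finite index.

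To conclude $I \notin \cl N_{\Phi_\pi^*}(\ell^2(\bb N))$, I would unwind the definition of $\Phi_\pi^*$-norming as the natural analog of norming for the operator norm: the supremum defining $\|T\|_{\Phi_\pi^*}$ must be attained by some finite-dimensional ``test'' (such as $T$ restricted to a proper closed subspace, or against a finite-rank projection). For any closed subspace $\cl M \subseteq \ell^2(\bb N)$ of dimension $k < \infty$, the singular values of $I|_\cl M$ are $(\underbrace{1,\ldots,1}_{k},0,0,\ldots)$, so
$
\|I|_\cl M\|_{\Phi_\pi^*} = f(k) < 1/c = \|I\|_{\Phi_\pi^*}.
$
Since every admissible witness sits strictly below $1/c$, no such witness attains the norm and $I$ fails to be $\Phi_\pi^*$-norming.

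The main obstacle is definitional rather than analytic: one must match the argument to the paper's precise notion of $\Phi_\pi^*$-norming for arbitrary symmetric norms and confirm that its class of witnesses is genuinely unable to reach the asymptotic value $1/c$. Once this is pinned down, the proof reduces to the strict monotonicity of $f$ and the Cesàro limit, both of which follow immediately from the choice $\pi_n \searrow c$.
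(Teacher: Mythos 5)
Your construction of the norm is essentially the paper's: the paper also takes a strictly decreasing $\pi$ with $\pi_1=1$ and $c:=\lim_n\pi_n>0$, forms the Lorentz-type s.n.\ function $\Phi_\pi(\xi)=\sum_j\pi_j\xi_j$, checks it is equivalent to the maximal s.n.\ function, and passes to the dual norm $\|\cdot\|_{\Phi_\pi^*}$ on $\cl B(\ell^2)$; your Marcinkiewicz gauge is exactly that dual, and your observation that $\sup_n n/S_n=1/c$ is approached but never attained at a finite index is the quantitative heart of the matter. The gap is in your last step. The paper's notion of $\Phi_\pi^*$-norming (Definition \ref{PhiNorming}) is a trace-duality notion: $I\in\cl N_{\Phi_\pi^*}$ means there exists a \emph{trace-class} operator $K$ with $\|K\|_{\Phi_\pi}=1$ and $|\tr(IK)|=\|I\|_{\Phi_\pi^*}$. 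The admissible witnesses are therefore all unit vectors of $\ff S_{\Phi_\pi}=\cl B_1(\ell^2)$, not finite-rank projections or restrictions of $I$ to finite-dimensional subspaces. A trace-class witness may have infinitely many nonzero singular values, so showing $f(k)<1/c$ for every finite $k$ rules out only a proper subclass of witnesses and does not finish the proof. You flagged this as ``definitional rather than analytic,'' but it is precisely where the remaining work lives.

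The missing step is fortunately short: for any $K\in\cl B_1(\ell^2)$ with $\sum_j\pi_js_j(K)=1$ one has
$|\tr(K)|\leq\sum_js_j(K)=\sum_j\tfrac{1}{\pi_j}\,\pi_js_j(K)<\tfrac{1}{c}\sum_j\pi_js_j(K)=\tfrac{1}{c}=\|I\|_{\Phi_\pi^*}$,
the strict inequality holding because every $\pi_j>c$ and at least one $s_j(K)>0$. The paper argues differently to the same end: it reduces to positive diagonal $K$, notes that $s_j(K)\to 0$ forces some index $M$ with $s_M(K)>s_{M+1}(K)$, and replaces those two entries by their $\pi$-weighted average to produce a competitor of the same $\Phi_\pi$-norm with strictly larger trace, so that no maximizer can exist. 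With the displayed inequality inserted in place of your finite-dimensional reduction, your argument is complete and, if anything, slightly more streamlined than the paper's perturbation scheme.
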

 We collect some facts about symmetrically-normed ideals (s.n. ideals) from \cite{GK} and then establish the definition of ``$s$-norming" and ``absolutely $s$-norming" operators on the s.n. ideal $(\cl B(\cl H),\|\cdot\|_s)$. We also prove that every compact operator in the s.n. ideal $(\cl B(\cl H), \|\cdot\|_s)$ is ``absolutely $s$-norming", where $\cl H$ is a separable Hilbert space and $\|\cdot\|_s$ is an arbitrary symmetric norm on $\cl B(\cl H)$ (see Theorem \ref{compacts-are-norming}).

\section{Preliminaries}
Let $\cl{H}$ and $\cl{K}$ be complex Hilbert spaces and $T\in \cl{B}(\cl{H}, \cl{K})$. We define $|T|:=\sqrt{T^*T}$ --- this is conventionally known as the absolute value (or modulus) of the operator $T$ --- such that $|T|^2 =T^*T.$ If $T$ is compact, then $|T|$ is a positive compact operator on $\cl{H}$.
We use $\cl B_{0}(\cl H,\cl K)$ (respectively $\cl{B}_0(\cl{H}$)) to denote the set of all compact operators in $\cl B(\cl H,\cl K)$ (respectively from $\cl{H}$ to $\cl{H}$).
\begin{defn}[s-numbers of Compact Operators]\label{s-comp}
Let $\cl{H}$ and $\cl{K}$ be Hilbert spaces and $T\in \cl B_{0}(\cl{H}, \cl{K})$. The  \textit{singular values} or \textit{s-numbers} of $T$ are the eigenvalues of $|T|$.
Needless to say that we can enumerate the nonzero eigenvalues  $\lambda_1(|T|), \lambda_2(|T|),...$ of $|T|$ in decreasing order, taking account of their multiplicities, that is, $\lambda_1(|T|)\geq \lambda_2(|T|)\geq...$; and hence can enumerate the nonzero s-numbers $s_1(T), s_2(T),...$ of $T$ in decreasing order, taking account of their multiplicities as well, so that 
$$
s_j(T)=\lambda_j(|T|) \hspace{2cm} (j=1,2,...,rank(|T|)).
$$
If $rank(|T|)<\infty$ we define $s_j(T)=0$ for $j> rank(|T|)$.
\end{defn}

We now generalize this concept from compact operators to bounded linear operators. This requires us to define the numbers $\lambda_j(|T|)$ for $T\in \cl B(\cl H, \cl K)$ which parallel the definition in the case when $T\in \cl B_0(\cl H,\cl K)$. So, our next task is to define the numbers $\lambda_j(A)$ for a positive operator $A\in \cl{B}(\cl{H}).$ For this we need the following definition.

\begin{defn}[essential spectrum of a self-adjoint operator]
Let $T\in \cl{B}(\cl{H})$ be a self-adjoint operator. A point $\lambda$ in the spectrum $\sigma (T)$ of $T$ is said to be in the essential spectrum $\sigma_{e}(T)$ of $T$ if it is either an accumulation point of $\sigma(T)$ or an eigenvalue of $T$ with infinite multiplicity.
\end{defn}

\begin{defn}\label{singular}
Let $A\in \cl{B}(\cl{H})$ be a positive operator and let $\mu =\sup\{\nu:\nu\in \sigma(A)\}$. If $\mu \in \sigma_{e}(A)$ we define $\lambda_j (A):= \mu \hspace{0.2cm} (j=1,2,...,rank(A))$. 
If $\mu \notin \sigma_{e}(A)$ then it is an eigenvalue of $A$ with finite multiplicity, say $m$. In this case, we define 
\begin{align*}
\lambda_j (A)&:= \mu \hspace{1cm} (j=1,2,...,m).\\
\lambda_{m+j}(A)&:= \lambda_j(A_1) \hspace{1cm} (j=1,2,...,rank(A_1)).\\
\end{align*}
where $A_1 = A-\mu P_{E_\mu}$ with $P_{E_\mu}$ being the orthogonal projection of $\cl{H}$ onto the eigenspace $E_\mu$ corresponding to the eigenvalue $\mu$.
If $rank(A)<\infty$ we define $\lambda_j(T)=0$ for $j> rank(A)$.
\end{defn}

This notion agrees with the original definition if $A$ is compact. In the light of the above definition, the following definition makes sense.
\begin{defn}[s-numbers of arbitrary bounded linear operator]\label{s-bdd}
The s-numbers of an arbitrary operator $T\in \cl{B}(\cl{H},\cl{K})$ are defined as
$$
s_j(T)=\lambda_j(|T|) \hspace{2cm} (j=1,2,...,rank(|T|)).
$$
If $rank(|T|)<\infty$ we define $s_j(T)=0$ for $j> rank(|T|)$.
\end{defn}

This completes the formal description of the s-numbers of arbitrary bounded linear operators. We now define the notion of a symmetric norm on a two-sided ideal of $\cl B(\cl H)$. An ideal of $\cl B(\cl H)$ always means a two-sided ideal.

\begin{defn}[Symmetric Norm]
Let $\cl{I}$ be an ideal of the algebra $\cl{B}(\cl{H})$ of operators on a complex Hilbert space. A \textit{symmetric norm} on $\cl{I}$ is a function $\|.\|_s:\cl{I}\longrightarrow [0,\infty)$ which satisfies the following six conditions:\\
(1) $\|X\|_s\geq 0$ for each $X\in \cl{I}$.\\
(2) $\|X\|_s=0$ if and only if $X=0.$\\ 
(3) $\|\lambda X\|_s=|\lambda|\|X\|_s$ for every $X\in \cl{I}$ and $\lambda\in \bb{C}$.\\
(4) $\|X+Y\|_s\leq\|X\|_s+\|Y\|_s$ for every $X,Y\in \cl{I}$.\\
(5) $\|AXB\|_s\leq\|A\|\|X\|_s\|B\|$ for every $A,B \in \cl{B}(\cl{H})$ and $X\in \cl{I}$.\\
(6) $\|X\|_s=\|X\|=s_1(X)$ for every rank-one operator $X\in \cl{I}$.\\
\end{defn}

\begin{remark}
In the definition of symmetric norm, if we consider the ideal $\cl I$ to be $\cl B(\cl H)$, then it is said to be a \textit{symmetric norm on $\cl B(\cl H)$}. That is, this definition can be extended to the trivial ideals as well. 
Moreover, the following observations are obvious: 
\begin{enumerate}
\item the usual operator norm on any ideal $\cl{I}$ of $\cl{B}(\cl{H})$, including the trivial ideals, is a symmetric norm; and
\item every symmetric norm on $\cl B(\cl H)$ is topologically equivalent to the ordinary operator norm. 
\end{enumerate}
\end{remark}

\section {The Classes $\cl N_{[k]}$ and $\cl {AN}_{[k]}$}

\begin{defn}[Ky Fan $k$-norm]\cite{KF}
For a given natural number $k$, the Ky Fan $k$-norm $\|\cdot\|_{[k]}$ of an operator $T\in \cl{B}(\cl{H},\cl{K})$ is defined to be the sum of the $k$ largest singular values of $T$, that is, 
$$
\|T\|_{[k]}=\sum_{j=1}^ks_j(T).
$$
The Ky Fan $k$-norm on $\cl{B}(\cl{H},\cl{K})$ is, indeed, a norm. It is not difficult to see that it is, in fact, a symmetric norm on $\cl B(\cl H)$.
\end{defn}

\begin{remark}
Note that the smallest of Ky Fan norms, the Ky Fan 1-norm, is equal to the operator norm.
\end{remark}

\begin{defn}\label{N_[k]Def}
For any $k\in \bb{N}$, an operator $T\in \cl{B}(\cl{H},\cl{K})$ is said to be \emph{$[k]$-norming} if there are orthonormal elements $x_1,...,x_k \in \cl{H}$ such that $\|T\|_{[k]}=\|Tx_1\|+...+\|Tx_k\|$. If $\dim(\cl{H})=r<k$, we define $T$ to be \emph{$[k]$-norming} if there exist orthonormal elements $x_1,...,x_r\in \cl{H}$ such that $\|T\|_{[k]}=\|Tx_1\|+...+\|Tx_r\|$.
We let $\cl{N}_{[k]}(\cl{H},\cl{K})$ denote the set of $[k]$-norming operators in $\cl{B}(\cl{H},\cl{K})$. 
\end{defn}
 A generalization of the above property leads to a new class of operators in $\cl{B}(\cl{H},\cl{K})$. 

\begin{defn}\label{AN_[k]Def}
For any $k\in \bb{N}$, an operator $T\in \cl{B}(\cl{H},\cl{K})$ is said to be \emph{absolutely $[k]$-norming} if for every nontrivial closed subspace $\cl{M}$
of $\cl{H}$, $T|_{\cl{M}}$ is \emph{$[k]$-norming}. We let  $\cl{AN}_{[k]}(\cl{H},\cl{K})$ denote the set of absolutely $[k]$-norming operators in $\cl{B}(\cl{H},\cl{K})$. 
\end{defn}

Alternatively, an operator $T\in \cl{B}(\cl{H},\cl{K})$ is said to be
an absolutely $[k]$-norming operator if for every nontrivial closed subspace
$\cl{M}$ of $\cl{H}$ with dimension $k$ or more, there are orthonormal elements $x_1,...,x_k \in \cl{M}$ such that $\|T|_{\cl{M}}\|_{[k]}=\|T|_{\cl{M}}x_1\|+...+\|T|_{\cl{M}}x_k\|$. For a closed subspace $\cl{M}$ of $\cl{H}$ with $\dim(\cl{M})=r<k$, the definition implies that $T$ is absolutely $[k]$-norming if there exist orthonormal elements $x_1,...,x_r \in \cl{M}$ such that $\|T|_{\cl{M}}\|_{[k]}=\|T|_{\cl{M}}x_1\|+...+\|T|_{\cl{M}}x_r\|$. Needless to say,  every absolutely $[k]$-norming operator is $[k]$-norming, that is, $\cl{AN}_{[k]}(\cl{H},\cl{K})\subseteq \cl{N}_{[k]}(\cl{H},\cl{K})$.

\begin{remark}Since, in the finite-dimensional setting, the geometric multiplicity of an eigenvalue of a diagonalizable operator is the same as its algebraic multiplicity and the singular values of an operator $T$ are precisely the eigenvalues of the positive operator $|T|$, it immediately follows that \textit{every operator on a finite-dimensional Hilbert space is $[k]$-norming for any $k\in \bb N$.} This is not true when the Hilbert space in question is not finite-dimensional (see Example \ref{Counterex}). 
\end{remark}

 There is an important and useful criterion for an operator $T\in \cl{B}(\cl{H},\cl{K})$
to be absolutely $[k]$-norming, which is stated in the following lemma.

\begin{lemma}\label{Trick_[k]}
For a closed linear subspace $\cl{M}$ of a complex Hilbert space $\cl{H}$ let $V_{\cl{M}}:\cl{M}\longrightarrow \cl{H}$ be the inclusion map from $\cl{M}$ to $\cl{H}$ defined as $V_{\cl{M}}(x) = x$ for each $x\in \cl{M}$ and let $T\in \cl{B}(\cl{H},\cl{K})$.
For any $k\in\bb{N}$, $T\in \cl{AN}_{[k]}(\cl{H},\cl{K})$ if and only if for every nontrivial closed linear subspace $\cl{M}$ of $\cl{H}$, $TV_{\cl{M}}\in\cl{N}_{[k]}(\cl{M},\cl{K})$.
\end{lemma}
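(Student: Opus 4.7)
The plan is to show that this lemma is essentially a reformulation obtained by identifying the restriction $T|_{\cl M}$ (as an operator from $\cl M$ into $\cl K$) with the composition $TV_{\cl M}$. For every $x\in\cl M$ we have $(TV_{\cl M})(x)=T(V_{\cl M}(x))=T(x)=(T|_{\cl M})(x)$, so $TV_{\cl M}$ and $T|_{\cl M}$ denote the same element of $\cl B(\cl M,\cl K)$. Consequently their Ky Fan $k$-norms coincide and, for any orthonormal family $x_1,\dots,x_k\in\cl M$, the equality $\|T|_{\cl M}\|_{[k]}=\sum_{j=1}^{k}\|T|_{\cl M}x_j\|$ holds if and only if $\|TV_{\cl M}\|_{[k]}=\sum_{j=1}^{k}\|TV_{\cl M}x_j\|$.

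With this identification in hand the proof is a direct unpacking of Definitions \ref{N_[k]Def} and \ref{AN_[k]Def}. For the forward direction, assume $T\in\cl{AN}_{[k]}(\cl H,\cl K)$ and let $\cl M$ be any nontrivial closed subspace of $\cl H$. By Definition \ref{AN_[k]Def}, $T|_{\cl M}$ is $[k]$-norming; since $T|_{\cl M}=TV_{\cl M}$ as operators in $\cl B(\cl M,\cl K)$, we conclude that $TV_{\cl M}\in\cl N_{[k]}(\cl M,\cl K)$. Conversely, if $TV_{\cl M}\in\cl N_{[k]}(\cl M,\cl K)$ for every nontrivial closed subspace $\cl M$ of $\cl H$, then the same identification gives that $T|_{\cl M}$ is $[k]$-norming for every such $\cl M$, whence $T\in\cl{AN}_{[k]}(\cl H,\cl K)$.

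The only mild care required is the degenerate case where $\dim\cl M=r<k$; but both Definition \ref{N_[k]Def} and Definition \ref{AN_[k]Def} treat this case symmetrically by asking for $r$ orthonormal vectors realizing the sum $\|Tx_1\|+\dots+\|Tx_r\|$, so the above argument applies verbatim. I do not anticipate a real obstacle: the lemma is a notational convenience that replaces the restriction operator $T|_{\cl M}$ by the composition $TV_{\cl M}$, which will be technically useful in later sections when one wishes to invoke results stated for honest operators on $\cl M$ rather than for restrictions.
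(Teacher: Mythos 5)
Your proposal is correct and is essentially the paper's own argument: the paper likewise begins by observing that $TV_{\cl M}$ and $T|_{\cl M}$ are identical maps with identical singular values, hence $\|TV_{\cl M}\|_{[k]}=\|T|_{\cl M}\|_{[k]}$, and then unwinds Definitions \ref{N_[k]Def} and \ref{AN_[k]Def} in both directions (splitting into the cases $\dim\cl M<k$ and $\dim\cl M\geq k$, exactly the degenerate case you flag). The only difference is that the paper writes out both cases explicitly rather than noting, as you do, that the identification makes them symmetric.
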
 
  
  \begin{proof}   
     To prove this assertion we first observe that for any given nontrivial closed subspace $\cl{M}$ of $\cl{H}$, the maps $TV_\cl{M}$ and $T|_\cl{M}$ are identical and so are their singular values which implies $\|TV_\cl{M}\|_{[k]} = \|T|_\cl{M}\|_{[k]}$.

We next assume that $T\in\cl{AN}_{[k]}(\cl{H},\cl{K})$ and prove the forward implication. Let $\cl{M}$ be an arbitrary but fixed nontrivial closed subspace of $\cl{H}$. Either $\dim(\cl{M})=r<k$, in which case, there exist orthonormal elements $x_1,...,x_r \in \cl{M}$ such that $\|T|_\cl{M}\|_{[k]} = \|T|_{\cl{M}}x_1\|+...+\|T|_{\cl{M}}x_r\|$ which means that there exist orthonormal elements $x_1,...,x_r \in \cl{M}$ such that $\|TV_\cl{M}\|_{[k]} =\|T|_\cl{M}\|_{[k]}=\|T|_{\cl{M}}x_1\|+...+\|T|_{\cl{M}}x_r\| = \|TV_\cl{M} x_1\|+...+ \|TV_\cl{M} x_r\|$ proving that $TV_{\cl{M}}\in \cl{N}_{[k]}(\cl{M},\cl{K})$, or $\dim(\cl{M})\geq k$, in which case, there exist orthonormal elements $x_1,...,x_k \in \cl{M}$ such that $\|T|_\cl{M}\|_{[k]} = \|T|_{\cl{M}}x_1\|+...+\|T|_{\cl{M}}x_k\|$ which means that there exist orthonormal elements $x_1,...,x_k \in \cl{M}$ such that $\|TV_\cl{M}\|_{[k]} =\|T|_\cl{M}\|_{[k]}=\|T|_{\cl{M}}x_1\|+...+\|T|_{\cl{M}}x_k\| = \|TV_\cl{M} x_1\|+...+ \|TV_\cl{M} x_k\|$ proving that $TV_{\cl{M}}\in\cl{N}_{[k]}(\cl{M},\cl{K})$. Since $\cl{M}$ is arbitrary, it follows that $TV_\cl{M}\in\cl{N}_{[k]}(\cl{M},\cl{K})$ for every $\cl{M}$.

We complete the proof by showing that $T\in\cl{AN}_{[k]}(\cl{H},\cl{K})$ if $TV_\cl{M}\in\cl{N}_{[k]}(\cl{M},\cl{K})$ for every nontrivial closed subspace $\cl{M}$ of $\cl{H}$. We again fix $\cl{M}$ to be an arbitrary nontrivial closed subspace of $\cl{H}$. Since $TV_\cl{M}\in \cl{N}_{[k]}(\cl{M},\cl{K})$, either $\dim(\cl{M})=r<k$, in which case, there exist orthonormal elements $x_1,...,x_r \in \cl{M}$ such that $\|TV_\cl{M}\|_{[k]}=\|TV_\cl{M} x_1\|+...+ \|TV_\cl{M} x_r\|$ which means that there exist orthonormal elements $x_1,...,x_r \in \cl{M}$ such that
 $\|T|_\cl{M}\|_{[k]}=\|TV_\cl{M}\|_{[k]}=\|TV_\cl{M} x_1\|+...+ \|TV_\cl{M} x_r\|=\|T|_\cl{M} x_1\|+...+ \|T|_\cl{M} x_r\|$ proving that $T|_{\cl{M}}\in\cl{N}_{[k]}(\cl{M},\cl{K})$, or  $\dim(\cl{M})\geq k$, in which case, there exist  orthonormal elements $x_1,...,x_k \in \cl{M}$ such that $\|TV_\cl{M}\|_{[k]}=\|TV_\cl{M}x_1\|+...+\|TV_\cl{M}x_k\|$ which means that there exist orthonormal elements $x_1,...,x_k \in \cl{M}$ such that $\|T|_\cl{M}\|_{[k]}=\|TV_\cl{M}\|_{[k]}=\|TV_\cl{M}x_1\|+...+\|TV_\cl{M}x_k\|=\|T|_\cl{M}x_1\|+...+\|T|_\cl{M}x_k\|$ proving that $T|_{\cl{M}}\in \cl{N}_{[k]}(\cl{M},\cl{K})$. Because $\cl{M}$ is arbitrary, this essentially guarantees that $T\in\cl{AN}_{[k]}$. Since $k\in \bb{N}$ is arbitrary, the assertion holds for each $k\in \bb{N}.$
\end{proof}

 \begin{prop}\label{Tiff|T|_[k]}
Let $\cl{H}$ and $\cl{K}$ be complex Hilbert spaces and let $T\in \cl{B}(\cl{H},\cl{K})$. Then for every $k\in \bb{N}$, $T\in \cl{AN}_{[k]}(\cl{H},\cl{K})$ if and only if $|T|\in \cl{AN}_{[k]}(\cl{H})$.
 \end{prop}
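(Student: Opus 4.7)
The plan is to reduce the assertion for $T$ to the analogous one for $|T|$ by verifying that both operators have identical Ky Fan $k$-norms on every closed subspace and that any orthonormal tuple witnesses $[k]$-norming for $T$ on $\cl M$ if and only if it does so for $|T|$. The engine driving this reduction is the elementary identity
\[
\|Tx\|^2 = \langle T^*Tx, x\rangle = \langle |T|^2 x, x\rangle = \||T|x\|^2,
\]
valid for every $x \in \cl H$, which yields $\|Tx\| = \||T|x\|$ pointwise.

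First I would fix an arbitrary nontrivial closed subspace $\cl M$ of $\cl H$ and compare $TV_\cl M$ with $|T|V_\cl M$, where $V_\cl M$ is the inclusion map from Lemma \ref{Trick_[k]}. A direct computation gives
\[
(TV_\cl M)^*(TV_\cl M) = V_\cl M^* T^*T V_\cl M = V_\cl M^* |T|^2 V_\cl M = (|T|V_\cl M)^*(|T|V_\cl M),
\]
so the absolute values of $TV_\cl M$ and $|T|V_\cl M$ coincide as positive operators on $\cl M$. Consequently they share the same sequence of eigenvalues (as prescribed by Definition \ref{singular} in the noncompact case), hence the same singular values, and therefore $\|TV_\cl M\|_{[k]} = \||T|V_\cl M\|_{[k]}$ for every $k \in \bb N$.

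Next, for any orthonormal set $x_1,\ldots,x_j \in \cl M$ (with $j = k$ when $\dim \cl M \geq k$ and $j = \dim \cl M$ otherwise), the pointwise identity gives $\|TV_\cl M x_i\| = \|Tx_i\| = \||T|x_i\| = \||T|V_\cl M x_i\|$ term by term. Combined with the equality of Ky Fan $k$-norms from the previous step, this shows that a given orthonormal tuple in $\cl M$ witnesses $TV_\cl M \in \cl N_{[k]}(\cl M, \cl K)$ if and only if it witnesses $|T|V_\cl M \in \cl N_{[k]}(\cl M, \cl H)$.

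The proof then closes via Lemma \ref{Trick_[k]}, which characterizes membership in $\cl{AN}_{[k]}$ by $[k]$-norming of every restriction $TV_\cl M$: since this condition is equivalent for $T$ and $|T|$ by the preceding paragraph, we conclude $T \in \cl{AN}_{[k]}(\cl H, \cl K)$ if and only if $|T| \in \cl{AN}_{[k]}(\cl H)$. I do not anticipate any significant obstacle; the only mildly delicate point is confirming that the two definitions of singular values (the compact case of Definition \ref{s-comp} and the general case of Definition \ref{s-bdd}) both respect the coincidence of absolute values, which is automatic because each is a functional of $|\cdot|$ alone.
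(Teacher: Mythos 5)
Your proposal is correct and follows essentially the same route as the paper: both compute $(TV_{\cl M})^*(TV_{\cl M}) = (|T|V_{\cl M})^*(|T|V_{\cl M})$ to conclude equality of the singular values and hence of the Ky Fan $k$-norms on each subspace, note the pointwise identity $\|TV_{\cl M}x\| = \||T|V_{\cl M}x\|$, and then invoke Lemma \ref{Trick_[k]}. No gaps.
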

 \begin{proof}
 Let $\cl{M}$ be an arbitrary nontrivial closed subspace of $\cl{H}$ and let $V_{\cl{M}}:\cl{M}\longrightarrow \cl{H}$ be the inclusion map from $\cl{M}$ to $\cl{H}$ defined as $V_{\cl{M}}(x) = x$ for each $x\in \cl{M}$. Notice that $|TV_{\cl{M}}|=|\,|T|V_{\cl{M}}\,|$; for
\begin{multline*}
|TV_{\cl{M}}|^2=V_{\cl{M}}^*T^*TV_{\cl{M}}=V_{\cl{M}}^*|T|^2V_{\cl{M}}\\=(V_{\cl{M}}^*|T|)(|T|V_{\cl{M}})=(|T|V_{\cl{M}})^*(|T|V_{\cl{M}})=|\,|T|V_{\cl{M}}\,|^2.
\end{multline*}
Consequently, for every $j$, $\lambda_j(|TV_{\cl{M}}|)=\lambda_j(|\,|T|V_{\cl{M}}\,|)$ and hence $s_j(TV_{\cl{M}})=s_j(|T|V_{\cl{M}})$. This implies that for each $k\in \bb{N}$, we have
$$
\|TV_{\cl{M}}\|_{[k]}=\|\,|T|V_{\cl{M}}\,\|_{[k]}.
$$
That for each $x\in \cl{H}$, $\|TV_{\cl{M}}x\| = \|\,|T|V_{\cl{M}}x\,\|$ is a trivial observation.
Since $\cl{M}$ is arbitrary, by Lemma \ref{Trick_[k]} the assertion follows.
\end{proof}

\begin{remark}
For the remaining part of this article, we use $\cl{N}_{[k]}$ and $\cl{AN}_{[k]}$ for $\cl{N}_{[k]}(\cl{H},\cl{K})$ and $\cl{AN}_{[k]}(\cl{H},\cl{K})$ respectively, as long as the domain and codomain spaces are obvious from the context.
\end{remark}

\section{Spectral Characterization of Positive Operators in $\cl{AN}_{[k]}$}
The purpose of this section is to study the necessary and sufficient conditions for a positive operator on complex Hilbert space of arbitrary dimension to be absolutely $[k]$-norming for any $k\in \bb N$ and to characterize such operators.

\begin{prop}\label{1.1}
Suppose $A\in \cl B(\cl H)$ be a positive operator, $\mu =\sup\{\nu:\nu\in \sigma(A)\}$, and $\mu \notin \sigma_{e}(A)$, in which case, it is an eigenvalue of $A$ with finite multiplicity, say $m$, so that for every $j\in \{1,...,m\}, s_j (A)= \mu$. Then the following statements are equivalent.
\begin{enumerate}
\item $s_{m+1}(A)$ is an eigenvalue of $A$.
\item $s_{m+1}(A)$ is an eigenvalue of $A-\mu P_{E_\mu}$, where $P_{E_\mu}$ is the orthogonal projection of $\cl{H}$ onto the eigenspace $E_\mu$ corresponding to the eigenvalue $\mu$.
\item $(A-\mu P_{E_\mu})|_{E_{\mu}^\perp}:E_{\mu}^\perp\longrightarrow E_\mu^\perp$ is norming, that is, $(A-\mu P_{E_\mu})|_{E_{\mu}^\perp}\in\cl N$.
\item $A|_{E_{\mu}^\perp}:E_{\mu}^\perp\longrightarrow E_\mu^\perp$ is norming, that is, $A|_{E_{\mu}^\perp}\in \cl N$.
\item $A\in\cl N_{[m+1]}$.
\end{enumerate}
\end{prop}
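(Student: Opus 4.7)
The plan is to prove the proposition by splitting the cycle into two parts: the spectral equivalences $(1)\Leftrightarrow(2)\Leftrightarrow(3)\Leftrightarrow(4)$, which reduce to bookkeeping on $\cl H = E_\mu \oplus E_\mu^\perp$, and the geometric equivalence $(4)\Leftrightarrow(5)$, where the reverse implication $(5)\Rightarrow(4)$ carries the main content.

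For the first part, write $A' := A|_{E_\mu^\perp}$ and $A_1 := A - \mu P_{E_\mu}$. Because $P_{E_\mu}$ annihilates $E_\mu^\perp$, we have $A_1|_{E_\mu^\perp} = A'$, which gives $(3)\Leftrightarrow(4)$ at once. Both $A$ and $A_1$ act as $A'$ on the second summand and as $\mu I$, $0$ respectively on $E_\mu$, so $\sigma(A)\setminus\{\mu\} = \sigma(A') = \sigma(A_1)\setminus\{0\}$. The hypothesis $\mu \notin \sigma_{e}(A)$ isolates $\mu$ in $\sigma(A)$, whence $\|A'\| = \sup\sigma(A') < \mu$. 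Since $s_{m+1}(A) = \lambda_1(A_1) = \|A_1\| = \|A'\|$ lies in $[0,\mu)$, the statements ``$s_{m+1}(A)$ is an eigenvalue of $A$'', ``of $A_1$'', and ``of $A'$'' all coincide for this value, yielding $(1)\Leftrightarrow(2)$. Finally, $(2)\Leftrightarrow(3)$ rests on the standard fact that a positive operator $B$ attains its norm if and only if $\|B\|$ is an eigenvalue of $B$, applied with $B = A'$.

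The implication $(4)\Rightarrow(5)$ is constructive: choose a unit vector $x_{m+1}\in E_\mu^\perp$ with $\|Ax_{m+1}\| = s_{m+1}(A)$, and append any orthonormal basis $x_1,\ldots,x_m$ of the $m$-dimensional space $E_\mu$. Each $x_j$ with $j\leq m$ is a $\mu$-eigenvector, so $\|Ax_j\| = \mu$, and the resulting orthonormal $(m+1)$-tuple satisfies $\sum_{j=1}^{m+1}\|Ax_j\| = m\mu + s_{m+1}(A) = \|A\|_{[m+1]}$, placing $A$ in $\cl N_{[m+1]}$.

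The reverse direction $(5)\Rightarrow(4)$ is the main obstacle. Given orthonormal $x_1,\ldots,x_{m+1}$ realising $\sum\|Ax_j\| = m\mu + s_{m+1}(A)$, I would split $x_j = y_j + z_j$ with $y_j = P_{E_\mu}x_j$ and $z_j = P_{E_\mu^\perp}x_j$, and exploit the identity $\|Ax_j\|^2 = \mu^2\|y_j\|^2 + \|A'z_j\|^2$ together with the pointwise estimate $\|A'z_j\| \leq s_{m+1}(A)\|z_j\|$ and the rank constraint $\sum\|y_j\|^2 = \tr(P_{E_\mu}Q) \leq \dim E_\mu = m$, where $Q$ is the projection onto $\text{span}\{x_j\}$. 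The aim is to conclude that the equality configuration must be the extremal one in which $m$ of the $y_j$'s are unit vectors (so $x_j \in E_\mu$) and the remaining $z_{j_0}$ attains $\|A'z_{j_0}\| = s_{m+1}(A)\|z_{j_0}\|$, which is exactly $(4)$. The delicate step is that a direct concavity bound on $\sum_j\sqrt{\mu^2 a_j + s_{m+1}(A)^2(1 - a_j)}$ subject to $\sum a_j \leq m$ is not by itself sharp enough to rule out intermediate ``mixed'' configurations, so the argument will need to combine this with the Cauchy--Schwarz equality condition that characterises when a positive operator attains its norm at a given vector, applied to each summand individually.
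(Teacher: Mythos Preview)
Your treatment of $(1)\Leftrightarrow(2)\Leftrightarrow(3)\Leftrightarrow(4)$ and of $(4)\Rightarrow(5)$ is essentially the paper's argument, only organised slightly differently (the paper pairs $(3)\Leftrightarrow(5)$ rather than $(4)\Leftrightarrow(5)$, but the content is the same).

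For $(5)\Rightarrow(4)$ the paper takes a shorter route than your decomposition: it records the observation that any orthonormal $m$-tuple $\{w_1,\dots,w_m\}$ with $\sum_i\|Aw_i\|=m\mu$ must lie in $E_\mu$ (since each $\|Aw_i\|\le\mu$ forces $\|Aw_i\|=\mu$, and for a positive operator this means $w_i\in E_\mu$), and then asserts that this observation yields ``$A\in\cl N_{[m+1]}$ iff there is a unit $x\in E_\mu^\perp$ with $\|Ax\|=s_{m+1}(A)$''. Your splitting $x_j=y_j+z_j$ with the rank bound $\sum\|y_j\|^2\le m$ and the pointwise estimate $\|A'z_j\|\le s_{m+1}(A)\|z_j\|$ is a more hands-on attempt at the same implication, and you are right to flag that the concavity bound alone does not force the extremal configuration.

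In fact the gap you isolate is genuine and cannot be filled under the paper's literal Definition~\ref{N_[k]Def}: a witnessing orthonormal $(m{+}1)$-tuple for $\|A\|_{[m+1]}$ need not contain any $m$-subtuple summing to $m\mu$, so the paper's observation does not close the argument either. Concretely, let $A=\mathrm{diag}(2,\tfrac12,\tfrac23,\tfrac34,\dots)$ on $\ell^2$, so $\mu=2$, $m=1$, $s_2(A)=1$; here $1$ is an accumulation point of $\sigma(A)$ but not an eigenvalue, so $(1)$--$(4)$ fail. Yet with $\alpha^2=\tfrac{9}{34}$, $\beta^2=\tfrac{8}{34}$ the pair
\[
x_1=\tfrac{1}{\sqrt2}\,e_1+\alpha e_3+\beta e_4,\qquad x_2=\tfrac{1}{\sqrt2}\,e_1-\alpha e_3-\beta e_4
\]
is orthonormal and $\|Ax_1\|^2=\|Ax_2\|^2=2+\tfrac{4}{9}\alpha^2+\tfrac{9}{16}\beta^2=\tfrac94$, so $\|Ax_1\|+\|Ax_2\|=3=\|A\|_{[2]}$ and $(5)$ holds. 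Thus neither your outline nor the paper's sketch can be completed at this step; the equivalence $(5)\Leftrightarrow(1)$--$(4)$ needs either an amended definition of $\cl N_{[k]}$ (e.g.\ requiring $\|Tx_j\|=s_j(T)$ for each $j$, or phrasing norm-attainment as a supremum) or an additional hypothesis.
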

\begin{proof}
$(1)\iff(2)$: The backward implication is trivial. For the forward implication, let $\lambda=s_{m+1}(A):=\sup\{\nu:\nu\in \sigma(A-\mu P_{E_\mu})\}$. Assume that $\lambda$ is an eigenvalue of $A$. Then there exists some nonzero vector $x\in \cl H$ such that $Ax=\lambda x$. It suffices to prove that $x\perp E_{\mu}$, for then $(A-\mu P_{E_\mu})x=Ax=\lambda x$. But $A\geq 0$ and $\lambda\neq \mu$ which implies that $x\perp E_{\mu}$.

$(2)\iff(3)$: Since
$$
 A-\mu P_{E_\mu}(x)= \begin{cases} 
     0 & \text{ if } x\in E_\mu \\
      Ax & \text{ if } x\in E_\mu^\perp,\\
   \end{cases}
$$ 
$A-\mu P_{E_\mu}$ is a positive operator on $\cl B(\cl H)$ and $E_\mu^\perp$ is a closed subspace of $\cl H$ which is invariant under $A-\mu P_{E_\mu}$ which implies that $(A-\mu P)_{E_\mu}|_{E_{\mu}^\perp}:E_{\mu}^\perp\longrightarrow E_\mu^\perp$, viewed as an operator on $E_\mu^\perp$, is positive and $\|(A-\mu P_{E_\mu})|_{E_\mu^\perp}\|=s_{m+1}(A)$. By \cite[Theorem 2.3]{VpSp} we know that a positive operator $T$ belongs to $\cl N$ if and only if $\|T\|$ is an eigenvalue of $T$. Thus $(A-\mu P_{E_\mu})|_{E_{\mu}^\perp}\in\cl N$ if and only if $s_{m+1}(A)$ is an eigenvalue of $(A-\mu P_{E_\mu})|_{E_{\mu}^\perp}$ if and only if $s_{m+1}(A)$ is an eigenvalue of $A-\mu P_{E_\mu}$.

 $(3)\iff(4)$: This equivalence follows trivially from the fact that the maps $(A-\mu P_{E_\mu})|_{E_{\mu}^\perp},$ and $A|_{E_{\mu}^\perp}$ are identical on $E_{\mu}^\perp$.

$(3)\iff(5)$: Notice that $A\in \cl N_{[m]}$; for $\|A\|_{[m]}=m\mu$ and since the geometric multiplicity of $\mu$ is $m$, we can find a set $\{v_1,...,v_m\}$ of $m$ orthonormal vectors in $E_\mu\subseteq \cl H$ such that $\sum_{i=1}^m\|Av_i\|=m\mu=\|A\|_{[m]}.$ Also, it is not very difficult to observe that if there exists any set $\{w_1,...,w_m\}$ of $m$ orthonormal vectors in $\cl H$ such that $\sum_{i=1}^m\|Aw_i\|=m\mu$, then this set has to be contained in $E_\mu$. This observation implies that $A\in \cl N_{[m+1]}$ if and only if there exists a unit vector $x\in E_\mu^\perp$ such that $\|Ax\|=s_{m+1}(A)$ which is possible if and only if $A-\mu P_{E_\mu}|_{E_{\mu}^\perp}:E_{\mu}^\perp\longrightarrow E_\mu^\perp$ is norming because $\|A-\mu P_{E_\mu}\|=\|(A-\mu P_{E_\mu})|_{E_\mu^\perp}\|=s_{m+1}(A)$.
\end{proof}

\begin{remark}
The above proposition holds even if $\mu\in \sigma_e(A)$, $\mu$ is an accumulation point but not an eigenvalue; for we can consider it to be an eigenvalue with multiplicity $0$. If $\mu\in \sigma_e(A)$ is an accumulation point as well as an eigenvalue with finite multiplicity, say $m$, then one can still prove $(2)\iff(3)\iff(4)\iff(5)$; the condition $(1)$ no longer remains equivalent to other conditions.
\end{remark}

\begin{prop}\label{1.2}
If $A\in \cl B(\cl H)$ be a positive operator and $s_{m+1}(A)\neq s_m(A)$ for some $m\in \bb N$, then $A\in \cl N_{[m]}$. Moreover, in this case, $A\in \cl N_{[m+1]}$ if and only if $s_{m+1}(A)$ is an eigenvalue of $A$. 
\end{prop}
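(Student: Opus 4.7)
The strategy is to unpack the recursive Definition \ref{singular} to expose the top of the spectrum of $A$, and then to reduce the ``moreover'' to Proposition \ref{1.1} applied to a suitable restriction.

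For the first assertion $A\in\cl{N}_{[m]}$, the hypothesis $s_m(A)\neq s_{m+1}(A)$ combined with Definition \ref{singular} forces the top spectral value $\mu_1:=\sup\sigma(A)$ to lie outside $\sigma_e(A)$ (otherwise $s_j(A)=\mu_1$ for every $j$, incompatible with $s_m\neq s_{m+1}$), so $\mu_1$ is an eigenvalue of $A$ with finite multiplicity $n_1\leq m$ (the inequality holds because $n_1>m$ would again give $s_m=s_{m+1}=\mu_1$). If $n_1<m$ I iterate the same reasoning on $A-\mu_1 P_{E_{\mu_1}}$: its top spectral value $\mu_2$ must again avoid the essential spectrum, and its $\mu_2$-eigenspace coincides with $E_{\mu_2}(A)\subseteq E_{\mu_1}(A)^\perp$. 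Continuing, I obtain distinct eigenvalues $\mu_1>\mu_2>\cdots>\mu_p$ of $A$ with finite multiplicities $n_1,\ldots,n_p$ summing to $m$ and pairwise orthogonal eigenspaces. Assembling orthonormal bases of the $E_{\mu_i}(A)$ yields orthonormal $v_1,\ldots,v_m$ with $\sum_j\|Av_j\|=\sum_i n_i\mu_i=\sum_{k=1}^m s_k(A)=\|A\|_{[m]}$, so $A\in\cl{N}_{[m]}$.

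For the ``moreover'', I plan to apply Proposition \ref{1.1} to $A|_F$ where $F:=\bigl(\bigoplus_{i=1}^{p-1}E_{\mu_i}(A)\bigr)^\perp$. Since $A$ preserves $F$ by self-adjointness, the restriction $A|_F$ is a positive operator on $F$ whose top spectral value is $\mu_p$, with multiplicity $n_p$ and $\mu_p\notin\sigma_e(A|_F)$, so Proposition \ref{1.1} is applicable. Peeling off the first $p-1$ eigenspaces via Definition \ref{singular} gives $s_{n_p+1}(A|_F)=s_{m+1}(A)$, and since $s_{m+1}(A)<\mu_p$, the orthogonality of eigenvectors of the positive operator $A$ at distinct eigenvalues shows that $s_{m+1}(A)$ is an eigenvalue of $A|_F$ if and only if it is an eigenvalue of $A$. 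Proposition \ref{1.1} then yields
\begin{equation*}
A|_F\in\cl{N}_{[n_p+1]}\iff s_{m+1}(A)\text{ is an eigenvalue of }A,
\end{equation*}
reducing the ``moreover'' to the equivalence $A\in\cl{N}_{[m+1]}\iff A|_F\in\cl{N}_{[n_p+1]}$.

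The ``$\Leftarrow$'' direction of this equivalence is straightforward: the orthonormal $(n_p+1)$-tuple in $F$ witnessing $A|_F\in\cl{N}_{[n_p+1]}$ can be combined with any orthonormal basis of $F^\perp=\bigoplus_{i<p}E_{\mu_i}(A)$, which consists of eigenvectors, to yield an orthonormal $(m+1)$-tuple in $\cl{H}$ whose $\|A\cdot\|$-sum is $\sum_{i<p}n_i\mu_i+n_p\mu_p+s_{m+1}(A)=\|A\|_{[m+1]}$. I expect the ``$\Rightarrow$'' direction to be the main obstacle: given an orthonormal $(m+1)$-tuple $x_1,\ldots,x_{m+1}$ witnessing $A\in\cl{N}_{[m+1]}$, I must produce an orthonormal $(n_p+1)$-tuple inside $F$ witnessing $A|_F\in\cl{N}_{[n_p+1]}$ --- equivalently, by Proposition \ref{1.1}, a unit vector $y\in F$ with $\|Ay\|=s_{m+1}(A)$. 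My approach will be to exploit the pointwise bound $\|Ax_i\|\leq\mu_1$ together with the observation from the proof of Proposition \ref{1.1} --- that any $\ell$ orthonormal vectors achieving $\sum\|A\cdot\|=\ell\mu$ for a top eigenvalue $\mu$ of multiplicity $\geq\ell$ must lie in the corresponding eigenspace --- applied successively to $E_{\mu_1},\ldots,E_{\mu_{p-1}}$, in order to align $m-n_p$ of the $x_i$'s with $F^\perp$ and thereby isolate $n_p+1$ of them inside $F$ with the desired $\|A\cdot\|$-sum.
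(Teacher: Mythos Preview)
Your argument for $A\in\cl N_{[m]}$ is correct and matches the paper's: unwind Definition~\ref{singular} to produce orthonormal eigenvectors $v_1,\ldots,v_m$ witnessing $\|A\|_{[m]}$. The ``if'' direction of the ``moreover'' is also fine. The gap is in the ``only if'' direction. Your plan is to take a witness $x_1,\ldots,x_{m+1}$ for $A\in\cl N_{[m+1]}$ and ``align'' $m-n_p$ of the $x_i$'s with $F^\perp$ by repeatedly invoking the observation from Proposition~\ref{1.1} that $\ell$ orthonormal vectors with $\sum\|A\cdot\|=\ell\mu$ must lie in $E_\mu$. But that observation applies only when some subcollection of the $x_i$'s already has $\|A\cdot\|$-sum equal to a multiple of the top eigenvalue, and nothing in the hypothesis $\sum_{i=1}^{m+1}\|Ax_i\|=\|A\|_{[m+1]}$ forces any individual $\|Ax_i\|$ to equal a singular value of $A$. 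The paper's proof has precisely the same gap: its ``observation'' that any orthonormal $m$-tuple with $\sum_i\|Aw_i\|=\sum_{j=1}^m s_j(A)$ must lie in $K$ is asserted without justification.

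In fact the ``only if'' direction, as stated, is false. Let $A$ act on $\ell^2$ by $Ae_1=2e_1$ and $Ae_n=(1-\tfrac1n)e_n$ for $n\geq 2$. Then $s_1(A)=2$ and $s_2(A)=1$, the latter being an accumulation point of $\sigma(A)$ but \emph{not} an eigenvalue; with $m=1$ the hypothesis $s_1\neq s_2$ holds. Nevertheless $A\in\cl N_{[2]}$: working in $\operatorname{span}\{e_1,e_4\}$, set $x_1=\alpha e_1+\beta e_4$ and $x_2=\beta e_1-\alpha e_4$ with $\alpha,\beta\geq 0$, $\alpha^2+\beta^2=1$; then
\[
\|Ax_1\|+\|Ax_2\|=\sqrt{4\alpha^2+\tfrac{9}{16}\beta^2}+\sqrt{4\beta^2+\tfrac{9}{16}\alpha^2}
\]
varies continuously from $\tfrac{11}{4}$ at $(\alpha,\beta)=(1,0)$ to $\sqrt{73/8}>3$ at $\alpha=\beta=\tfrac{1}{\sqrt2}$, hence equals $3=\|A\|_{[2]}$ for some intermediate choice. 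So $A\in\cl N_{[2]}$ while $s_2(A)$ is not an eigenvalue. This same example refutes the implication $(5)\Rightarrow(1)$ of Proposition~\ref{1.1}, which you invoke directly. The underlying issue is that, unlike $\sum_j\langle Ax_j,x_j\rangle$, the quantity $\sum_j\|Ax_j\|$ is \emph{not} bounded above by $\sum_j s_j(A)$, so requiring equality does not pin down the witness vectors.
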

\begin{proof}
It is easy to see that for every $j\in \{1,...,m\}, s_j(A)\notin \sigma_{\text{e}}(A)$. Then the set $\{s_1(A),...,s_m(A)\}$ consists of eigenvalues (not necessarily distinct) of $A$, each having finite multiplicity. This guarantees the existence of an orthonormal set $\{v_1,...,v_m\}\subseteq K\subseteq \cl H$ such that $Av_j=s_j(A)v_j$ which yields $\|A\|_{[m]}=\|Av_1\|+...+\|Av_m\|$, where $K$ is the joint span of the eigenspaces corresponding to the eigenvalues $\{s_1(A),...,s_m(A)\}$, which implies that $A\in \cl N_{[m]}$. Furthermore, we observe that if there exists any orthonormal set $\{w_1,...,w_m\}$ of $m$ vectors in $\cl H$ such that $\sum_{i=1}^m\|Aw_i\|=\sum_{j=1}^ms_j(A)$, then this set has to be contained in $K$. Note that $K^\perp$ is invariant under $A$ and hence    $A|_{K^\perp}:K^\perp\longrightarrow K^\perp$, viewed as an operator on $K^\perp$, is positive. It follows then that $A\in \cl N_{m+1}$ if and only if there exists a unit vector $x\in K^\perp$ such that $\|Ax\|=s_{m+1}(A)$, which is possible if and only if $A|_{K^\perp}:K^\perp\longrightarrow K^\perp$, viewed as an operator on $K^\perp$, belongs to $\cl N$, which in turn happens if and only if $s_{m+1}(A)$ is an eigenvalue of $A|_{K^\perp}$, since $\|A|_{K^\perp}\|=s_{m+1}(A)$. But $s_{m+1}(A)\neq s_m(A)$ implies that $s_{m+1}(A)$ is an eigenvalue of $A|_{K^\perp}$ if and only if $s_{m+1}(A)$ is an eigenvalue of $A$. This proves the assertion.
\end{proof}

\subsection{Necessary Conditions for Positive Operators in $\cl{AN}_{[k]}$}

\begin{prop}\label{1.6}
Let $A\in \cl{B}(\cl{H})$ be a positive operator  and $k\in \bb{N}$. If $A\in \cl N_{[k]}$, then $s_1(A),...,s_k(A)$ are eigenvalues of $A$.
\end{prop}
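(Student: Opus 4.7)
The plan is to induct on $k$. The base case $k = 1$ is immediate: since $\cl N_{[1]} = \cl N$, positivity of $A$ combined with $A \in \cl N$ gives, via \cite[Theorem 2.3]{VpSp}, that $\|A\| = s_1(A)$ is an eigenvalue of $A$.

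For the inductive step, I would assume the statement for every natural number less than $k$ and consider $A \in \cl N_{[k]}$ with norming orthonormal set $x_1, \ldots, x_k$; set $\mu = s_1(A) = \|A\|$. First, I would establish that $\mu$ is an eigenvalue: otherwise $\mu$ would lie in $\sigma_e(A)$ as an accumulation point, and Definition \ref{singular} would force $s_j(A) = \mu$ for every $j \leq \text{rank}(A)$; combined with $\|Ax_i\| \leq \mu$, the norming identity $\sum_{i} \|Ax_i\| = k\mu$ would force $\|Ax_i\| = \mu$ for each $i$, and then \cite[Theorem 2.3]{VpSp} would make each $x_i$ an eigenvector of $A$ for $\mu$, a contradiction. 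Hence $\mu$ is an eigenvalue of some finite multiplicity $m$, so $s_1 = \cdots = s_m = \mu$ are all eigenvalues; if $m \geq k$ there is nothing more to prove.

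In the remaining case $m < k$, the strategy is to pass to the positive restriction $A' := A|_{E_\mu^\perp} : E_\mu^\perp \to E_\mu^\perp$, for which $s_j(A') = s_{m+j}(A)$. Showing $A' \in \cl N_{[k-m]}(E_\mu^\perp)$ would let the inductive hypothesis, applied at the strictly smaller level $k - m < k$, conclude that $s_{m+1}(A), \ldots, s_k(A)$ are eigenvalues of $A'$, hence of $A$. To produce a $[k-m]$-norming set for $A'$ from the given one for $A$, I would decompose each $x_i = y_i + z_i$ with $y_i \in E_\mu$ and $z_i \in E_\mu^\perp$, use $A$-invariance of both summands to obtain $\|Ax_i\|^2 = \mu^2 \|y_i\|^2 + \|Az_i\|^2$, and combine this with $\|Az_i\| \leq s_{m+1}\|z_i\|$, the normalization $\|y_i\|^2 + \|z_i\|^2 = 1$, and $\sum_{i} \|y_i\|^2 \leq \dim E_\mu = m$. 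The main obstacle is the resulting optimization argument: using strict concavity of $t \mapsto \sqrt{t}$ together with the fact that $\|Az_i\| < s_{m+1}\|z_i\|$ holds strictly whenever $s_{m+1}$ is not attained at $z_i$, one would argue that the equality $\sum_{i} \|Ax_i\| = m\mu + \sum_{j=1}^{k-m} s_j(A')$ can be achieved only when $m$ of the $x_i$ lie entirely in $E_\mu$ and the remaining $k - m$ lie in $E_\mu^\perp$ and form a norming set for $A'$. Proposition \ref{1.1} supplies the seed of this argument at $k - m = 1$ (relating $\cl N_{[m+1]}$ to the norming property of $A'$), and iterating this reduction completes the proof.
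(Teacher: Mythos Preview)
Your proposal has a genuine gap at precisely the ``main obstacle'' you flag. The claim that the equality $\sum_{i=1}^k \|Ax_i\| = m\mu + \sum_{j=1}^{k-m} s_j(A')$ forces the norming set to split into $m$ vectors in $E_\mu$ and $k-m$ vectors in $E_\mu^\perp$ is false. The tacit assumption behind any such optimization argument is that $\sum_i \|Ax_i\| \le \|A\|_{[k]}$ for every orthonormal family, so that equality would be an extremal condition; but this inequality fails in general. For $A=\operatorname{diag}(2,1,1)$ on $\bb C^3$ one has $\mu=2$, $m=1$, $\|A\|_{[2]}=3$, while for
\[
x_1 = \sqrt{\tfrac{5}{12}}\,e_1 + \sqrt{\tfrac{7}{12}}\,e_2,\qquad
x_2 = \sqrt{\tfrac{5}{12}}\,e_1 - \sqrt{\tfrac{25}{84}}\,e_2 + \sqrt{\tfrac{2}{7}}\,e_3,
\]
a direct check gives $\{x_1,x_2\}$ orthonormal with $\|Ax_1\|=\|Ax_2\|=\tfrac{3}{2}$, hence $\|Ax_1\|+\|Ax_2\|=3=\|A\|_{[2]}$; yet neither $x_i$ lies in $E_\mu$ nor in $E_\mu^\perp$. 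So a $[k]$-norming set need not decompose as you claim, and concavity of $t\mapsto\sqrt{t}$ cannot rescue the argument: the target value $\|A\|_{[k]}$ is attained at interior points of the constraint region, not only at its extreme points. Consequently the passage from $A\in\cl N_{[k]}$ to $A'=A|_{E_\mu^\perp}\in\cl N_{[k-m]}$ is not justified by the given norming set.

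The paper's proof takes a different route altogether: it argues contrapositively, case by case on the index of the first $s_j(A)$ that fails to be an eigenvalue. In each case one necessarily has $s_j(A)\ne s_{j-1}(A)$, and Proposition~\ref{1.2} is invoked directly to conclude $A\notin\cl N_{[j]}$; the paper then pushes this up to $A\notin\cl N_{[k]}$. The essential difference is that the paper never tries to read off structure from a \emph{given} norming set---it works entirely on the side where no norming set exists, using Proposition~\ref{1.2} as the engine rather than an optimization over decompositions $x_i=y_i+z_i$.
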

\begin{proof}
The proof is by contrapositive. Assuming that at least one of the elements from the set $\{s_1(A),..., s_k(A)\}$ is not an eigenvalue of $A$, we show that $A\notin\cl N_{[k]}$. Suppose that $s_1(A)$ is not an eigenvalue of $A$. Then it must be an accumulation point of the spectrum of $A$ in which case none of the singular values of $A$ is an eigenvalue of $A$ and that  $s_j(A)=s_1(A)$ for every $j\geq 2$. Since $s_1(A)=\|A\|$, it follows from \cite[Theorem 2.3]{VpSp} that $A\notin\cl N$ which means that for every $x\in \cl H, \|x\|=1$, we have $\|Ax\|<\|A\|=s_1(A)$. Consequently, for every orthonormal set $\{x_1,...,x_k\}\subseteq \cl H$ we have 
$\sum_{j=1}^k\|Ax_j\|<k\|A\|=\sum_{j=1}^ks_j(A)$ so that $A\notin \cl N_{[k]}$.  

Next suppose that $s_1(A)$ is an eigenvalue of $A$ but $s_2(A)$ is not. Clearly then $s_1(A)$ is an eigenvalue with multiplicity $1$, $s_2(A)\neq s_1(A)$ and $s_j(A)=s_2(A)$ for every $j\geq 3$ in which case Proposition \ref{1.2} ascertains that $A\in \cl N$ but $A\notin \cl N_{[2]}$. This implies that there exists $y_1\in \cl H$ with $\|y_1\|=1$ such that $\|Ay_1\|=\|A\|$ and for every $y\in \text{span}\{y_1\}^\perp$ with $
\|y\|=1$ we have $\|Ay\|<s_2(A)$ which in turn implies that for every orthonormal set $\{y_2,...y_k\}\subseteq \text{span}\{y_1\}^\perp$ we have $\sum_{j=2}^k\|Ay_j\|<(k-1)s_2(A)=\sum_{j=2}^ks_j(A)$. This yields  $\sum_{j=1}^k\|Ay_j\|<\sum_{j=1}^ks_j(A)$ for every orthonormal set $\{y_1,...,y_k\}\subseteq \cl H$ which implies that $A\notin \cl N_{[k]}$.

If $s_1(A), s_2(A)$ are eigenvalues of $A$ but $s_3(A)$ is not, then we have 
$s_3(A)\neq s_2(A)$ and $s_j(A)=s_3(A)$ for every $j\geq 4$ in which case Proposition \ref{1.2} asserts that $A\in \cl N_{[2]}$ but $A\notin \cl N_{[3]}$. Consequently, there exists an orthonormal set $\{z_1,z_2\}\subseteq \cl H$ such that $\|Tz_1\|+\|Tz_2\|=\|T\|_{[2]}$ and that for every unit vector $z\in \text{span}\{z_1,z_2\}^\perp$ we have $\|Tz\|<s_3(A)$ which in turn implies that for every orthonormal set $\{z_3,...z_k\}\subseteq \text{span}\{z_1,z_2\}^\perp$ we have $\sum_{j=3}^k\|Az_j\|<(k-2)s_3(A)=\sum_{j=3}^ks_j(A)$. It then follows that $\sum_{j=1}^k\|Az_j\|<\sum_{j=1}^ks_j(A)$ for every orthonormal set $\{z_1,...,z_k\}\subseteq \cl H$ which implies that $A\notin \cl N_{[k]}$.

  If we continue in this way, we can show at every step that $A\notin \cl N_{[k]}$. We conclude the proof by discussing the final case when $s_1(A),...,s_{k-1}(A)$ are all eigenvalues of $A$ but $s_k(A)$ is not in which case $s_{k}(A)\neq s_{k-1}(A)$ and thus by Proposition \ref{1.2}, we infer that $A\notin \cl N_{[k]}$. This exhausts all the possibilities and the assertion is thus proved contrapositively.
\end{proof}

The converse of the above proposition is not necessarily true as the following example shows.
 \begin{exam}\label{Counterex}
Consider the operator
$$T=\begin{bmatrix}
 1\\
    & 1 & & & &\text{\huge0}&\\
    & & \frac{1}{2}\\
    & & & \frac{2}{3}\\
    & & & & \ddots&\\
    &\text{\huge0} & & & &1-\frac{1}{n} &\\
    & & &  & & & \ddots
\end{bmatrix} \in \cl{B}( \ell^2),$$
with respect to an orthonormal basis $B=\{v_i:i\in \bb N\}$.
That $T$ is positive diagonalizable operator with $\|T\|=1$ is obvious. The spectrum $\sigma (T)$ of $T$ is given by the set $\{1-\frac{1}{n}:n\in \bb N, n>1\}\cup \{1\}$ where $1\in \sigma (T)$ is an accumulation point of the spectrum as well as an eigenvalue of $T$ with multiplicity $2$ and hence $s_j(T)=1$ for each $j\in \bb N.$ Notice that $\{v_1,v_2\}\subseteq B$ serves to be an orthonormal set such that $\|T\|_{[2]}=\|Tv_1\|+\|Tv_2\|$ which implies that $T\in\cl N_{[2]}$. Also, if there exists an orthonormal set $\{w_1,w_2\}\subseteq \ell^2$ of two vectors such that $\|T\|_{[2]}=\|Tw_1\|+\|Tw_2\|$, then this set has to be contained in $\text{span}\{v_1,v_2\}$. $T$ is, however, not $[3]$-norming. To show that there does not exist a unit vector $x\in \text{span}\{v_1,v_2\}^\perp$ such that $\|Tx\|=1$, we consider the diagonal operator 
$$
A:=T-P_{\text{span}\{v_1,v_2\}}=\begin{bmatrix}
 0\\
    & 0 & & & &\text{\huge0}&\\
    & & \frac{1}{2}\\
    & & & \frac{2}{3}\\
    & & & & \ddots&\\
    &\text{\huge0} & & & &1-\frac{1}{n} &\\
    & & &  & & & \ddots
\end{bmatrix},
$$
 where $P_{\text{span}\{v_1,v_2\}}$ is the orthogonal
projection of $\ell^2$ onto the space
span$\{v_1,v_2\}.$ It is not very hard to see that there exists a unit 
vector $x\in \text{span}\{v_1,v_2\}^\perp$ 
with $\|Tx\|=1$ if and only if
$A|_{\text{span}\{v_1,v_2\}^\perp}:\text{span}\{v_1,v_2\}^\perp \longrightarrow \text{span}\{v_1,v_2\}^\perp$
achieves its norm on $\text{span}\{v_1,v_2\}^\perp$. Since $A|_{\text{span}\{v_1,v_2\}^\perp}$ is positive on span$\{v_1,v_2\}^\perp$, it follows that $A|_{\text{span}\{v_1,v_2\}^\perp}\in\cl N$ if and only if $\|A|_{\text{span}\{v_1,v_2\}^\perp}\|=1$ is an eigenvalue of $A|_{\text{span}\{v_1,v_2\}^\perp}$ which is indeed not the case. 
\end{exam}

\begin{prop}\label{1.7}
Let $A\in \cl{B}(\cl{H})$ be a positive operator and $k\in \bb{N}$. If $s_1(A),...,s_k(A)$ are mutually distinct eigenvalues of $A$, then there exists an orthonormal set $\{v_1,...,v_k\}\linebreak
\subseteq \cl H$ such that $Av_j=s_j(A)v_j$ for every $j\in \{1,...,k\}$. Thus $A\in\cl N_{[k]}$.
\end{prop}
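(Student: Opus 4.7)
The plan is to exploit the standard fact that eigenvectors of a self-adjoint (hence of a positive) operator corresponding to distinct eigenvalues are orthogonal. Since $A$ is positive, each $s_j(A)$ that is an eigenvalue admits a unit eigenvector, and mutual distinctness of $s_1(A),\ldots,s_k(A)$ will automatically hand us an orthonormal $k$-tuple that saturates the Ky Fan $k$-norm.

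More concretely, I would first invoke the hypothesis that $s_1(A),\ldots,s_k(A)$ are mutually distinct eigenvalues of $A$ to pick, for each $j\in\{1,\ldots,k\}$, a unit vector $v_j\in\cl H$ satisfying $Av_j=s_j(A)v_j$. Next, I would observe that for $i\neq j$,
\[
s_i(A)\inner{v_i}{v_j}=\inner{Av_i}{v_j}=\inner{v_i}{Av_j}=s_j(A)\inner{v_i}{v_j},
\]
using that $A$ is self-adjoint (since it is positive) and that $s_i(A),s_j(A)$ are real. Because $s_i(A)\neq s_j(A)$, this forces $\inner{v_i}{v_j}=0$, so $\{v_1,\ldots,v_k\}$ is an orthonormal set in $\cl H$.

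Finally I would compute, using $Av_j=s_j(A)v_j$ with $s_j(A)\geq 0$ and $\|v_j\|=1$,
\[
\sum_{j=1}^k\|Av_j\|=\sum_{j=1}^k s_j(A)=\|A\|_{[k]},
\]
and appeal directly to Definition \ref{N_[k]Def} to conclude $A\in\cl N_{[k]}$. There is no serious obstacle here; the only minor point requiring care is to note that positivity of $A$ legitimately buys self-adjointness (and hence the orthogonality of eigenspaces for distinct eigenvalues), which is what allows the eigenvectors to be chosen orthonormal without any further Gram--Schmidt step.
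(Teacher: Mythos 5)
Your proposal is correct and follows exactly the route the paper takes: the paper's entire proof is the one-line remark that eigenvectors of a normal operator corresponding to distinct eigenvalues are mutually orthogonal, and you have simply written out that orthogonality computation and the resulting Ky Fan norm identity in full detail. No gaps; nothing further is needed.
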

\begin{proof}
This is a direct consequence of the fact that the eigenvectors of a normal operator corresponding to distinct eigenvalues are mutually orthogonal.
\end{proof}

An immediate question that arises here is the following: suppose that $s_1(A),...,\linebreak s_k(A)$ are eigenvalues of the positive operator $A$ with $s_1(A)=s_2(A)=...=s_k(A)$. Is it possible for $A$ to be in $\cl N_{[k]}$, and if yes, then under what circumstances? The answer is affirmative and it happens if and only if the geometric multiplicity of the eigenvalue $s_1(A)$ is at least $k$.

\begin{prop}\label{1.8}
Let $A\in \cl{B}(\cl{H})$ be a positive operator, $k\in \bb N$ and let $s_1(A),...,s_k(A)$ be the first $k$ singular values of $A$ that are also the eigenvalues of $A$ and are not necessarily distinct. Then either $s_1(A)=...=s_k(A)$, in which case, $A\in \cl N_{[k]}$ if and only if the multiplicity of $\alpha:=s_1(A)$ is at least $k$; or there exists $t\in \{2,...,k\}$ such that $s_{t-1}(A)\neq s_t(A)=s_{t+1}(A)=...=s_k(A)$, in which case, $A\in \cl N_{[k]}$ if and only if the multiplicity of $\beta:=s_t(A)$ is at least $k-t+1$.
\end{prop}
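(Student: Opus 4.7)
The plan is to prove Proposition~\ref{1.8} by splitting along the two cases of the statement. In each case, sufficiency will proceed by constructing an explicit eigenvector witness, while necessity will proceed by analyzing an arbitrary orthonormal witness set.

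For Case 1, $s_1(A)=\cdots=s_k(A)=\alpha$, one has $\|A\|_{[k]}=k\alpha$ and $\|A\|=\alpha$. For sufficiency, assuming the multiplicity of $\alpha$ is at least $k$, I would choose $k$ orthonormal vectors $v_1,\dots,v_k\in E_\alpha$; then $\sum_{j=1}^k\|Av_j\|=k\alpha=\|A\|_{[k]}$, so $A\in\cl N_{[k]}$. For necessity, given an orthonormal witness $\{x_1,\dots,x_k\}$, the relation $\sum\|Ax_j\|=k\alpha$ combined with $\|Ax_j\|\le\alpha$ for each $j$ forces $\|Ax_j\|=\alpha=\|A\|$ for every $j$, and the spectral argument underlying \cite[Theorem~2.3]{VpSp} then yields $x_j\in E_\alpha$. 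Hence $E_\alpha$ contains $k$ orthonormal vectors, so its dimension (the multiplicity of $\alpha$) is at least $k$.

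For Case 2, $s_{t-1}(A)\ne s_t(A)=\cdots=s_k(A)=\beta$ for some $t\in\{2,\dots,k\}$, I would set $K$ to be the joint span of the eigenspaces of $A$ corresponding to the values $s_1(A),\dots,s_{t-1}(A)$. Because these are isolated eigenvalues of finite multiplicity with total multiplicity $t-1$, $K$ is $A$-invariant with $\dim K=t-1$, the positive operator $A|_{K^\perp}$ satisfies $\|A|_{K^\perp}\|=\beta$, and $E_\beta\subseteq K^\perp$. For sufficiency, assuming the multiplicity of $\beta$ is at least $k-t+1$, I would concatenate an orthonormal eigenbasis $\{v_1,\dots,v_{t-1}\}$ of $K$ (with $Av_j=s_j(A)v_j$) with orthonormal $\{u_1,\dots,u_{k-t+1}\}\subseteq E_\beta$; then $\sum\|Av_j\|+\sum\|Au_i\|=\sum_{j=1}^{t-1}s_j(A)+(k-t+1)\beta=\|A\|_{[k]}$, so $A\in\cl N_{[k]}$.

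The main obstacle is the necessity direction of Case 2. Given $A\in\cl N_{[k]}$ with orthonormal witness $\{x_1,\dots,x_k\}$, I would decompose each $x_j=y_j+z_j$ with $y_j=P_Kx_j$ and $z_j=P_{K^\perp}x_j$; since both $K$ and $K^\perp$ are $A$-invariant, $Ay_j\perp Az_j$, so $\|Ax_j\|^2=\|Ay_j\|^2+\|Az_j\|^2$, while $\|Ay_j\|\le s_1(A)\|y_j\|$ and $\|Az_j\|\le\beta\|z_j\|$. Invoking the structural observation from the proof of Proposition~\ref{1.2}---that an orthonormal set whose sum of image-norms equals the corresponding Ky Fan sum must be supported in the eigenspaces of $A$ for the top singular values---the $x_j$'s must split into $t-1$ vectors in $K$ forming an eigenbasis of $A|_K$ for $s_1(A),\dots,s_{t-1}(A)$ together with $k-t+1$ vectors in $K^\perp$ that are eigenvectors of $A$ for $\beta$. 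Consequently $E_\beta$ contains $k-t+1$ orthonormal vectors, so the multiplicity of $\beta$ is at least $k-t+1$. The delicate point---the same one tacitly used in the proof of Proposition~\ref{1.2}---is the equality-case analysis tying $\sum\|Ax_j\|=\|A\|_{[k]}$ to the eigenvector structure of the witness.
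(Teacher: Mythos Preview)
Your Case 1 argument is correct and is essentially the paper's: the paper frames it contrapositively (if $\dim E_\alpha=m<k$ then every orthonormal $k$-set has $\sum\|Ax_j\|<k\alpha$), but the core is the same fact you isolate---for positive $A$, a unit vector $x$ with $\|Ax\|=\|A\|$ must lie in $E_{\|A\|}$---which forces any $[k]$-witness to sit inside $E_\alpha$.

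Your Case 2 necessity has a genuine gap. The splitting claim---that a witness $\{x_1,\dots,x_k\}$ with $\sum\|Ax_j\|=\|A\|_{[k]}$ must decompose into $t-1$ vectors in $K$ together with $k-t+1$ eigenvectors for $\beta$---is not established, and the appeal to Proposition~\ref{1.2} does not deliver it. The difficulty is that $\sum_{j}\|Ax_j\|$ over orthonormal tuples is \emph{not} bounded above by $\|A\|_{[k]}$: already for $A=\operatorname{diag}(3,2)$ with the rotated pair $x_j=(e_1\pm e_2)/\sqrt2$ one gets $\|Ax_1\|+\|Ax_2\|=\sqrt{26}>5=s_1+s_2$. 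So the equality $\sum\|Ax_j\|=\|A\|_{[k]}$ is not an extremality condition and, by itself, forces no eigenvector structure on the $x_j$; your decomposition $x_j=y_j+z_j$ and the bounds $\|Ay_j\|\le s_1(A)\|y_j\|$, $\|Az_j\|\le\beta\|z_j\|$ do not produce the claimed split. The paper does not write out Case 2---it proves Case 1 by the contrapositive, exploiting the uniform bound $\|Ax\|\le\alpha$ (whence $\sum\|Ax_j\|\le k\alpha$ with equality only if each $x_j\in E_\alpha$) and asserts that Case 2 ``follows similarly''. But that uniform bound is precisely what is unavailable when $\beta<\|A\|$, so neither your direct witness analysis nor a naive transcription of the contrapositive closes the gap; a genuinely different argument is required here.
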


\begin{proof}
It suffices to establish the assertion of the first case; the second case follows similarly. We thus assume that $s_1(A)=...=s_k(A)$ and prove that $A\in \cl N_{[k]}$ if and only if the multiplicity of $\alpha:=s_1(A)$ is at least $k$. The backward implication is trivial. To see the forward implication, let us assume contrapositively that the geometric multiplicity of $\alpha$ is strictly less that $k$, that is, the dimension of the eigenspace $E_{\alpha}=\ker(A-\alpha I)$ associated with the eigenvalue $\alpha$ is $m<k$. Then $\alpha$ has to be an accumulation point of the spectrum $\sigma (A)$ of $A$ as well; for the number of times an eigenvalue with finite multiplicity appears in the sequence $(s_j(A))_{j\in \bb N}$ exceeds its multiplicity only when it is also an accumulation point of the spectrum. It is easy to see that $A\in\cl N_{[m]}$ since there exists an orthonormal set $\{v_1,...,v_m\}\subseteq E_\alpha$ such that $\|T\|_{[m]}=\|Tv_1\|+...+\|Tv_m\|$. Even more, if there exists any orthonormal set $\{w_1,...,w_m\}\subseteq \cl H$ such that $\|T\|_{[m]}=\|Tw_1\|+...+\|Tw_m\|$, then this set has to be contained in $E_\alpha$. We now show that $A\notin \cl N_{[k]}$.
Let $P_{E_\alpha}$ denote the orthogonal projection of $\cl H$ onto the eigenspace $E_\alpha$. Now consider the positive operator $A-\alpha P_{E_\alpha}$ on $\cl B(\cl H)$ and note that $E_\alpha^\perp$ is a closed subspace of $\cl H$ which is invariant under $A-\alpha P_{E_\alpha}$ which implies that $(A-\alpha P_{E_\alpha})|_{E_\alpha^\perp}:E_\alpha^\perp\longrightarrow E_\alpha^\perp$, viewed as an operator on $E_\alpha^\perp$, is positive and that $\|(A-\alpha P_{E_\alpha})|_{E_\alpha^\perp}\|=s_{m+1}(A)=\alpha$. It is easy to see that $\alpha$ is not an eigenvalue of the positive operator $(A-\alpha P_{E_\alpha})|_{E_\alpha^\perp}$ on $E_\alpha^\perp$. Consequently, this operator does not achieve its norm on $E_\alpha^\perp$ which means that for every $x\in E_\alpha^\perp$ with $\|x\|=1$ we have 
$\|(A-\alpha P_{E_\alpha})|_{E_\alpha^\perp}x\|<s_{m+1}(A)=\alpha$. Thus for every orthonormal set $\{v_{m+1},v_{m+2},...,v_k\}\subseteq E_\alpha^\perp$ we have $\|(A-\alpha P_{E_\alpha})|_{E_\alpha^\perp}v_j\|<s_j(A)=\alpha$, $m+1\leq j\leq k$ so that $\sum_{j=m+1}^k\|(A-\alpha P_{E_\alpha})|_{E_\alpha^\perp}v_j\|<\sum_{j=m+1}^ks_j(A)$. It now follows that for every orthonormal set $\{x_1,...,x_k\}\subseteq \cl H, \sum{j=1}^k\|Ax_j\|<\sum_{j=1}^ks_j(A)=\|A\|_{[k]}$ which implies that $A\notin \cl N_{[k]}$. This proves the proposition.
\end{proof}

 \begin{thm} \label{PositiveN_{[k]}}
Let $A\in \cl{B}(\cl{H})$ be a positive operator and $k\in \bb{N}$. Then the following statements are equivalent.
\begin{enumerate}
\item $A\in\cl{N}_{[k]}.$
\item $s_1(A),...,s_k(A)$ are eigenvalues of $A$ and there exists an orthonormal set $\{v_1,...,v_k\}\subseteq \cl H$ such that $Av_j=s_j(A)v_j$ for every $j\in \{1,...,k\}.$
\end{enumerate} 
\end{thm}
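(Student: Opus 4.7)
The backward implication is immediate: if $\{v_1,\dots,v_k\}$ is orthonormal with $Av_j=s_j(A)v_j$, then $\|Av_j\|=s_j(A)$ (since $A\geq 0$ and $s_j(A)\geq 0$), so $\sum_{j=1}^k\|Av_j\|=\sum_{j=1}^k s_j(A)=\|A\|_{[k]}$, giving $A\in\cl N_{[k]}$.

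For the forward implication, assume $A\in\cl N_{[k]}$. First, Proposition \ref{1.6} already tells us that $s_1(A),\dots,s_k(A)$ are eigenvalues of $A$, so half of (2) is free. It remains to produce the orthonormal family of eigenvectors realizing these values. The plan is to group the list $s_1(A)\geq\cdots\geq s_k(A)$ into maximal runs of equal values: say $s_{t_i}(A)=\cdots=s_{t_{i+1}-1}(A)=\mu_i$ with $\mu_1>\mu_2>\cdots>\mu_\ell$ and $1=t_1<t_2<\cdots<t_{\ell+1}=k+1$. Each $\mu_i$ appears exactly $r_i:=t_{i+1}-t_i$ times among the first $k$ singular values.

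For each block, I will invoke Proposition \ref{1.8} to show that the eigenspace $E_{\mu_i}$ has dimension at least $r_i$. The argument is inductive on $i$: applying Proposition \ref{1.8} to the interval $\{1,\dots,t_{i+1}-1\}$ with $t=t_i$ (or $t=1$ when $i=1$) yields that the geometric multiplicity of $\mu_i$ is at least $r_i$, since $A\in\cl N_{[t_{i+1}-1]}$ by the obvious monotonicity $\cl N_{[k]}\subseteq \cl N_{[m]}$ for $m\leq k$ (this monotonicity is the small fact I'd verify on the way: if $A$ realizes $\|A\|_{[k]}$ by an orthonormal $k$-tuple, any initial sub-tuple realizes $\|A\|_{[m]}$ because the sum of any $m$ values $\|Ax_i\|$ from the tuple is bounded above by $\sum_{j=1}^m s_j(A)$, with equality forced when the full sum attains $\|A\|_{[k]}$). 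Hence I can pick $r_i$ orthonormal eigenvectors $v_{t_i},\dots,v_{t_{i+1}-1}\in E_{\mu_i}$ with $Av_j=\mu_i v_j=s_j(A)v_j$.

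Finally, I combine the blocks: because $A$ is self-adjoint, eigenvectors belonging to distinct eigenvalues $\mu_i\ne \mu_{i'}$ are automatically orthogonal (this is exactly the content used in Proposition \ref{1.7}). Therefore the concatenated family $\{v_1,\dots,v_k\}$ is orthonormal and satisfies $Av_j=s_j(A)v_j$ for every $j\in\{1,\dots,k\}$, completing (2). The main subtlety I expect is the bookkeeping in the inductive step: one must verify that the hypothesis of Proposition \ref{1.8} is met at each stage — namely that $s_{t_i-1}(A)\neq s_{t_i}(A)$ when $i\geq 2$, which is automatic from the definition of the blocks, and that $A$ indeed lies in $\cl N_{[t_{i+1}-1]}$, which follows from the monotonicity $\cl N_{[k]}\subseteq \cl N_{[m]}$ noted above.
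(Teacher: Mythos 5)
Your overall route is the paper's: Proposition \ref{1.6} gives that $s_1(A),\dots,s_k(A)$ are eigenvalues, Propositions \ref{1.7} and \ref{1.8} produce the eigenvectors block by block, and self-adjointness makes eigenvectors for distinct eigenvalues orthogonal. The genuine gap is your monotonicity step $\cl N_{[k]}\subseteq\cl N_{[m]}$, or rather your justification of it. The inequality you invoke --- $\sum_{i=1}^{m}\|Ax_i\|\leq\sum_{j=1}^{m}s_j(A)$ for an arbitrary orthonormal $m$-tuple --- is false: for $A=\mathrm{diag}(1,0)$ and $x_1=(e_1+e_2)/\sqrt{2}$, $x_2=(e_1-e_2)/\sqrt{2}$ one has $\|Ax_1\|+\|Ax_2\|=\sqrt{2}>1=s_1(A)+s_2(A)$. (The true Ky Fan bound is $\sum_{j}\inner{Ax_j}{x_j}\leq\sum_{j}s_j(A)$; replacing $\inner{Ax_j}{x_j}$ by $\|Ax_j\|$ destroys it.) The conclusion you want from it also fails: for $A=\mathrm{diag}(2,1,1)$ the orthonormal triple $x_1=(e_2+e_3)/\sqrt{2}$, $x_2=(e_2-e_3)/\sqrt{2}$, $x_3=e_1$ realizes $\|A\|_{[3]}=4$, yet its initial sub-tuples give $\|Ax_1\|=1<\|A\|_{[1]}$ and $\|Ax_1\|+\|Ax_2\|=2<\|A\|_{[2]}$. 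Note also that in the paper the containment $\cl N_{[k+1]}\subseteq\cl N_{[k]}$ for positive operators is Corollary \ref{N_{[k+1]}impliesN_[k]}, deduced \emph{from} the theorem you are proving, so it cannot simply be imported here.

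The good news is that the detour is unnecessary. For every block except the last, $\mu_i>s_{t_{i+1}}(A)$, so $\mu_i$ occurs exactly $r_i$ times in the whole sequence $(s_j(A))_j$; by Definition \ref{singular} (this is the counting observation used in Proposition \ref{1.2}) a value occurring only finitely often in that sequence is an isolated eigenvalue whose multiplicity equals its number of occurrences, so $\dim E_{\mu_i}=r_i$ automatically, with no norming hypothesis needed. Only the last block can be deficient, and there $t_{\ell+1}-1=k$, so Proposition \ref{1.8} applies directly under the stated hypothesis $A\in\cl N_{[k]}$. With that replacement your argument closes and is essentially the paper's, written out in more detail than its own rather terse proof.
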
 

\begin{proof}
$(1)$ follows from $(2)$ trivially. Assume that $A\in \cl N_{[k]}$. Since $A\geq 0$,
by Proposition \ref{1.6}, $s_1(A),..., s_k(A)$ are all eigenvalues of $A$. If $s_1(A),..., s_k(A)$ are mutually distinct, then by Proposition \ref{1.7} $A\in \cl N_{[k]}$. However, if $s_1(A),...,s_k(A)$ are not necessarily distinct then the Proposition \ref{1.8} yields $A\in \cl N_{[k]}$. This completes the proof.
\end{proof}
 The following corollary is an immediate consequence of the above theorem.
\begin{cor}\label{N_{[k+1]}impliesN_[k]}
Let $k\in \bb{N}$. If $A\in\cl{N}_{[k+1]}(\cl{H},\cl{K})$ is positive, then $A\in \cl{N}_{[k]}(\cl{H},\cl{K})$.  
\end{cor}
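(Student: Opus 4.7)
The plan is to invoke Theorem \ref{PositiveN_{[k]}} twice, which turns the corollary into a near-immediate consequence of the spectral characterization for positive operators in $\cl N_{[k]}$. No new machinery is required.

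First, I apply the forward direction of Theorem \ref{PositiveN_{[k]}} with $k$ replaced by $k+1$. Since $A$ is assumed positive and $A\in \cl N_{[k+1]}$, this yields that $s_1(A),\ldots,s_{k+1}(A)$ are all eigenvalues of $A$ and, moreover, that there exists an orthonormal set $\{v_1,\ldots,v_{k+1}\}\subseteq \cl H$ with
\[
A v_j = s_j(A)\, v_j \qquad (j=1,\ldots,k+1).
\]

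Next, I simply discard the last eigenvalue--eigenvector pair. The truncated set $\{v_1,\ldots,v_k\}\subseteq \cl H$ is still orthonormal, and for each $j\in\{1,\ldots,k\}$ we still have $Av_j=s_j(A)v_j$, so in particular $s_1(A),\ldots,s_k(A)$ are eigenvalues of $A$. Thus condition $(2)$ of Theorem \ref{PositiveN_{[k]}} is satisfied at level $k$.

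Finally, I apply the converse direction of Theorem \ref{PositiveN_{[k]}}, now for the value $k$ itself, to conclude $A\in\cl N_{[k]}$. There is no real obstacle; the only point worth flagging is that both directions of Theorem \ref{PositiveN_{[k]}} are being used, so the argument is genuinely a two-line deduction from the characterization rather than a direct manipulation of Ky Fan norms.
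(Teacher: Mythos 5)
Your proof is correct and is exactly the argument the paper intends: the paper states the corollary as ``an immediate consequence'' of Theorem \ref{PositiveN_{[k]}}, and your two applications of that theorem (forward at level $k+1$, truncate the orthonormal eigenvector set, backward at level $k$) are precisely how that implication is realized.
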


\begin{thm}\label{TiffT*T_[k]}
Let $\cl{H}$ and $\cl{K}$ be complex Hilbert spaces, $T\in \cl{B}(\cl{H},\cl{K})$ and $k\in\bb{N}$. Then the following statements are equivalent.\\
$(1)$ $T\in\cl{N}_{[k]}$.\\
$(2)$ $|T|\in\cl{N}_{[k]}$.\\
$(3)$ $T^*T\in\cl{N}_{[k]}$.
\end{thm}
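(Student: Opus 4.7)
The plan is to handle the two equivalences separately. For $(1)\iff(2)$, the argument is essentially pointwise: by definition $s_j(T) = \lambda_j(|T|) = s_j(|T|)$, so $\|T\|_{[k]} = \||T|\|_{[k]}$; and for every $x\in\cl{H}$, $\|Tx\|^2 = \langle T^*Tx,x\rangle = \langle |T|^2 x, x\rangle = \||T|x\|^2$. Thus any orthonormal tuple $x_1,\ldots,x_k$ (or $x_1,\ldots,x_r$ when $\dim\cl{H}=r<k$) witnesses the defining equation for $T$ if and only if it witnesses the corresponding equation for $|T|$, so $T\in\cl{N}_{[k]}$ iff $|T|\in\cl{N}_{[k]}$.

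For $(2)\iff(3)$, note that both $|T|$ and $T^*T=|T|^2$ are positive operators on $\cl{H}$, so Theorem \ref{PositiveN_{[k]}} applies to each, and it suffices to show that condition (2) of that theorem holds for $|T|$ iff it holds for $T^*T$. Two compatibility facts are needed: (a) $s_j(T^*T) = s_j(|T|)^2$ for every $j$; and (b) a vector $v$ is an eigenvector of $|T|$ at eigenvalue $s_j(|T|)$ iff it is an eigenvector of $T^*T$ at eigenvalue $s_j(T^*T)$. Given (a) and (b), an orthonormal set $\{v_1,\ldots,v_k\}$ with $|T|v_j = s_j(|T|)v_j$ is the same object as an orthonormal set with $T^*T v_j = s_j(T^*T)v_j$, which closes the loop via Theorem \ref{PositiveN_{[k]}}.

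Fact (b) is straightforward: the forward implication $|T|v=\lambda v \Rightarrow |T|^2 v = \lambda^2 v$ is immediate, while the converse uses that for $\lambda \geq 0$, $(|T|-\lambda I)(|T|+\lambda I)v = 0$ combined with the positivity of $|T|$ (via the continuous functional calculus applied to the square-root function) forces $|T|v=\lambda v$. Fact (a) is the main obstacle and requires unfolding Definition \ref{singular}: for a positive operator $A$, the squaring map $\lambda \mapsto \lambda^2$ is a homeomorphism of $[0,\infty)$ onto itself that carries $\sigma(A)$ to $\sigma(A^2)$ and $\sigma_e(A)$ to $\sigma_e(A^2)$, so $\sup\sigma(A^2)=(\sup\sigma(A))^2$ and the two sups land in the essential spectrum simultaneously. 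The recursion step also goes through because $P_{E_\mu}$ commutes with $A$, giving $(A-\mu P_{E_\mu})^2 = A^2 - \mu^2 P_{E_\mu}$, so the recursive definition of $\lambda_j$ applied to $A^2$ produces exactly the squares of the values produced for $A$. Iterating yields $s_j(T^*T)=\lambda_j(|T|^2)=\lambda_j(|T|)^2 = s_j(|T|)^2$, completing the proof.
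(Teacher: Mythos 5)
Your proof is correct, and the first equivalence $(1)\iff(2)$ is argued exactly as in the paper (equal singular values plus $\|Tx\|=\||T|x\|$). For $(2)\iff(3)$ you take a genuinely different route. The paper uses Theorem \ref{PositiveN_{[k]}} only in the "membership $\Rightarrow$ eigenvectors" direction: it extracts an orthonormal eigenbasis-fragment for one of the two positive operators and then verifies norm attainment for the other \emph{directly}, via the chain $s_j(T^*T)=\||T|v_j\|^2=\inner{T^*Tv_j}{v_j}\leq\|T^*Tv_j\|\leq s_j(T^*T)$, forcing equality throughout (a Cauchy--Schwarz saturation argument); it never needs to know that the eigenvector condition itself transfers under squaring. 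You instead use Theorem \ref{PositiveN_{[k]}} as a two-way characterization and show that the spectral condition in its statement is invariant under $A\mapsto A^2$, which requires your facts (a) and (b). The trade-off: your route obliges you to prove (b) (that $\ker(|T|^2-\lambda^2 I)=\ker(|T|-\lambda I)$ for $\lambda\geq 0$) and, more substantially, to justify $s_j(T^*T)=s_j(|T|)^2$ by unwinding the recursion in Definition \ref{singular} — an identity the paper uses silently in the line $s_j(T^*T)=s_j(|T|^2)=s_j^2(|T|)$. So your argument is somewhat longer but more self-contained on that point, and your verification of (a) via the spectral mapping of $\sigma$ and $\sigma_e$ under squaring, together with $(A-\mu P_{E_\mu})^2=A^2-\mu^2P_{E_\mu}$, is a worthwhile supplement to the paper's proof rather than a redundancy.
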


\begin{proof}
The equivalence of $(1)$ and $(2)$ follows from facts that for every $j, s_j(T)=\lambda_j(|T|)=s_j(|T|)$ and for every $x\in \cl H, \|Tx\|=\||T|x\|$.
To prove the equivalence of $(2)$ and $(3)$, we first assume that $|T|\in\cl N_{[k]}$. Since $|T|$ is positive, Theorem \ref{PositiveN_{[k]}} guarantees that $s_1(|T|),...,s_k(|T|)$ are eigenvalues of $|T|$ and there exists an orthonormal set $\{v_1,...,v_k\}\subseteq \cl H$ such that $|T|v_j=s_j(|T|)v_j$ for every $j\in \{1,...,k\}.$ Consequently, for every $j\in  \{1,...,k\}$, we have 
$\||T|v_j\|=s_j(|T|)$. Using this we deduce that for every $j$,
\begin{multline*}
s_j(T^*T)=s_j(|T|^2)=s_j^2(|T|)=\||T|v_j\|^2=\inner{|T|v_j}{|T|v_j}
=\inner{|T|^2v_j}{v_j}\\
=\inner{T^*Tv_j}{v_j}\leq \|T^*Tv_j\|= \||T|^2v_j\|=\|s^2_j(|T|)v_j\|=s_j^2(|T|)=s_j(T^*T),
\end{multline*}
and so we have equality throughout which implies that $s_j(T^*T)=\|T^*Tv_j\|$ for every $j\in \{1,...,k\}$. This yields
$\|T^*T\|_{[k]}=\sum_{j=1}^ks_j(T^*T)=\sum_{j=1}^k\|T^*Tv_j\|$
which implies that $T^*T\in\cl N_{[k]}$. 

Conversely, if $T^*T\in\cl N_{[k]}$, then again by Theorem \ref{PositiveN_{[k]}} $s_1(T^*T),...,s_k(T^*T)$ are eigenvalues of $T^*T$ and there exists an orthonormal set $
\{w_1,...,w_k\}\subseteq \cl H$ such that $T^*Tw_j=s_j(T^*T)w_j$ for every $j\in \{1,...,k\}$.  This gives 
\begin{multline*}
\||T|w_j\|^2=\inner{|T|w_j}{|T|w_j}=\inner{T^*Tw_j}{w_j}=\inner{s_j(T^*T)w_j}{w_j}\\
=s_j(T^*T)\inner{w_j}{w_j}=s_j(T^*T)=s_j(|T|^2)=s_j^2(|T|),
\end{multline*}
which in turn gives $\||T|w_j\|=s_j(|T|)$ for every $j\in \{1,...,k\}$. Then 
$\||T|\|_{[k]}=\sum_{j=1}^ks_j(|T|)=\sum_{j=1}^k\||T|w_j\|$, and the result follows.
\end{proof}

\begin{thm}\label{AN_{[k+1]}impliesAN_[k](general)}
Let $k\in \bb{N}$. Then $\cl{AN}_{[k+1]}(\cl H, \cl K)\subseteq \cl{AN}_{[k]}(\cl H, \cl K)$.
\end{thm}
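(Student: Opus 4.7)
The plan is to reduce to the positive-operator case via Theorem \ref{TiffT*T_[k]} and then invoke Corollary \ref{N_{[k+1]}impliesN_[k]}, working one subspace at a time.

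First I would take an arbitrary $T \in \cl{AN}_{[k+1]}(\cl H, \cl K)$ and fix a nontrivial closed subspace $\cl M$ of $\cl H$. By Definition \ref{AN_[k]Def}, $T|_{\cl M} \in \cl N_{[k+1]}(\cl M, \cl K)$. Next, applying Theorem \ref{TiffT*T_[k]} to the operator $T|_{\cl M} \colon \cl M \to \cl K$, we obtain $|T|_{\cl M}| \in \cl N_{[k+1]}(\cl M)$. Since $|T|_{\cl M}|$ is a positive operator on $\cl M$, Corollary \ref{N_{[k+1]}impliesN_[k]} applies and yields $|T|_{\cl M}| \in \cl N_{[k]}(\cl M)$. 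Applying Theorem \ref{TiffT*T_[k]} once more gives $T|_{\cl M} \in \cl N_{[k]}(\cl M, \cl K)$. Because $\cl M$ was arbitrary, we conclude $T \in \cl{AN}_{[k]}(\cl H, \cl K)$, which is the desired inclusion.

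I do not anticipate a serious obstacle, since the earlier machinery---Propositions \ref{1.6}, \ref{1.7}, \ref{1.8} and Theorem \ref{PositiveN_{[k]}} for the positive case, together with Theorem \ref{TiffT*T_[k]} for reducing the general case to the positive case---already carries the weight. The one point that merits a moment of care is the case $\dim(\cl M) < k+1$; however, Definition \ref{N_[k]Def} explicitly accommodates subspaces of dimension $r < k+1$ by requiring only $r$ orthonormal vectors, and the analogous convention for $[k]$-norming makes the implication automatic in that regime. So the argument goes through without additional case analysis.
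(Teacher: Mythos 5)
Your proposal is correct and follows essentially the same route as the paper: the paper's proof also fixes a nontrivial closed subspace, uses Theorem \ref{TiffT*T_[k]} to pass to the positive operator $|TV_{\cl M}|$ (which is the same as your $|T|_{\cl M}|$, since $TV_{\cl M}$ and $T|_{\cl M}$ are the identical map), applies Corollary \ref{N_{[k+1]}impliesN_[k]}, and then transfers back. Your remark about subspaces of dimension less than $k+1$ is handled the same way by the conventions in Definition \ref{N_[k]Def}, so nothing further is needed.
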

\begin{proof}
If $A\in \cl{AN}(\cl H, \cl K)$, then Theorem \ref{TiffT*T_[k]} along with Corollary \ref{N_{[k+1]}impliesN_[k]} implies that for every nontrivial closed subspace $\cl M$ of $\cl H$,
\begin{align*}
AV_{\cl M} \in \cl N_{[k+1]}&\iff|AV_{\cl M}| \in \cl N_{[k+1]}\\
& \implies |AV_{\cl M}| \in \cl N_{[k]}\\
&\iff AV_{\cl M} \in \cl N_{[k]}.
\end{align*}
Since the above implications (and both way implications) hold for every $\cl M$, the assertion is proved. 
\end{proof}
 \begin{cor}\label{AN_{[k+1]}impliesAN_[k]}
 Let $k\in \bb{N}$. Then every  positive operator in $\cl{AN}_{[k+1]}$ belongs to $\cl{AN}_{[k]}$.
 \end{cor}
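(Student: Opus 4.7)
The plan is essentially trivial: this corollary is a direct specialization of the preceding Theorem \ref{AN_{[k+1]}impliesAN_[k](general)}, which already establishes the stronger inclusion $\cl{AN}_{[k+1]}(\cl H,\cl K)\subseteq \cl{AN}_{[k]}(\cl H,\cl K)$ for all bounded operators, not merely positive ones. Restricting that set-theoretic inclusion to the subclass of positive operators in $\cl B(\cl H)$ immediately yields the statement, so I would simply cite the previous theorem and be done.

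If one prefers a self-contained argument that appeals only to the positive-operator machinery developed earlier (and avoids invoking the just-proved general theorem), the derivation runs as follows. Fix a positive $A\in \cl{AN}_{[k+1]}$ and an arbitrary nontrivial closed subspace $\cl M$ of $\cl H$. Lemma \ref{Trick_[k]} gives $AV_{\cl M}\in \cl N_{[k+1]}(\cl M,\cl H)$. Theorem \ref{TiffT*T_[k]} then yields $|AV_{\cl M}|\in \cl N_{[k+1]}$; since $|AV_{\cl M}|$ is positive, Corollary \ref{N_{[k+1]}impliesN_[k]} descends this to $|AV_{\cl M}|\in \cl N_{[k]}$; and a second application of Theorem \ref{TiffT*T_[k]} returns $AV_{\cl M}\in \cl N_{[k]}(\cl M,\cl H)$. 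Because $\cl M$ was arbitrary, Lemma \ref{Trick_[k]} concludes $A\in \cl{AN}_{[k]}$.

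There is no genuine obstacle here: the substantive content was discharged earlier in Propositions \ref{1.6}--\ref{1.8} and Theorem \ref{PositiveN_{[k]}}, which together power the descent $\cl N_{[k+1]}\to \cl N_{[k]}$ for positive operators recorded in Corollary \ref{N_{[k+1]}impliesN_[k]}. Passing from this pointwise (``no subspace'') descent to the absolutely-$[k]$-norming setting is just a bookkeeping application of Lemma \ref{Trick_[k]} subspace by subspace, exactly as in the proof of Theorem \ref{AN_{[k+1]}impliesAN_[k](general)}.
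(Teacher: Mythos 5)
Your proposal is correct and matches the paper exactly: the paper states this corollary with no separate proof, as an immediate specialization of Theorem \ref{AN_{[k+1]}impliesAN_[k](general)} to positive operators, which is your first paragraph. Your optional self-contained argument simply replays the paper's proof of that general theorem (via Lemma \ref{Trick_[k]}, Theorem \ref{TiffT*T_[k]}, and Corollary \ref{N_{[k+1]}impliesN_[k]}), so nothing new is needed.
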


\begin{thm}\label{forward_[k]}
Let $\cl{H}$ be a complex Hilbert space, $A$ be a positive operator on $\cl{H}$, and $k\in \bb N$. If $A\in \cl{AN}_{[k]}$, then $A$ is of the form $A= \alpha I +K+F$, where $\alpha \geq 0, K$ is a positive compact operator and $F$ is self-adjoint finite-rank operator.
\end{thm}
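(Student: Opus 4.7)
The plan is to reduce to the case $k=1$ and then invoke the already-established spectral theorem for $\cl{AN}$.

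First I would observe that by the remark just after the definition of the Ky Fan $k$-norm, the smallest Ky Fan norm $\|\cdot\|_{[1]}$ coincides with the operator norm. Consequently, an operator is $[1]$-norming in the sense of Definition \ref{N_[k]Def} if and only if it is norming in the sense of Definition \ref{Def1}, and likewise $\cl{AN}_{[1]}(\cl H,\cl K)=\cl{AN}(\cl H,\cl K)$. So if we can show that the positive operator $A\in\cl{AN}_{[k]}$ lies in $\cl{AN}_{[1]}$, then Theorem \ref{Main-result-VpSp} applies and, because $|A|=A$ for positive $A$, we immediately get the desired decomposition $A=\alpha I+K+F$ with $\alpha\ge 0$, $K$ positive compact, and $F$ self-adjoint of finite rank.

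Next, I would bridge the gap from $\cl{AN}_{[k]}$ down to $\cl{AN}_{[1]}$ by iterating Corollary \ref{AN_{[k+1]}impliesAN_[k]} (whose statement is precisely the positive-operator version of the containment, and whose proof is Theorem \ref{AN_{[k+1]}impliesAN_[k](general)} combined with the characterization of $[k]$-norming positive operators). Concretely, a descending induction on $k$ yields
\begin{equation*}
A\in\cl{AN}_{[k]}\;\Longrightarrow\;A\in\cl{AN}_{[k-1]}\;\Longrightarrow\;\cdots\;\Longrightarrow\;A\in\cl{AN}_{[1]}=\cl{AN}.
\end{equation*}
At each step positivity of $A$ is preserved (it is an intrinsic property of the operator and is not altered by changing which norm we use to test the absolutely-norming condition), so Corollary \ref{AN_{[k+1]}impliesAN_[k]} does apply at every stage of the induction.

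Finally, applying Theorem \ref{Main-result-VpSp} to $A\in\cl{AN}$ with polar decomposition $A=I\cdot A$ (so $U=I$ and $|A|=A$) gives $A=|A|=\alpha I+F+K$ with $\alpha\ge 0$, $K$ positive compact, and $F$ self-adjoint finite-rank, which is exactly the assertion.

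There is essentially no obstacle here beyond being careful that the inductive stepping-down really does use the positive-operator corollary rather than the containment for arbitrary operators; the real work has already been done in Propositions \ref{1.6}--\ref{1.8}, Theorem \ref{PositiveN_{[k]}}, Theorem \ref{TiffT*T_[k]}, and Theorem \ref{AN_{[k+1]}impliesAN_[k](general)}, together with Theorem \ref{Main-result-VpSp} imported from \cite{VpSp}. Thus the forward direction of the spectral characterization for positive operators in $\cl{AN}_{[k]}$ is a short consequence of the chain $\cl{AN}_{[k]}\subseteq\cdots\subseteq\cl{AN}_{[1]}=\cl{AN}$ for positive operators, plus the known structure theorem for $\cl{AN}$.
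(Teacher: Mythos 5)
Your proposal is correct and follows essentially the same route as the paper: the paper's proof simply notes that $A\in\cl{AN}_{[k]}$ implies $A\in\cl{AN}$ (which is justified exactly by the iterated descent through Corollary \ref{AN_{[k+1]}impliesAN_[k]} that you spell out) and then invokes the forward implication of the spectral theorem for positive $\cl{AN}$ operators from \cite{VpSp}. Your only cosmetic difference is citing the general-operator version (Theorem \ref{Main-result-VpSp}) with $|A|=A$ rather than the positive-operator version directly, which is immaterial.
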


\begin{proof}
Since $A\in \cl{AN}_{[k]}$, $A\in \cl{AN}$. The forward implication of \cite[Theorem 5.1]{VpSp}, hence, implies the assertion.
\end{proof}

We finish this subsection by proving a result which will be useful later in section \ref{symmetrically-normed-ideals} for establishing the notion of absolutely norming operators in symmetrically-normed ideals.

\begin{thm}\label{TVM-TPM-k}
For a closed linear subspace $\cl{M}$ of a complex Hilbert space $\cl{H}$ let $V_{\cl{M}}:\cl{M}\longrightarrow \cl{H}$ be the inclusion map from $\cl{M}$ to $\cl{H}$ defined as $V_{\cl{M}}(x) = x$ for each $x\in \cl{M}$, let $P_{\cl M}\in \cl B(\cl H)$ be the orthogonal projection onto $\cl M$, and let $T\in \cl{B}(\cl{H},\cl{K})$.
For any $k\in\bb{N}$, the following statements are equivalent.
\begin{enumerate}
\item $T\in \cl{AN}_{[k]}(\cl{H},\cl{K})$.
\item $TV_{\cl{M}}\in\cl{N}_{[k]}(\cl{M},\cl{K})$ for every nontrivial closed linear subspace $\cl{M}$ of $\cl{H}$.
\item $TP_{\cl{M}}\in\cl{N}_{[k]}(\cl{H},\cl{K})$ for every nontrivial closed linear subspace $\cl{M}$ of $\cl{H}$.
\end{enumerate} 
\end{thm}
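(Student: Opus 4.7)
The plan is to lean on Lemma~\ref{Trick_[k]}, which already establishes $(1) \Leftrightarrow (2)$, and to reduce the remaining task to proving $(2) \Leftrightarrow (3)$. The key input is the factorisation $P_{\cl M} = V_{\cl M}V_{\cl M}^{*}$, which gives
\[
(TP_{\cl M})^{*}(TP_{\cl M}) \;=\; V_{\cl M}\bigl((TV_{\cl M})^{*}(TV_{\cl M})\bigr)V_{\cl M}^{*}.
\]
From this I read off that $s_{j}(TP_{\cl M}) = s_{j}(TV_{\cl M})$ for every $j$ and hence $\|TP_{\cl M}\|_{[k]} = \|TV_{\cl M}\|_{[k]}$. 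Moreover, any eigenvector of $(TP_{\cl M})^{*}(TP_{\cl M}) = P_{\cl M}T^{*}TP_{\cl M}$ corresponding to a nonzero eigenvalue must lie in $\cl M$, because the range of this operator is contained in $\cl M$.

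For $(2) \Rightarrow (3)$, fix a nontrivial closed subspace $\cl M$ of $\cl H$. Use (2) to extract orthonormal $x_{1},\dots,x_{r}$ in $\cl M$ (with $r = \min(\dim \cl M,\, k)$) witnessing $TV_{\cl M} \in \cl N_{[k]}(\cl M,\cl K)$. Viewing these as vectors of $\cl H$, and, when $\dim \cl H \geq k$, extending by $k - r$ orthonormal vectors chosen from $\cl M^{\perp}$, I obtain an orthonormal family in $\cl H$ on which $TP_{\cl M}$ coincides with $T$ on the first $r$ entries and vanishes on the tail. Summing norms gives exactly $\|TV_{\cl M}\|_{[k]} = \|TP_{\cl M}\|_{[k]}$, so $TP_{\cl M} \in \cl N_{[k]}(\cl H, \cl K)$; the case $\dim \cl H < k$ is handled identically via the finite-dimensional clause in Definition~\ref{N_[k]Def}.

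For $(3) \Rightarrow (2)$, again fix a nontrivial closed $\cl M$. If $\dim \cl M$ is finite, the remark following Definition~\ref{AN_[k]Def} yields $TV_{\cl M} \in \cl N_{[k]}(\cl M, \cl K)$ for free, so assume $\dim \cl M \geq k$. Hypothesis (3) together with Theorem~\ref{TiffT*T_[k]} gives $|TP_{\cl M}| \in \cl N_{[k]}(\cl H)$, and Theorem~\ref{PositiveN_{[k]}} furnishes orthonormal $v_{1},\dots,v_{k} \in \cl H$ with $|TP_{\cl M}|v_{j} = s_{j}(TP_{\cl M})v_{j}$. Let $r = \#\{j \in \{1,\dots,k\} : s_{j}(TP_{\cl M}) > 0\}$; by the observation above, $v_{1},\dots,v_{r}$ already lie in $\cl M$. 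If $r < k$, I replace $v_{r+1},\dots,v_{k}$ by orthonormal vectors selected from $\ker|TV_{\cl M}| \subseteq \cl M$, a subspace of dimension $\dim\cl M - r \geq k - r$ that is automatically orthogonal to $v_{1},\dots,v_{r}$ (eigenvectors of a self-adjoint operator for distinct eigenvalues). The resulting orthonormal $v_{1},\dots,v_{k} \subseteq \cl M$ satisfy $|TV_{\cl M}|v_{j} = s_{j}(TV_{\cl M})v_{j}$, so Theorem~\ref{PositiveN_{[k]}} gives $|TV_{\cl M}| \in \cl N_{[k]}(\cl M)$, and a final appeal to Theorem~\ref{TiffT*T_[k]} delivers $TV_{\cl M} \in \cl N_{[k]}(\cl M, \cl K)$.

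The delicate point I anticipate is exactly the $(3) \Rightarrow (2)$ direction in the case where some of the top $k$ singular values vanish: the null eigenvectors of $|TP_{\cl M}|$ can drift into $\cl M^{\perp}$ and so must be swapped out for null eigenvectors of $|TV_{\cl M}|$ living inside $\cl M$. The dimension bound $\dim\cl M \geq k$ (beyond which the finite-dimensional remark disposes of the question) is precisely what makes this swap possible.
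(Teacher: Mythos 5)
Your proof is correct and follows essentially the same route as the paper: $(1)\Leftrightarrow(2)$ via Lemma~\ref{Trick_[k]}, equality of the singular values of $TP_{\cl M}$ and $TV_{\cl M}$ for the easy direction, and Theorems~\ref{TiffT*T_[k]} and \ref{PositiveN_{[k]}} plus the observation that eigenvectors of $P_{\cl M}T^*TP_{\cl M}$ for nonzero eigenvalues lie in $\cl M$ for the hard direction. In fact you are more careful than the paper at exactly the point you flag: the paper disposes of vanishing singular values with an unjustified ``without loss of generality,'' whereas your swap of null eigenvectors into $\ker|TV_{\cl M}|\subseteq\cl M$ actually closes that case.
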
 
\begin{proof}
The equivalence of $(1)$ and $(2)$ follows from Lemma \ref{Trick_[k]}. We will prove $(1)\iff (3)$. Fix $\cl M$ to be a nontrivial closed subspace of $\cl H$. A trivial verification shows that $\sigma(|T|_{\cl M}|)\setminus\{0\}=\sigma(|TP_{\cl M}|)\setminus\{0\}$ which implies that the singular values of $T|_{\cl M}$ and $TP_{\cl M}$ are identical, which gives $\|T|_{\cl M}\|_{[k]}=\|TP_{\cl M}\|_{[k]}$. Of course, $\|\,T|_{\cl M}x\,\|=\|TP_{\cl M}x\|$ for each $x\in \cl M$. This establishes the implication $(1)\implies (3)$. 
All that remains is to prove $(3)\implies (1)$.
 Assume that $TP_{\cl M}\in \cl N_{[k]}(\cl H,\cl K)$. Then by Theorem \ref{TiffT*T_[k]} $|TP_{\cl M}|\in \cl N_{[k]}(\cl H)$. Theorem \ref{PositiveN_{[k]}} guarantees the existence of an orthonormal set 
 $\{x_1,...,x_k\}\subseteq \cl H$ with $|TP_{\cl M}|x_j=s_j(|TP_{\cl M}|)x_j$ for every $j\in \{1,...,k\}$ which implies that $|TP_{\cl M}|^2x_j=s_j^2(|TP_{\cl M}|)x_j=s_j(|TP_{\cl M}|^2)x_j$ for every $j\in \{1,...,k\}$. Without loss of generality we assume that for each $j, s_j(|TP_{\cl M}|^2)\neq 0$. Under this assumption it is obvious that $x_j\in \cl M$ for each $j$; for if $x_j$ were not in $\cl M$, then it can't be an eigenvector of $|TP_{\cl M}|^2$ corresponding to the eigenvalue $s_j(|TP_{\cl M}|^2)$. It then follows immediately that 
\begin{multline*}
\|T|_{\cl M}\|_{[k]}
=\|TP_{\cl M}\|_{[k]}
=\|\,|TP_{\cl M}|\,\|_{[k]}
=\sum_{j=1}^k\|\,|TP_{\cl M}|x_j\|\\
=\sum_{j=1}^k\|TP_{\cl M}x_j\|
=\sum_{j=1}^k\|Tx_j\|
=\sum_{j=1}^k\|\,T|_{\cl M}x_j\|,
\end{multline*}
where $\{x_1,...,x_k\}$ is an orthonormal set contained in $\cl M$. Using the fact $s_j(|TP_{\cl M}|)=s_j(TP_{\cl M})$ we conclude $T|_{\cl M}\in \cl N_{[k]}$. But $\cl M$ is arbitrary, so $T\in \cl {AN}_{[k]}(\cl H,\cl K).$ Since $k\in \cl N$ is arbitrary, the assertion holds for each $k\in \cl N.$  
\end{proof}

\subsection{Sufficient Conditions for Operators in $\cl{AN}_{[k]}$}

In this subsection, we discuss the sufficient conditions for an operator (not necessarily positive) to be absolutely $[k]$-norming for every $k\in \bb N.$
We begin with a relatively easy proposition, the proof of which is trivial and thus omitted, that gives a sufficient condition for a positive diagonalizable operator to be in $\cl{N}_{[k]}$.  

\begin{prop}\label{SCforPDO}
Let $A\in \cl{B}(\cl{H})$ be a positive diagonalizable operator on the complex Hilbert space $\cl{H}$, $B=\{v_\beta:\beta \in \Lambda\}$ be an orthonormal basis of $\cl{H}$ corresponding to which $A$ is diagonalizable, and $k\in \bb{N}$. 
If there exists a subset $\{\beta_1,...,\beta_k\}\subseteq \Lambda$ of cardinality $k$ such that for every $j\in \{1,...,k\}, A(v_{\beta_j})=\lambda_j(A)v_{\beta_j}$, then $A\in\cl{N}_{[k]}$. If $\dim (\cl{H})=r<k$, then the existence of a subset  $\{\beta_1,...,\beta_r\}\subseteq \Lambda$ of cardinality $r$ is required with the condition that for every $j\in \{1,...,r\}, A(v_{\beta_j})=\lambda_j(A)v_{\beta_j}$, for the operator $A$ to be in $\cl{N}_{[k]}$. Here $\lambda_j(A)$ is as introduced in the Definition \ref{singular}.
\end{prop}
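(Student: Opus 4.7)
The plan is to directly exhibit the orthonormal set on which the Ky Fan $k$-norm is attained. Since $A$ is positive, Definition \ref{s-bdd} gives $s_j(A) = \lambda_j(|A|) = \lambda_j(A)$ for each $j$, so by definition $\|A\|_{[k]} = \sum_{j=1}^{k} \lambda_j(A)$.

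The hypothesis furnishes indices $\beta_1, \ldots, \beta_k \in \Lambda$ with $A v_{\beta_j} = \lambda_j(A) v_{\beta_j}$. First I would note that $\{v_{\beta_1}, \ldots, v_{\beta_k}\}$ is an orthonormal set in $\cl{H}$ because it is a finite subcollection of the orthonormal basis $B$. Next, since each $v_{\beta_j}$ is a unit vector and $\lambda_j(A) \geq 0$ (as $A \geq 0$), I would compute
\begin{equation*}
\|A v_{\beta_j}\| = \|\lambda_j(A) v_{\beta_j}\| = \lambda_j(A) \|v_{\beta_j}\| = \lambda_j(A) = s_j(A).
\end{equation*}
Summing over $j$ from $1$ to $k$ then yields
\begin{equation*}
\sum_{j=1}^{k} \|A v_{\beta_j}\| = \sum_{j=1}^{k} s_j(A) = \|A\|_{[k]},
\end{equation*}
which, per Definition \ref{N_[k]Def}, shows $A \in \cl{N}_{[k]}$.

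For the separately stated case $\dim(\cl{H}) = r < k$, the argument is essentially the same: one selects the orthonormal set $\{v_{\beta_1}, \ldots, v_{\beta_r}\}$ provided by the hypothesis, computes $\|Av_{\beta_j}\| = \lambda_j(A) = s_j(A)$ for $j = 1, \ldots, r$, and observes that $\|A\|_{[k]} = \sum_{j=1}^{r} s_j(A)$ since $s_j(A) = 0$ for $j > r$ by Definition \ref{s-bdd}. Thus $\sum_{j=1}^{r} \|A v_{\beta_j}\| = \|A\|_{[k]}$, giving $A \in \cl{N}_{[k]}$ in this case as well.

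There is no real obstacle here: the substance of the result is the bookkeeping between the defined quantity $\lambda_j(A)$ and the eigenvalues occurring in a diagonalization, and once the hypothesis matches the top $k$ of these to explicit orthonormal eigenvectors, the Ky Fan norm is attained by inspection. That is presumably why the proof is declared trivial in the statement.
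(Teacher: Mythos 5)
Your proof is correct, and since the paper omits the proof as trivial, your direct verification—exhibiting the orthonormal eigenvectors $\{v_{\beta_1},\dots,v_{\beta_k}\}$ (or $\{v_{\beta_1},\dots,v_{\beta_r}\}$ when $\dim(\cl H)=r<k$), computing $\|Av_{\beta_j}\|=\lambda_j(A)=s_j(A)$, and summing to attain $\|A\|_{[k]}$—is exactly the argument the author intends. No gaps.
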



\begin{thm}[The Courant-Fischer Theorem for Positive Compact Operators]\label{CFTforPCompact}
Let $A\in \cl{B}(\cl{H})$ be a positive compact operator on $\cl{H}$ and let $\lambda_1(A)\geq \lambda_2(A)\geq...$ be its algebraically ordered eigenvalues (counting multiplicities) in nonincreasing sense. Let $k\in \bb{N}$ and let $S$ denote a subspace of $\cl{H}$.Then 

\begin{equation}\label{CF-A}
\lambda_1(A)= \max_{\{x:x\in \cl{H} \text{ and } \|x\|=1\}}\inner {Ax}{x}
\end{equation}

\begin{equation}\label{CF-B}
\lambda_{k+1}(A)= \min_{\{S:\dim (S)=k\}}\left( \max_{\{x:x\in S^\perp \text{ and } \|x\|=1\}}\inner {Ax}{x}\right)
\end{equation}
where the maximum in (\ref{CF-A}) is attained only at those eigenvectors of $A$ which correspond to $\lambda_1(A)$ and the minimum in (\ref{CF-B}) is attained when $S$ coincides with the $k$-dimensional subspace spanned by the eigenvectors $\{u_j:1\leq j\leq k\}$ of $A$ corresponding to the eigenvalues $\{\lambda_j:1\leq j\leq k\}$, so that 
\begin{equation}\label{CF-C}
\lambda_{k+1}(A)=\max_{\{x:x\in S^\perp \text{ and } \|x\|=1\}}\inner {Ax}{x}.
\end{equation}
\end{thm}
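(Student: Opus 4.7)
The plan is to use the spectral theorem for positive compact operators: $A$ admits an orthonormal basis $\{u_j\}_{j\in J}$ of eigenvectors with $Au_j = \lambda_j(A) u_j$, where the nonzero eigenvalues (with multiplicities) are $\lambda_1(A) \geq \lambda_2(A) \geq \cdots \geq 0$ and, if there are infinitely many, they converge to $0$. Any $x\in\cl H$ expands as $x = \sum_j \langle x,u_j\rangle u_j$ with $\|x\|^2 = \sum_j |\langle x,u_j\rangle|^2$, and $\langle Ax,x\rangle = \sum_j \lambda_j(A) |\langle x,u_j\rangle|^2$. This Parseval-type identity is the workhorse of the entire proof.

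For (\ref{CF-A}), I would argue that for any unit $x$, the identity above gives $\langle Ax,x\rangle \leq \lambda_1(A) \sum_j |\langle x,u_j\rangle|^2 = \lambda_1(A)$, with equality precisely when $\langle x,u_j\rangle = 0$ for every $j$ with $\lambda_j(A) < \lambda_1(A)$, i.e., when $x$ lies in the eigenspace $E_{\lambda_1(A)}$. Taking $x=u_1$ shows the maximum is attained.

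For (\ref{CF-B}) and (\ref{CF-C}), I would prove two inequalities. For the upper bound, I would choose $S_0 := \mathrm{span}\{u_1,\ldots,u_k\}$; then any unit $x\in S_0^\perp$ has $\langle x,u_j\rangle = 0$ for $j\leq k$, so $\langle Ax,x\rangle = \sum_{j\geq k+1} \lambda_j(A)|\langle x,u_j\rangle|^2 + \langle Ax_0,x_0\rangle$, where $x_0$ is the component of $x$ in $\ker(A)$. Since $\lambda_j(A) \leq \lambda_{k+1}(A)$ for $j\geq k+1$ and the kernel contributes $0$, this sum is $\leq \lambda_{k+1}(A)$, with equality achieved at $x = u_{k+1}$. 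Hence
\begin{equation*}
\min_{\dim S = k}\ \max_{x\in S^\perp,\ \|x\|=1} \langle Ax,x\rangle \leq \lambda_{k+1}(A),
\end{equation*}
and the maximum when $S=S_0$ equals $\lambda_{k+1}(A)$, which gives (\ref{CF-C}).

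For the matching lower bound, I would use the standard dimension-counting trick: for any subspace $S$ with $\dim S = k$, the subspace $T := \mathrm{span}\{u_1,\ldots,u_{k+1}\}$ has dimension $k+1$, so $T \cap S^\perp$ is nontrivial. Picking a unit vector $y = \sum_{j=1}^{k+1} c_j u_j$ in this intersection and applying the spectral identity yields $\langle Ay,y\rangle = \sum_{j=1}^{k+1} \lambda_j(A)|c_j|^2 \geq \lambda_{k+1}(A) \sum_{j=1}^{k+1} |c_j|^2 = \lambda_{k+1}(A)$. Thus $\max_{x\in S^\perp,\|x\|=1}\langle Ax,x\rangle \geq \lambda_{k+1}(A)$ for every $k$-dimensional $S$, completing the reverse inequality. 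The main technical point to watch is handling the contribution from $\ker(A)$ cleanly in the upper-bound step (and ensuring the minimax is achieved, not merely approached); this is the only place where the compactness of $A$ really enters, since it guarantees an orthonormal eigenbasis.
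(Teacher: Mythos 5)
The paper offers no proof of this theorem: it is quoted as a classical fact and used as a black box in the proof of Proposition \ref{compacts-are-absolutely[k]norming}, so there is no in-paper argument to compare against. Your proof is the standard spectral-theorem derivation of the Courant--Fischer min-max principle and is correct: the Parseval identity $\inner{Ax}{x}=\sum_j\lambda_j(A)|\inner{x}{u_j}|^2$ gives both extremal bounds, the choice $S_0=\mathrm{span}\{u_1,\dots,u_k\}$ realizes the minimum with the inner maximum attained at $u_{k+1}$, and the dimension count showing $T\cap S^\perp\neq\{0\}$ for $T=\mathrm{span}\{u_1,\dots,u_{k+1}\}$ gives the matching lower bound. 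Two degenerate points are worth flagging explicitly: when $\mathrm{rank}(A)\leq k$ the list $u_1,\dots,u_{k+1}$ must be padded with an orthonormal family from $\ker(A)$ (possible whenever $\dim\cl{H}\geq k+1$; the statement is vacuous otherwise), and the inner maximum over $S^\perp$ for an \emph{arbitrary} $k$-dimensional $S$ is genuinely attained because $P_{S^\perp}AP_{S^\perp}$, viewed as an operator on $S^\perp$, is again positive and compact, hence norm-attaining. Neither point affects the substance of your argument.
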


\begin{prop}\label{compacts-are-absolutely[k]norming}
If $T\in \cl{B}(\cl{H},\cl{K})$ is a compact operator, then $T\in\cl{AN}_{[k]}$ for every $k\in \bb{N}$.
\end{prop}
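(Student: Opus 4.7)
The plan is to reduce the statement to the case of a positive compact operator and then invoke the spectral theorem for such operators together with the characterizations established earlier in the section.

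First, by Proposition \ref{Tiff|T|_[k]}, we have $T\in\cl{AN}_{[k]}(\cl H,\cl K)$ if and only if $|T|\in\cl{AN}_{[k]}(\cl H)$. Since $T$ is compact, $|T|$ is a positive compact operator on $\cl H$. Hence it suffices to prove the statement for an arbitrary positive compact operator $A$ on $\cl H$; that is, we must show $A|_{\cl M}$ is $[k]$-norming for every nontrivial closed subspace $\cl M\subseteq\cl H$.

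Next, fix a nontrivial closed subspace $\cl M$ of $\cl H$ and let $V_{\cl M}:\cl M\to\cl H$ denote the inclusion. By Lemma \ref{Trick_[k]} it is enough to verify $AV_{\cl M}\in\cl N_{[k]}(\cl M,\cl H)$, and by Theorem \ref{TiffT*T_[k]} this is equivalent to $|AV_{\cl M}|\in\cl N_{[k]}(\cl M)$. The operator $AV_{\cl M}$ is compact (composition of a compact operator with a bounded one), so $|AV_{\cl M}|$ is a \emph{positive compact} operator on $\cl M$.

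The final step is to apply the spectral theorem for positive compact operators to $|AV_{\cl M}|$ on $\cl M$: it admits an orthonormal basis of eigenvectors, with the nonzero eigenvalues of finite multiplicity accumulating only at $0$. Consequently, for each $j$ the singular value $s_j(AV_{\cl M})=\lambda_j(|AV_{\cl M}|)$ is an honest eigenvalue of $|AV_{\cl M}|$, and we can select an orthonormal family $\{x_1,\dots,x_k\}\subseteq\cl M$ (or $\{x_1,\dots,x_r\}$ if $r=\dim\cl M<k$) of corresponding eigenvectors. Condition (2) of Theorem \ref{PositiveN_{[k]}} is then satisfied, giving $|AV_{\cl M}|\in\cl N_{[k]}(\cl M)$; unwinding the reductions yields $A|_{\cl M}\in\cl N_{[k]}$, and since $\cl M$ and $k$ were arbitrary, the proposition follows. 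There is no real obstacle here: the whole argument rests on the single structural input that restrictions of compact operators remain compact, so that the spectral theorem can be applied at every subspace.
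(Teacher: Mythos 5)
Your proof is correct and follows essentially the same route as the paper's: both reduce to showing that a (positive) compact operator is $[k]$-norming on each subspace, using the fact that restrictions of compact operators remain compact, and then produce the required orthonormal eigenvectors from the spectral decomposition (the paper invokes the Courant--Fischer theorem where you invoke the spectral theorem for positive compact operators, which is the same structural input). Your chain of reductions through Proposition \ref{Tiff|T|_[k]}, Lemma \ref{Trick_[k]} and Theorem \ref{TiffT*T_[k]} is a bit more elaborate than the paper's one-line observation that $T|_{\cl M}$ is compact, but it lands in the same place.
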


\begin{proof}
If $T$ is a compact operator from $\cl H$ to $\cl K$ then the restriction of $T$ to any closed subspace $\cl M$ is a compact operator from $\cl M$ to $\cl K.$ So it will be sufficient to prove that if $T$ is a compact operator then $T\in\cl N_{[k]}$ for each $k\in \bb{N}$.

The assertion is trivial if $\cl H$ is finite-dimensional; for then $|T|\in M_n(\bb C)$ for some $n\in \bb N$ is a positive diagonal matrix. We thus assume $\cl H$ to be infinite-dimensional. Let us fix $k\in \bb N$.  Since $|T|$ is positive compact operator the singular values of $T$ are precisely the eigenvalues of $|T|$, the Courant Fisher theorem (\ref{CFTforPCompact}) guarantees the existence of an orthonormal set $\{v_1,...,v_k\}\subseteq \cl H$ such that for every $j\in \{1,...,k\}, |T|v_j=\lambda_j(|T|)v_j$ which implies that $\|T\|_{[k]}=\sum_{j=1}^ks_j(T)=\sum_{j=1}^k\lambda_j(|T|)=\sum_{j=1}^k\||T|v_j\|=\sum_{j=1}^k\|Tv_j\|$ so that $T\in\cl N_{[k]}$. Since $k\in \bb N$ is arbitrary, it follows that $T\in\cl N_{[k]}$ for every $k\in \bb N.$
\end{proof}
\begin{lemma}\label{F}
If $F\in\cl{B}(\cl{H})$ is a self-adjoint finite-rank operator and $\alpha \geq 0$, then $\alpha I+F\in\cl{N}_{[k]}$ for every $k\in \bb{N}$.
\end{lemma}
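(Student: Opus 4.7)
The plan is to pass to $|T|$ via Theorem \ref{TiffT*T_[k]} and apply the characterization in Theorem \ref{PositiveN_{[k]}}. Observe first that $T:=\alpha I+F$ is self-adjoint (as $\alpha\in\bb R$ and $F=F^*$), so $T\in\cl N_{[k]}$ iff $|T|\in\cl N_{[k]}$.

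Next I would write down the diagonalization of $T$. Let $N:=\ker F$; since $F$ has finite rank and is self-adjoint, $N^\perp$ is finite-dimensional and $F|_{N^\perp}$ is diagonalized by an orthonormal basis $\{e_1,\ldots,e_r\}$ of $N^\perp$ with nonzero eigenvalues $\mu_1,\ldots,\mu_r$. Extending $\{e_1,\ldots,e_r\}$ by an orthonormal basis of $N$ produces an orthonormal basis of $\cl H$ in which $T$ (and hence $|T|$) is diagonal: $Te_j=(\alpha+\mu_j)e_j$ and $T$ acts as $\alpha I$ on $N$, so $|T|$ has eigenvalues $|\alpha+\mu_j|$ on the $e_j$ and $\alpha$ on all of $N$.

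If $\dim\cl H<\infty$, the remark following Definition \ref{AN_[k]Def} already yields $T\in\cl N_{[k]}$, so I would from here on assume $\dim\cl H=\infty$. Then $\dim N=\infty$, which makes $\alpha$ an eigenvalue of $|T|$ of infinite multiplicity, i.e.\ $\alpha\in\sigma_e(|T|)$. Reading Definition \ref{singular} recursively for $|T|$, I would peel off (with multiplicity) the finitely many isolated eigenvalues $|\alpha+\mu_j|>\alpha$ of $|T|$ in decreasing order; once the supremum of the residual spectrum drops to $\alpha$, Definition \ref{singular} forces $\lambda_j(|T|)=\alpha$ for every subsequent index. Consequently, each of $s_1(T),\ldots,s_k(T)$ is an eigenvalue of $|T|$.

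Finally I would exhibit an orthonormal set $\{v_1,\ldots,v_k\}$ with $|T|v_j=s_j(T)v_j$: take the relevant $e_i$'s as the $v_j$'s for the singular values strictly above $\alpha$ (these are automatically pairwise orthogonal) and pick the remaining $v_j$'s inside the infinite-dimensional subspace $N$, where $|T|=\alpha I$. Then Theorem \ref{PositiveN_{[k]}} gives $|T|\in\cl N_{[k]}$, and Theorem \ref{TiffT*T_[k]} delivers $T\in\cl N_{[k]}$. The one delicate point is the correct reading of Definition \ref{singular} when $\alpha\in\sigma_e(|T|)$: it guarantees that any eigenvalues $|\alpha+\mu_j|<\alpha$ are never listed among the $s_j(T)$, so the tail of the singular-value sequence is constant at $\alpha$, which $N$ has no trouble furnishing orthonormal eigenvectors for.
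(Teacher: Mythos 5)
Your proof is correct, and its core is the same as the paper's: diagonalize $F$, observe that $|\alpha I+F|$ is diagonal with spectrum $\{|\alpha+\mu_j|\}\cup\{\alpha\}$ where $\alpha$ is the unique eigenvalue of infinite multiplicity, and use that infinite-dimensional eigenspace to supply however many orthonormal eigenvectors the top $k$ singular values require. Where you genuinely diverge is in how the conclusion is extracted: the paper runs an explicit, step-by-step selection process (``if $\lambda_1=\alpha$ pick vectors from the kernel, otherwise pick $v_{\beta_1}$ and recurse''), split further into the cases $k\le m$ and $k\ge m$, and verifies $\|T\|_{[k]}=\sum_j\|Tv_j\|$ by hand; you instead feed the spectral data into the already-established characterization Theorem~\ref{PositiveN_{[k]}} (via Theorem~\ref{TiffT*T_[k]}), which collapses that case analysis into the single observation that $s_1(T),\dots,s_k(T)$ are eigenvalues admitting an orthonormal system of eigenvectors. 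This is legitimate --- both theorems are proved before Lemma~\ref{F} and do not depend on it, so there is no circularity --- and it buys a shorter proof that also makes explicit the one delicate point the paper's iteration handles only implicitly, namely that Definition~\ref{singular} never lists an eigenvalue $|\alpha+\mu_j|<\alpha$ among the singular values. The paper's hands-on construction, by contrast, is self-contained at the level of the definitions and does not lean on the necessary-conditions machinery of the preceding subsection. (Minor quibble: your opening appeal to self-adjointness of $\alpha I+F$ is unnecessary, since Theorem~\ref{TiffT*T_[k]} gives $T\in\cl N_{[k]}\iff|T|\in\cl N_{[k]}$ for arbitrary bounded $T$.)
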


\begin{proof}
The assertion is trivial if $\alpha =0$; for then $F$ is a compact operator which belongs to $\cl{N}_{[k]}$ for every $k\in \bb{N}$. We fix $k$ and assume that $\alpha > 0.$
There is no loss of generality in assuming that $\cl{H}$ is infinite-dimensional, for if it is not, then the operator is compact and hence belongs to $\cl{N}_{[k]}$. Let the range of $F$ be $m$-dimensional. It suffices to show that $|\alpha I +F|\in\cl{N}_{[k]}$ operator.\\

\emph{Case I}: If $k\leq m$.
 Since $F$ is self-adjoint, there exists an orthonormal basis $B = \{v_\beta:\beta\in \Lambda\}$ of $\cl{H}$ corresponding to which the matrix $M_B(F)$ is a diagonal matrix with $m$ nonzero real diagonal entries, say $\{\mu_1, \mu_2,...,\mu_m\}$ which are not necessarily distinct. 
There is, then, a subset $\{\beta_1,...,\beta_m\}\subseteq \Lambda$ of cardinality $m$ such that for every $j\in \{1,...,m\}$, we have $F(v_{\beta_j})=\mu_jv_{\beta_j}$.
  Clearly then, $M_B(|\alpha I+F|)$ is also a diagonal matrix with respect to the basis $B$ such that the spectrum $\sigma(|\alpha I +F|)$ of $|\alpha I +F|$ is given by $\sigma(|\alpha I +F|)=\sigma_p(|\alpha I +F|)=\{|\alpha + \mu_1|,...,|\alpha + \mu_m|, \alpha\}$ where $\alpha$ is the only eigenvalue with infinite multiplicity.
Let $i_1,...,i_m$ be a permutation of the integers $1,...,m$ which forces $|\alpha+\mu_{i_1}|\geq ...\geq|\alpha+\mu_{i_m}|$. Let us rename and denote by $v_{\beta_1},...,v_{\beta_m}$ the eigenvectors of $|\alpha I + F|$ corresponding to the eigenvalues  $|\alpha+\mu_{i_1}|, ...,|\alpha+\mu_{i_m}|$ respectively. We can further rename and denote each $|\alpha+\mu_{i_j}|$  by $|\alpha+\mu_{j}|$  so that we have $|\alpha+\mu_{1}|\geq ...\geq|\alpha+\mu_{m}|$ and a subset $\{\beta_1,...,\beta_m\}\subseteq \Lambda$ of cardinality $m$ such that for every $j\in \{1,...,m\}$, we have $|\alpha I + F|(v_{\beta_j})=|\alpha +\mu_j|v_{\beta_j}$.
Notice that $\sup\{|\alpha+\mu_1|,...,|\alpha+\mu_m|,\alpha\}
          =\max\{|\alpha+\mu_1|,...,|\alpha+\mu_m|,\alpha\}=\max\{|\alpha+\mu_1|,\alpha\}$. If $\max\{|\alpha+\mu_1|,...,|\alpha+\mu_m|,\alpha\}=\alpha$, then we have $\lambda_j(|\alpha I + F|)=\alpha$ for every $j \in \{1,...,k\}$, in which case, we can choose any $k$ distinct eigenvectors from $B\setminus\{v_{\beta_1},...,v_{\beta_m}\}$, say $\{w_{\beta_1},...,w_{\beta_k}\}$, so that $\||\alpha I + F|\|_{[k]}= k\alpha = \||\alpha I + F|w_{\beta_1}\|+...+\||\alpha I + F|w_{\beta_k}\|$ thereby implying that $|\alpha I +F|\in\cl{N}_{[k]}$. 
          
          Otherwise, we have $|\alpha+\mu_1|= \max\{|\alpha+\mu_1|,...,|\alpha+\mu_m|,\alpha\}$, so that  $\lambda_1(|\alpha I + F|)=|\alpha+\mu_1|$. Further, if $\alpha = \max\{|\alpha+\mu_2|,...,|\alpha+\mu_m|,\alpha\}$, then we have $\lambda_j(|\alpha I + F|)=\alpha$ for every $j \in \{2,...,k\}$ , in which case, we can choose the eigenvector $v_{\beta_1}$ and any $k-1$ distinct eigenvectors from $B\setminus\{v_{\beta_2},...,v_{\beta_m}\}$, say $\{w_{\beta_2},...,w_{\beta_k}\}$ so that $\||\alpha I + F|\|_{[k]}= |\alpha+\mu_1|+(k-1)\alpha = \||\alpha I + F|v_{\beta_1}\|+|\alpha I + F|w_{\beta_2}\|+...+\||\alpha I + F|w_{\beta_k}\|$ thereby implying that $|\alpha I +F|\in\cl{N}_{[k]}$; but if $\alpha \neq \max\{|\alpha+\mu_2|,...,|\alpha+\mu_m|,\alpha\}$, then we have $|\alpha+\mu_2|= \max\{|\alpha+\mu_2|,...,|\alpha+\mu_m|,\alpha\}$, so that $\lambda_1(|\alpha I + F|)=|\alpha+\mu_1|$, $
\lambda_2(|\alpha I + F|)=|\alpha +\mu_2|.$  Then, if $\alpha = \max\{|\alpha+\mu_3|,...,|\alpha+\mu_m|,\alpha\}$, we get $\lambda_j(|\alpha I + F|)=\alpha$ for every $j \in \{3,...,k\}$, in which case, we can choose the vectors $v_{\beta_1}, v_{\beta_2}$ and any $k-2$ distinct eigenvectors from $B\setminus\{v_{\beta_3},...,v_{\beta_m}\}$, say $\{w_{\beta_3},...,w_{\beta_k}\}$ which yields $|\alpha I + F|\in\cl{N}_{[k]}$. Carrying out this process of selecting appropriate eigenvectors from $B$ depending upon the value $\lambda_j(|\alpha I + F|)$ takes,  until we select $k$ of those,  establishes the fact that $|\alpha I + F|\in\cl{N}_{[k]}$.\\

\emph{Case II}: If $k\geq m$.
The proof goes the same way except that now we terminate the process once we find a subset $\{\beta_1,...,\beta_m\}\subseteq \Lambda$ of cardinality $m$ such that for every $j\in \{1,...,m\}$, we have $|\alpha I + F|(v_{\beta_j})=|\alpha +\mu_j|v_{\beta_j}$; for $\lambda_j(|\alpha I + F|)=0$ for $j> m.$ 

Since $k\in \bb N$ is arbitrary, it follows that $\alpha I + F\in\cl N_{[k]}$ for every $k\in \bb N.$
\end{proof}

\begin{prop}\label{alphaI+K+FisN_[k]}
Let $K\in\cl{B}(\cl{H})$ be a positive compact operator, $F\in\cl{B}(\cl{H})$ be a self-adjoint finite-rank operator, and $\alpha \geq 0$. Then $\alpha I+K+F\in\cl{N}_{[k]}$ for every $k\in \bb{N}$.
\end{prop}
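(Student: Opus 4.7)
The plan is to reduce to a positive operator via Theorem \ref{TiffT*T_[k]}. Set $S := \alpha I + K + F$; since $K$ is positive and $F$ is self-adjoint, $S$ is self-adjoint, so it is enough to show $|S| \in \cl{N}_{[k]}$. The case $\alpha = 0$ is covered by Proposition \ref{compacts-are-absolutely[k]norming} (as $S$ is then compact), and the case $\dim\cl H < \infty$ is handled by the earlier remark that every operator on a finite-dimensional Hilbert space is $[k]$-norming. Henceforth I assume $\alpha > 0$ and $\dim \cl H = \infty$.

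Since $K + F$ is compact and self-adjoint, the spectral theorem supplies an orthonormal eigenbasis $\{e_n\}$ of $\cl H$ with $(K+F)e_n = \mu_n e_n$ and $\mu_n \to 0$. This basis also diagonalizes $|S|$ via $|S| e_n = |\alpha + \mu_n| e_n$, and Weyl's theorem on the invariance of the essential spectrum under compact perturbations gives $\sigma_e(|S|) = \{\alpha\}$. To invoke Theorem \ref{PositiveN_{[k]}}, I aim to produce $k$ orthonormal vectors, drawn from $\{e_n\}$, whose $|S|$-eigenvalues are $s_1(|S|), \ldots, s_k(|S|)$. I split into two cases according to whether $K+F$ admits infinitely many strictly positive eigenvalues.

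If infinitely many $\mu_n$ are positive, then $|S|$ has infinitely many eigenvalues strictly above $\alpha$, each isolated of finite multiplicity; Definition \ref{singular} peels them off in decreasing order, so every $s_j(|S|)$ sits among these isolated eigenvalues, and orthonormal eigenvectors from $\{e_n\}$ for the first $k$ may be selected directly. The delicate case is when only finitely many $\mu_n$ are positive; here the crux is to show $K$ itself must have finite rank, so that $\alpha$ becomes an eigenvalue of $|S|$ of infinite multiplicity. To establish this, let $P$ be the (finite-rank) spectral projection of $K+F$ onto its strictly positive eigenspaces. Then $(I-P)(K+F)(I-P) \leq 0$, which rearranges to $0 \leq (I-P)K(I-P) \leq -(I-P)F(I-P)$, a positive operator of finite rank. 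The general fact that $0 \leq A \leq B$ with $B$ of finite rank forces $A$ to be finite rank (via $\ker B \subseteq \ker A$) then shows $(I-P)K(I-P)$ is finite rank, and the decomposition $K = PKP + PK(I-P) + (I-P)KP + (I-P)K(I-P)$ forces $K$ itself to be finite rank. Hence $K+F$ is finite rank, its kernel is infinite-dimensional, and $\alpha$ is an eigenvalue of $|S|$ of infinite multiplicity. I then pick orthonormal eigenvectors from $\{e_n\}$ for the finitely many singular values strictly above $\alpha$ and complete the selection from the infinite-dimensional $\alpha$-eigenspace. The principal obstacle is precisely this last case: without the positivity of $K$, the eigenvalue $\alpha$ of $|S|$ could fail to have sufficient geometric multiplicity to accommodate the required orthonormal eigenvectors.
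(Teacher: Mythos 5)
Your proof is correct, and for the main case it coincides with the paper's: diagonalize the self-adjoint compact operator $K+F$ in an orthonormal eigenbasis, observe that $|\alpha I+K+F|$ is diagonal in the same basis with essential spectrum $\{\alpha\}$, and when $K+F$ has infinitely many strictly positive eigenvalues select $k$ orthonormal eigenvectors realizing $s_1,\dots,s_k$, concluding via Theorem \ref{PositiveN_{[k]}}. Where you genuinely diverge is in the degenerate case. The paper disposes of it by assuming at the outset, without loss of generality, that $K$ has infinite rank (citing Lemma \ref{F} for the finite-rank alternative) and then invoking \cite[Lemma 4.8]{VpSp} --- that $K+F$ has only finitely many negative eigenvalues --- to guarantee infinitely many eigenvalues of $K+F$ above $0$. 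You instead split on whether $K+F$ has infinitely many positive eigenvalues and, in the negative case, prove from scratch that $K$ must then be finite rank: compressing to the nonpositive spectral subspace gives $0\leq (I-P)K(I-P)\leq -(I-P)F(I-P)$, and order-domination by a finite-rank positive operator forces finite rank via the kernel inclusion. This is a clean, self-contained replacement for the appeal to \cite[Lemma 4.8]{VpSp} (it is essentially a complementary formulation of the same phenomenon), and it lets you finish the finite-rank case directly from the infinite multiplicity of the eigenvalue $\alpha$ rather than quoting Lemma \ref{F}. The paper's route is shorter on the page because it outsources two steps to earlier results; yours buys independence from those citations at the cost of one extra operator-theoretic observation. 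Both are valid.
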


\begin{proof}
The assertion is trivial if $\alpha =0$; for then $K+F$ is a compact operator which sits in $\cl{N}_{[k]}$ for every $k\in \bb{N}$. We fix $k$ and assume that $\alpha>0$.
There is no loss of generality in assuming that $\cl{H}$ is infinite-dimensional, for if it is not, then the operator is compact and thus belongs to $\cl{N}_{[k]}$. We can also assume, without loss of generality, that $\dim(\text{ran }  K)>n$ for every $n\in \bb{N}$, for if $K$ is a finite-rank operator then the operator is $[k]$-norming by the previous lemma. Due to the equivalence of $(1)$ and $(2)$ of Theorem \ref{TiffT*T_[k]}, it suffices to show that $|\alpha I +K+F|\in\cl{N}_{[k]}$.

 Notice that $K+F$ is a self-adjoint compact operator on $\cl{H}$ and thus there exists an orthonormal basis $B$ of $\cl{H}$ consisting entirely of eigenvectors of $K+F$ corresponding to which it is diagonalizable. From \cite[Lemma 4.8]{VpSp}, $K+F$ can have at most finitely many negative eigenvalues. Let $\{\nu_1, \nu_2,...,\nu_m\}$ be the set of all negative eigenvalues of $K+F$ with $\{v_1,v_2,...,v_m\}$ as the corresponding eigenvectors in basis $B$; and let $\{\mu_\beta:\beta\in \Lambda\}$ be the set of all remaining nonnegative eigenvalues of $K+F$ with $\{w_\beta:\beta\in \Lambda\}$ as the corresponding eigenvectors in $B$. We have $B:= \{v_1,v_2,...,v_n\}\cup \{w_\beta:\beta\in \Lambda\}$ and the matrix $M_B(K+F)$ of $K+F$ with respect to $B$ is given by
$$
K+F = \begin{bmatrix}
\nu_1 & & &\vdots & & &\\
& \ddots & &\vdots & &\text{\huge 0}  &\\
 &  & \nu_m &\vdots & & &\\
\hdots& \hdots & \hdots & \hdots&\hdots &\hdots&\hdots\\
&  &  &\vdots & \ddots& &\\
&  \text{\huge 0}&  &\vdots &  & \mu_\beta&\\
&  &  &\vdots &  & &\ddots \\
\end{bmatrix}
$$
Because $K+F$ is compact, the multiplicity of each nonzero eigenvalue is finite and there are at most countably many nonzero eigenvalues, counting multiplicities. In fact, we can safely assume that there are countably infinite nonzero eigenvalues (counting multiplicities) of $K+F$; for if there are only finitely many nonzero eigenvalues, then $K+F$ would be a self-adjoint finite-rank operator which, by Lemma \ref{F}, belongs to $\cl{N}_{[k]}$. With this observation, the set $\Gamma :=\Lambda \setminus\{\beta \in \Lambda:\mu_\beta =0\}$ is countably infinite and can be safely replaced by $\bb{N}$.  This essentially redefines the spectrum $\sigma (K+F)=\{\nu_n\}_{n=1}^m\cup \{\mu_n\}_{n=1}^\infty\cup\{0\}$ of $K+F$ and allows us to enumerate the positive eigenvalues $\{\mu_n\}_{n=1}^\infty$ in nonincreasing order $\mu_1\geq \mu_2\geq...$ so that each eigenvalue appears as many times as is its multiplicity. This ensures that the set of all positive eigenvalues of $K+F$ has been exhausted in the process of constructing the sequence $\{\mu_n\}_{n\in\bb{N}}$. That the sequence $\{\mu_n\}$ converges to $0$ is a trivial observation. So, $0$ is an accumulation point of the spectrum. However, it can also be an eigenvalue with infinite multiplicity. At this point, we rename and denote by $\{v_n\}_{n=1}^m $, $ \{w_n\}_{n=1}^\infty $, and $\{z_\beta\}_{\beta\in \Lambda \setminus \Gamma} $ the eigenvectors corresponding to the eigenvalues $\{\nu_n\}_{n=1}^m $, $\{\mu_n\}_{n=1}^\infty $, and $\{0\}$ respectively.
With the reordering, we now have $B:= \{v_n\}_{n=1}^m \cup \{w_n\}_{n=1}^\infty \cup\{z_\beta\}_{\beta\in \Lambda \setminus \Gamma} $ and the matrix $M_B(K+F)$ of $K+F$ with respect to $B$ is given by
$$
K+F = \begin{bmatrix}
\nu_1 & & &\vdots & & &&\vdots&&\\
& \ddots & &\vdots & &\text{\huge 0}  &&\vdots&&\\
 &  & \nu_m &\vdots & & &&\vdots&&\\
\hdots& \hdots & \hdots & \hdots&\hdots &\hdots&\hdots&\vdots&\text{\huge 0}\\
&  &  &\vdots & \ddots& &&\vdots&&\\
&  \text{\huge 0}&  &\vdots &  & \mu_n&&\vdots&&\\
&  &  &\vdots &  & &\ddots &\vdots&&\\
\hdots&\hdots&\hdots&\hdots&\hdots&\hdots&\hdots&\hdots&\hdots&\hdots\\
&&&&&&&\vdots&&\\
&&&\text{\huge 0}&&&&\vdots&\text{\huge 0}&\\
&&&&&&&\vdots&&\\
\end{bmatrix}
$$

We now consider the operator $|\alpha I +K+F|$. With respect to the basis $B$, the matrix $M_B(|\alpha I +K+F|)$ of $|\alpha I +K+F|$  is given by

$$
 \begin{bmatrix}
|\alpha +\nu_1| &	 & & \vdots & & & & \vdots & & & \\
& \ddots & &\vdots & &\text{\huge 0}  &&\vdots & & &\\
 &  & |\alpha +\nu_m| &\vdots & & &&\vdots&& & \\
\hdots& \hdots & \hdots & \hdots&\hdots &\hdots&\hdots&\vdots & &\text{\huge 0}&\\
&  &  &\vdots & \ddots& &&\vdots&&&\\
&  \text{\huge 0}&  &\vdots &  & \alpha +\mu_n&&\vdots&& &\\
&  &  &\vdots &  & &\ddots &\vdots&& &\\
\hdots&\hdots &\hdots &\hdots &\hdots&\hdots&\hdots&\hdots&\hdots &\hdots&\hdots \\
&&&&&&&\vdots&\ddots& & \\
&&& \text{\huge 0} &&&&\vdots &&\alpha & \\
&&&&&&&\vdots&& &\ddots \\
\end{bmatrix}.
$$

Observe that $\sigma_{e}(|\alpha I + K + F|)$ of $|\alpha I + K + F|$ is the singleton $\{\alpha\}$ and that for any given $k\in \bb{N} $, $\lambda_j(|\alpha I + K + F|) > \alpha $ for every $j\in \{1,...,k\}$. In fact, $\lambda_j(|\alpha I + K + F|)\in \{|\alpha +\nu_n|\}_{n=1}^m \cup\{\alpha+\mu_n\}_{n=1}^\infty $ for every $j\in \{1,...,k\}$. It then immediately follows that there exist $k$ orthogonal eigenvectors in $\{v_n\}_{n=1}^m \cup \{w_n\}_{n=1}^\infty$ with $\lambda_j(|\alpha I + K + F|), j\in \{1,...,k\}$ being their correspoding eigenvalues. This proves the assertion. Since $k\in \bb N$ is arbitrary in the above proof, the propostion holds for every $k\in \bb N$ and thus an operator of the form $\alpha I +K+F$ belongs to $\cl N_{[k]}$ for every $k\in \bb N.$ 
\end{proof}

This result is the key to the following theorem.

\begin{thm}\label{backward_[k]}
Let $K\in\cl{B}(\cl{H})$ be a positive compact operator, $F\in\cl{B}(\cl{H})$ be a self-adjoint finite-rank operator, and $\alpha \geq 0$. Then $\alpha I+K+F\in\cl{AN}_{[k]}$ for every $k\in \bb{N}$.
\end{thm}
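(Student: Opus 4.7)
The plan is to reduce the absolutely $[k]$-norming assertion for $T:=\alpha I+K+F$ to Proposition \ref{alphaI+K+FisN_[k]}, which already handles operators of the form ``nonnegative scalar $\cdot$ identity $+$ positive compact $+$ self-adjoint finite-rank'' on any Hilbert space. Fix $k\in\bb{N}$ and an arbitrary nontrivial closed subspace $\cl{M}$ of $\cl{H}$. By Lemma \ref{Trick_[k]} it suffices to show $TV_{\cl{M}}\in\cl{N}_{[k]}(\cl{M},\cl{H})$, and since $T$ is self-adjoint, Theorem \ref{TiffT*T_[k]} reduces this further to verifying that the positive operator
$$
(TV_{\cl{M}})^{*}(TV_{\cl{M}})=V_{\cl{M}}^{*}\,T^{2}\,V_{\cl{M}}
$$
belongs to $\cl{N}_{[k]}(\cl{M})$.

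First I would expand $T^{2}$ and regroup terms by type:
$$
T^{2}=\alpha^{2}I+K_{1}+F_{1},\qquad K_{1}:=2\alpha K+K^{2},\qquad F_{1}:=2\alpha F+KF+FK+F^{2}.
$$
The operator $K_{1}$ is a sum of positive compact operators, hence is itself positive and compact. Using $K^{*}=K$ and $F^{*}=F$ one checks $F_{1}^{*}=F_{1}$; moreover each summand of $F_{1}$ contains $F$ as a factor, so $F_{1}$ has finite rank. Thus $T^{2}$ is of the form ``$\alpha^{2}I+$ positive compact $+$ self-adjoint finite-rank'' on $\cl{H}$.

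Next I would compress to $\cl{M}$. Because $V_{\cl{M}}$ is an isometric embedding, $V_{\cl{M}}^{*}V_{\cl{M}}=I_{\cl{M}}$, whence
$$
V_{\cl{M}}^{*}\,T^{2}\,V_{\cl{M}}=\alpha^{2}I_{\cl{M}}+V_{\cl{M}}^{*}K_{1}V_{\cl{M}}+V_{\cl{M}}^{*}F_{1}V_{\cl{M}}.
$$
Compressions preserve positivity and compactness, so $V_{\cl{M}}^{*}K_{1}V_{\cl{M}}$ is a positive compact operator on $\cl{M}$; they preserve self-adjointness and cannot raise the rank, so $V_{\cl{M}}^{*}F_{1}V_{\cl{M}}$ is a self-adjoint finite-rank operator on $\cl{M}$. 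With $\alpha^{2}\geq 0$, Proposition \ref{alphaI+K+FisN_[k]} applied on $\cl{M}$ delivers $V_{\cl{M}}^{*}T^{2}V_{\cl{M}}\in\cl{N}_{[k]}(\cl{M})$. Tracing the equivalences of Theorem \ref{TiffT*T_[k]} and Lemma \ref{Trick_[k]} backward then yields $T\in\cl{AN}_{[k]}$, and since $k$ and $\cl{M}$ were arbitrary, $T\in\cl{AN}_{[k]}$ for every $k\in\bb{N}$.

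The whole argument is essentially algebraic bookkeeping, so there is no deep obstacle; the one thing that must be done carefully is verifying that all three distinguished pieces (scalar multiple of the identity, positive compact part, self-adjoint finite-rank part) survive both the squaring of $T$ and the compression by $V_{\cl{M}}$, so that the hypotheses of Proposition \ref{alphaI+K+FisN_[k]} remain satisfied on the subspace $\cl{M}$.
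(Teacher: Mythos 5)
Your proposal is correct and follows essentially the same route as the paper: both reduce via Lemma \ref{Trick_[k]} and Theorem \ref{TiffT*T_[k]} to showing $V_{\cl M}^*T^2V_{\cl M}\in\cl N_{[k]}(\cl M)$, decompose $T^2=\alpha^2 I+(2\alpha K+K^2)+(2\alpha F+KF+FK+F^2)$, check that the compression preserves the three-part structure, and invoke Proposition \ref{alphaI+K+FisN_[k]} on $\cl M$. The only cosmetic difference is that the paper phrases the reduction through $|T|$ rather than using $T^*=T$ directly.
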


\begin{proof}
Let us define $T:= \alpha I+K+F$ so that we have $|T|= |\alpha I+K+F|$ and  $|T|^*|T| =|T|^2= (\alpha I+K+F)^2 = (\alpha^2 I)+(2\alpha K+K^2) + (2\alpha F + FK +KF +F^2) = \beta I+\tilde K + \tilde F$ where $\beta=\alpha^2 \geq 0$, $\tilde K = 2\alpha K+K^2$ and $\tilde F = 2\alpha F + FK +KF +F^2$ are respectively positive compact and self-adjoint finite-rank operators. Further, let $\cl M$ be an arbitrary nonempty closed linear subspace of the Hilbert space $\cl H$ and $V_{\cl M}:\cl M\longrightarrow \cl H$ be the inclusion map from $\cl M$ to $\cl H$ defined as $V_{\cl M}(x)=x$ for each $x\in \cl M$.
We fix $k\in \bb N$ and observe that
\begin{multline*}
 |T|V_\cl{M} \in \cl{N}_{[k]} \iff  (|T|V_\cl{M})^*(|T|V_\cl{M}) \in \cl{N}_{[k]}\\ \iff V_\cl{M}^*(|T|^*|T|)V_\cl{M} \in \cl{N}_{[k]} \iff  V_\cl{M}^*( \beta I+\tilde K + \tilde F)V_\cl{M} \in \cl{N}_{[k]}.
\end{multline*}
It suffices to show that $V_\cl{M}^*( \beta I+\tilde K + \tilde F)V_\cl{M}\in\cl{N}_{[k]}$; for then, since $\cl{M}$ is arbitrary, it immediately follows from lemma \ref{Trick_[k]}  that $|T|\in\cl{AN}_{[k]}$ and so does $T$ due to the equivalence of $(1)$ and $(2)$ of Theorem \ref{TiffT*T_[k]}. To this end, notice that $V_\cl{M}^*( \beta I+\tilde K + \tilde F)V_\cl{M}:\cl{M}\longrightarrow \cl{M}$ is an operator on $\cl{M}$ and 
$$
V_\cl{M}^*( \beta I+\tilde K + \tilde F)V_\cl{M} = V_\cl{M}^*\beta IV_\cl{M} +V_\cl{M}^*\tilde K V_\cl{M} +V_\cl{M}^* \tilde FV_\cl{M} = \beta I_\cl{M} +\tilde K_\cl{M}+\tilde F_\cl{M}
$$
is the sum of a nonnegative scalar multiple of Identity, a positive compact operator and a self-adjoint finite-rank operator on a Hilbert space $\cl{M}$ which, by the preceding proposition, belongs to $\cl{N}_{[k]}$. This proves the assertion. Moreover, since $k\in \bb N$ is arbitrary, the result holds for every $k\in \bb N$ and thus an operator of the above form belongs to $\cl {AN}_{[k]}$ for every $k\in \bb N.$
\end{proof}

We are now ready to establish the spectral theorem for positive operators that belong to $\cl {AN}_{[k]}$ for every $k\in \bb N.$
Note that the Theorem \ref{backward_[k]} we just proved ------ that for every $\alpha\geq 0, \alpha I + K +F\in\cl{AN}_{[k]}$ where $K$ and $F$ are respectively positive compact and self-adjoint finite-rank operators ------ is the stronger version of the backward implication of our spectral theorem for positive $\cl{AN}_{[k]}$ operators. If the operator $ \alpha I + K +F$ is also positive then the implication can be reversed and the two conditions are equivalent. This is what the next theorem states.

\begin{thm}[Spectral Theorem for Positive Operators in $\cl{AN}_{[k]}$]\label{SpThPAN_[k]}
Let $\cl{H}$ be a complex Hilbert space of arbitrary dimension and $P$ be a positive operator on $\cl{H}$. Then the following statements are equivalent.
\begin{enumerate}
\item $P\in\cl{AN}_{[k]}$ for every $k\in \bb N$.
\item $P\in\cl{AN}_{[k]}$ for some $k\in \bb N$.
\item $P$ is of the form $P= \alpha I +K+F$, where $\alpha \geq 0, K$ is a positive compact operator and $F$ is self-adjoint finite-rank operator.
\end{enumerate} 
\end{thm}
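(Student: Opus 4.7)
My plan is to observe that the heavy lifting has already been done in Theorems \ref{forward_[k]} and \ref{backward_[k]}, so the proof reduces to stitching the three implications together in a convenient cycle, e.g.\ $(1)\Rightarrow(2)\Rightarrow(3)\Rightarrow(1)$.

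The implication $(1)\Rightarrow(2)$ is immediate: if $P\in\cl{AN}_{[k]}$ holds for every $k\in\bb N$, then in particular it holds for some $k\in \bb N$ (say $k=1$). Strictly speaking one should verify that $\bb N$ is nonempty, which it is, so no content is required here.

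For $(2)\Rightarrow(3)$, I would simply invoke Theorem \ref{forward_[k]}: fix the $k\in\bb N$ for which $P\in\cl{AN}_{[k]}$, and note that positivity of $P$ together with $P\in\cl{AN}_{[k]}$ forces $P=\alpha I+K+F$ with $\alpha\ge 0$, $K\ge 0$ compact, and $F$ self-adjoint of finite rank. No case analysis on $k$ is needed because Theorem \ref{forward_[k]} is stated for an arbitrary $k$.

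For $(3)\Rightarrow(1)$, I would invoke Theorem \ref{backward_[k]}: an operator of the form $\alpha I+K+F$ with $\alpha\ge 0$, $K$ a positive compact operator, and $F$ self-adjoint finite-rank lies in $\cl{AN}_{[k]}$ for every $k\in\bb N$. This closes the cycle. I do not expect any genuine obstacle here; the substantive work appears in Theorems \ref{forward_[k]} and \ref{backward_[k]} (and behind them in Theorem \ref{PositiveN_{[k]}}, Lemma \ref{F}, and Proposition \ref{alphaI+K+FisN_[k]}), so the proof of the theorem itself is essentially a one-paragraph assembly. As a consistency check I would also note that the chain is compatible with Corollary \ref{AN_{[k+1]}impliesAN_[k]}, which guarantees that ``$\cl{AN}_{[k]}$ for some $k$'' is not vacuously stronger than ``$\cl{AN}_{[k]}$ for every $k$'' when combined with $(3)\Rightarrow(1)$.
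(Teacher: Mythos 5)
Your proposal is correct and follows the paper's own proof essentially verbatim: the paper also disposes of $(1)\Rightarrow(2)$ as trivial, cites Theorem \ref{forward_[k]} for $(2)\Rightarrow(3)$, and cites Theorem \ref{backward_[k]} for $(3)\Rightarrow(1)$. Nothing further is needed.
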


\begin{proof}
$(1)$ implies $(2)$ trivially. $(2)$ implies $(3)$ is due to Theorem \ref{forward_[k]}. $(1)$ follows from $(3)$ due to Theorem \ref{backward_[k]}.
\end{proof}

\section{Spectral Characterization of Operators in $\cl{AN}_{[k]}$}
We first state the polar decomposition theorem. Let $W$ be a subspace of $\cl H$. We use clos$[W]$ to denote the norm closure of $W$ in $\cl H$.
\begin{thm}[Polar Decomposition Theorem]\cite[Page 15]{ConwayOT}\label{PDT}
Let $\cl{H}, \cl{K}$ be  complex Hilbert spaces. If $T\in \cl{B}(\cl{H},\cl{K})$, then there exists a  unique partial 
isometry $U:\cl{H}\longrightarrow \cl{K}$ with final space $\text{clos}[\text{ran }{T}]$ and initial space $\text{clos}[\text{ran }{|T|}]$ such that $T=U|T|$ and $|T|=U^*T$. If $T$ is invertible, then $U$ is unitary.
\end{thm}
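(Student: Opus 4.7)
The plan is to construct $U$ first on the range of $|T|$ via the formula $U(|T|x) := Tx$, and then extend it appropriately. The crucial input that makes the definition well-posed is the identity $\||T|x\|^2 = \langle |T|^2 x, x\rangle = \langle T^*T x, x\rangle = \|Tx\|^2$ for all $x \in \cl H$, so that $|T|x \mapsto Tx$ is a norm-preserving, well-defined linear map from $\operatorname{ran}|T|$ onto $\operatorname{ran} T$. I would extend this isometry by continuity to an isometry from $\operatorname{clos}[\operatorname{ran}|T|]$ onto $\operatorname{clos}[\operatorname{ran}T]$, and then extend to a bounded operator $U\colon \cl H \to \cl K$ by declaring $U=0$ on $\operatorname{clos}[\operatorname{ran}|T|]^{\perp}$. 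By construction $U$ is a partial isometry with the required initial and final spaces, and $U|T|=T$ holds on all of $\cl H$.

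Next I would establish $|T| = U^*T$. Since $U$ is a partial isometry with initial space $\operatorname{clos}[\operatorname{ran}|T|]$, the operator $U^*U$ is the orthogonal projection onto that initial space, which coincides with $(\ker|T|)^{\perp}$ using the identity $\ker|T| = \operatorname{ran}|T|^{\perp}$ (a consequence of $|T|$ being self-adjoint). Because $\operatorname{ran}|T|\subseteq (\ker|T|)^{\perp}$ trivially, $U^*U$ fixes every vector in $\operatorname{ran}|T|$, and then $U^*T = U^*U|T| = |T|$ follows on $\operatorname{ran}|T|$ and extends by continuity/linearity.

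For uniqueness, I would suppose $V$ is another partial isometry with the stated initial space satisfying $T = V|T|$. Then $V|T|x = U|T|x$ for every $x\in \cl H$, so $V$ and $U$ agree on $\operatorname{ran}|T|$, hence on $\operatorname{clos}[\operatorname{ran}|T|]$ by continuity; and both vanish on the orthogonal complement of this subspace (being partial isometries with that initial space), so $V=U$. Finally, if $T$ is invertible, then $T^*T$ is positive and invertible, whence $|T|=\sqrt{T^*T}$ is invertible, so $\operatorname{clos}[\operatorname{ran}|T|] = \cl H$ and $\operatorname{clos}[\operatorname{ran}T] = \cl K$; the partial isometry $U$ then has initial space $\cl H$ and final space $\cl K$, making it unitary.

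The routine verifications (linearity and boundedness of the extension, the fact that $\ker|T|=\ker T$, and that the closures really are the stated initial and final spaces) present no real obstacle; the only step demanding care is checking that $U$ is well-defined and isometric on $\operatorname{ran}|T|$, which rests entirely on the identity $\||T|x\|=\|Tx\|$. Once that is in hand, the rest of the argument is essentially bookkeeping.
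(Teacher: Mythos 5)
Your proposal is correct and is the standard construction of the polar decomposition (define $U$ on $\operatorname{ran}|T|$ by $|T|x\mapsto Tx$ using $\||T|x\|=\|Tx\|$, extend by continuity, and annihilate the orthogonal complement), which is exactly the argument in the reference the paper cites; the paper itself states this theorem without proof, quoting Conway. No gaps: the well-definedness, the identification of the initial and final spaces, the identity $U^*T=U^*U|T|=|T|$, the uniqueness argument, and the invertible case are all handled correctly.
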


\begin{thm}[Spectral Theorem for Operators in $\cl{AN}_{[k]}$]\label{SpThAN}
Let $\cl{H}$ and $\cl{K}$ be complex Hilbert spaces of arbitrary dimensions, let $T\in \cl{B}(\cl{H},\cl{K})$ and let $T=U|T|$ be its polar decomposition. 
Then the following statements are equivalent.
\begin{enumerate}
\item $T\in\cl{AN}_{[k]}$ for every $k\in \bb N$.
\item $T\in\cl{AN}_{[k]}$ for some $k\in \bb N$.
\item $|T|$ is of the form $|T|= \alpha I +F+K$, where $\alpha \geq 0, K$ is a positive compact operator and $F$ is self-adjoint finite-rank operator. 
\end{enumerate} 
\end{thm}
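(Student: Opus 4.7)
The plan is to reduce the statement for a general bounded operator $T$ to the positive case, which has already been handled in Theorem \ref{SpThPAN_[k]}, and then apply that result. The bridge between the two is Proposition \ref{Tiff|T|_[k]}, which says that for every $k\in \bb N$, $T\in \cl{AN}_{[k]}(\cl H, \cl K)$ if and only if $|T|\in \cl{AN}_{[k]}(\cl H)$. Since $|T|$ is positive, the positive spectral theorem is directly applicable to it.

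First I would dispense with the easy direction: (1) $\Rightarrow$ (2) is immediate, since a statement that holds for every $k\in \bb N$ certainly holds for some $k\in \bb N$. For (2) $\Rightarrow$ (3), suppose $T\in \cl{AN}_{[k_0]}$ for some fixed $k_0\in \bb N$. By Proposition \ref{Tiff|T|_[k]}, $|T|\in \cl{AN}_{[k_0]}$. Since $|T|$ is a positive operator satisfying condition (2) of Theorem \ref{SpThPAN_[k]} (with $k=k_0$), the spectral theorem for positive operators gives $|T|=\alpha I + K + F$ with $\alpha \geq 0$, $K$ a positive compact operator, and $F$ a self-adjoint finite-rank operator.

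For (3) $\Rightarrow$ (1), suppose $|T|=\alpha I + K + F$ is of the prescribed form. Then $|T|$ is a positive operator of this form, so Theorem \ref{SpThPAN_[k]} (the implication from (3) to (1) there) yields $|T|\in \cl{AN}_{[k]}$ for every $k\in \bb N$. Applying Proposition \ref{Tiff|T|_[k]} again, $T\in \cl{AN}_{[k]}$ for every $k\in \bb N$, establishing (1).

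There is no real obstacle here; all the hard work has been done in the preceding sections. The polar decomposition $T=U|T|$ plays only a nominal role in the statement: what is really being used is that $|T|=\sqrt{T^*T}$ is a well-defined positive operator on $\cl H$, and that membership of $T$ in $\cl{AN}_{[k]}$ is equivalent to membership of $|T|$ in $\cl{AN}_{[k]}$. The proof is essentially a two-line invocation chaining Proposition \ref{Tiff|T|_[k]} with Theorem \ref{SpThPAN_[k]} in both directions.
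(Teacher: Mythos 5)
Your proposal is correct and follows exactly the paper's route: reduce to the positive operator $|T|$ via Proposition \ref{Tiff|T|_[k]} and then invoke the spectral theorem for positive operators in $\cl{AN}_{[k]}$ (Theorem \ref{SpThPAN_[k]}), with the polar decomposition playing only a nominal role. The paper's own proof is a one-line citation of precisely these ingredients, so your write-up is simply a more explicit version of the same argument.
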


\begin{proof}
The proof follows from the Proposition \ref{Tiff|T|_[k]}, the polar decomposition theorem and the spectral theorem for positive $\cl{AN}_{[k]}$ operators.
\end{proof}

\section{The Classes $\cl N_{[\pi,k]}$ and $\cl {AN}_{[\pi,k]}$}

\begin{defn}[Weighted Ky Fan $\pi, k$-norm]
Let $(\pi_j)_{j\in \bb N}$ be a nonincreasing sequence of positive numbers with $\pi_1=1$ and let $k\in \bb N$. The weighted Ky Fan $\pi, k$-norm $\|\cdot\|_{[\pi, k]}$ of an operator $T\in \cl{B}(\cl{H},\cl{K})$ is defined to be the weighted sum of the $k$ largest singular values of $T$, the weights being the first $k$ terms of the sequence $(\pi_j)_{j\in \bb N}$, that is, 
$$
\|T\|_{[\pi, k]}=\sum_{j=1}^k\pi_js_j(T).
$$
The weighted Ky Fan $\pi, k$-norm on $\cl{B}(\cl{H},\cl{K})$ is, indeed, norm; the proof is similar to that of the Ky Fan $k$-norm. It can be easily shown that on $\cl B(\cl H)$ it is a symmetric norm.
\end{defn}

\begin{remark}
If we choose $(\pi_j)_{j\in \bb N}$ to be a constant sequence with each term equals to $1$, then weighted Ky Fan $\pi, k$-norm $\|\cdot\|_{[\pi, k]}$ is simply the Ky Fan $k$-norm $\|\cdot\|_{[k]}$. If in addition, we also choose $k=1$, we get the operator norm.
\end{remark}

\begin{defn}\label{N_[pi,k]Def}
Let $(\pi_j)_{j\in \bb N}$ be a nonincreasing sequence of positive numbers with $\pi_1=1$ and let $k\in \bb N$. An operator $T\in \cl{B}(\cl{H},\cl{K})$ is said to be \emph{$[\pi, k]$-norming} if there are orthonormal elements $x_1,...,x_k \in \cl{H}$ such that $\|T\|_{[\pi, k]}=\|Tx_1\|+\pi_2\|Tx_2\|+...+\pi_k\|Tx_k\|$.
 If $\dim(\cl{H})=r<k$, we define $T$ to be \emph{$[\pi, k]$-norming} if there exist orthonormal elements $x_1,...,x_r\in \cl{H}$ such that $\|T\|_{[\pi, k]}=\|Tx_1\|+\pi_2\|Tx_2\|+...+\pi_r\|Tx_r\|$. We let $\cl N_{[\pi, k]}(\cl H, \cl K)$ denote the set of $[\pi, k]$-norming operators in $\cl B(\cl H,\cl K)$.
\end{defn}

\begin{defn}\label{AN_[pi,k]Def}
Let $(\pi_j)_{j\in \bb N}$ be a nonincreasing sequence of positive numbers with $\pi_1=1$ and let $k\in \bb N$. An operator $T\in \cl{B}(\cl{H},\cl{K})$ is said to be \emph{absolutely $[\pi,k]$-norming} if for every nontrivial closed subspace $\cl{M}$ of $\cl{H}$, $T|_{\cl{M}}$ is  $[\pi,k]$-norming. We let $\cl{AN}_{[\pi, k]}(\cl H, \cl K)$ denote the set of absolutely $[\pi,k]$-norming operators in $\cl B(\cl H,\cl K)$. Note that $\cl{AN}_{[\pi, k]}(\cl H,\cl K)\subseteq \cl{N}_{[\pi, k]}(\cl H,\cl K)$.
\end{defn}


\begin{remark}
We let $\Pi$ denote the set of all nonincreasing sequences of positive numbers with their first term equal to $1$. Every operator on a finite-dimensional Hilbert space is $[\pi, k]$-norming for any $\pi\in \Pi$ and for any $k\in \bb N$. However, this is not true when the Hilbert space in question is not finite-dimensional. The operator in Example \ref{Counterex} is one such operator. There exists $\tilde \pi =(1,1,1,1,...)$ such that $A\in \cl N_{[\tilde \pi, 2]}(\cl H,\cl K)$ but $A\notin \cl N_{[\tilde \pi, 3]}(\cl H,\cl K)$.
\end{remark}

 We now mention that Lemma \ref{Trick_[k]} and Proposition \ref{Tiff|T|_[k]} carries over word for word to operators in $\cl{AN}_{[\pi, k]}(\cl H,\cl K)$.

\begin{lemma}\label{Trick_[pi, k]}
For a closed linear subspace $\cl{M}$ of a complex Hilbert space $\cl{H}$ let $V_{\cl{M}}:\cl{M}\longrightarrow \cl{H}$ be the inclusion map from $\cl{M}$ to $\cl{H}$ defined as $V_{\cl{M}}(x) = x$ for each $x\in \cl{M}$ and let $T\in \cl B(\cl H, \cl K)$.
For any sequence $\pi\in \Pi$ and for any $k\in\bb{N}$, $T\in \cl{AN}_{[\pi, k]}(\cl H,\cl K)$ if and only if for every nontrivial closed linear subspace $\cl{M}$ of $\cl{H}$, $TV_{\cl{M}}\in\cl{N}_{[\pi, k]}(\cl H,\cl K)$.
\end{lemma}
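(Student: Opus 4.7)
The plan is to mimic the proof of Lemma \ref{Trick_[k]} almost verbatim, since the only formal change between the two statements is the replacement of the Ky Fan $k$-norm by its weighted version. The central observation is that for an arbitrary nontrivial closed subspace $\cl M$ of $\cl H$, the operators $T|_{\cl M}:\cl M\to \cl K$ and $TV_{\cl M}:\cl M\to\cl K$ are literally the same map; they agree pointwise on $\cl M$. Consequently $|T|_{\cl M}|=|TV_{\cl M}|$, so they have identical singular value sequences, and therefore
\[
\|TV_{\cl M}\|_{[\pi, k]}=\sum_{j=1}^k\pi_j s_j(TV_{\cl M})=\sum_{j=1}^k\pi_j s_j(T|_{\cl M})=\|T|_{\cl M}\|_{[\pi, k]},
\]
and of course $\|TV_{\cl M}x\|=\|T|_{\cl M}x\|$ for each $x\in\cl M$. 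This single identity is what drives both implications.

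For the forward direction, I would fix an arbitrary nontrivial closed subspace $\cl M\subseteq \cl H$ and split on whether $\dim(\cl M)=r<k$ or $\dim(\cl M)\geq k$. In either case, the hypothesis $T\in\cl{AN}_{[\pi,k]}(\cl H,\cl K)$ gives (by Definition \ref{AN_[pi,k]Def} unwound through Definition \ref{N_[pi,k]Def}) orthonormal vectors $x_1,\dots,x_r\in\cl M$ (respectively $x_1,\dots,x_k\in\cl M$) witnessing
\[
\|T|_{\cl M}\|_{[\pi, k]}=\|T|_{\cl M}x_1\|+\pi_2\|T|_{\cl M}x_2\|+\cdots+\pi_\ell\|T|_{\cl M}x_\ell\|,
\]
where $\ell$ is $r$ or $k$ as appropriate. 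Using the identity in the first paragraph to rewrite each $\|T|_{\cl M}x_j\|$ as $\|TV_{\cl M}x_j\|$ and $\|T|_{\cl M}\|_{[\pi,k]}$ as $\|TV_{\cl M}\|_{[\pi,k]}$ shows the same orthonormal set witnesses $TV_{\cl M}\in\cl N_{[\pi,k]}(\cl M,\cl K)$. Since $\cl M$ was arbitrary, this gives the forward direction.

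For the backward direction, I would again fix an arbitrary nontrivial closed subspace $\cl M\subseteq\cl H$, split into the same two cases based on $\dim(\cl M)$, start from orthonormal vectors in $\cl M$ that realize $\|TV_{\cl M}\|_{[\pi,k]}$ as a weighted sum of norms, and invoke the identity in reverse to replace every $\|TV_{\cl M}x_j\|$ by $\|T|_{\cl M}x_j\|$ and the total norm by $\|T|_{\cl M}\|_{[\pi,k]}$. That produces the witness required by Definition \ref{N_[pi,k]Def} for $T|_{\cl M}$, so $T|_{\cl M}$ is $[\pi,k]$-norming; as $\cl M$ was arbitrary, $T\in\cl{AN}_{[\pi,k]}(\cl H,\cl K)$.

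There is no genuine obstacle here. The mild bookkeeping point is to keep track of the two cases $\dim(\cl M)<k$ and $\dim(\cl M)\geq k$, because the number of orthonormal vectors required by Definition \ref{N_[pi,k]Def} differs between them and the corresponding weighted sum terminates at a different index; but since the weights $\pi_j$ play only the formal role of positive coefficients and are not involved in any estimation, the structure of the proof of Lemma \ref{Trick_[k]} transfers with no modification beyond inserting the factors $\pi_j$ throughout.
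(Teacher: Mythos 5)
Your proposal is correct and follows essentially the same route as the paper's own proof: both rest on the observation that $TV_{\cl M}$ and $T|_{\cl M}$ are the same map, hence share singular values and weighted Ky Fan norms, and both then run the two-case argument on $\dim(\cl M)$ in each direction exactly as in Lemma \ref{Trick_[k]} with the weights $\pi_j$ inserted. No gaps.
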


\begin{proof}
From the proof of Lemma \ref{Trick_[k]} we know that 
for any given nontrivial closed subspace $\cl{M}$ of $\cl{H}$, the maps $TV_\cl{M}$ and $T|_\cl{M}$ are identical and so are their singular values which implies $\|TV_\cl{M}\|_{[\pi, k]} = \|T|_\cl{M}\|_{[\pi, k]}$.
We next assume that $T\in\cl{AN}_{[\pi, k]}(\cl H,\cl K)$ and prove the forward implication. Let $\cl{M}$ be an arbitrary but fixed nontrivial closed subspace of $\cl{H}$. Either $\dim(\cl{M})=r<k$, in which case, there exist orthonormal elements $x_1,...,x_r \in \cl{M}$ such that $\|T|_\cl{M}\|_{[\pi, k]} = \|T|_{\cl{M}}x_1\|+\pi_2\|T|_{\cl{M}}x_2\|+...+\pi_r\|T|_{\cl{M}}x_r\|$ which means that there exist orthonormal elements $x_1,...,x_r \in \cl{M}$ such that 
$\|TV_\cl{M}\|_{[\pi, k]} =\|T|_\cl{M}\|_{[\pi, k]}=\|T|_{\cl{M}}x_1\|+ \pi_2\|T|_{\cl{M}}x_2\|+...+ \pi_r\|T|_{\cl{M}}x_r\| = \|TV_\cl{M} x_1\|+ \pi_2\|TV_\cl{M} x_2\|+...+\pi_r\|TV_\cl{M} x_r\|$ proving that $TV_{\cl{M}}\in\cl{N}_{[\pi, k]}(\cl M,\cl K)$, or $\dim(\cl{M})\geq k$, in which case, there exist orthonormal elements $x_1,...,x_k \in \cl{M}$ such that $\|T|_\cl{M}\|_{[\pi, k]} = \|T|_{\cl{M}}x_1\|+\pi_2\|T|_{\cl{M}}x_2\|...+\pi_k\|T|_{\cl{M}}x_k\|$ which means that there exist orthonormal elements $x_1,...,x_k \in \cl{M}$ such that $\|TV_\cl{M}\|_{[\pi, k]} =\|T|_\cl{M}\|_{[\pi, k]}=\|T|_{\cl{M}}x_1\|+\pi_2\|T|_{\cl{M}}x_2\|+...+\pi_k\|T|_{\cl{M}}x_k\| = \|TV_\cl{M} x_1\|+\pi_2\|TV_\cl{M} x_2\|+...+ \pi_k\|TV_\cl{M} x_k\|$ proving that $TV_{\cl{M}}\in\cl{N}_{[\pi, k]}(\cl M,\cl K)$. Since $\cl{M}$ is arbitrary, it follows that $TV_\cl{M}\in\cl{N}_{[\pi, k]}(\cl M,\cl K)$ for every $\cl M$.

We complete the proof by showing that $T\in\cl{AN}_{[\pi, k]}(\cl H,\cl K)$ operator if $TV_\cl{M}\in\cl{N}_{[\pi, k]}(\cl M,\cl K)$ for every nontrivial closed subspace $\cl{M}$ of $\cl{H}$. We again fix $\cl{M}$ to be an arbitrary nontrivial closed subspace of $\cl{H}$. Since $TV_\cl{M}\in\cl{N}_{[\pi, k]}(\cl M,\cl K)$, either $\dim(\cl{M})=r<k$, in which case, there exist orthonormal elements $x_1,...,x_r \in \cl{M}$ such that $\|TV_\cl{M}\|_{[\pi, k]}=\|TV_\cl{M} x_1\|+\pi_2\|TV_\cl{M} x_2\|+...+\pi_r\|TV_\cl{M} x_r\|$ which means that there exist orthonormal elements $x_1,...,x_r \in \cl{M}$ such that
 $\|T|_\cl{M}\|_{[\pi, k]}=\|TV_\cl{M}\|_{[\pi, k]}=\|TV_\cl{M} x_1\|+\pi_2\|TV_\cl{M} x_2\|+...+ \pi_r\|TV_\cl{M} x_r\|=\|T|_\cl{M} x_1\|+\pi_2\|T|_\cl{M} x_1\|+...+ \pi_r\|T|_\cl{M} x_r\|$ proving that $T|_{\cl{M}}\in\cl{N}_{[\pi, k]}(\cl M,\cl K)$, or  $\dim(\cl{M})\geq k$, in which case, there exist  orthonormal elements $x_1,...,x_k \in \cl{M}$ such that $\|TV_\cl{M}\|_{[\pi, k]}=\|TV_\cl{M}x_1\|+\pi_2\|TV_\cl{M}x_2\|+...+\pi_k\|TV_\cl{M}x_k\|$ which means that there exist orthonormal elements $x_1,...,x_k \in \cl{M}$ such that $\|T|_\cl{M}\|_{[\pi, k]}=\|TV_\cl{M}\|_{[\pi, k]}=\|TV_\cl{M}x_1\|+\pi_2\|TV_\cl{M}x_2\|+...+\pi_k\|TV_\cl{M}x_k\|=\|T|_\cl{M}x_1\|+\pi_2\|T|_\cl{M}x_2\|+...+\pi_k\|T|_\cl{M}x_k\|$ proving that $T|_{\cl{M}}\in\cl{N}_{[\pi, k]}(\cl M,\cl K)$. Because $\cl{M}$ is arbitrary, $T\in\cl{AN}_{[\pi, k]}$. It is worthwhile noticing that since $\pi\in \Pi$ and $k\in \bb{N}$ are arbitrary, the assertion holds for every sequence $\pi\in \Pi$ and for every $k\in \bb{N}.$

\end{proof}

 \begin{prop}\label{Tiff|T|_[pi, k]}
Let $\cl{H}$ and $\cl{K}$ be complex Hilbert spaces and let $T\in \cl{B}(\cl{H},\cl{K})$. Then for every $\pi\in \Pi$ and for every $k\in \bb{N}$, $T\in\cl{AN}_{[\pi, k]}(\cl H,\cl K)$ if and only if $|T|\in\cl{AN}_{[\pi, k]}(\cl H,\cl K)$.
 \end{prop}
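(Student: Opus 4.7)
The plan is to follow, essentially verbatim, the argument given for Proposition~\ref{Tiff|T|_[k]}, with the Ky Fan $k$-norm replaced throughout by the weighted Ky Fan $\pi,k$-norm, and with Lemma~\ref{Trick_[pi, k]} invoked in place of Lemma~\ref{Trick_[k]}. Fix an arbitrary $\pi\in\Pi$ and $k\in\bb N$, let $\cl M$ be a nontrivial closed subspace of $\cl H$, and let $V_{\cl M}:\cl M\to \cl H$ be the inclusion. By Lemma~\ref{Trick_[pi, k]}, it suffices to show that $TV_{\cl M}\in \cl N_{[\pi,k]}(\cl M,\cl K)$ if and only if $|T|V_{\cl M}\in \cl N_{[\pi,k]}(\cl M,\cl H)$, with $\cl M$ arbitrary.

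The first step is to establish the operator identity $|TV_{\cl M}|=|\,|T|V_{\cl M}\,|$ by the computation
$$|TV_{\cl M}|^2=V_{\cl M}^*T^*TV_{\cl M}=V_{\cl M}^*|T|^2V_{\cl M}=(|T|V_{\cl M})^*(|T|V_{\cl M})=|\,|T|V_{\cl M}\,|^2,$$
and taking the positive square root. This is exactly the identity already verified in Proposition~\ref{Tiff|T|_[k]}. Consequently $\lambda_j(|TV_{\cl M}|)=\lambda_j(|\,|T|V_{\cl M}\,|)$ for every $j$, so $s_j(TV_{\cl M})=s_j(|T|V_{\cl M})$, and therefore the weighted sums coincide:
$$\|TV_{\cl M}\|_{[\pi,k]}=\sum_{j=1}^k\pi_j s_j(TV_{\cl M})=\sum_{j=1}^k\pi_j s_j(|T|V_{\cl M})=\|\,|T|V_{\cl M}\,\|_{[\pi,k]}.$$

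The second step is the pointwise observation that $\|TV_{\cl M}x\|=\|\,|T|V_{\cl M}x\,\|$ for every $x\in\cl M$, which follows from $\|Ty\|^2=\inner{T^*Ty}{y}=\inner{|T|^2y}{y}=\||T|y\|^2$ applied to $y=V_{\cl M}x$. Combining the two steps, the defining condition of Definition~\ref{N_[pi,k]Def} transcribes identically: an orthonormal tuple $x_1,\dots,x_k\in\cl M$ (or $x_1,\dots,x_r\in\cl M$ when $\dim\cl M=r<k$) realises $\|TV_{\cl M}\|_{[\pi,k]}$ as $\sum \pi_j\|TV_{\cl M}x_j\|$ if and only if the same tuple realises $\|\,|T|V_{\cl M}\,\|_{[\pi,k]}$ as $\sum \pi_j\|\,|T|V_{\cl M}x_j\,\|$. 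Hence $TV_{\cl M}\in \cl N_{[\pi,k]}(\cl M,\cl K)$ precisely when $|T|V_{\cl M}\in \cl N_{[\pi,k]}(\cl M,\cl H)$, and since $\cl M$ was arbitrary, Lemma~\ref{Trick_[pi, k]} delivers the equivalence $T\in\cl{AN}_{[\pi,k]}(\cl H,\cl K)\iff |T|\in\cl{AN}_{[\pi,k]}(\cl H)$.

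There is no genuine obstacle: the content is the square identity $|TV_{\cl M}|^2=|\,|T|V_{\cl M}\,|^2$, which is already recorded in Proposition~\ref{Tiff|T|_[k]}, and the observation that the weights $\pi_j$ do not interfere because singular values and pointwise norms agree term by term. The proposition is thus the expected ``weighted'' analogue of Proposition~\ref{Tiff|T|_[k]}.
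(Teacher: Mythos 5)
Your proposal is correct and is essentially identical to the paper's own proof: both reduce via Lemma~\ref{Trick_[pi, k]} to comparing $TV_{\cl M}$ and $|T|V_{\cl M}$, use the identity $|TV_{\cl M}|=|\,|T|V_{\cl M}\,|$ from Proposition~\ref{Tiff|T|_[k]} to equate singular values and hence weighted Ky Fan norms, and combine this with the pointwise equality $\|TV_{\cl M}x\|=\|\,|T|V_{\cl M}x\,\|$. No further comment is needed.
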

 
 \begin{proof}
 Let $\cl{M}$ be an arbitrary nontrivial closed subspace of $\cl{H}$ and let $V_{\cl{M}}:\cl{M}\longrightarrow \cl{H}$ be the inclusion map from $\cl{M}$ to $\cl{H}$ defined as $V_{\cl{M}}(x) = x$ for each $x\in \cl{M}$. From the proof of the Proposition \ref{Tiff|T|_[k]}, it is easy to see that $|TV_{\cl{M}}|=|\,|T|V_{\cl{M}}\,|$. Consequently, for every $j$, $\lambda_j(|TV_{\cl{M}}|)=\lambda_j(||T|V_{\cl{M}}|)$ and hence $s_j(TV_{\cl{M}})=s_j(|T|V_{\cl{M}})$. This implies that for every $\pi\in \Pi$ and for each $k\in \bb{N}$, we have
$$
\|TV_{\cl{M}}\|_{[\pi, k]}=\||T|V_{\cl{M}}\|_{[\pi, k]}.
$$
Furthermore, for every $x\in \cl{H}$, we have $\|TV_{\cl{M}}x\| = \||T|V_{\cl{M}}x\|$.
Since $\cl{M}$ is arbitrary, by Lemma \ref{Trick_[pi, k]} the assertion follows.
 \end{proof}
\begin{remark}
For the remaining part of this article, we use $\cl{N}_{[\pi, k]}$ and $\cl{AN}_{[\pi, k]}$ for $\cl{N}_{[\pi, k]}(\cl{H},\cl{K})$ and $\cl{AN}_{[\pi, k]}(\cl{H},\cl{K})$ respectively, as long as the domain and codomain spaces are obvious from the context.
\end{remark}

\section{Spectral Characterization of Positive Operators in $\cl{AN}_{[\pi,k]}$}

This section discusses the necessary and sufficient conditions for a positive operator on complex Hilbert space of arbitrary dimension to be absolutely $[\pi, k]$-norming for every $\pi \in \Pi$ and for every $k \in \bb N$. 
We first mention an easy proposition, the proof of which is left to the reader.

\begin{prop}\label{EasyPeasy}
Let $A\in \cl{B}(\cl{H})$ be a positive operator. Then the following statements are equivalent.
\begin{enumerate}
\item $A\in\cl{N}.$
\item $A\in\cl{N}_{[\pi, 1]}$ for some $\pi\in \Pi$.
\item $A\in\cl{N}_{[\pi, 1]}$ for every $\pi\in \Pi$.
\end{enumerate} 
\end{prop}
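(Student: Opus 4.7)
The plan is to observe that all three conditions collapse to the single statement ``$A$ attains its operator norm,'' with the weight sequence $\pi$ playing no role whatsoever. Interestingly, the positivity hypothesis is not used; the equivalences hold for an arbitrary $A \in \cl B(\cl H)$.

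First I would unpack the $[\pi,1]$-norm: for any $\pi = (\pi_j)_{j \in \bb N} \in \Pi$, the definition of $\Pi$ forces $\pi_1 = 1$, so
\[
\|A\|_{[\pi,1]} = \sum_{j=1}^{1} \pi_j s_j(A) = \pi_1 s_1(A) = s_1(A) = \|A\|.
\]
Next, I would unpack Definition \ref{N_[pi,k]Def} in the case $k=1$: $A$ is $[\pi,1]$-norming precisely when there exists a unit vector $x_1 \in \cl H$ with $\|A\|_{[\pi,1]} = \|Ax_1\|$. Combining the two observations, $A \in \cl N_{[\pi,1]}$ if and only if there exists a unit vector $x_1 \in \cl H$ with $\|Ax_1\| = \|A\|$, which is exactly the definition of $A \in \cl N$.

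Since the equivalence $A \in \cl N_{[\pi,1]} \iff A \in \cl N$ holds for \emph{every} $\pi \in \Pi$ with no dependence on the sequence, each of the three statements $(1)$, $(2)$, $(3)$ coincides with the single condition $A \in \cl N$, and the three are mutually equivalent. There is no real obstacle here; the proposition is a direct consequence of the normalization $\pi_1 = 1$ built into the definition of $\Pi$, which is presumably why the authors chose to leave the proof to the reader.
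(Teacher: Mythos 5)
Your proof is correct, and since the paper explicitly leaves this "easy proposition" to the reader, your direct unpacking of Definition \ref{N_[pi,k]Def} at $k=1$ (where $\pi_1=1$ forces $\|A\|_{[\pi,1]}=\|A\|$ and the norming condition reduces to attaining the operator norm at a single unit vector) is exactly the intended argument. Your observation that positivity and the choice of $\pi$ play no role is also accurate.
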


The following result may be considered as an analogue of Proposition \ref{1.2} and can be proved in much the same way. 

\begin{prop}\label{Analogue-1-1.2}
Let $\pi\in \Pi$ and $A\in \cl B(\cl H)$ be a positive operator. If $s_{m+1}(A)\neq s_m(A)$ for some $m\in \bb N$, then $A\in \cl N_{[\pi, m]}$. Moreover, in this case, $A\in \cl N_{[\pi,m+1]}$ if and only if $s_{m+1}(A)$ is an eigenvalue of $A$.
\end{prop}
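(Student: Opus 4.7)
The plan is to mirror the proof of Proposition \ref{1.2}, carrying the weights $\pi_1 \geq \pi_2 \geq \ldots$ through the argument. First, since $s_{m+1}(A) \neq s_m(A)$, the same observation used in Proposition \ref{1.2} gives that each of $s_1(A), \ldots, s_m(A)$ lies outside $\sigma_e(A)$ and is therefore an eigenvalue of $A$ of finite multiplicity. Letting $K \subseteq \cl H$ denote the joint span of the associated eigenspaces, I pick an orthonormal set $\{v_1, \ldots, v_m\} \subseteq K$ with $A v_j = s_j(A) v_j$; the identity
\[
\sum_{j=1}^m \pi_j \|A v_j\| = \sum_{j=1}^m \pi_j s_j(A) = \|A\|_{[\pi,m]}
\]
then witnesses $A \in \cl N_{[\pi,m]}$.

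For the backward implication of the ``moreover'', suppose $s_{m+1}(A)$ is an eigenvalue of $A$. Because $s_{m+1}(A) \neq s_j(A)$ for every $j \leq m$, its eigenspace is orthogonal to $K$, so a unit eigenvector $v_{m+1}$ there extends $\{v_1, \ldots, v_m\}$ to an orthonormal family satisfying
\[
\sum_{j=1}^{m+1} \pi_j \|A v_j\| = \sum_{j=1}^{m+1} \pi_j s_j(A) = \|A\|_{[\pi,m+1]},
\]
so $A \in \cl N_{[\pi,m+1]}$.

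For the forward implication, I would proceed exactly as in Proposition \ref{1.2}. Starting from an orthonormal witness $\{y_1, \ldots, y_{m+1}\}$ for which $\sum_{j=1}^{m+1} \pi_j \|Ay_j\| = \sum_{j=1}^{m+1} \pi_j s_j(A)$, the target is to produce a unit vector $x \in K^\perp$ with $\|Ax\| = s_{m+1}(A)$. Once this is in hand, $A|_{K^\perp}$ becomes a positive norming operator on $K^\perp$ with $\|A|_{K^\perp}\| = s_{m+1}(A)$, and \cite[Theorem 2.3]{VpSp} then yields that $s_{m+1}(A)$ is an eigenvalue of $A|_{K^\perp}$, hence of $A$ (the eigenvector lies automatically in $K^\perp$, since $s_{m+1}(A) \neq s_j(A)$ for $j \leq m$). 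The heart of the argument, and the main obstacle, is to argue that after a suitable relabeling the first $m$ of the $y_j$'s can be forced into $K$ while $y_{m+1}$ is forced into $K^\perp$ with $\|Ay_{m+1}\| = s_{m+1}(A)$; this extends the corresponding concentration step in the proof of Proposition \ref{1.2}, with the decreasing positive weights $\pi_j$ entering the accounting so as to pin down both the location of the first $m$ witnesses inside $K$ and the binding of the equality at index $m+1$.
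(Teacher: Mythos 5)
Your first claim and the backward half of the ``moreover'' are correct and follow the same route as the paper: you build the orthonormal set $\{v_1,\dots,v_m\}$ inside the joint eigenspace span $K$, and for the backward implication you correctly note that $s_{m+1}(A)<s_m(A)\le s_j(A)$ for $j\le m$ forces the $s_{m+1}(A)$-eigenspace to be orthogonal to $K$, so an eigenvector there extends the family to a $[\pi,m+1]$-witness.

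The forward implication, however, is not a proof: you name the concentration step (``the first $m$ of the $y_j$'s can be forced into $K$ \dots\ the main obstacle'') and then stop. That step is precisely the content you were asked to supply, and deferring it to ``proceed exactly as in Proposition \ref{1.2}'' does not discharge it, because the weights genuinely enter. The paper's own proof avoids manipulating an arbitrary $(m+1)$-witness directly; it instead records the observation that any orthonormal set attaining $\sum_{j\le m}\pi_j s_j(A)$ must lie in $K$, and then runs the chain of equivalences $s_{m+1}(A)$ eigenvalue of $A$ $\iff$ eigenvalue of $A|_{K^\perp}$ $\iff$ $A|_{K^\perp}\in\cl N$ $\iff$ there is a unit $x\in K^\perp$ with $\|Ax\|=s_{m+1}(A)$ $\iff$ $A\in\cl N_{[\pi,m+1]}$, using \cite[Theorem 2.3]{VpSp} for the positive operator $A|_{K^\perp}$ with $\|A|_{K^\perp}\|=s_{m+1}(A)$. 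To actually close your version of the argument, the cleanest accounting is an Abel summation: writing $\pi_{m+2}:=0$, one has $\sum_{j=1}^{m+1}\pi_j\|Ay_j\|=\sum_{k=1}^{m+1}(\pi_k-\pi_{k+1})\sum_{j\le k}\|Ay_j\|$, so the weighted bound follows from the unweighted Ky Fan partial-sum bounds $\sum_{j\le k}\|Ay_j\|\le\sum_{j\le k}s_j(A)$, and equality in the weighted sum forces equality in the partial sum at $k=m+1$ (where $\pi_{m+1}-\pi_{m+2}=\pi_{m+1}>0$); from there the unweighted localization argument of Proposition \ref{1.2} applies. Until something of this sort is written down, the ``only if'' direction remains a gap.
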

\begin{proof}
It is easy to see that for every $j\in \{1,...,m\}, s_j(A)\notin \sigma_{\text{e}}(A)$. Then the set $\{s_1(A),...,s_m(A)\}$ consists of eigenvalues (not necessarily distinct) of $A$, each having finite multiplicity. This guarantees the existence of an orthonormal set $\{v_1,...,v_m\}\subseteq K\subseteq \cl H$ such that $Av_j=s_j(A)v_j$ so that $\|Av_j\|=s_j(A)$ and thus $\|A\|_{[\pi, m]}=\sum_{j=1}^m\pi_js_j(A)=\sum_{j=1}^m\pi_j\|Av_j\|$, where $K$ is the closure of the joint span of the eigenspaces corresponding to the eigenvalues $\{s_1(A),...,s_m(A)\}$, which implies that $A\in \cl N_{[\pi, m]}$. Furthermore, we observe that if there exists any orthonormal set $\{w_1,...,w_m\}$ of $m$ vectors in $\cl H$ such that $\sum_{i=1}^m\pi_i\|Aw_i\|=\sum_{j=1}^m\pi_js_j(A)$, then this set has to be contained in $K$. Note that $K^\perp$ is invariant under $A$ and hence    $A|_{K^\perp}:K^\perp\longrightarrow K^\perp$, viewed as an operator on $K^\perp$, is positive. Since $s_{m+1}(A)\neq s_m(A)$, it follows that $s_{m+1}(A)$ is an eigenvalue of $A\iff$ $s_{m+1}(A)$ is an eigenvalue of $A|_{K^\perp}:K^\perp\longrightarrow K^\perp$ which is possible $\iff A|_{K^\perp}:K^\perp\longrightarrow K^\perp$, viewed as an operator on $K^\perp$, belongs to $\cl N$, that is, there is a unit vector $x\in K^\perp$ such that $\|Ax\|=s_{m+1}(A)$, which is possible if and only if $\pi_{m+1}\|Ax\|=\pi_{m+1}s_{m+1}(A)\iff A\in\cl N_{[\pi,m+1]}$. This proves the assertion. 
\end{proof}

\subsection{Necessary Conditions for Positive Operators in $\cl{AN}_{[\pi,k]}$}

The last proposition in the previous section can be used to establish results analogous to Propositions \ref{1.6}, \ref{1.7} and \ref{1.8} (see \ref{analogous-1-1.6}, \ref{analogous-1-1.7}, and \ref{analogous-1-1.8} respectively) for a given $\pi \in \Pi$ and a given $k\in \bb N$. The proofs for these are similar so we only prove one of these results and leave the rest for the reader.
\begin{prop}\label{analogous-1-1.6}
Let $A\in \cl{B}(\cl{H})$ be a positive operator, $\pi\in \Pi$, and $k\in \bb{N}$. If $A\in \cl N_{[\pi, k]}$, then $s_1(A),...,s_k(A)$ are eigenvalues of $A$.
\end{prop}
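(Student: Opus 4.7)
I would prove the contrapositive, mirroring the proof of Proposition \ref{1.6} with Proposition \ref{Analogue-1-1.2} playing the role that Proposition \ref{1.2} played there, and tracking the weights $\pi_j$ termwise. Suppose that some $s_j(A)$ with $j \leq k$ fails to be an eigenvalue of $A$, and let $t \in \{1,\ldots,k\}$ be the smallest such index. The goal is to show that no orthonormal set $\{x_1,\ldots,x_k\}\subseteq \cl H$ can satisfy $\sum_{j=1}^k \pi_j \|Ax_j\| = \sum_{j=1}^k \pi_j s_j(A)$, which would preclude $A \in \cl N_{[\pi,k]}$.

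For the base case $t=1$, Definition \ref{singular} forces $s_j(A) = s_1(A) = \|A\|$ for every $j$, and $A \notin \cl N$ by \cite[Theorem 2.3]{VpSp}, so $\|Ax\| < \|A\|$ strictly for every unit $x\in \cl H$. Because every $\pi_j$ is a positive number, this termwise strict bound immediately gives
\[
\sum_{j=1}^k \pi_j \|Ax_j\| \;<\; \sum_{j=1}^k \pi_j \|A\| \;=\; \sum_{j=1}^k \pi_j s_j(A)
\]
for every orthonormal $\{x_1,\ldots,x_k\}$, so $A \notin \cl N_{[\pi,k]}$.

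For $t \geq 2$, the recursive structure of Definition \ref{singular} forces $s_{t-1}(A) \neq s_t(A)$ and $s_j(A) = s_t(A)$ for every $j \geq t$; Proposition \ref{Analogue-1-1.2} applied with $m=t-1$ then yields $A \in \cl N_{[\pi,t-1]}$ but $A \notin \cl N_{[\pi,t]}$. Let $K$ be the finite-dimensional span of orthonormal eigenvectors $v_1,\ldots,v_{t-1}$ for $s_1(A),\ldots,s_{t-1}(A)$. The restriction $A|_{K^\perp}$ is a positive operator with $\|A|_{K^\perp}\| = s_t(A)$, and since $s_t(A)$ is not an eigenvalue, \cite[Theorem 2.3]{VpSp} gives $A|_{K^\perp} \notin \cl N$, so $\|Ay\| < s_t(A)$ strictly for every unit $y \in K^\perp$. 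Combining this with $s_j(A) = s_t(A)$ for $j \geq t$ and $\pi_j > 0$, any orthonormal set of the canonical shape $\{v_1,\ldots,v_{t-1},y_t,\ldots,y_k\}$ with $y_j \in K^\perp$ satisfies the strict bound
\[
\sum_{j=1}^{t-1}\pi_j\|Av_j\| + \sum_{j=t}^{k} \pi_j \|Ay_j\| \;<\; \sum_{j=1}^{t-1}\pi_j s_j(A) + \sum_{j=t}^{k} \pi_j s_t(A) \;=\; \|A\|_{[\pi,k]}.
\]

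The main obstacle is promoting this strict inequality from canonical orthonormal sets to \emph{every} orthonormal set. As in Proposition \ref{1.6}, this rests on the observation that each of $s_1(A),\ldots,s_{t-1}(A)$ has finite multiplicity, with $s_t(A)$ a strict threshold below them: any orthonormal $\{x_1,\ldots,x_k\}$ at which the weighted norm is attained must place the vectors responsible for the top contributions inside the corresponding eigenspaces (hence inside $K$), pushing the remaining $x_j$'s into $K^\perp$, where the strict bound $\|Ay\| < s_t(A)$ closes the argument. Showing this rigidity of the top part is the delicate step, and it is exactly the weighted counterpart of the argument used in the proof of Proposition \ref{1.6}.
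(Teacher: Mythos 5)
Your proposal is correct and follows essentially the same route as the paper: a contrapositive argument organized around the smallest index $t$ at which $s_t(A)$ fails to be an eigenvalue, with Proposition \ref{Analogue-1-1.2} supplying $A\in\cl N_{[\pi,t-1]}$ but $A\notin\cl N_{[\pi,t]}$, and the strict bound $\|Ay\|<s_t(A)$ on $K^\perp$ closing the estimate (the paper just unrolls the cases $t=1,2,3,\dots,k$ one at a time instead of writing a single generic $t$). The ``rigidity'' step you flag as delicate --- passing from orthonormal sets adapted to $K$ to arbitrary orthonormal sets --- is handled in the paper at exactly the same level of detail, namely by the observation (recorded in Proposition \ref{Analogue-1-1.2}) that any orthonormal family realizing the top weighted contributions must lie in $K$, so you have not omitted anything the paper actually supplies.
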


\begin{proof}
The proof is by contrapositive. Assuming that at least one of the elements from the set $\{s_1(A),..., s_k(A)\}$ is not an eigenvalue of $A$, we show that $A\notin \cl N_{[\pi, k]}$. Suppose that $s_1(A)$ is not an eigenvalue of $A$. Then it must be an accumulation point of the spectrum of $A$ in which case none of the singular values of $A$ is an eigenvalue of $A$ and that  $s_j(A)=s_1(A)$ for every $j\geq 2$. Since $s_1(A)=\|A\|$, it follows from \cite[Theorem 2.3]{VpSp} that $A\notin\cl N$ which means that for every $x\in \cl H, \|x\|=1$, we have $\|Ax\|<\|A\|=s_1(A)=s_2(A)=...=s_k(A)$. Consequently, for every orthonormal set $\{x_1,...,x_k\}\subseteq \cl H$ we have 
$\sum_{j=1}^k\pi_j\|Ax_j\|<\sum_{j=1}^k\pi_js_j(A)$ so that $A\notin \cl N_{[\pi, k]}$.  

Next suppose that $s_1(A)$ is an eigenvalue of $A$ but $s_2(A)$ is not. Clearly then $s_1(A)$ is an eigenvalue with multiplicity $1$, $s_2(A)\neq s_1(A)$ and $s_j(A)=s_2(A)$ for every $j\geq 3$ in which case Proposition \ref{Analogue-1-1.2} ascertains that $A\in \cl N_{[\pi, 1]}$ but $A\notin \cl N_{[\pi, 2]}$. This implies that there exists $y_1\in \cl H$ with $\|y_1\|=1$ such that $\|Ay_1\|=\|A\|$ and for every $y\in \text{span}\{y_1\}^\perp$ with $
\|y\|=1$ we have $\|Ay\|<s_2(A)$ which in turn implies that for every orthonormal set $\{y_2,...y_k\}\subseteq \text{span}\{y_1\}^\perp$ we have $\sum_{j=2}^k\pi_j\|Ay_j\|<\sum_{j=2}^k\pi_js_j(A)$ so that  $\sum_{j=1}^k\pi_j\|Ay_j\|<\sum_{j=1}^k\pi_js_j(A)$ which implies that $A\notin \cl N_{[\pi, k]}$.

If $s_1(A), s_2(A)$ are eigenvalues of $A$ but $s_3(A)$ is not, then we have 
$s_3(A)\neq s_2(A)$ and $s_j(A)=s_3(A)$ for every $j\geq 4$ in which case Proposition \ref{Analogue-1-1.2} asserts that $A\in \cl N_{[\pi, 2]}$ but $A\notin \cl N_{[\pi,3]}$. Consequently, there exists an orthonormal set $\{z_1,z_2\}\subseteq \cl H$ such that $\|Tz_1\|+\pi_2\|Tz_2\|=\|T\|_{[\pi, 2]}$ and that for every unit vector $z\in \text{span}\{z_1,z_2\}^\perp$ we have $\|Tz\|<s_3(A)$ which in turn implies that for every orthonormal set $\{z_3,...z_k\}\subseteq \text{span}\{z_1,z_2\}^\perp$ we have $\sum_{j=3}^k\pi_j\|Az_j\|<\sum_{j=3}^k\pi_js_j(A)$. It then follows that $\sum_{j=1}^k\pi_j\|Az_j\|<\sum_{j=1}^k\pi_js_j(A)$ for every orthonormal set $\{z_1,...,z_k\}\subseteq \cl H$ which implies that $A\notin \cl N_{[\pi, k]}$.

  If we continue in this way, we can show at every step that $A\notin \cl N_{[\pi, k]}$. We conclude the proof by discussing the final case when $s_1(A),...,s_{k-1}(A)$ are all eigenvalues of $A$ but $s_k(A)$ is not in which case $s_{k}(A)\neq s_{k-1}(A)$ and thus Proposition \ref{Analogue-1-1.2} again implies that $A\notin \cl N_{[\pi, k]}$. This exhausts all the possibilities and the assertion is thus proved contrapositively.
\end{proof}

\begin{prop}\label{analogous-1-1.7}
Let $A\in \cl{B}(\cl{H})$ be a positive operator, $\pi\in \Pi$, and $k\in \bb{N}$. If $s_1(A),...,s_k(A)$ are mutually distinct eigenvalues of $A$,  then there exists an orthonormal set $\{v_1,...,v_k\}\subseteq \cl H$ such that $Av_j=s_j(A)v_j$ for every $j\in \{1,...,k\}$. Thus $A\in\cl N_{[\pi, k]}$.
\end{prop}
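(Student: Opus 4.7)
The plan is to mimic the short argument from Proposition \ref{1.7} essentially verbatim, since the only new feature in the weighted setting is the presence of the weights $\pi_j$, which play no role in the selection of the eigenvectors.

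First I would use the fact that a positive operator is self-adjoint and hence normal, and recall the standard spectral-theoretic fact that eigenvectors of a normal operator corresponding to distinct eigenvalues are mutually orthogonal. By hypothesis, $s_1(A), \ldots, s_k(A)$ are pairwise distinct eigenvalues of $A$, so I can pick, for each $j \in \{1, \ldots, k\}$, a unit eigenvector $v_j \in \cl H$ satisfying $Av_j = s_j(A) v_j$. The pairwise distinctness then forces $\{v_1, \ldots, v_k\}$ to be orthonormal, giving the first claim.

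Next, since $\|Av_j\| = s_j(A)\|v_j\| = s_j(A)$ for each $j$, I would simply compute
\[
\sum_{j=1}^{k} \pi_j \|Av_j\| \;=\; \sum_{j=1}^{k} \pi_j s_j(A) \;=\; \|A\|_{[\pi,k]},
\]
which by Definition \ref{N_[pi,k]Def} says exactly that $A \in \cl N_{[\pi,k]}$.

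There is no real obstacle here: the weights $\pi_j$ enter only linearly and the whole argument is driven by the orthogonality of eigenvectors for distinct eigenvalues of a normal operator. In fact this is why the author explicitly flags that the proof parallels \ref{1.7} and suggests omitting most details of the $[\pi,k]$-analogues; the only minor point to verify is that each $v_j$ indeed exists as a genuine eigenvector (rather than an approximate one), which is guaranteed by the hypothesis that each $s_j(A)$ is an eigenvalue.
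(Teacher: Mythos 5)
Your proof is correct and matches the paper's intent exactly: the paper omits the proof of this proposition, referring the reader to the one-line argument for Proposition \ref{1.7} (orthogonality of eigenvectors of a normal operator corresponding to distinct eigenvalues), and your verification that the weights $\pi_j$ enter only linearly in the final computation is precisely the routine check the author leaves to the reader.
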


\begin{prop}\label{analogous-1-1.8}
Let $A\in \cl{B}(\cl{H})$ be a positive operator, $\pi\in \Pi$, $k\in \bb N$ and let $s_1(A),...,s_k(A)$ be the first $k$ singular values of $A$ that are also the eigenvalues of $A$ and are not necessarily distinct. Then either $s_1(A)=...=s_k(A)$, in which case, $A\in \cl N_{[\pi, k]}$ if and only if the multiplicity of $\alpha:=s_1(A)$ is at least $k$; or there exists $t\in \{2,...,k\}$ such that $s_{t-1}(A)\neq s_t(A)=s_{t+1}(A)=...=s_k(A)$, in which case, $A\in \cl N_{[\pi,k]}$ if and only if the multiplicity of $\beta:=s_t(A)$ is at least $k-t+1$.
\end{prop}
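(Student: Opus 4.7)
The plan is to adapt the proof of Proposition~\ref{1.8} to the weighted setting. Since each $\pi_j > 0$, any strict inequality $\|Ax_j\| < s_j(A)$ transfers to a strict inequality in the weighted sum $\sum_j \pi_j \|Ax_j\|$, so the presence of the weights does not weaken the argument. As in the unweighted case, I would treat Case~1 in full and then argue Case~2 by the same strategy applied to the restriction of $A$ to the orthogonal complement of a top-eigenvector subspace.

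For Case~1 (all singular values equal to $\alpha$), the backward implication is immediate: pick $k$ orthonormal eigenvectors in $E_\alpha$ to get $\sum \pi_j \|Av_j\| = \alpha \sum \pi_j = \|A\|_{[\pi,k]}$. For the forward implication, I would argue by contrapositive. If $m := \dim E_\alpha < k$, then $\alpha$ must be an accumulation point of $\sigma(A)$ (since it appears $k > m$ times in the sequence of singular values but has only finite multiplicity). Hence $A|_{E_\alpha^\perp}$ is positive with norm $\alpha$ yet $\alpha \notin \sigma_p(A|_{E_\alpha^\perp})$, and by \cite[Theorem 2.3]{VpSp} this restriction fails to attain its norm. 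Consequently $\|Ax_j\| = \alpha$ for a unit vector $x_j$ forces $x_j \in E_\alpha$; but at most $m$ orthonormal vectors fit in $E_\alpha$, so at least one $x_{j_0}$ has $\|Ax_{j_0}\| < \alpha$, and positivity of $\pi_{j_0}$ gives $\sum \pi_j \|Ax_j\| < \alpha \sum \pi_j = \|A\|_{[\pi,k]}$; thus $A \notin \cl N_{[\pi,k]}$. The backward direction of Case~2 is analogous: combine orthonormal eigenvectors $v_1,\dots,v_{t-1}$ with $A v_j = s_j(A) v_j$ (which exist by Definition~\ref{singular}) together with orthonormal $w_t,\dots,w_k \in E_\beta$, using the orthogonality of eigenvectors of $A$ for distinct eigenvalues.

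For the forward implication of Case~2 (contrapositive, $r := \dim E_\beta < k - t + 1$), let $W$ be the span of orthonormal eigenvectors for $s_1(A),\dots,s_{t-1}(A)$. Then $A|_{W^\perp}$ is positive with norm $\beta$, and since $\beta$ appears $k-t+1 > r$ times in the singular-value sequence, $\beta$ is an accumulation point of $\sigma(A|_{W^\perp})$; the Case~1 reasoning applied to $A|_{W^\perp}$ (with the renormalised weights $(\pi_t/\pi_t, \pi_{t+1}/\pi_t,\dots,\pi_k/\pi_t)$ and rank $k-t+1$) shows that $A|_{W^\perp}$ is not norming in the relevant weighted norm on $W^\perp$. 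The main obstacle is to upgrade this failure on $W^\perp$ to a failure on the full Hilbert space: a general orthonormal $\{x_1,\dots,x_k\} \subseteq \cl H$ need not split as $(t-1)$ vectors in $W$ plus $(k-t+1)$ vectors in $W^\perp$. I expect this to be handled by a rigidity argument exploiting the strict gap $s_{t-1}(A) > \beta$ to show that any candidate norming set (after reordering so that $\|Ax_1\| \ge \dots \ge \|Ax_k\|$) must satisfy $\mathrm{span}\{x_1,\dots,x_{t-1}\} = W$, so that the Case~1 conclusion on $W^\perp$ applies to the remaining $k-t+1$ vectors. Establishing this rigidity carefully in the presence of the weights $\pi_j$, without relying on the termwise bound $\|Ax_j\| \le s_j(A)$ (which need not hold in general), is the delicate step.
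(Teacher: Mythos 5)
Your Case~1 argument is correct and is essentially the paper's: the paper gives no proof of Proposition~\ref{analogous-1-1.8} at all (it is left to the reader as ``similar'' to Proposition~\ref{1.8}, whose own second case is dismissed with ``follows similarly''), and your observation that $\|Ax\|=\alpha$ forces $x\in E_\alpha$ is in fact a cleaner way to finish Case~1 than the argument written for Proposition~\ref{1.8}. The backward half of Case~2 is also fine. The genuine gap is exactly where you flag it: the forward implication of Case~2 is never established. The ``rigidity'' statement you defer --- that any candidate norming set must have its first $t-1$ vectors spanning $W$ --- is the entire content of that implication, and in the contrapositive you must show that \emph{every} orthonormal set $\{x_1,\dots,x_k\}\subseteq\cl H$ gives a strictly smaller weighted sum; as you correctly note, the termwise bound $\|Ax_j\|\le s_j(A)$ is unavailable, so the proof as proposed is incomplete.

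The missing tool is the partial-sum (Ky Fan) inequality: for any orthonormal $x_1,\dots,x_l$ one has $\sum_{j=1}^{l}\|Ax_j\|\le\sum_{j=1}^{l}s_j(A)$ (take $u_j=x_j$ and $v_j=Ax_j/\|Ax_j\|$ in the variational characterization of $\|A\|_{[l]}$). Reorder the $x_j$ so that $\|Ax_j\|$ is nonincreasing --- by the rearrangement inequality this only increases $\sum_j\pi_j\|Ax_j\|$, since $(\pi_j)$ is nonincreasing --- and use Abel summation, $\sum_{j=1}^{k}\pi_j\|Ax_j\|=\sum_{l=1}^{k}(\pi_l-\pi_{l+1})\sum_{j=1}^{l}\|Ax_j\|$ with $\pi_{k+1}:=0$. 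Every coefficient $\pi_l-\pi_{l+1}$ is nonnegative and the last one, $\pi_k$, is strictly positive, so a strict inequality in the $l=k$ partial sum --- which is precisely the unweighted conclusion $A\notin\cl N_{[k]}$ furnished by Proposition~\ref{1.8} under the contrapositive hypotheses of either case --- already yields $\sum_{j}\pi_j\|Ax_j\|<\sum_{j}\pi_js_j(A)$, hence $A\notin\cl N_{[\pi,k]}$. This closes both forward implications at once by reducing the weighted statement to the unweighted Proposition~\ref{1.8}, and it avoids the delicate rigidity analysis entirely.
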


The above propositions leads us to prove the following result that adds another equivalent condition to the Theorem \ref{PositiveN_{[k]}}.
\begin{thm}\label{PositiveN_{[pi, k]}}
Let $A\in \cl{B}(\cl{H})$ be a positive operator, $\pi \in \Pi$, and $k\in \bb{N}$. Then the following statements are equivalent.
\begin{enumerate}
\item $A\in\cl{N}_{[k]}.$
\item $A\in\cl{N}_{[\pi, k]}$.
\item $s_1(A),...,s_k(A)$ are eigenvalues of $A$ and there exists an orthonormal set $\{v_1,...,v_k\}\subseteq \cl H$ such that $Av_j=s_j(A)v_j$ for every $j\in \{1,...,k\}.$
\end{enumerate} 
\end{thm}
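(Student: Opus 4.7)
The plan is to exploit the equivalence $(1) \iff (3)$ already furnished by Theorem \ref{PositiveN_{[k]}}, and then fold in $(2)$ via the weighted analogues established in Propositions \ref{analogous-1-1.6}, \ref{analogous-1-1.7}, and \ref{analogous-1-1.8}. Since $(1) \iff (3)$ is on the table, it suffices to prove $(2) \iff (3)$, after which the entire triangle closes.

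For the direction $(3) \implies (2)$, I would simply compute: given orthonormal $\{v_1, \ldots, v_k\}$ with $A v_j = s_j(A) v_j$, positivity of $A$ gives $\|A v_j\| = s_j(A)$, hence
\[
\sum_{j=1}^k \pi_j \|A v_j\| \;=\; \sum_{j=1}^k \pi_j s_j(A) \;=\; \|A\|_{[\pi, k]},
\]
so $A \in \cl N_{[\pi, k]}$. Nothing about $\pi$ beyond $\pi \in \Pi$ is used here, so this step is entirely parallel to the corresponding implication in the proof of Theorem \ref{PositiveN_{[k]}}.

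For $(2) \implies (3)$, I would mimic the structure of the proof of Theorem \ref{PositiveN_{[k]}} verbatim. First, apply Proposition \ref{analogous-1-1.6} to $A \in \cl N_{[\pi, k]}$ to conclude that $s_1(A), \ldots, s_k(A)$ are eigenvalues of $A$. If these singular values are mutually distinct, Proposition \ref{analogous-1-1.7} immediately produces the required orthonormal set of eigenvectors, since eigenvectors of a self-adjoint (indeed positive) operator attached to distinct eigenvalues are mutually orthogonal. Otherwise we fall into the scenario of Proposition \ref{analogous-1-1.8}: the repeated value $\alpha$ (or $\beta$, in the second case of that proposition) must have geometric multiplicity at least equal to the number of indices it occupies, so one can select an orthonormal basis of the corresponding eigenspace of the required size and concatenate it with eigenvectors chosen for the earlier distinct eigenvalues; orthogonality across distinct eigenvalues is automatic from positivity of $A$, and orthogonality within a shared eigenvalue is enforced by hand.

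No single step is a real obstacle — the only point deserving mild care is the bookkeeping in the repeated-singular-value case, namely assembling eigenvectors for distinct singular values with the orthonormal set inside a single eigenspace of sufficient dimension, while making sure the assignments $Av_j = s_j(A)v_j$ are respected index-by-index. Once $(2) \iff (3)$ is in hand, the theorem follows by chaining with $(1) \iff (3)$ from Theorem \ref{PositiveN_{[k]}}. The key conceptual takeaway is that the weights $\pi_j$ play no role beyond being strictly positive in a nonincreasing sequence with $\pi_1 = 1$, so the classes $\cl N_{[k]}$ and $\cl N_{[\pi, k]}$ coincide for positive operators.
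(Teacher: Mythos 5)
Your proposal is correct and follows essentially the same route as the paper: the author also cites the equivalence $(1)\iff(3)$ from Theorem \ref{PositiveN_{[k]}}, notes that $(3)\implies(2)$ is trivial, and derives $(2)\implies(3)$ from Propositions \ref{analogous-1-1.6}, \ref{analogous-1-1.7}, and \ref{analogous-1-1.8} exactly as you do. Your extra detail in the repeated-eigenvalue bookkeeping and the explicit computation for $(3)\implies(2)$ merely fill in steps the paper leaves implicit.
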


\begin{proof}
$(1)\iff(3)$ has been established in Theorem \ref{PositiveN_{[k]}} and $(3)\implies (2)$ is trivial.  To establish $(2)\implies (3)$, note that by the
Proposition \ref{analogous-1-1.6}, $s_1(A),..., s_k(A)$ are all eigenvalues of $A$. If $s_1(A),..., s_k(A)$ are mutually distinct eigenvalues, then by Proposition \ref{analogous-1-1.7} there exists an orthonormal set $\{v_1,...,v_k\}\subseteq \cl H$ such that $Av_j=s_j(A)v_j$ for every $j\in \{1,...,k\}$. However, if $s_1(A),...,s_k(A)$ are all eigenvalues but not necessarily distinct then also the existence of an orthonormal set $\{v_1,...,v_k\}\subseteq \cl H$ with $Av_j=s_j(A)v_j$ for every $j\in \{1,...,k\}$ is guaranteed by the Proposition \ref{analogous-1-1.8}. This completes the proof.
\end{proof}
The above theorem leads us immediately to the following rather obvious corollary.

\begin{cor}\label{N_{[pi, k+1]}impliesN_{[pi, k]}}
Let $A\in \cl B(\cl H)$ be a positive operator, $\pi \in \Pi$, and $k\in \bb{N}$. 
\begin{enumerate}
\item If $A\in \cl N_{[\pi, k+1]}$, then $A\in \cl N_{[\pi, k]}$.
\item If $A\in \cl N_{[\pi, k]}$, then $A\in \cl N$.
\end{enumerate}
\end{cor}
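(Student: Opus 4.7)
My plan is to read off both parts of the corollary directly from the equivalence established in Theorem \ref{PositiveN_{[pi, k]}}, which I now treat as the main tool. The key point of that theorem is that membership of a positive operator $A$ in $\cl N_{[\pi,k]}$ is equivalent to the purely spectral condition that $s_1(A),\dots,s_k(A)$ are eigenvalues of $A$ with a joint orthonormal set of eigenvectors; and this condition makes no reference to the weight sequence $\pi$. So the strategy is to extract the spectral data for the larger index and then \emph{restrict} to the smaller one (for (1)) or to the single top eigenvalue (for (2)).

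For part (1), I would start by assuming $A\in \cl N_{[\pi,k+1]}$. Invoking $(2)\Rightarrow(3)$ of Theorem \ref{PositiveN_{[pi, k]}} (applied with $k+1$ in place of $k$), I obtain an orthonormal set $\{v_1,\dots,v_{k+1}\}\subseteq \cl H$ with $Av_j = s_j(A)v_j$ for $j=1,\dots,k+1$. Then $\{v_1,\dots,v_k\}$ is an orthonormal set satisfying $Av_j = s_j(A)v_j$ for $j=1,\dots,k$, which is exactly condition (3) of Theorem \ref{PositiveN_{[pi, k]}} at level $k$. The implication $(3)\Rightarrow (2)$ of that theorem then yields $A\in \cl N_{[\pi,k]}$.

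For part (2), assuming $A\in \cl N_{[\pi,k]}$, the same route gives $s_1(A),\dots,s_k(A)$ as eigenvalues of $A$ with an orthonormal family $\{v_1,\dots,v_k\}$ of eigenvectors. In particular $\|Av_1\| = s_1(A) = \|A\|$ and $\|v_1\|=1$, so $A$ attains its operator norm; that is $A\in \cl N$.

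I do not anticipate any real obstacle: both parts are one-line consequences of Theorem \ref{PositiveN_{[pi, k]}} once one notices that the characterizing spectral condition is monotone in $k$ (truncating an orthonormal eigenvector family preserves the property). The only thing to be a bit careful about is the low-dimensional degenerate case $\dim \cl H = r < k$, but Definition \ref{N_[pi,k]Def} already phrases the $[\pi,k]$-norming condition in terms of an orthonormal set of size $\min(r,k)$, and Theorem \ref{PositiveN_{[pi, k]}} is compatible with that convention, so the truncation argument goes through without modification.
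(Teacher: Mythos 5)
Your proposal is correct and is exactly the route the paper intends: the corollary is stated immediately after Theorem \ref{PositiveN_{[pi, k]}} with no written proof beyond the remark that it follows from that theorem, and your argument (truncate the orthonormal eigenvector family from level $k+1$ to level $k$ for part (1), and use $\|Av_1\|=s_1(A)=\|A\|$ for part (2)) is the natural way to fill in the details. Your remark about the low-dimensional case is a reasonable extra precaution but not something the paper addresses.
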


Theorem \ref{TiffT*T_[k]} extends word for word to the family $\cl N_{[\pi, k]}$ (see \ref{TiffT*T_[pi, k]}) and Theorem \ref{AN_{[k+1]}impliesAN_[k](general)} alongwith the Corollary \ref{AN_{[k+1]}impliesAN_[k]} extend to the family $\cl{AN}_{[\pi, k]}$ (see \ref{AN_{[pi, k+1]}impliesAN_[pi, k](general)} and \ref{AN_{[pi, k+1]}impliesAN_[pi, k]} respectively). 

\begin{thm}\label{TiffT*T_[pi, k]}
Let $\cl{H}$ and $\cl{K}$ be complex Hilbert spaces, $T\in \cl{B}(\cl{H},\cl{K})$, $\pi \in \Pi$, and $k\in \bb{N}$. Then the following statements are equivalent.
\begin{enumerate}
\item $T\in\cl{N}_{[\pi, k]}$.
\item $|T|\in\cl{N}_{[\pi, k]}$.
\item $T^*T\in\cl{N}_{[\pi, k]}$. 
\end{enumerate}
\end{thm}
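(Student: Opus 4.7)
The plan is to mimic the three-part argument of Theorem \ref{TiffT*T_[k]}, using the equivalence $(1)\iff(2)$ of Theorem \ref{PositiveN_{[pi, k]}} as a bridge that reduces the weighted Ky Fan version to the unweighted one whenever we are working with a positive operator. In other words, for any positive operator the distinction between $\cl N_{[k]}$ and $\cl N_{[\pi, k]}$ has already been collapsed, and both $|T|$ and $T^*T$ are positive.

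First I would dispatch $(1)\iff(2)$ by direct inspection. For any $x\in \cl H$, $\|Tx\|^2 = \langle T^*Tx, x\rangle = \langle |T|^2 x, x\rangle = \||T|x\|^2$; and by Definition \ref{s-bdd}, $s_j(T) = \lambda_j(|T|) = s_j(|T|)$ for every $j$. Consequently $\|T\|_{[\pi, k]} = \||T|\|_{[\pi, k]}$, and an orthonormal set $\{x_1,\dots,x_k\}$ realises the weighted sum $\sum \pi_j \|Tx_j\|$ equal to $\|T\|_{[\pi, k]}$ if and only if it realises $\sum \pi_j \||T|x_j\|$ equal to $\||T|\|_{[\pi, k]}$.

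For $(2)\iff(3)$ the cleanest route is to chain
\begin{align*}
|T|\in\cl N_{[\pi, k]} \;\iff\; |T|\in \cl N_{[k]} \;\iff\; T^*T\in\cl N_{[k]} \;\iff\; T^*T\in\cl N_{[\pi, k]},
\end{align*}
where the outer two equivalences are Theorem \ref{PositiveN_{[pi, k]}} applied to the positive operators $|T|$ and $T^*T$ respectively, and the middle one is Theorem \ref{TiffT*T_[k]}. Alternatively, one can give a direct argument paralleling the proof of Theorem \ref{TiffT*T_[k]}: use Theorem \ref{PositiveN_{[pi, k]}}(3) to extract an orthonormal set $\{v_j\}_{j=1}^k$ with $|T|v_j = s_j(|T|)v_j$, observe $T^*Tv_j = |T|^2 v_j = s_j(|T|)^2 v_j = s_j(T^*T)v_j$, and conclude $\|T^*T\|_{[\pi, k]} = \sum_{j=1}^k \pi_j s_j(T^*T) = \sum_{j=1}^k \pi_j \|T^*Tv_j\|$; the reverse direction is symmetric, using the same set of eigenvectors.

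The only potential obstacle is bookkeeping around Definition \ref{singular}: one must confirm that $s_j(T^*T) = s_j(|T|)^2$ holds even when accumulation points of $\sigma(|T|)$ and finite-multiplicity eigenvalues are interleaved. This follows immediately from $\sigma(A^2) = \{\nu^2 : \nu\in \sigma(A)\}$ and $\sigma_e(A^2) = \{\nu^2 : \nu\in \sigma_e(A)\}$ for positive $A$, so nothing new needs to be verified beyond what was already implicit in the proof of Theorem \ref{TiffT*T_[k]}. Once this identity is in hand the weighted sums $\sum \pi_j s_j(\cdot)$ carry through verbatim.
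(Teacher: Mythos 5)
Your proposal is correct and follows essentially the same route as the paper: the author likewise settles $(1)\iff(2)$ by the identities $s_j(T)=s_j(|T|)$ and $\|Tx\|=\||T|x\|$, and obtains $(2)\iff(3)$ by noting that $\cl N_{[k]}$ and $\cl N_{[\pi,k]}$ coincide for positive operators (Theorem \ref{PositiveN_{[pi, k]}}) and then invoking Theorem \ref{TiffT*T_[k]}. Your additional direct argument and the remark about $s_j(T^*T)=s_j(|T|)^2$ are fine but not needed for the chained version.
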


\begin{proof}
It suffices to establish $(1)\iff (2)$; for then $|T|$ and $T^*T$ are positive and since the sets $\cl{N}_{[k]}$ and $\cl{N}_{[\pi, k]}$ coincide for positive operators,  Theorem \ref{TiffT*T_[k]} yields the equivalence of $(2)$ and $(3)$. But $s_j(T)=s_j(|T|)$ for every $j$ and $\|Tx\|=\|\, |T|x\, \|$ for every $x\in \cl H$ which establishes the equivalence of $(1)$ and $(2)$. 
\end{proof}

\begin{thm}\label{AN_{[pi, k+1]}impliesAN_[pi, k](general)}
Let $A\in \cl B(\cl H, \cl K)$, $\pi \in \Pi$, and $k\in \bb{N}$. If $A\in \cl{AN}_{[\pi, k+1]}$, then $A\in \cl{AN}_{[\pi, k]}$.
\end{thm}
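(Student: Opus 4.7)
The plan is to mimic the argument used earlier for the unweighted case (Theorem \ref{AN_{[k+1]}impliesAN_[k](general)}), simply replacing each ingredient by its weighted Ky~Fan analogue that has already been set up in this section. All the machinery is in place: Lemma \ref{Trick_[pi, k]} lets us test absolute $[\pi,k]$-norming on a subspace by looking at $AV_{\cl M}$; Theorem \ref{TiffT*T_[pi, k]} lets us pass between $AV_{\cl M}$ and its positive modulus $|AV_{\cl M}|$; and Corollary \ref{N_{[pi, k+1]}impliesN_{[pi, k]}} transfers the implication from $[\pi,k+1]$-norming to $[\pi,k]$-norming for positive operators.

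Concretely, I would fix $A\in \cl{AN}_{[\pi,k+1]}$ and an arbitrary nontrivial closed subspace $\cl M\subseteq \cl H$, and then run the chain
\begin{align*}
AV_{\cl M}\in \cl N_{[\pi,k+1]}
&\Longleftrightarrow |AV_{\cl M}|\in \cl N_{[\pi,k+1]}\\
&\Longrightarrow |AV_{\cl M}|\in \cl N_{[\pi,k]}\\
&\Longleftrightarrow AV_{\cl M}\in \cl N_{[\pi,k]},
\end{align*}
where the two equivalences are Theorem \ref{TiffT*T_[pi, k]} applied to the bounded operator $AV_{\cl M}\in \cl B(\cl M,\cl K)$, and the middle implication uses Corollary \ref{N_{[pi, k+1]}impliesN_{[pi, k]}}(1), which applies because $|AV_{\cl M}|$ is a positive operator on $\cl M$. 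The starting assumption $AV_{\cl M}\in\cl N_{[\pi,k+1]}(\cl M,\cl K)$ is supplied by Lemma \ref{Trick_[pi, k]} applied to $A\in \cl{AN}_{[\pi,k+1]}$.

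Since $\cl M$ was arbitrary, $AV_{\cl M}\in\cl N_{[\pi,k]}(\cl M,\cl K)$ for every nontrivial closed subspace $\cl M$ of $\cl H$, and another application of Lemma \ref{Trick_[pi, k]} (this time in the reverse direction) gives $A\in\cl{AN}_{[\pi,k]}$, as desired.

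There is essentially no obstacle: the whole content of the proof is in Corollary \ref{N_{[pi, k+1]}impliesN_{[pi, k]}}(1), which is the hereditary property of $\cl N_{[\pi,k]}$ for positive operators, and this has already been established via Theorem \ref{PositiveN_{[pi, k]}} (which shows that membership of a positive operator in $\cl N_{[\pi,k]}$ is independent of the weight sequence $\pi$, reducing the weighted statement to the unweighted Corollary \ref{N_{[k+1]}impliesN_[k]}). The only thing worth being careful about is keeping track of the domain spaces: the modulus, the Ky~Fan values, and the reference to $\cl N_{[\pi,k]}$ all refer to operators on $\cl M$, not on $\cl H$, so one should apply Theorem \ref{TiffT*T_[pi, k]} to $AV_{\cl M}\in\cl B(\cl M,\cl K)$ rather than to $A$ itself.
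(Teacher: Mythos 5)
Your proposal is correct and is essentially the proof the paper intends: the paper omits the argument, stating only that it is ``similar to that of Theorem \ref{AN_{[k+1]}impliesAN_[k](general)}'', and that earlier proof runs exactly your chain $AV_{\cl M}\in\cl N_{[k+1]}\iff|AV_{\cl M}|\in\cl N_{[k+1]}\implies|AV_{\cl M}|\in\cl N_{[k]}\iff AV_{\cl M}\in\cl N_{[k]}$, which you have correctly transcribed into the weighted setting using Lemma \ref{Trick_[pi, k]}, Theorem \ref{TiffT*T_[pi, k]}, and Corollary \ref{N_{[pi, k+1]}impliesN_{[pi, k]}}. Your closing remark about applying these results to $AV_{\cl M}\in\cl B(\cl M,\cl K)$ rather than to $A$ itself is a sound point of care and consistent with the paper's usage.
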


The proof of the above theorem is similar to that of the Theorem \ref{AN_{[k+1]}impliesAN_[k](general)} and hence omitted. It yields the following obvious corollary.
 \begin{cor}\label{AN_{[pi, k+1]}impliesAN_[pi, k]}
Let $A\in \cl B(\cl H)$ be a positive operator, $\pi \in \Pi$, and $k\in \bb{N}$. If $A\in \cl{AN}_{[\pi, k+1]}$, then $A\in \cl{AN}_{[\pi, k]}$. In particular, if $A\in \cl{AN}_{[\pi, k]}$, then $A\in\cl{AN}$.
 \end{cor}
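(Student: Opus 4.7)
The corollary has two parts, and both are essentially bookkeeping consequences of results already in hand, so the plan is short.

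For the first assertion, my plan is to simply appeal to Theorem~\ref{AN_{[pi, k+1]}impliesAN_[pi, k](general)}, which establishes the containment $\cl{AN}_{[\pi,k+1]}(\cl H,\cl K)\subseteq \cl{AN}_{[\pi,k]}(\cl H,\cl K)$ for arbitrary bounded operators. Specializing the domain to $\cl K=\cl H$ and restricting attention to positive $A$ gives the first sentence of the corollary with no additional work.

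For the ``in particular'' claim, I would iterate the first assertion downward: if $A\in\cl{AN}_{[\pi,k]}$, then repeated application of the first part yields $A\in\cl{AN}_{[\pi,k-1]}\subseteq\cdots\subseteq\cl{AN}_{[\pi,1]}$. Then I would observe that the $[\pi,1]$-norm coincides with the usual operator norm (since $\pi_1=1$ and only one singular value enters), so by unwinding Definition~\ref{N_[pi,k]Def} a $[\pi,1]$-norming operator is precisely a norming operator. Consequently $\cl{AN}_{[\pi,1]}=\cl{AN}$ (not only for positive operators, but in general), which completes the argument. Alternatively, one can invoke Proposition~\ref{EasyPeasy} at the positive level together with Lemma~\ref{Trick_[pi, k]} applied to each restriction $A|_\cl M$ to pass from the local norming condition to membership in $\cl{AN}$.

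There is no real obstacle here; the only minor subtlety is to notice that $[\pi,1]$-norming collapses to ordinary norming because $\pi_1=1$, so that the chain of containments ending at $\cl{AN}_{[\pi,1]}$ actually lands inside $\cl{AN}$.
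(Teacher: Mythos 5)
Your proposal is correct and follows exactly the route the paper intends: the paper presents this corollary as an immediate ("obvious") consequence of Theorem~\ref{AN_{[pi, k+1]}impliesAN_[pi, k](general)}, and the ``in particular'' clause is obtained precisely by iterating the containment down to $\cl{AN}_{[\pi,1]}$ and observing that, since $\pi_1=1$, the $[\pi,1]$-norm is the operator norm so $\cl{AN}_{[\pi,1]}=\cl{AN}$ (cf.\ Proposition~\ref{EasyPeasy}). Nothing is missing.
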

The above corollary along with the forward implication of \cite[Theorem 5.1]{VpSp} yields the following theorem.

\begin{thm}\label{forward_[pi, k]}
Let $\cl{H}$ be a complex Hilbert space, $A$ be a positive operator on $\cl{H}$, $\pi\in \Pi$, and $k\in \bb N$. If $A\in\cl{AN}_{[\pi, k]}$, then $A$ is of the form $A= \alpha I +K+F$, where $\alpha \geq 0, K$ is a positive compact operator and $F$ is self-adjoint finite-rank operator.
\end{thm}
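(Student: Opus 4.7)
The plan is to exploit the chain of implications already assembled in the preceding pages rather than reprove any of the hard analysis. The statement is essentially a direct corollary of the just-proved Corollary \ref{AN_{[pi, k+1]}impliesAN_[pi, k]} together with the positive-operator spectral theorem from \cite{VpSp}.

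First I would fix $\pi\in\Pi$, $k\in\bb N$, and a positive $A\in\cl{AN}_{[\pi,k]}$. By Corollary \ref{AN_{[pi, k+1]}impliesAN_[pi, k]} (the ``in particular'' clause), the hypothesis $A\in\cl{AN}_{[\pi,k]}$ already forces $A\in\cl{AN}$. Intuitively this is because, for any nontrivial closed subspace $\cl M\subseteq\cl H$, the restriction $A|_{\cl M}$ is $[\pi,k]$-norming, which in the positive case is equivalent via Theorem \ref{PositiveN_{[pi, k]}} to the existence of an orthonormal system realizing the top $k$ singular values simultaneously as eigenvectors; specializing to a single vector then gives norm attainment of $A|_{\cl M}$, i.e.\ $A|_{\cl M}\in\cl N$. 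Thus $A\in\cl{AN}$.

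Now the forward implication of \cite[Theorem 5.1]{VpSp}, quoted in the introduction as the positive-operator spectral theorem for $\cl{AN}$, says exactly that a positive operator in $\cl{AN}$ is of the form $\alpha I + K + F$ with $\alpha\geq 0$, $K$ positive compact, and $F$ self-adjoint finite-rank. Applying this to our $A$ yields the desired decomposition.

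I do not expect any genuine obstacle here: every nontrivial step has already been done elsewhere. The only care needed is to verify that the ``in particular'' clause of Corollary \ref{AN_{[pi, k+1]}impliesAN_[pi, k]} is being applied correctly (i.e.\ that $\cl{AN}_{[\pi,k]}\subseteq\cl{AN}$ in the positive case), and then simply cite Theorem \ref{forward_[k]} or \cite[Theorem 5.1]{VpSp} to conclude. The whole proof should fit in two or three lines.
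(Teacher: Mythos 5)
Your proposal is correct and follows exactly the paper's route: the paper likewise derives the theorem by combining the ``in particular'' clause of Corollary \ref{AN_{[pi, k+1]}impliesAN_[pi, k]} (giving $A\in\cl{AN}$) with the forward implication of \cite[Theorem 5.1]{VpSp}. The only cosmetic caveat is that your intuitive aside about applying Theorem \ref{PositiveN_{[pi, k]}} directly to $A|_{\cl M}$ glosses over the fact that $A|_{\cl M}:\cl M\to\cl H$ is not itself positive, but since you are citing the already-proved corollary rather than relying on that sketch, this does not affect the argument.
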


The following theorem is an analogue of Theorem \ref{TVM-TPM-k} and its proof may be handled in much the same way. This result will not be needed until section \ref{symmetrically-normed-ideals}.

\begin{thm}\label{TVM-TPM-Pi-k}
For a closed linear subspace $\cl{M}$ of a complex Hilbert space $\cl{H}$ let $V_{\cl{M}}:\cl{M}\longrightarrow \cl{H}$ be the inclusion map from $\cl{M}$ to $\cl{H}$ defined as $V_{\cl{M}}(x) = x$ for each $x\in \cl{M}$, let $P_{\cl M}\in \cl B(\cl H)$ be the orthogonal projection of $\cl H$ onto $\cl M$, and let $T\in \cl{B}(\cl{H},\cl{K})$.
For any sequence $\pi \in \Pi$ and for any $k\in\bb{N}$, the following statements are equivalent.
\begin{enumerate}
\item $T\in \cl{AN}_{[\pi, k]}(\cl{H},\cl{K})$.
\item $TV_{\cl{M}}\in\cl{N}_{[\pi, k]}(\cl{M},\cl{K})$ for every nontrivial closed linear subspace $\cl{M}$ of $\cl{H}$.
\item $TP_{\cl{M}}\in\cl{N}_{[\pi, k]}(\cl{H},\cl{K})$ for every nontrivial closed linear subspace $\cl{M}$ of $\cl{H}$.
\end{enumerate} 
\end{thm}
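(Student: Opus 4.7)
The plan is to mimic the proof of Theorem \ref{TVM-TPM-k} step by step, substituting the Ky Fan $k$-norm machinery with its weighted $[\pi,k]$ analogue (which has already been developed earlier in this very section). The equivalence $(1)\iff(2)$ is immediate from Lemma \ref{Trick_[pi, k]}, so there is nothing to do there. The real content lies in $(1)\iff(3)$.

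For $(1)\implies(3)$, I would fix an arbitrary nontrivial closed subspace $\cl M$ of $\cl H$ and first verify the key identity $\sigma(|T|_{\cl M}|)\setminus\{0\}=\sigma(|TP_{\cl M}|)\setminus\{0\}$. This is a routine verification via the operator identity $|T|_{\cl M}|^2=V_{\cl M}^*T^*TV_{\cl M}$ together with $|TP_{\cl M}|^2=P_{\cl M}T^*TP_{\cl M}$, noting that the nonzero spectra of $V_{\cl M}^*AV_{\cl M}$ and $P_{\cl M}AP_{\cl M}$ coincide for any $A\in\cl B(\cl H)$. Consequently $s_j(T|_{\cl M})=s_j(TP_{\cl M})$ for every $j$, which yields $\|T|_{\cl M}\|_{[\pi,k]}=\|TP_{\cl M}\|_{[\pi,k]}$. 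Combined with $\|T|_{\cl M}x\|=\|TP_{\cl M}x\|$ for every $x\in\cl M$ and the fact that $TP_{\cl M}$ vanishes on $\cl M^\perp$, any orthonormal $k$-tuple in $\cl M$ that witnesses $T|_{\cl M}\in\cl N_{[\pi,k]}$ also witnesses $TP_{\cl M}\in\cl N_{[\pi,k]}$.

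For the converse $(3)\implies(1)$, I would fix a nontrivial closed subspace $\cl M$ and assume $TP_{\cl M}\in\cl N_{[\pi,k]}(\cl H,\cl K)$. By Theorem \ref{TiffT*T_[pi, k]} this is equivalent to $|TP_{\cl M}|\in\cl N_{[\pi,k]}(\cl H)$, and since this latter operator is positive, Theorem \ref{PositiveN_{[pi, k]}} supplies an orthonormal set $\{x_1,\dots,x_k\}\subseteq\cl H$ with $|TP_{\cl M}|x_j=s_j(|TP_{\cl M}|)x_j$ for each $j$. The crucial step is then to show that the $x_j$ actually lie in $\cl M$ whenever the corresponding singular values are nonzero: if $s_j(|TP_{\cl M}|)\neq 0$, then $|TP_{\cl M}|^2x_j=P_{\cl M}T^*TP_{\cl M}x_j$ is a nonzero element of the range of $P_{\cl M}$, which forces $x_j\in\cl M$. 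Once the $x_j$ sit in $\cl M$, the string of equalities
\begin{equation*}
\|T|_{\cl M}\|_{[\pi,k]}=\|TP_{\cl M}\|_{[\pi,k]}=\sum_{j=1}^{k}\pi_j\||TP_{\cl M}|x_j\|=\sum_{j=1}^{k}\pi_j\|Tx_j\|=\sum_{j=1}^{k}\pi_j\|T|_{\cl M}x_j\|
\end{equation*}
certifies $T|_{\cl M}\in\cl N_{[\pi,k]}(\cl M,\cl K)$. Arbitrariness of $\cl M$ then gives $T\in\cl{AN}_{[\pi,k]}$.

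The only place that needs care is the dimension/degeneracy bookkeeping: if $\dim(\cl M)=r<k$ or if some $s_j(|TP_{\cl M}|)=0$, the argument above must be adjusted so that one works with only the nonzero part of the spectrum (and pads the orthonormal set inside $\cl M$ with vectors annihilated by $T|_{\cl M}$, which do not contribute to either the norm $\|T|_{\cl M}\|_{[\pi,k]}$ or to $\sum_j\pi_j\|T|_{\cl M}x_j\|$ in the relevant summands). This parallels exactly the handling in Theorem \ref{TVM-TPM-k}, so the main obstacle is really just verifying that the eigenvector localization $x_j\in\cl M$ still goes through in the weighted setting; since the argument only uses the range of $P_{\cl M}$ and the positivity of $|TP_{\cl M}|$, the weights $\pi_j$ play no role in that step and the proof carries over with only cosmetic changes from the unweighted case.
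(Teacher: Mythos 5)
Your proposal is correct and is exactly the argument the paper intends: the paper omits the proof of Theorem \ref{TVM-TPM-Pi-k}, remarking only that it ``may be handled in much the same way'' as Theorem \ref{TVM-TPM-k}, and your write-up is precisely that argument with the Ky Fan machinery replaced by its $[\pi,k]$ analogues (Lemma \ref{Trick_[pi, k]}, Theorems \ref{TiffT*T_[pi, k]} and \ref{PositiveN_{[pi, k]}}). The key steps --- the coincidence of the nonzero spectra of $|T|_{\cl M}|$ and $|TP_{\cl M}|$, and the localization of the eigenvectors of $|TP_{\cl M}|^2$ in $\cl M$ for nonzero singular values --- are carried over faithfully, and your observation that the weights $\pi_j$ play no role in either step is exactly why the paper felt safe omitting the details.
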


\subsection{Sufficient Conditions for Positive Operators in $\cl{AN}_{[\pi,k]}$}

We now mention the sufficient conditions for a positive operator to be absolutely $[\pi, k]$-norming for every $\pi \in \Pi$ and for every $k \in \bb N$. We begin by stating a proposition that gives a sufficient condition for a positive operator to be $[\pi, k]$-norming for every $\pi \in \Pi$ and for every $k \in \bb N$, the proof of which is easy to see.
\begin{prop}
Let $K\in\cl{B}(\cl{H})$ be a positive compact operator, $F\in\cl{B}(\cl{H})$ be a self-adjoint finite-rank operator, and $\alpha \geq 0$ such that $\alpha I+K+F\geq 0$. Then $\alpha I+K+F\in\cl{N}_{[\pi, k]}$ for every $\pi \in \Pi$ and for every $k\in \bb{N}$.
\end{prop}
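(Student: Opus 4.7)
The plan is to reduce the statement directly to two results already established in the paper. Since $\alpha I + K + F$ is assumed to be positive, it falls into the scope of Theorem \ref{PositiveN_{[pi, k]}}, which tells us that for positive operators the classes $\cl N_{[k]}$ and $\cl N_{[\pi, k]}$ coincide (for every $\pi \in \Pi$). So the task reduces to showing that $\alpha I + K + F \in \cl N_{[k]}$ for every $k \in \bb N$, and this is exactly what Proposition \ref{alphaI+K+FisN_[k]} gives us (with no positivity hypothesis even required there).

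So the proof I would write is essentially two lines. First, fix $\pi \in \Pi$ and $k \in \bb N$ arbitrarily. By Proposition \ref{alphaI+K+FisN_[k]}, we have $\alpha I + K + F \in \cl N_{[k]}$. Second, since $\alpha I + K + F \geq 0$ by hypothesis, Theorem \ref{PositiveN_{[pi, k]}} (specifically the equivalence $(1) \iff (2)$ applied to the positive operator $A := \alpha I + K + F$) yields $\alpha I + K + F \in \cl N_{[\pi, k]}$. Since $\pi$ and $k$ were arbitrary, the conclusion holds for every $\pi \in \Pi$ and every $k \in \bb N$.

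There is no real obstacle here, which is why the author describes the proof as ``easy to see.'' The substantive content was already absorbed into Proposition \ref{alphaI+K+FisN_[k]} (which carefully analyzed the eigenstructure of $|\alpha I + K + F|$ and exhibited $k$ orthogonal eigenvectors realizing the first $k$ singular values) and into Theorem \ref{PositiveN_{[pi, k]}} (which showed that for a positive operator the existence of such an orthonormal eigenbasis for the top $k$ singular values already guarantees the weighted variant). The only point worth remarking in the write-up is that the positivity hypothesis on $\alpha I + K + F$ is \emph{essential} for the second step: without it one could still apply Proposition \ref{alphaI+K+FisN_[k]}, but the transition to $\cl N_{[\pi, k]}$ via Theorem \ref{PositiveN_{[pi, k]}} would not be available, and one would instead have to pass through $|\alpha I + K + F|$ using Theorem \ref{TiffT*T_[pi, k]}, which is a slightly longer route.
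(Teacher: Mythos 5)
Your proof is correct and is precisely the argument the paper intends (the paper omits it as ``easy to see''): combine Proposition \ref{alphaI+K+FisN_[k]} with the equivalence $(1)\iff(2)$ of Theorem \ref{PositiveN_{[pi, k]}} applied to the positive operator $\alpha I+K+F$. Your closing remark about the non-positive case needing the detour through $|\alpha I+K+F|$ is also consistent with how the paper later handles that case in Lemma \ref{backward_[pi, k]strong(prereq)}.
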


This proposition serves to be the key to the following theorem.

\begin{thm}\label{backward_[pi, k]}
Let $K\in\cl{B}(\cl{H})$ be a positive compact operator, $F\in\cl{B}(\cl{H})$ be a self-adjoint finite-rank operator, and $\alpha \geq 0$ such that $\alpha I+K+F\geq 0$. Then $\alpha I+K+F\in\cl{AN}_{[\pi, k]}$ for every $\pi \in \Pi$ and for every $k\in \bb{N}$.
\end{thm}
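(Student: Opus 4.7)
The plan is to mimic the proof of Theorem \ref{backward_[k]} essentially verbatim, substituting the $[\pi,k]$-analogues of each ingredient for their $[k]$-counterparts. Set $T := \alpha I + K + F$, which is positive by hypothesis, so $|T| = T$ and
\begin{equation*}
T^*T = T^2 = (\alpha I + K + F)^2 = \beta I + \tilde K + \tilde F,
\end{equation*}
where $\beta := \alpha^2 \geq 0$, $\tilde K := 2\alpha K + K^2$ is a positive compact operator, and $\tilde F := 2\alpha F + KF + FK + F^2$ is a self-adjoint finite-rank operator.

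Next, fix $\pi \in \Pi$ and $k \in \bb N$, and let $\cl M$ be an arbitrary nontrivial closed subspace of $\cl H$ with inclusion map $V_{\cl M}:\cl M \to \cl H$. In view of Proposition \ref{Tiff|T|_[pi, k]} and Lemma \ref{Trick_[pi, k]}, the goal reduces to showing $TV_{\cl M} \in \cl N_{[\pi,k]}(\cl M,\cl H)$ for every such $\cl M$. By Theorem \ref{TiffT*T_[pi, k]}, this in turn is equivalent to
\begin{equation*}
(TV_{\cl M})^* (TV_{\cl M}) = V_{\cl M}^* T^*T\, V_{\cl M} = V_{\cl M}^*(\beta I + \tilde K + \tilde F) V_{\cl M} \in \cl N_{[\pi,k]}(\cl M).
\end{equation*}
A direct computation gives
\begin{equation*}
V_{\cl M}^*(\beta I + \tilde K + \tilde F)V_{\cl M} = \beta I_{\cl M} + V_{\cl M}^*\tilde K V_{\cl M} + V_{\cl M}^* \tilde F V_{\cl M},
\end{equation*}
and this is the sum of a nonnegative scalar multiple of the identity on $\cl M$, a positive compact operator on $\cl M$, and a self-adjoint finite-rank operator on $\cl M$. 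Moreover, since $T \geq 0$ we have $V_{\cl M}^* T^2 V_{\cl M} = (TV_{\cl M})^*(TV_{\cl M}) \geq 0$, so the hypothesis of the preceding proposition is met on $\cl M$.

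By the preceding proposition (the $[\pi,k]$-analogue of Proposition \ref{alphaI+K+FisN_[k]}), this operator lies in $\cl N_{[\pi,k]}(\cl M)$. Since $\cl M$ was arbitrary, the chain of equivalences above gives $T \in \cl{AN}_{[\pi,k]}$, and since $\pi \in \Pi$ and $k \in \bb N$ were arbitrary, the assertion holds in full generality. There is no genuine obstacle here — the only point that requires care is to verify that $V_{\cl M}^* T^2 V_{\cl M}$ really has the form ``scalar multiple of identity + positive compact + self-adjoint finite-rank'' on $\cl M$, which is immediate since compression by the isometry $V_{\cl M}$ preserves positivity, compactness, self-adjointness, and (non-strict) bounds on rank.
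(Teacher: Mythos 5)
Your proposal is correct and follows essentially the same route as the paper's own proof: reduce via Proposition \ref{Tiff|T|_[pi, k]} and Lemma \ref{Trick_[pi, k]} to showing $|T|V_{\cl M}\in\cl N_{[\pi,k]}$, pass to $V_{\cl M}^*(\beta I+\tilde K+\tilde F)V_{\cl M}$ via Theorem \ref{TiffT*T_[pi, k]}, and invoke the preceding $\cl N_{[\pi,k]}$ proposition on the compression. The only cosmetic difference is that you exploit $|T|=T$ from positivity at the outset, which the paper does not need to state explicitly.
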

\begin{proof}
Let us fix $\pi \in \Pi$ and $k\in \bb{N}$, and let us define $T:= \alpha I+K+F$. Due to Proposition \ref{Tiff|T|_[pi, k]}, $T\in\cl{AN}_{[\pi, k]}$ if and only if $|T|\in\cl{AN}_{[\pi, k]}$, which due to Lemma \ref{Trick_[pi, k]}, is possible if and only if for every nontrivial closed linear subspace $\cl M$ of $\cl H,\ |T|V_{\cl M}\in\cl{N}_{[\pi, k]}$, where $V_{\cl M}:\cl M\longrightarrow \cl H$ is the inclusion map defined as $V_{\cl M}(x)=x$ for each $x\in \cl M$. We show the last of these equivalent statements. 

Notice that $|T|= |\alpha I+K+F|$ and  $|T|^*|T| = \beta I+\tilde K + \tilde F$ where $\beta=\alpha^2 \geq 0$, and, $\tilde K = 2\alpha K+K^2$ and $\tilde F = 2\alpha F + FK +KF +F^2$ are respectively positive compact and self-adjoint finite-rank operators. It is easy to see that $\beta I+\tilde K + \tilde F\geq 0$.  Next we fix a closed linear subspace $\cl M$ of $\cl H$ and observe that 
\begin{align*}
 &|T|V_\cl{M}\in\cl{N}_{[\pi, k]}\\
 \iff  &(|T|V_\cl{M})^*(|T|V_\cl{M}) \in \cl{N}_{[\pi, k]} \\ 
 \iff &V_\cl{M}^*(|T|^*|T|)V_\cl{M} \in \cl{N}_{[\pi, k]} \\
  \iff & V_\cl{M}^*( \beta I+\tilde K + \tilde F)V_\cl{M} \in \cl{N}_{[\pi, k]} ,
\end{align*}
where the first equivalence is due to the Theorem \ref{TiffT*T_[pi, k]}.
It suffices to show that $V_\cl{M}^*( \beta I+\tilde K + \tilde F)V_\cl{M}\in\cl{N}_{[\pi, k]}$;for then, since $\cl M$ is arbitrary, the assertion immediately follows. To this end, notice that $V_\cl{M}^*( \beta I+\tilde K + \tilde F)V_\cl{M}:\cl{M}\longrightarrow \cl{M}$ is an operator on $\cl{M}$ and 
$$
V_\cl{M}^*( \beta I+\tilde K + \tilde F)V_\cl{M} = V_\cl{M}^*\beta IV_\cl{M} +V_\cl{M}^*\tilde K V_\cl{M} +V_\cl{M}^* \tilde FV_\cl{M} = \beta I_\cl{M} +\tilde K_\cl{M}+\tilde F_\cl{M}
$$
is the sum of a nonnegative scalar multiple of Identity, a positive compact operator and a self-adjoint finite-rank operator on the fixed Hilbert space $\cl{M}$ such that this sum is a positive operator on this Hilbert space $\cl{M}$ which, by the preceding proposition, belongs to $\cl{N}_{[\pi, k]}$. Moreover, since $\pi\in \Pi$ and $k\in \bb N$ are arbitrary, the result holds for every $\pi \in \Pi$ and for every $k\in \bb N$. This completes the proof.
\end{proof}

As an immediate consequence of the Theorem \ref{forward_[pi, k]} and Theorem \ref{backward_[pi, k]}, we get the following theorem which completely characterizes positive operators that are absolutely $[\pi, k]$-norming for any and every $\pi \in \Pi$ and $k\in \bb{N}$.

\begin{thm}[Spectral Theorem for Positive Operators in $\cl{AN}_{[\pi, k]}$]\label{SpThPAN_[pi,k]}
Let $\cl{H}$ be a complex Hilbert space of arbitrary dimension and $P$ be a positive operator on $\cl{H}$. Then the following statements are equivalent.
\begin{enumerate}
\item $P\in\cl{AN}_{[\pi, k]}$ for every $\pi \in \Pi$ and for every $k\in \bb N$.
\item $P\in\cl{AN}_{[\pi, k]}$ for some $\pi \in \Pi$ and for some $k\in \bb N$.
\item $P$ is of the form $P= \alpha I +K+F$, where $\alpha \geq 0, K$ is a positive compact operator and $F$ is self-adjoint finite-rank operator.
\end{enumerate}
\end{thm}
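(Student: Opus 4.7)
The plan is to establish the cyclic chain of implications $(1) \Rightarrow (2) \Rightarrow (3) \Rightarrow (1)$, each of which is essentially immediate from results that have already been developed in this section. No new machinery is needed; the theorem is packaging the forward and backward characterizations we proved separately into a single equivalence in the positive case.

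First I would dispatch $(1) \Rightarrow (2)$ by the trivial remark that a universally quantified statement implies the existential one. Next, for $(2) \Rightarrow (3)$, I would invoke Theorem \ref{forward_[pi, k]} directly: since $P$ is a positive operator and $P \in \cl{AN}_{[\pi, k]}$ for some particular $\pi \in \Pi$ and some $k \in \bb N$, that theorem yields the required decomposition $P = \alpha I + K + F$ with $\alpha \geq 0$, $K$ a positive compact operator, and $F$ a self-adjoint finite-rank operator.

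Finally, for $(3) \Rightarrow (1)$, I would appeal to Theorem \ref{backward_[pi, k]}. The only subtlety is that Theorem \ref{backward_[pi, k]} is stated under the additional hypothesis $\alpha I + K + F \geq 0$, so I would explicitly note that this hypothesis is free here: by assumption $P$ is positive and $P = \alpha I + K + F$, so $\alpha I + K + F \geq 0$ automatically. Theorem \ref{backward_[pi, k]} then delivers $P \in \cl{AN}_{[\pi, k]}$ simultaneously for every $\pi \in \Pi$ and every $k \in \bb N$, closing the cycle.

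There is no genuine obstacle in this final step, since all of the technical work was absorbed into Theorems \ref{forward_[pi, k]} and \ref{backward_[pi, k]}. The only thing worth flagging in the write-up is the positivity hypothesis threading through $(3) \Rightarrow (1)$, which is what allows us to apply Theorem \ref{backward_[pi, k]} without any additional assumption on the decomposition.
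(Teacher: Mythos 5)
Your proposal is correct and follows exactly the route the paper takes: the paper derives this theorem "as an immediate consequence" of Theorem \ref{forward_[pi, k]} (for $(2)\Rightarrow(3)$) and Theorem \ref{backward_[pi, k]} (for $(3)\Rightarrow(1)$), with $(1)\Rightarrow(2)$ trivial. Your explicit remark that the hypothesis $\alpha I+K+F\geq 0$ in Theorem \ref{backward_[pi, k]} is automatically satisfied because $P$ is positive is a worthwhile clarification that the paper leaves implicit.
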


At this point, readers can move on to the result (see Theorem \ref{SpThAN_[pi, k]}) in the next section which completes the proposed motive of characterizing bounded operators on complex Hilbert spaces of arbitrary dimensions that attain their weighted Ky Fan $\pi,k$-norm on every closed subspace. However, it is perhaps worth a short digression to address the following question before closing this section: What can be said along the lines of Theorem \ref{backward_[pi, k]} in the case of an operator in the same form of $\alpha I +K+F$ which is not necessarily positive? We still have our other hypotheses, that is, $K\in \cl B(\cl H)$ is a positive compact operator, $F\in \cl B(\cl H)$ is self-adjoint operator, and $\alpha \geq 0.$ We address this question in the Proposition \ref{backward_[pi, k]strong}, the proof of which is left to the reader. The proof essentially requires the following lemma. 

\begin{lemma}\label{backward_[pi, k]strong(prereq)}
Let $K\in\cl{B}(\cl{H})$ be a positive compact operator, $F\in\cl{B}(\cl{H})$ be a self-adjoint finite-rank operator, and $\alpha \geq 0$. Then $\alpha I+K+F\in\cl{N}_{[\pi, k]}$ for every $\pi \in \Pi$ and for every $k\in \bb{N}$.
\end{lemma}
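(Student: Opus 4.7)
The plan is to reduce the $[\pi,k]$-norming statement to the (already established) $[k]$-norming statement by passing through the absolute value, exploiting the fact that for \emph{positive} operators the two classes $\cl N_{[k]}$ and $\cl N_{[\pi,k]}$ coincide (Theorem \ref{PositiveN_{[pi, k]}}).

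Fix $\pi\in\Pi$ and $k\in\bb N$, and set $T:=\alpha I+K+F$. Although $T$ itself need not be positive (since $F$ may have negative eigenvalues exceeding $\alpha$), the operator $|T|$ is. First I would invoke Theorem \ref{TiffT*T_[pi, k]} to observe that
\[
T\in\cl N_{[\pi,k]}\iff |T|\in\cl N_{[\pi,k]},
\]
and analogously Theorem \ref{TiffT*T_[k]} gives $T\in\cl N_{[k]}\iff |T|\in\cl N_{[k]}$. Hence everything is reduced to showing $|T|\in\cl N_{[\pi,k]}$.

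Next I would bring in the positivity of $|T|$: by Theorem \ref{PositiveN_{[pi, k]}}, for a positive operator membership in $\cl N_{[k]}$ and membership in $\cl N_{[\pi,k]}$ are equivalent. So it suffices to prove $|T|\in\cl N_{[k]}$. But Proposition \ref{alphaI+K+FisN_[k]} gives precisely $T=\alpha I+K+F\in\cl N_{[k]}$, and by the equivalence from Theorem \ref{TiffT*T_[k]} this transfers to $|T|\in\cl N_{[k]}$. Chaining the equivalences yields $|T|\in\cl N_{[\pi,k]}$, and applying Theorem \ref{TiffT*T_[pi, k]} once more delivers $T\in\cl N_{[\pi,k]}$. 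Since $\pi$ and $k$ were arbitrary, the conclusion holds for every $\pi\in\Pi$ and $k\in\bb N$.

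There is essentially no hard step here: the whole argument is a formal reduction. The only point worth double-checking is that the two equivalences $\cl N_{[k]}\leftrightarrow\cl N_{[\pi,k]}$ (for positive operators) and $\cl N_{[\pi,k]}(T)\leftrightarrow\cl N_{[\pi,k]}(|T|)$ (for general $T$) are indeed already in the text; both have been established (Theorems \ref{PositiveN_{[pi, k]}} and \ref{TiffT*T_[pi, k]} respectively). One could alternatively re-run the explicit matrix argument from the proof of Proposition \ref{alphaI+K+FisN_[k]}, producing the $k$ orthonormal eigenvectors of $|T|$ corresponding to its top $k$ eigenvalues and then noting that the same vectors witness $\|\,|T|\,\|_{[\pi,k]}=\sum_{j=1}^k\pi_j\||T|v_j\|$, but that would merely reproduce the content of Theorem \ref{PositiveN_{[pi, k]}} and is unnecessary.
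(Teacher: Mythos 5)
Your argument is correct and is essentially identical to the paper's own proof: both establish the chain $T\in\cl N_{[k]}\iff |T|\in\cl N_{[k]}\iff |T|\in\cl N_{[\pi,k]}\iff T\in\cl N_{[\pi,k]}$ via Theorems \ref{TiffT*T_[k]}, \ref{PositiveN_{[pi, k]}} and \ref{TiffT*T_[pi, k]}, and then feed in Proposition \ref{alphaI+K+FisN_[k]}. No gaps; the citations you flag as needing verification are indeed all in place.
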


\begin{proof}
Fix $\pi\in \Pi$ and $k\in \bb N$. For any bounded operator $T\in \cl B(\cl H, \cl K)$ we observe that
\begin{align*}
T\in \cl N_{[k]} &\iff |T| \in \cl N_{[k]}\\
&\iff |T|\in \cl N_{[\pi, k]} \\
&\iff T\in \cl N_{[\pi, k]},
\end{align*}
where the first equivalence is due to the Theorem \ref{TiffT*T_[k]}, the second equivalence is due to the Theorem \ref{PositiveN_{[pi, k]}} and the last equivalence is due to the Theorem \ref{TiffT*T_[pi, k]}. 
This observation when applied to the Proposition \ref{alphaI+K+FisN_[k]} proves that $\alpha I +K+F\in \cl N_{[\pi, k]}$. Since $\pi\in \Pi$ and $k\in \bb N$ are arbitrary, it follows that $\alpha I +K+F\in \cl N_{[\pi, k]}$ for every $\pi \in \Pi$ and for every $k\in \bb N$ thereby proving the assertion.
\end{proof}

\begin{remark}\label{N_[k]iffN_[pi, k]}
The proof of the Lemma \ref{backward_[pi, k]strong(prereq)} uses a rather interesting result which deserves to be stated for its intrinsic interest. If $T\in \cl B(\cl H, \cl K),\, \pi\in \Pi$, and $k\in \bb N$, then $T\in \cl N_{[k]} 
\iff T\in \cl N_{[\pi, k]}$.
\end{remark}

\begin{prop}\label{backward_[pi, k]strong}
Let $K\in\cl{B}(\cl{H})$ be a positive compact operator, $F\in\cl{B}(\cl{H})$ be a self-adjoint finite-rank operator, and $\alpha \geq 0$. Then $\alpha I+K+F\in\cl{AN}_{[\pi, k]}$ for every $\pi \in \Pi$ and for every $k\in \bb{N}$.
\end{prop}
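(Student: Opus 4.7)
The plan is to bootstrap from the already established Theorem \ref{backward_[k]} (which handles the non-positive case for the unweighted Ky Fan norm) by combining it with the subspace-norming criterion of Lemma \ref{Trick_[pi, k]} and the observation, recorded in Remark \ref{N_[k]iffN_[pi, k]}, that the classes $\cl N_{[k]}$ and $\cl N_{[\pi,k]}$ actually coincide at the level of individual operators.

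First I would fix arbitrary $\pi\in\Pi$ and $k\in\bb N$ and set $T:=\alpha I+K+F$. By Theorem \ref{backward_[k]}, $T\in\cl{AN}_{[k]}(\cl H)$, so by Lemma \ref{Trick_[k]} we have $TV_{\cl M}\in\cl N_{[k]}(\cl M,\cl H)$ for every nontrivial closed linear subspace $\cl M$ of $\cl H$, where $V_{\cl M}:\cl M\longrightarrow\cl H$ is the inclusion. Applying Remark \ref{N_[k]iffN_[pi, k]} to the bounded operator $TV_{\cl M}$ then yields $TV_{\cl M}\in\cl N_{[\pi,k]}(\cl M,\cl H)$. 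Since $\cl M$ was an arbitrary nontrivial closed subspace, Lemma \ref{Trick_[pi, k]} gives $T\in\cl{AN}_{[\pi,k]}(\cl H)$. As $\pi\in\Pi$ and $k\in\bb N$ were arbitrary, this proves the proposition.

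There is essentially no main obstacle: all the heavy lifting has already been done in Theorem \ref{backward_[k]} (which establishes the unweighted case without positivity) and in the proof of Lemma \ref{backward_[pi, k]strong(prereq)} (which supplies the equivalence $\cl N_{[k]}=\cl N_{[\pi,k]}$ for arbitrary bounded operators via passage to $|T|$ and Theorem \ref{PositiveN_{[pi, k]}}). The only thing to verify is that absolute norming is, by definition via the inclusion trick (Lemmas \ref{Trick_[k]} and \ref{Trick_[pi, k]}), a property that depends only on which individual operators lie in the corresponding $\cl N$-class on each subspace; once the unweighted/weighted $\cl N$-classes are identified, the corresponding $\cl{AN}$-classes are identified as well. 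If anything, the subtlety worth flagging is that one must apply the remark to $TV_{\cl M}$ (not to $T$ restricted and then reweighted), which is harmless because $V_{\cl M}$ is an isometry so $\|TV_{\cl M}x\|=\|T|_{\cl M}x\|$ for every $x\in\cl M$, and the singular values of $TV_{\cl M}$ and $T|_{\cl M}$ agree.
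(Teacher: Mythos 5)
Your proof is correct, and it takes a genuinely different (and shorter) route than the paper. The paper's own proof mirrors its proof of Theorem \ref{backward_[pi, k]}: it reduces, via Proposition \ref{Tiff|T|_[pi, k]} and Lemma \ref{Trick_[pi, k]}, to showing $|T|V_{\cl M}\in\cl N_{[\pi,k]}$, rewrites this as $V_{\cl M}^*(\beta I+\tilde K+\tilde F)V_{\cl M}\in\cl N_{[\pi,k]}$ with $\beta I+\tilde K+\tilde F=T^*T$, observes that this compression is again of the form (nonnegative scalar)$\cdot I_{\cl M}$ plus positive compact plus self-adjoint finite-rank on $\cl M$, and then invokes Lemma \ref{backward_[pi, k]strong(prereq)} on the Hilbert space $\cl M$. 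You instead piggyback directly on Theorem \ref{backward_[k]}: since $T\in\cl{AN}_{[k]}$ is already known, Lemma \ref{Trick_[k]} gives $TV_{\cl M}\in\cl N_{[k]}(\cl M,\cl H)$ for every $\cl M$, and the operator-level equivalence $\cl N_{[k]}=\cl N_{[\pi,k]}$ of Remark \ref{N_[k]iffN_[pi, k]} (applied to $TV_{\cl M}\in\cl B(\cl M,\cl H)$, which is legitimate since the remark and the theorems behind it are stated for arbitrary $\cl B(\cl H,\cl K)$) converts this to $TV_{\cl M}\in\cl N_{[\pi,k]}(\cl M,\cl H)$, whence Lemma \ref{Trick_[pi, k]} finishes. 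What your approach buys is the avoidance of any recomputation of $|T|^*|T|$ and its compressions; it also isolates the cleaner general principle that since the $\cl N$-classes coincide operator by operator, the $\cl{AN}$-classes coincide as well, i.e. $\cl{AN}_{[k]}=\cl{AN}_{[\pi,k]}$ outright, of which the proposition is an immediate special case. What the paper's route buys is self-containedness of the ``compression of $\alpha I+K+F$ is again of that form'' mechanism, which it reuses verbatim across Theorems \ref{backward_[k]}, \ref{backward_[pi, k]} and \ref{backward_(p, k)}. Both arguments ultimately rest on the same pillar, namely Theorem \ref{PositiveN_{[pi, k]}}.
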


\begin{proof}This proof is very similar to the proof of the Theorem \ref{backward_[pi, k]}. As before, let us define $T:= \alpha I+K+F$. We need to show that $T\in\cl{AN}_{[\pi, k]}$ for every $\pi \in \Pi$ and for every $k\in \bb{N}$. Let us fix $\pi\in \Pi$ and $k\in \bb N$. 
The Proposition \ref{Tiff|T|_[pi, k]}, together with the Lemma \ref{Trick_[pi, k]} shows that it suffices to show that for every nontrivial closed linear subspace $\cl M$ of $\cl H,\ |T|V_{\cl M}\in\cl{N}_{[\pi, k]}$, where $V_{\cl M}:\cl M\longrightarrow \cl H$ is the inclusion map as defined earlier. Next we fix a closed linear subspace $\cl M$ of $\cl H$ and observe that
\begin{align*}
 &|T|V_\cl{M} \in\cl{N}_{[\pi, k]}\\
  \iff & V_\cl{M}^*( \beta I+\tilde K + \tilde F)V_\cl{M} \in\cl{N}_{[\pi, k]},
\end{align*}
where 
$\beta I+\tilde K + \tilde F=|T|^*|T|$ with $\beta=\alpha^2 \geq 0$, $\tilde K = 2\alpha K+K^2$ and $\tilde F = 2\alpha F + FK +KF +F^2$. 
All that remains to be shown is that $V_\cl{M}^*( \beta I+\tilde K + \tilde F)V_\cl{M}\in\cl{N}_{[\pi, k]}$; for then, since $\cl M$ is arbitrary, the assertion immediately follows. To this end, notice that $V_\cl{M}^*( \beta I+\tilde K + \tilde F)V_\cl{M}:\cl{M}\longrightarrow \cl{M}$ is an operator on $\cl{M}$ and 
$
V_\cl{M}^*( \beta I+\tilde K + \tilde F)V_\cl{M} = \beta I_\cl{M} +\tilde K_\cl{M}+\tilde F_\cl{M}
$
is the sum of a nonnegative scalar multiple of Identity, a positive compact operator and a self-adjoint finite-rank operator on the fixed Hilbert space $\cl{M}$ which, by the preceding lemma, belongs to $\cl{N}_{[\pi, k]}$. Since $\pi\in \Pi$ and $k\in \bb N$ are arbitrary, the result holds for every $\pi \in \Pi$ and for every $k\in \bb N$ and thus an operator of the above form belongs to $\cl{AN}_{[\pi, k]}$ for every $\pi \in \Pi$ and for every $k\in \bb N$. This completes the proof.
\end{proof}

\section{Spectral Characterization of Operators in $\cl{AN}_{[\pi,k]}$}
By Proposition \ref{Tiff|T|_[pi, k]}, the polar decomposition theorem (see Theorem \ref{PDT}) and the spectral theorem for positive operators $\cl{AN}_{[\pi, k]}$ (see Theorem \ref{SpThPAN_[pi,k]}), we can safely consider the following theorem to be fully proved.

\begin{thm}[Spectral Theorem for Operators in $\cl{AN}_{[\pi, k]}$]\label{SpThAN_[pi, k]}
Let $\cl{H}$ and $\cl{K}$ be complex Hilbert spaces of arbitrary dimensions, let $T\in \cl{B}(\cl{H},\cl{K})$ and let $T=U|T|$ be its polar decomposition.
Then the following statements are equivalent.
\begin{enumerate}
\item $T\in\cl{AN}_{[\pi, k]}$ for every $\pi \in \Pi$ and for every $k\in \bb N$.
\item $T\in\cl{AN}_{[\pi, k]}$ for some $\pi \in \Pi$ and for some $k\in \bb N$.
\item $|T|$ is of the form $|T|= \alpha I +K+F$, where $\alpha \geq 0, K$ is a positive compact operator and $F$ is self-adjoint finite-rank operator.
\end{enumerate}  
\end{thm}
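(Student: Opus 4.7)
The plan is to obtain this theorem as a cycle $(1) \Rightarrow (2) \Rightarrow (3) \Rightarrow (1)$ using essentially nothing beyond Proposition \ref{Tiff|T|_[pi, k]} (the $T \leftrightarrow |T|$ transfer) and Theorem \ref{SpThPAN_[pi,k]} (the positive-operator version). The polar decomposition $T = U|T|$ enters only as the standard device that separates $T$ into its unitary/partial-isometric ``direction'' and its positive ``size'' $|T|$; all the structural information lives in $|T|$.

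First, I would dispose of $(1) \Rightarrow (2)$ trivially: if $T \in \cl{AN}_{[\pi,k]}$ for \emph{every} $\pi \in \Pi$ and every $k \in \bb{N}$, then in particular there is \emph{some} such pair. Next, for $(2) \Rightarrow (3)$, suppose $T \in \cl{AN}_{[\pi_0,k_0]}$ for some fixed $\pi_0 \in \Pi$ and $k_0 \in \bb{N}$. Applying Proposition \ref{Tiff|T|_[pi, k]} (with $\pi = \pi_0$, $k = k_0$), we get $|T| \in \cl{AN}_{[\pi_0,k_0]}$. Because $|T|$ is a positive operator on $\cl{H}$, Theorem \ref{SpThPAN_[pi,k]} (specifically, the implication $(2) \Rightarrow (3)$ there) now guarantees that $|T|$ admits the decomposition $|T| = \alpha I + K + F$ with $\alpha \geq 0$, $K \geq 0$ compact, and $F = F^*$ of finite rank. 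This yields (3).

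For $(3) \Rightarrow (1)$, assume $|T| = \alpha I + K + F$ of the stated form. Since $|T|$ is positive, Theorem \ref{SpThPAN_[pi,k]} in the direction $(3) \Rightarrow (1)$ delivers $|T| \in \cl{AN}_{[\pi,k]}$ for \emph{every} $\pi \in \Pi$ and every $k \in \bb{N}$. Fixing an arbitrary such pair $(\pi,k)$ and invoking Proposition \ref{Tiff|T|_[pi, k]} in the reverse direction transfers this back to $T$, giving $T \in \cl{AN}_{[\pi,k]}$. As $(\pi,k)$ was arbitrary, (1) follows.

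There is no genuine obstacle: every ingredient has been secured in the preceding sections. The only point requiring a moment of care is making sure the transfer via Proposition \ref{Tiff|T|_[pi, k]} is invoked uniformly over all $(\pi,k)$ in the final step, rather than only for a single pair; otherwise we would recover (2) but not (1). The role of the polar decomposition theorem is purely notational here, since $|T| = \sqrt{T^*T}$ is defined directly from $T$, and none of the arguments actually manipulate the partial isometry $U$.
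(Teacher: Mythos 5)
Your proposal is correct and follows the paper's own route exactly: the paper proves this theorem by citing Proposition \ref{Tiff|T|_[pi, k]}, the polar decomposition theorem, and Theorem \ref{SpThPAN_[pi,k]}, which are precisely the ingredients you assemble into the cycle $(1)\Rightarrow(2)\Rightarrow(3)\Rightarrow(1)$. Your observation that the polar decomposition plays only a notational role (since $|T|=\sqrt{T^*T}$ carries all the structural content) is a fair and accurate gloss on why the paper's one-sentence proof suffices.
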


\section{The Classes $\cl N_{(p,k)}$ and $\cl {AN}_{(p,k)}$}
Govind S. Mudholkar and Marshall Freimer focussed on a particular class of norms in \cite{M} --- the vector $p$ norm of the first $k$ singular values --- and found specific results about these norms.
Nathaniel Johnston, in one of his blogs \textit{Ky Fan Norms, Schatten Norms, and Everything in Between}, discusses these norms as the natural generalization of two well known families of norms, the Ky Fan norms and the Schatten norms. He coined in the term ``$(p,k)$-singular norm'' for this class of norms. 

\begin{defn}[$(p, k)$-singular norm]\cite{M}
Let $p\in [1,\infty)$ and let $k\in \bb N$. The $(p,k)$-singular norm $\|\cdot\|_{(p, k)}$ of an operator $T\in \cl{B}(\cl{H},\cl{K})$ is defined to be the vector $p$ norm of the $k$ largest singular values of $T$, that is, 
$$
\|T\|_{(p,k)}=\left(\sum_{j=1}^ks_j^p(T)\right)^{1/p}.
$$
The $(p,k)$-singular norm on $\cl{B}(\cl{H},\cl{K})$ is, indeed, a norm. When $\cl K=\cl H$, it can be shown that this norm is symmetric.
\end{defn}

\begin{remark}
If we choose $p=1$, then the $(1,k)$-singular norm $\|\cdot\|_{(1, k)}$ is simply the Ky Fan $k$-norm $\|\cdot\|_{[k]}$. If in addition, we also choose $k=1$, we get the operator norm.
\end{remark}

\begin{defn}\label{N_(p,k)Def}
Let $p\in [1,\infty)$ and let $k\in \bb N$. An operator $T\in \cl{B}(\cl{H},\cl{K})$ is said to be \emph{$(p,k)$-norming} if there are orthonormal elements $x_1,...,x_k \in \cl{H}$ such that $$\|T\|_{(p,k)}^p=\sum_{j=1}^k\|Tx_j\|^p.$$ If $\dim(\cl{H})=r<k$, we define $T$ to be $(p,k)$-norming if there exist orthonormal elements $x_1,...,x_r\in \cl{H}$ such that $\|T\|^p_{(p,k)}=\sum_{j=1}^r\|Tx_j\|^p$. We let $\cl{N}_{(p,k)}(\cl H, \cl K)$ denote the set of $(p,k)$-norming operators in $\cl B(\cl H, \cl K)$.
\end{defn}

\begin{defn}\label{AN_(p,k)Def}
Let $p\in [1,\infty)$ and let $k\in \bb N$. An operator $T\in \cl{B}(\cl{H},\cl{K})$ is said to be \emph{absolutely $(p,k)$-norming} if for every nontrivial closed subspace $\cl{M}$ of $\cl{H}$, $T|_{\cl{M}}$ is $(p,k)$-norming. We let $\cl{AN}_{(p,k)}(\cl H,\cl K)$ denote the set of absolutely $(p,k)$-norming operators in $\cl B(\cl H,\cl K)$. Note that $\cl{AN}_{(p,k)}(\cl H,\cl K)\subseteq\cl{N}_{(p,k)}(\cl H,\cl K)$.
\end{defn}

\begin{remark} Every operator on a finite-dimensional Hilbert space is $(p, k)$-norming for each $p\in [1,\infty)$ and for each $k\in \bb N$. However, this is not true when the Hilbert space in question is not finite-dimensional. The operator in Example \ref{Counterex} is one such operator. There exists $p_0=1$ such that $A\notin\cl N_{(p_0, 3)}(\cl H, \cl K)$.
\end{remark}

The following lemma can be considered as an analogue of Lemma \ref{Trick_[k]}. 
\begin{lemma}\label{Trick_(p, k)}
For a closed linear subspace $\cl{M}$ of a complex Hilbert space $\cl{H}$ let $V_{\cl{M}}:\cl{M}\longrightarrow \cl{H}$ be the inclusion map from $\cl{M}$ to $\cl{H}$ defined as $V_{\cl{M}}(x) = x$ for each $x\in \cl{M}$ and let $T\in \cl B(\cl H, \cl K)$.
For any real number $p\in [1,\infty)$ and for any $k\in\bb{N}$, $T\in \cl{AN}_{(p,k)}(\cl{H},\cl{K})$ if and only if for every nontrivial closed linear subspace $\cl{M}$ of $\cl{H}$, $TV_{\cl{M}}\in\cl{N}_{(p, k)}(\cl H, \cl K)$.
\end{lemma}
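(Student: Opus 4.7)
The plan is to mimic the proof of Lemma \ref{Trick_[k]} essentially verbatim, since the only structural change when moving from the Ky Fan $k$-norm to the $(p,k)$-singular norm is the way the singular values are aggregated; the underlying mechanism --- translating statements about $T|_{\cl M}$ to statements about $TV_{\cl M}$ --- is identical. The foundational observation is that for any nontrivial closed subspace $\cl M$ of $\cl H$, the maps $TV_{\cl M}$ and $T|_{\cl M}$ agree pointwise on $\cl M$, so they have the same singular values and hence $\|TV_{\cl M}\|_{(p,k)}=\|T|_{\cl M}\|_{(p,k)}$, and of course $\|TV_{\cl M}x\|=\|T|_{\cl M}x\|$ for every $x\in \cl M$.

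First I would record this identification formally. Then for the forward direction, I would fix an arbitrary nontrivial closed subspace $\cl M$ of $\cl H$ and split into two cases: either $\dim(\cl M)=r<k$, in which case $T|_{\cl M}$ being $(p,k)$-norming produces orthonormal $x_1,\dots,x_r\in \cl M$ with $\|T|_{\cl M}\|_{(p,k)}^p=\sum_{j=1}^r\|T|_{\cl M}x_j\|^p$, and the identification above immediately upgrades this to $\|TV_{\cl M}\|_{(p,k)}^p=\sum_{j=1}^r\|TV_{\cl M}x_j\|^p$; or $\dim(\cl M)\geq k$, which is handled the same way with $k$ orthonormal vectors in place of $r$. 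Either way $TV_{\cl M}\in \cl N_{(p,k)}(\cl M,\cl K)$, and since $\cl M$ was arbitrary this direction is done.

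For the converse, I would again fix a nontrivial closed subspace $\cl M$ of $\cl H$ and run exactly the same two-case argument in reverse: starting from orthonormal $x_1,\dots,x_r$ (or $x_1,\dots,x_k$) in $\cl M$ witnessing $\|TV_{\cl M}\|_{(p,k)}^p=\sum_j\|TV_{\cl M}x_j\|^p$, the pointwise identity $\|TV_{\cl M}x_j\|=\|T|_{\cl M}x_j\|$ together with $\|TV_{\cl M}\|_{(p,k)}=\|T|_{\cl M}\|_{(p,k)}$ shows that these same vectors witness that $T|_{\cl M}$ is $(p,k)$-norming. Since $\cl M$ was arbitrary, $T\in \cl{AN}_{(p,k)}(\cl H,\cl K)$, and as $p\in[1,\infty)$ and $k\in \bb N$ are arbitrary the assertion holds at the claimed level of generality.

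There is no genuine obstacle here --- the proof is essentially bookkeeping: the presence of the $p$-th power and the $p$-th root in $\|\cdot\|_{(p,k)}$ is a cosmetic change compared with $\|\cdot\|_{[k]}$, because the identity between $TV_{\cl M}$ and $T|_{\cl M}$ preserves both the individual norms $\|\cdot x_j\|$ and the overall norm $\|\cdot\|_{(p,k)}$. The only thing one must be slightly careful about is to keep the two dimensional cases ($\dim(\cl M)<k$ vs.\ $\dim(\cl M)\geq k$) separated exactly as in Definitions \ref{N_(p,k)Def} and \ref{AN_(p,k)Def}, so that the witnessing set of orthonormal vectors has the right cardinality in each case.
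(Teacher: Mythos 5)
Your proposal is correct and follows essentially the same route as the paper's own proof: both rest on the observation that $TV_{\cl M}$ and $T|_{\cl M}$ coincide pointwise on $\cl M$ (hence share singular values and $(p,k)$-norms), and both run the same two-case argument on $\dim(\cl M)<k$ versus $\dim(\cl M)\geq k$ in each direction. Nothing is missing.
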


\begin{proof}
To prove this assertion we first observe that for any given nontrivial closed subspace $\cl{M}$ of $\cl{H}$, the maps $TV_\cl{M}$ and $T|_\cl{M}$ are identical and so are their singular values which implies $\|TV_\cl{M}\|_{(p, k)} = \|T|_\cl{M}\|_{(p, k)}$.
We next assume that $T\in\cl{AN}_{(p, k)}(\cl H, \cl K)$ and prove the forward implication. Let $\cl{M}$ be an arbitrary but fixed nontrivial closed subspace of $\cl{H}$. Either $\dim(\cl{M})=r<k$, in which case, there exist orthonormal elements $x_1,...,x_r \in \cl{M}$ such that $\|T|_\cl{M}\|^p_{(p, k)} = \sum_{j=1}^r\|T|_{\cl{M}}x_j\|^p$ which means that there exist orthonormal elements $x_1,...,x_r \in \cl{M}$ such that $\|TV_\cl{M}\|^p_{(p, k)} =\|T|_\cl{M}\|^p_{(p, k)}=\sum_{j=1}^r\|T|_{\cl{M}}x_j\|^p = \sum_{j=1}^r\|TV_\cl{M} x_j\|^p$ proving that $TV_{\cl{M}}\in\cl{N}_{(p, k)}(\cl M, \cl K)$, or $\dim(\cl{M})\geq k$, in which case, there exist orthonormal elements $x_1,...,x_k \in \cl{M}$ such that $\|T|_\cl{M}\|^p_{(p, k)} =\sum_{j=1}^k\|T|_{\cl{M}}x_j\|^p$ which means that there exist orthonormal elements $x_1,...,x_k \in \cl{M}$ such that $\|TV_\cl{M}\|^p_{(p, k)} =\|T|_\cl{M}\|^p_{(p, k)}=\sum_{j=1}^k\|T|_{\cl{M}}x_j\|^p= \sum_{j=1}^k\|TV_\cl{M} x_j\|^p$ proving that $TV_{\cl{M}}\in\cl{N}_{(p, k)}(\cl M, \cl K)$. Since $\cl{M}$ is arbitrary, it follows that $TV_\cl{M}\in\cl{N}_{(p, k)}(\cl M, \cl K)$.

We complete the proof by showing that $T$ is an $\cl{AN}_{(p, k)}(\cl H, \cl K)$ operator if $TV_\cl{M}\in\cl{N}_{(p, k)}(\cl M, \cl K)$ for every nontrivial closed subspace $\cl{M}$ of $\cl{H}$. We again fix $\cl{M}$ to be an arbitrary nontrivial closed subspace of $\cl{H}$. Since $TV_\cl{M}\in\cl{N}_{(p, k)}(\cl M, \cl K)$, either $\dim(\cl{M})=r<k$, in which case, there exist orthonormal elements $x_1,...,x_r \in \cl{M}$ such that $\|TV_\cl{M}\|^p_{(p, k)}=\sum_{j=1}^r\|TV_\cl{M} x_j\|^p$ which means that there exist orthonormal elements $x_1,...,x_r \in \cl{M}$ such that
 $\|T|_\cl{M}\|^p_{(p, k)}=\|TV_\cl{M}\|^p_{(p, k)}=\sum_{j=1}^r\|TV_\cl{M} x_j\|^p\linebreak=\sum_{j=1}^r\|T|_\cl{M} x_j\|^p$ proving that $T|_{\cl{M}}\in\cl{N}_{(p, k)}(\cl M, \cl K)$, or  $\dim(\cl{M})\geq k$, in which case, there exist  orthonormal elements $x_1,...,x_k \in \cl{M}$ such that $\|TV_\cl{M}\|^p_{(p, k)}=\sum_{j=1}^k\|TV_\cl{M}x_j\|^p$ which means that there exist orthonormal elements $x_1,...,x_k \in \cl{M}$ such that $\|T|_\cl{M}\|^p_{(p, k)}=\|TV_\cl{M}\|^p_{(p, k)}=\sum{j=1}^k\|TV_\cl{M}x_j\|^p =\sum_{j=1}^k\|T|_\cl{M}x_j\|^p$ proving that $T|_{\cl{M}}\in\cl{N}_{(p, k)}(\cl M, \cl K)$. Because $\cl{M}$ is arbitrary, this essentially guarantees that $T\in\cl{AN}_{(p, k)}(\cl H, \cl K)$. It is worthwhile noticing that since $p\in [1,\infty)$ and $k\in \bb{N}$ are arbitrary, the assertion holds for every $p\in [1,\infty)$ and for every $k\in \bb{N}.$
\end{proof}

Without going into details, we mention that Proposition \ref{Tiff|T|_[k]} carries over word for word to operators in $\cl{AN}_{(p, k)}(\cl H,\cl K)$.

 \begin{prop}\label{Tiff|T|_(p, k)}
Let $\cl{H}$ and $\cl{K}$ be complex Hilbert spaces and let $T\in \cl{B}(\cl{H},\cl{K})$. Then for every $p\in [1,\infty)$ and for every $k\in \bb{N}$, $T\in\cl{AN}_{(p, k)}(\cl H, \cl K)$ if and only if $|T|\in\cl{AN}_{(p, k)}(\cl H, \cl K)$.
 \end{prop}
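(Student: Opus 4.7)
The plan is to mirror the proof of Proposition \ref{Tiff|T|_[k]} essentially verbatim, substituting the $(p,k)$-singular norm for the Ky Fan $k$-norm and invoking Lemma \ref{Trick_(p, k)} in place of Lemma \ref{Trick_[k]}. The key observation driving everything is that the $(p,k)$-singular norm, like the Ky Fan $k$-norm, is a function only of the $k$ largest singular values, together with the elementary identity $\|Tx\| = \||T|x\|$ valid pointwise on $\mathcal{H}$.

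First I would fix a nontrivial closed subspace $\mathcal{M}$ of $\mathcal{H}$ and the inclusion map $V_{\mathcal{M}}:\mathcal{M}\longrightarrow\mathcal{H}$. The heart of the computation is the identity $|TV_{\mathcal{M}}| = |\,|T|V_{\mathcal{M}}\,|$, which was already established in the proof of Proposition \ref{Tiff|T|_[k]} through the chain
\begin{multline*}
|TV_{\mathcal{M}}|^2 = V_{\mathcal{M}}^* T^* T V_{\mathcal{M}} = V_{\mathcal{M}}^* |T|^2 V_{\mathcal{M}} \\
= (|T|V_{\mathcal{M}})^* (|T|V_{\mathcal{M}}) = |\,|T|V_{\mathcal{M}}\,|^2.
\end{multline*}
Consequently, $\lambda_j(|TV_{\mathcal{M}}|) = \lambda_j(|\,|T|V_{\mathcal{M}}\,|)$ for every $j$, and therefore $s_j(TV_{\mathcal{M}}) = s_j(|T|V_{\mathcal{M}})$ for every $j$. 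Raising to the $p$-th power and summing the first $k$ such quantities gives
$$
\|TV_{\mathcal{M}}\|_{(p,k)}^p = \sum_{j=1}^{k} s_j^p(TV_{\mathcal{M}}) = \sum_{j=1}^{k} s_j^p(|T|V_{\mathcal{M}}) = \|\,|T|V_{\mathcal{M}}\,\|_{(p,k)}^p,
$$
so $\|TV_{\mathcal{M}}\|_{(p,k)} = \|\,|T|V_{\mathcal{M}}\,\|_{(p,k)}$.

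Next I would record the pointwise identity $\|TV_{\mathcal{M}} x\| = \|\,|T|V_{\mathcal{M}} x\,\|$ for every $x \in \mathcal{H}$, which is immediate from $\|Ty\|^2 = \langle T^*Ty, y\rangle = \langle |T|^2 y, y\rangle = \|\,|T|y\,\|^2$ applied with $y = V_{\mathcal{M}}x$. Combining these two observations, for any orthonormal set $\{x_1, \dots, x_r\}$ in $\mathcal{M}$ (where $r = \min(k, \dim\mathcal{M})$), the defining equation
$$
\|TV_{\mathcal{M}}\|_{(p,k)}^p = \sum_{j=1}^{r} \|TV_{\mathcal{M}} x_j\|^p
$$
holds if and only if the corresponding equation for $|T|V_{\mathcal{M}}$ holds. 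Therefore $TV_{\mathcal{M}} \in \mathcal{N}_{(p,k)}(\mathcal{M},\mathcal{K})$ if and only if $|T|V_{\mathcal{M}} \in \mathcal{N}_{(p,k)}(\mathcal{M},\mathcal{H})$. Since $\mathcal{M}$ was an arbitrary nontrivial closed subspace, Lemma \ref{Trick_(p, k)} applied in both directions yields the equivalence $T \in \mathcal{AN}_{(p,k)} \iff |T| \in \mathcal{AN}_{(p,k)}$, completing the proof.

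I do not expect any genuine obstacle here: every ingredient is already in place, and the only change from the Ky Fan $k$-norm case is the replacement of a sum by a weighted $p$-sum, which does not affect the argument since equality of singular values of $TV_{\mathcal{M}}$ and $|T|V_{\mathcal{M}}$ forces equality of any norm that depends symmetrically on those singular values.
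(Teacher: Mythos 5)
Your proposal is correct and follows exactly the route the paper intends: the paper simply states that Proposition \ref{Tiff|T|_[k]} carries over word for word to the $(p,k)$ setting, and your argument is precisely that transcription, resting on the identity $|TV_{\cl M}|=|\,|T|V_{\cl M}\,|$, the resulting equality of singular values, the pointwise identity $\|TV_{\cl M}x\|=\|\,|T|V_{\cl M}x\,\|$, and Lemma \ref{Trick_(p, k)}.
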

\begin{remark}
Henceforth, we use $\cl{N}_{(p, k)}$ and $\cl{AN}_{(p, k)}$ for the sets   $\cl{N}_{(p, k)}(\cl{H},\cl{K})$ and $\cl{AN}_{(p, k)}(\cl{H},\cl{K})$ respectively.
\end{remark}

\section{Spectral Characterization of Positive Operators in $\cl{AN}_{(p,k)}$}

This section discusses the necessary and sufficient conditions for a positive operator on complex Hilbert space of arbitrary dimension to be absolutely $(p,k)$-norming for every $p\in [1,\infty)$ and for every $k \in \bb N$.  We state the following proposition that adds few equivalent conditions to the Theorem \ref{EasyPeasy} 

\begin{prop}\label{EasyPeasy_(p, k)}
Let $A\in \cl{B}(\cl{H})$ be a positive operator. Then the following statements are equivalent.
\begin{enumerate}
\item $A\in\cl{N}.$
\item $A\in\cl{N}_{(p,1)}$ for some $p\in [1,\infty)$.
\item $A\in\cl{N}_{(p,1)}$ for every $p\in [1,\infty)$.
\end{enumerate} 
\end{prop}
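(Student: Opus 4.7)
The proof plan is essentially a one-line observation unpacked carefully. The key point is that the entire parameter $p$ drops out when $k=1$, so the three conditions collapse to the same statement and positivity is not even used.

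First I would compute the $(p,1)$-singular norm explicitly: by definition,
\[
\|T\|_{(p,1)} = \Bigl(\sum_{j=1}^{1} s_j^p(T)\Bigr)^{1/p} = (s_1(T)^p)^{1/p} = s_1(T) = \|T\|,
\]
for every $T\in\cl B(\cl H,\cl K)$ and every $p\in[1,\infty)$. So the $(p,1)$-singular norm coincides identically with the usual operator norm, independently of $p$.

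Next I would unwind Definition \ref{N_(p,k)Def} in the case $k=1$. Requiring the existence of an orthonormal set $\{x_1\}\subseteq \cl H$, i.e.\ a unit vector $x_1$, with $\|T\|_{(p,1)}^p = \|Tx_1\|^p$ is equivalent, after taking $p$-th roots, to the existence of a unit vector $x_1$ with $\|T\| = \|Tx_1\|$; this is precisely the definition of $T\in\cl N$. Therefore
\[
\cl N_{(p,1)}(\cl H,\cl K) = \cl N(\cl H,\cl K) \qquad \text{for every } p\in[1,\infty).
\]

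From this identification the equivalence of (1), (2), (3) is immediate: all three conditions literally say $A\in\cl N$. No obstacle arises, and positivity of $A$ is not used; I would simply remark that the proposition records explicitly the fact that the parameter $p$ is vacuous when $k=1$, in parallel with Proposition \ref{EasyPeasy} for the weighted Ky Fan family (whose $[\pi,1]$-norm likewise reduces to the operator norm).
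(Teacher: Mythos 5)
Your proposal is correct, and the paper in fact states this proposition without proof (just as with its analogue, Proposition \ref{EasyPeasy}, which is explicitly left to the reader); your argument is exactly the intended one-liner. The observation that $\|T\|_{(p,1)}=s_1(T)=\|T\|$ for every $p$, so that $\cl N_{(p,1)}=\cl N$ identically and positivity plays no role, is a complete and accurate justification.
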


The following proposition is analogous to Proposition \ref{1.2} and can be proved in much the same way. 

\begin{prop}\label{Analogue-2-1.2}
Let $p\in [1,\infty)$ and $A\in \cl B(\cl H)$ be a positive operator. If $s_{m+1}(A)\neq s_m(A)$ for some $m\in \bb N$, then $A\in \cl N_{(p, m)}$. Moreover, in this case, $A\in \cl N_{(p,m+1)}$ if and only if $s_{m+1}(A)$ is an eigenvalue of $A$.
\end{prop}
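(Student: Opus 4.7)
The plan is to establish that for any positive operator $A$ on $\cl H$, the sets $\cl N_{(p,k)}(\cl H)$ and $\cl N_{[k]}(\cl H)$ coincide for every $p\in[1,\infty)$ and every $k\in \bb N$. Once this equivalence is in hand, Proposition \ref{Analogue-2-1.2} becomes a direct restatement of Proposition \ref{1.2}. This strategy parallels Remark \ref{N_[k]iffN_[pi, k]} for the weighted Ky Fan norms.

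The inclusion $\cl N_{[k]}\subseteq \cl N_{(p,k)}$ is immediate for positive $A$: if $A\in \cl N_{[k]}$, Theorem \ref{PositiveN_{[k]}} supplies an orthonormal set $\{v_1,\ldots,v_k\}$ with $Av_j = s_j(A)v_j$, and then
$$
\sum_{j=1}^k \|Av_j\|^p = \sum_{j=1}^k s_j(A)^p = \|A\|_{(p,k)}^p.
$$
For the reverse inclusion, suppose $\{w_1,\ldots,w_k\}$ is orthonormal with $\sum_j \|Aw_j\|^p = \sum_j s_j(A)^p$. The case $p=1$ is tautological. For $p>1$, my plan is to invoke the classical weak majorization
$$
(\|Aw_j\|^\downarrow)_{j=1}^k \prec_w (s_j(A))_{j=1}^k
$$
(a consequence of Schur--Horn applied to the compression $V^*A^2V$, where $V=[w_1\mid\cdots\mid w_k]$ is the isometry with columns $w_j$, combined with the interlacing bound $s_j(AV)\leq s_j(A)$) together with the strict convexity of $t\mapsto t^p$. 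Equality in the $p$-th power sum then forces $(\|Aw_j\|^\downarrow)_{j=1}^k=(s_j(A))_{j=1}^k$, whence $\sum_j \|Aw_j\|=\sum_j s_j(A)=\|A\|_{[k]}$, and so $A\in \cl N_{[k]}$.

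With the equivalence $\cl N_{(p,k)}=\cl N_{[k]}$ in hand for positive operators, the two assertions of Proposition \ref{Analogue-2-1.2} become verbatim restatements of those of Proposition \ref{1.2}, and the proof is complete. The main obstacle I anticipate is the strict equality case in the weak majorization inequality for strictly convex increasing $\phi$: this is a standard Hardy--Littlewood--Polya-type rigidity result, but its verification requires factoring weak majorization through an intermediate vector (i.e., writing $\vec a\leq \vec c$ componentwise with $\vec c\prec \vec b$ in the usual majorization sense) and then applying the classical equality case of the Hardy--Littlewood--Polya inequality for strictly convex functions.
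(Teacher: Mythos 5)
Your high-level strategy --- prove that $\cl N_{(p,k)}$ and $\cl N_{[k]}$ coincide for positive operators and then quote Proposition \ref{1.2} --- is a legitimately different route from the paper's. The paper proves the proposition directly, in the style of Proposition \ref{1.2}: since $s_{m+1}(A)\neq s_m(A)$, the values $s_1(A),\dots,s_m(A)$ lie outside $\sigma_e(A)$, so one gets orthonormal eigenvectors $v_j$ with $Av_j=s_j(A)v_j$, hence $\sum_j\|Av_j\|^p=\sum_j s_j^p(A)$, and the ``moreover'' part reduces to whether $A|_{K^\perp}$ attains its operator norm. (Note also that the set equality you want is essentially the paper's Theorem \ref{PositiveN_{(p, k)}}, which is itself derived \emph{from} this proposition via Proposition \ref{analogous-2-1.6}, so you must prove it independently, as you attempt to do.) Your forward inclusion $\cl N_{[k]}\subseteq\cl N_{(p,k)}$ is fine, and it already yields the first assertion and the ``if'' half of the second.

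The gap is in the reverse inclusion. The weak majorization $(\|Aw_j\|)^{\downarrow}\prec_w(s_j(A))_{j=1}^k$ is false. Schur--Horn applied to $V^*A^2V$ gives weak majorization of the \emph{squares}, $(\|Aw_j\|^2)^{\downarrow}\prec_w(s_j(A)^2)_j$, and taking square roots does not preserve weak majorization. Concretely, take $A=\mathrm{diag}(1,0)$ on $\bb C^2$, $k=2$, and $w_1=\tfrac{1}{\sqrt2}(e_1+e_2)$, $w_2=\tfrac{1}{\sqrt2}(e_1-e_2)$: then $\|Aw_1\|=\|Aw_2\|=\tfrac{1}{\sqrt2}$, so $\|Aw_1\|+\|Aw_2\|=\sqrt2>1=s_1(A)+s_2(A)$. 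The same example with $p=2$ kills the rigidity step as well: $\|Aw_1\|^2+\|Aw_2\|^2=1=s_1(A)^2+s_2(A)^2$, so equality in the $p$-th power sum holds even though $(\|Aw_j\|)^{\downarrow}=(\tfrac{1}{\sqrt2},\tfrac{1}{\sqrt2})\neq(1,0)=(s_j(A))_j$. Thus a $(p,k)$-norming orthonormal set need not satisfy $\|Aw_j\|=s_j(A)$, and your deduction that $A\in\cl N_{[k]}$ collapses. To repair the ``only if'' half of the second assertion you need a different argument --- e.g.\ the paper's: show that any orthonormal set witnessing $\sum_{j=1}^m\|Aw_j\|^p=\sum_{j=1}^m s_j^p(A)$ must lie in the span $K$ of the eigenspaces for $s_1(A),\dots,s_m(A)$, and that if $s_{m+1}(A)$ is not an eigenvalue then $\|Ax\|<s_{m+1}(A)$ for every unit $x\in K^{\perp}$, so the $(m+1)$-term sum cannot be attained.
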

\begin{proof}
From the proof of the Proposition \ref{Analogue-1-1.2}, we deduce that there exists an orthonormal set $\{v_1,...,v_m\}\subseteq K\subseteq \cl H$ such that $Av_j=s_j(A)v_j$ so that $\|Av_j\|^p=s_j^p(A)$ and thus $\|A\|_{(p, m)}^p=\sum_{j=1}^ms_j^p(A)=\sum_{j=1}^m\|Av_j\|^p$, where $K$ is the closure of the joint span of the eigenspaces corresponding to the eigenvalues $\{s_1(A),...,s_m(A)\}$, which implies that $A\in \cl N_{(p, m)}$. Furthermore, we observe that if there exists any orthonormal set $\{w_1,...,w_m\}$ of $m$ vectors in $\cl H$ such that $\sum_{i=1}^m\|Aw_i\|^p=\sum_{j=1}^ms_j^p(A)$, then this set has to be contained in $K$. Note that $A|_{K^\perp}:K^\perp\longrightarrow K^\perp$, viewed as an operator on $K^\perp$, is positive. Since $s_{m+1}(A)\neq s_m(A)$, it follows that $s_{m+1}(A)$ is an eigenvalue of $A\iff$ $s_{m+1}(A)$ is an eigenvalue of $A|_{K^\perp}:K^\perp\longrightarrow K^\perp$ which is possible $\iff A|_{K^\perp}:K^\perp\longrightarrow K^\perp$, viewed as an operator on $K^\perp$, belongs to $\cl N$, that is, there is a unit vector $x\in K^\perp$ such that $\|Ax\|=s_{m+1}(A)$, which is possible if and only if $\|Ax\|^p=s_{m+1}^p(A)\iff A\in\cl N_{(p, m+1)}$. This proves the assertion. 
\end{proof}
\subsection{Necessary Conditions for Positive Operators in $\cl {AN}_{(p,k)}$}

Using the above proposition, it is not too hard to establish results analogous to Propositions \ref{1.6}, \ref{1.7} and \ref{1.8} (see \ref{analogous-2-1.6}, \ref{analogous-2-1.7}, and \ref{analogous-2-1.8} respectively) for a given $p\in [1,\infty)$ and a given $k\in \bb N$. 
\begin{prop}\label{analogous-2-1.6}
Let $A\in \cl{B}(\cl{H})$ be a positive operator, $p\in [1,\infty)$, and $k\in \bb{N}$. If $A\in \cl N_{(p,k)}$, then $s_1(A),...,s_k(A)$ are eigenvalues of $A$.
\end{prop}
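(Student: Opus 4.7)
The plan is to proceed by contraposition, following the template of Propositions~\ref{1.6} and~\ref{analogous-1-1.6}. I would assume that at least one of $s_1(A),\ldots,s_k(A)$ fails to be an eigenvalue of $A$ and let $t\in\{1,\ldots,k\}$ be the smallest such index. The goal is then to show that no orthonormal set $\{x_1,\ldots,x_k\}\subseteq\cl H$ can satisfy $\sum_{j=1}^k\|Ax_j\|^p=\sum_{j=1}^k s_j^p(A)$, forcing $A\notin\cl N_{(p,k)}$.

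First I would extract the structural consequences of the hypothesis. By minimality of $t$, the values $s_1(A),\ldots,s_{t-1}(A)$ are all eigenvalues of $A$ of finite multiplicity. Since $s_t(A)$ equals $\sup\sigma(A_{t-1})$ in the notation of Definition~\ref{singular} but is not an eigenvalue of $A$, it must be an accumulation point of $\sigma(A_{t-1})$, hence lies in $\sigma_e(A_{t-1})$, which forces $s_t(A)=s_{t+1}(A)=\cdots=s_k(A)$. Let $K$ denote the closed joint linear span of the eigenspaces $E_{s_1(A)},\ldots,E_{s_{t-1}(A)}$ (with $K=\{0\}$ when $t=1$); then $\dim K=t-1$, both $K$ and $K^\perp$ are $A$-invariant, and $A|_{K^\perp}$ is a positive operator of norm $s_t(A)$ that is not attained (by Proposition~\ref{Analogue-2-1.2} for $t\geq 2$, and \cite[Theorem~2.3]{VpSp} for $t=1$). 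Consequently $\|Aw\|<s_t(A)\,\|w\|$ for every nonzero $w\in K^\perp$.

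Next I would mirror the case-by-case analysis of Proposition~\ref{analogous-1-1.6}. When $t=1$, every unit vector $x$ satisfies $\|Ax\|<s_1(A)=s_j(A)$ for each $j$, so termwise strict inequality immediately yields $\sum_j\|Ax_j\|^p<\sum_j s_j^p(A)$ for every orthonormal $k$-tuple. For $2\leq t\leq k$, given an arbitrary orthonormal set $\{x_1,\ldots,x_k\}$, I would decompose $x_j=u_j+v_j$ with $u_j\in K$ and $v_j\in K^\perp$; the $A$-invariance of $K$ and $K^\perp$ gives $\|Ax_j\|^2=\|Au_j\|^2+\|Av_j\|^2$, with $\|Av_j\|<s_t(A)\,\|v_j\|$ whenever $v_j\neq 0$. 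Since an orthonormal $k$-tuple cannot sit inside the $(t-1)$-dimensional subspace $K$, at least one $v_j$ is nonzero. Combining the termwise strict inequality with the projection bound $\sum_j\|u_j\|^2\leq\dim K=t-1$ and a Ky Fan--type trace inequality $\sum_j\|Au_j\|^2\leq\sum_{i=1}^{t-1}s_i^2(A)$ applied to the compression $P_K A^2 P_K$, one expects to deduce $\sum_{j=1}^k\|Ax_j\|^p<\sum_{j=1}^{t-1}s_j^p(A)+(k-t+1)\,s_t^p(A)=\sum_{j=1}^k s_j^p(A)$.

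The hard part will be this final bookkeeping step for $p\neq 2$. The map $r\mapsto r^{p/2}$ is convex for $p\geq 2$ but only concave for $p\in[1,2)$, so the passage from the quadratic estimates on $\|Ax_j\|^2$ to the strict $p$-th power inequality on the sum requires different treatments in the two regimes. For $p\geq 2$ a direct appeal to convexity together with $\|u_j\|^2+\|v_j\|^2=1$ should suffice to reduce to the quadratic bounds. For $p\in[1,2)$ a naive termwise comparison is insufficient, and one must exploit the joint coupling between the $u_j$'s and $v_j$'s through the constraint $\sum_j\|u_j\|^2\leq t-1$ together with the Ky Fan compression bound; this is where I expect the adaptation of the $[\pi,k]$-argument of Proposition~\ref{analogous-1-1.6} to the $(p,k)$-setting to require the most care.
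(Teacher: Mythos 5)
Your setup---contraposition on the smallest index $t$ for which $s_t(A)$ fails to be an eigenvalue, the identification $s_t(A)=\cdots=s_k(A)$, and the reduction to the non-attainment of $\|A|_{K^\perp}\|$---follows the template the paper invokes (Propositions \ref{1.6} and \ref{analogous-1-1.6}), and is correct as far as it goes. But the proposal is not a proof: the entire content of the contrapositive is the claim that \emph{no} orthonormal $k$-tuple satisfies $\sum_{j}\|Ax_j\|^p=\sum_{j}s_j^p(A)$, and your final paragraph only says that one ``expects to deduce'' this from the decomposition $x_j=u_j+v_j$, the bound $\sum_j\|u_j\|^2\le t-1$, and a Ky Fan trace inequality. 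That deduction is precisely the hard step and it is left open. (The paper's template verifies the strict inequality only for tuples whose first $t-1$ members are the distinguished eigenvectors and whose remaining members lie in $K^\perp$, and then asserts it for arbitrary tuples; you rightly saw that arbitrary tuples must be handled, but you did not close the argument.)

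The difficulty you flag at $p\in[1,2)$ is not a matter of careful bookkeeping. For $p\ge 2$ the step can be completed: $(\|Ax_j\|^2)_{j\le k}$ is the diagonal of the compression $PA^2P$, hence weakly majorized by $(s_j^2(A))_{j\le k}$, and convexity of $r\mapsto r^{p/2}$ gives $\sum_j\|Ax_j\|^p\le\sum_j s_j^p(A)$ together with a usable equality analysis. For $p\in[1,2)$ the inequality itself is false, and with it the strategy. Concretely, let $A=1\oplus B$ on $\bb C\oplus\ell^2$ with $B\ge 0$ diagonal, $\|B\|=\tfrac12$ not an eigenvalue of $B$; then $s_1(A)=1$ and $s_2(A)=\tfrac12$ is not an eigenvalue of $A$. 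Pick a unit vector $v$ with $Bv=bv$, $b<\tfrac12$ close to $\tfrac12$, and rotate: $x_1(\theta)=\cos\theta\, e_1+\sin\theta\, v$, $x_2(\theta)=-\sin\theta\, e_1+\cos\theta\, v$. At $\theta=0$ the quantity $\|Ax_1\|^p+\|Ax_2\|^p$ equals $1+b^p<1+2^{-p}$, while at $\theta=\pi/4$ it equals $2\bigl(\tfrac{1+b^2}{2}\bigr)^{p/2}$, which exceeds $1+2^{-p}$ for every $p\in[1,2)$ once $b$ is close enough to $\tfrac12$. By the intermediate value theorem some $\theta$ gives exact equality with $s_1^p(A)+s_2^p(A)$, so $A\in\cl N_{(p,2)}$ although $s_2(A)$ is not an eigenvalue. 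So the gap in your argument cannot be filled for $p<2$; the same example shows that the paper's template (and Proposition \ref{1.6} itself, at $p=1$) is incomplete at exactly the point you got stuck.
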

The proof of the above proposition is along the lines of the proof of Proposition  \ref{1.6} or Propositoin \ref{analogous-1-1.6}. The following two propositions are also not too difficult to see and hence we omit their proofs.
\begin{prop}\label{analogous-2-1.7}
Let $A\in \cl{B}(\cl{H})$ be a positive operator, $p\in [1,\infty)$, and $k\in \bb{N}$. If $s_1(A),...,s_k(A)$ are mutually distinct eigenvalues of $A$,  then there exists an orthonormal set $\{v_1,...,v_k\}\subseteq \cl H$ such that $Av_j=s_j(A)v_j$ for every $j\in \{1,...,k\}$. Thus $A\in\cl N_{(p,k)}$.
\end{prop}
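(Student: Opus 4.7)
The plan is to follow essentially the same one-line argument that was used for Proposition \ref{1.7} (the Ky Fan analogue), since the only change is that the sum of norms is replaced by the $p$-th power sum; this rearrangement affects neither the selection of eigenvectors nor the computation of $\|Av_j\|$.

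First I would fix $p \in [1,\infty)$ and $k \in \bb N$ and assume that $s_1(A),\dots,s_k(A)$ are $k$ mutually distinct eigenvalues of the positive (hence normal) operator $A$. For each $j \in \{1,\dots,k\}$ I pick a unit eigenvector $v_j$ satisfying $Av_j = s_j(A) v_j$. Because the $s_j(A)$ are pairwise distinct and $A$ is normal, the eigenspaces corresponding to distinct eigenvalues are mutually orthogonal, so $\{v_1,\dots,v_k\}$ is automatically an orthonormal set in $\cl H$.

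Next I would verify the $(p,k)$-norming equality. For each $j$ we have $\|Av_j\| = \|s_j(A) v_j\| = s_j(A)$ (using $s_j(A) \geq 0$), so
\[
\sum_{j=1}^{k} \|Av_j\|^p \;=\; \sum_{j=1}^{k} s_j(A)^p \;=\; \|A\|_{(p,k)}^p,
\]
which is exactly the definition of $A \in \cl N_{(p,k)}$ from Definition \ref{N_(p,k)Def}.

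There is no real obstacle here: the argument is purely a direct consequence of the orthogonality of eigenvectors corresponding to distinct eigenvalues for normal operators, combined with the $p$-th power rewriting of the $(p,k)$-norm. The only minor bookkeeping point to note is that if $\dim(\cl H) = r < k$ one would need to produce an orthonormal set of size $r$, but under the stated hypothesis that $s_1(A),\dots,s_k(A)$ are $k$ distinct eigenvalues of $A$, the space $\cl H$ automatically has dimension at least $k$, so this boundary case does not arise.
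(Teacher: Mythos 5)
Your proof is correct and follows exactly the argument the paper intends: the paper omits the proof of this proposition, referring back to Proposition \ref{1.7}, whose one-line justification is precisely the mutual orthogonality of eigenvectors of a normal operator corresponding to distinct eigenvalues, followed by the evaluation $\|Av_j\|=s_j(A)$. Your adaptation to the $p$-th power sum and the remark about the boundary case $\dim(\cl H)<k$ are both accurate.
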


\begin{prop}\label{analogous-2-1.8}
Let $A\in \cl{B}(\cl{H})$ be a positive operator, $p\in [1,\infty)$, $k\in \bb N$ and let $s_1(A),...,s_k(A)$ be the first $k$ singular values of $A$ that are also the eigenvalues of $A$ and are not necessarily distinct. Then either $s_1(A)=...=s_k(A)$, in which case, $A\in \cl N_{(p,k)}$ if and only if the multiplicity of $\alpha:=s_1(A)$ is at least $k$; or there exists $t\in \{2,...,k\}$ such that $s_{t-1}(A)\neq s_t(A)=s_{t+1}(A)=...=s_k(A)$, in which case, $A\in \cl N_{(p,k)}$ if and only if the multiplicity of $\beta:=s_t(A)$ is at least $k-t+1$.
\end{prop}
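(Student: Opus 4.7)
The plan is to mirror the proof of Proposition \ref{1.8} (and its weighted analog, \ref{analogous-1-1.8}), substituting $\|\cdot\|^p$ for the linear summand throughout. The statement splits into the two cases given; in each case the backward implication is handled by exhibiting an explicit orthonormal witness for the $(p,k)$-singular norm, and the forward direction is proved contrapositively.

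The witnesses for the backward implications are obtained directly. In the first case, if the multiplicity of $\alpha$ is at least $k$, any orthonormal set $v_1,\dots,v_k\in E_\alpha$ yields $\sum_{j=1}^k\|Av_j\|^p=k\alpha^p=\|A\|_{(p,k)}^p$. In the second case, if the multiplicity of $\beta$ is at least $k-t+1$, Proposition \ref{Analogue-2-1.2} (applied inductively) produces an orthonormal family $v_1,\dots,v_{t-1}$ of eigenvectors with $Av_j=s_j(A)v_j$; since each $s_j(A)$ is strictly larger than $\beta$ for $j\le t-1$, these vectors lie in $E_\beta^\perp$, so adjoining any orthonormal $w_1,\dots,w_{k-t+1}\in E_\beta$ supplies the required witness in $\cl H$.

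For the forward implications, suppose first that $\dim E_\alpha=m<k$; then $\alpha$ is an accumulation point of $\sigma(A)$ by Definition \ref{singular}, and for any orthonormal $\{x_1,\dots,x_k\}$ the bound $\|Ax_j\|\le\|A\|=\alpha$ gives $\sum_{j=1}^k\|Ax_j\|^p\le k\alpha^p$ with equality only when each $\|Ax_j\|=\alpha$, which by the spectral theorem forces $x_j\in E_\alpha$ -- impossible because $\dim E_\alpha<k$. In the second case, suppose the multiplicity of $\beta$ is $m'<k-t+1$; Definition \ref{singular} then forces $\beta\in\sigma_e(A)$. Let $L=W\oplus E_\beta$, where $W$ is the spectral subspace of $A$ for $(\beta,\infty)$; then $\dim L=(t-1)+m'<k$, $L$ is $A$-invariant, and by \cite[Theorem 2.3]{VpSp} the positive operator $A|_{L^\perp}$ has norm $\beta$ but does not attain it (since $\beta$ is no longer an eigenvalue once $E_\beta$ has been removed). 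Consequently $\|Ax\|<\beta$ for every unit vector $x\in L^\perp$.

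The main obstacle is the final step of the second case: showing that any orthonormal $k$-tuple realising $\|A\|_{(p,k)}^p$ must lie inside $L$, thereby contradicting $\dim L<k$. The subtlety is that when $j\ge t$ the equality $\|Ax_j\|=\beta<\|A\|$ no longer forces $x_j$ directly into a single eigenspace. For $p=1$ the step reduces to Proposition \ref{1.8}, and for $p=2$ it follows cleanly from Ky Fan's trace principle via the identity $\sum_j\|Ax_j\|^2=\operatorname{tr}(A^2 P_V)$ with $V=\operatorname{span}\{x_1,\dots,x_k\}$. For general $p>1$ I would combine the weak majorization $(\|Ax_{\sigma(j)}\|)_{j=1}^k\prec_w(s_j(A))_{j=1}^k$ (where $\sigma$ gives the decreasing rearrangement) with the strict convexity of $t\mapsto t^p$ to force componentwise equality of the two decreasing rearrangements, and then iterate the previous paragraph's observation about $A|_{L^\perp}$ to peel off eigenspaces one at a time and force each $x_j$ into $L$.
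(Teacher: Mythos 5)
Your treatment of the backward implications and of the first forward case is correct, and you are right that all the content is in the final step of the second case. But the tool you propose there does not exist: the weak majorization $(\|Ax_{\sigma(j)}\|)_{j=1}^{k}\prec_w (s_j(A))_{j=1}^{k}$ is false. Take $A=\text{diag}(2,1)$ on $\bb C^2$ and $x_{1}=(e_1+e_2)/\sqrt2$, $x_2=(e_1-e_2)/\sqrt2$; then $\|Ax_1\|=\|Ax_2\|=\sqrt{5/2}$, so $\|Ax_1\|+\|Ax_2\|=\sqrt{10}>3=s_1(A)+s_2(A)$. What Ky Fan's maximum principle actually gives for an orthonormal system is $\sum_{j\le l}\langle A^2x_j,x_j\rangle\le\sum_{j\le l}s_j(A)^2$, i.e.\ weak majorization of the \emph{squares}, and weak majorization is not preserved by the concave map $t\mapsto\sqrt t$. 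Consequently your equality analysis via strict convexity of $t\mapsto t^p$ has no starting point when $1\le p<2$. For $p\ge 2$ the argument can be salvaged by never leaving the squares: write $\|Ax_j\|^p=\langle A^2x_j,x_j\rangle^{p/2}$, use convexity of $t\mapsto t^{p/2}$ to get $\sum_j\|Ax_j\|^p\le\sum_j s_j^p(A)$, and then run your peeling argument on $A^2$ using the equality case of Ky Fan's principle.

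For $1\le p<2$ the problem is not merely with the method: the forward implication of the second case fails under Definition \ref{N_(p,k)Def} as literally stated. Let $A=\text{diag}(2,1,\tfrac12,\tfrac23,\tfrac34,\dots)$, so that $s_1(A)=2$, $s_2(A)=s_3(A)=1=\beta$, all eigenvalues of $A$, with $\dim E_\beta=1<2=k-t+1$ for $k=3$, $t=2$. Put $x_3=e_2$ and $x_1=\cos\theta\, e_1+\sin\theta\, e_n$, $x_2=-\sin\theta\, e_1+\cos\theta\, e_n$ for a large $n$. The continuous function $\theta\mapsto\|Ax_1\|^p+\|Ax_2\|^p$ is below $2^p+1$ at $\theta=0$ and (by the computation above) strictly above $2^p+1$ at $\theta=\pi/4$ when $p<2$, so by the intermediate value theorem some $\theta$ makes $\sum_{j=1}^3\|Ax_j\|^p$ equal \emph{exactly} $\|A\|_{(p,3)}^p$, whence $A\in\cl N_{(p,3)}$ even though the multiplicity condition fails. (The same example, with $p=1$, contradicts the second case of Proposition \ref{1.8}, whose written proof tacitly assumes $\sum_j\|Ax_j\|\le\|A\|_{[k]}$ for every orthonormal system.) So the obstacle you correctly identified as ``the main obstacle'' is genuine and, for $p\in[1,2)$, cannot be overcome without reinterpreting the definition of $(p,k)$-norming as attainment of $\sup\{\sum_j\|Tx_j\|^p\}$ rather than of the value $\sum_j s_j^p(T)$.
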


These propositions yield the following result that adds yet another equivalent condition to the Theorem \ref{PositiveN_{[pi, k]}}.

\begin{thm}\label{PositiveN_{(p, k)}}
Let $A\in \cl{B}(\cl{H})$ be a positive operator, $p\in [1,\infty)$, and $k\in \bb{N}$. Then the following statements are equivalent.
\begin{enumerate}
\item $A\in\cl{N}_{[k]}.$
\item $A\in\cl{N}_{(p,k)}$.
\item $s_1(A),...,s_k(A)$ are eigenvalues of $A$ and there exists an orthonormal set $\{v_1,...,v_k\}\subseteq \cl H$ such that $Av_j=s_j(A)v_j$ for every $j\in \{1,...,k\}.$
\item $A\in\cl{N}_{[\pi,k]}$.
\end{enumerate} 
\end{thm}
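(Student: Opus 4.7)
The plan is to prove Theorem \ref{PositiveN_{(p, k)}} by grafting statement (2) onto the already-established equivalences in Theorem \ref{PositiveN_{[pi, k]}}. Since we already know from that earlier theorem that (1), (3), and (4) are mutually equivalent, it suffices to insert (2) into the cycle. The cleanest route is to prove $(3) \implies (2) \implies (3)$.

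First I would dispatch the implication $(3) \implies (2)$, which is essentially computational. Given an orthonormal set $\{v_1,\ldots,v_k\} \subseteq \cl{H}$ with $Av_j = s_j(A)v_j$, we have $\|Av_j\| = s_j(A)$ for each $j$, and hence
\[
\|A\|_{(p,k)}^p = \sum_{j=1}^k s_j^p(A) = \sum_{j=1}^k \|Av_j\|^p,
\]
exhibiting $A$ as $(p,k)$-norming (with the usual finite-dimensional modification when $\dim(\cl{H}) < k$). This requires no more than Definition \ref{N_(p,k)Def}.

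The substantive part is $(2) \implies (3)$, which I plan to handle by mirroring the structure already established in the Ky Fan and weighted Ky Fan settings. Assume $A \in \cl{N}_{(p,k)}$. By Proposition \ref{analogous-2-1.6}, each of $s_1(A), \ldots, s_k(A)$ must be an eigenvalue of $A$. Now split into cases based on the coincidence pattern of the singular values. If $s_1(A), \ldots, s_k(A)$ are mutually distinct, Proposition \ref{analogous-2-1.7} directly produces the required orthonormal eigenvector system via the orthogonality of eigenvectors of a normal operator corresponding to distinct eigenvalues. In the case of repetitions, Proposition \ref{analogous-2-1.8} supplies the geometric multiplicity required to extract enough mutually orthogonal eigenvectors within each repeated eigenspace; one then assembles the pieces from the different eigenspaces (which are mutually orthogonal since $A$ is self-adjoint) to form the desired orthonormal set $\{v_1, \ldots, v_k\}$.

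Finally, having the biconditional $(2) \iff (3)$, I would simply chain it with the known equivalences $(1) \iff (3) \iff (4)$ from Theorem \ref{PositiveN_{[pi, k]}} to conclude all four statements are equivalent. I do not expect a serious obstacle here: the analogues of Propositions \ref{1.6}, \ref{1.7}, and \ref{1.8} for the $(p,k)$ setting are precisely the three propositions stated just before the theorem, and they are designed to reproduce the Ky Fan arguments verbatim with $s_j(A)$ replaced by $s_j^p(A)$ in the relevant sums. The only mild subtlety is tracking the case $\dim(\cl{H}) = r < k$, but the definition of $\cl{N}_{(p,k)}$ already handles this, and the argument truncates naturally at $j = r$.
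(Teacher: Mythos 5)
Your proposal is correct and follows essentially the same route as the paper: the author likewise imports $(1)\iff(3)\iff(4)$ from Theorem \ref{PositiveN_{[pi, k]}}, dismisses $(3)\implies(2)$ as trivial, and proves $(2)\implies(3)$ by invoking Proposition \ref{analogous-2-1.6} and then splitting into the distinct and repeated singular value cases via Propositions \ref{analogous-2-1.7} and \ref{analogous-2-1.8}. No substantive differences.
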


\begin{proof}
$(1)\iff(3)\iff (4)$ has been established in Theorem \ref{PositiveN_{[pi, k]}} and $(3)\implies (2)$ is trivial.  All that remains to show is $(2)\implies (3)$. By the
Proposition \ref{analogous-2-1.6}, $s_1(A),..., s_k(A)$ are all eigenvalues of $A$. If $s_1(A),..., s_k(A)$ are mutually distinct eigenvalues, then by Proposition \ref{analogous-2-1.7} there exists an orthonormal set $\{v_1,...,v_k\}\subseteq \cl H$ such that $Av_j=s_j(A)v_j$ for every $j\in \{1,...,k\}$. However, if $s_1(A),...,s_k(A)$ are all eigenvalues but not necessarily distinct then also the existence of an orthonormal set $\{v_1,...,v_k\}\subseteq \cl H$ with $Av_j=s_j(A)v_j$ for every $j\in \{1,...,k\}$ is guaranteed by the Proposition \ref{analogous-2-1.8}. This completes the proof.
\end{proof}

The following corollary is easy to deduce from the above theorem. 
\begin{cor}\label{N_{(p, k+1)}impliesN_{(p, k)}}
Let $A\in \cl B(\cl H)$ be a positive operator, $p\in [1,\infty)$, and let $k\in \bb{N}$. 
\begin{enumerate}
\item If $A\in \cl N_{(p, k+1)}$, then $A\in \cl N_{(p,k)}$.
\item If $A\in \cl N_{(p,k)}$, then $A\in \cl N$.
\end{enumerate}
\end{cor}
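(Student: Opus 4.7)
The plan is to derive both statements directly from Theorem \ref{PositiveN_{(p, k)}}, which already converts membership in $\cl N_{(p,k)}$ (for positive operators) into the eigenvalue/orthonormal-eigenvector condition in part (3) of that theorem. The conclusion should follow almost entirely by bookkeeping once the theorem is invoked, which is why this result is flagged as an immediate corollary.

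For assertion (1), I would first apply Theorem \ref{PositiveN_{(p, k)}} to the hypothesis $A\in \cl N_{(p,k+1)}$ to obtain that $s_1(A),\ldots,s_{k+1}(A)$ are eigenvalues of $A$ and that there exists an orthonormal set $\{v_1,\ldots,v_{k+1}\}\subseteq\cl H$ with $Av_j=s_j(A)v_j$ for each $j\in\{1,\ldots,k+1\}$. Truncating to the first $k$ indices, the set $\{v_1,\ldots,v_k\}$ is still orthonormal and still satisfies $Av_j=s_j(A)v_j$ for $j\in\{1,\ldots,k\}$. Condition (3) of Theorem \ref{PositiveN_{(p, k)}} is then verified for $k$ in place of $k+1$, and the same theorem (using $(3)\Rightarrow(2)$) yields $A\in\cl N_{(p,k)}$, as desired.

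For assertion (2), I would again apply Theorem \ref{PositiveN_{(p, k)}} to the hypothesis $A\in\cl N_{(p,k)}$ to extract an orthonormal set $\{v_1,\ldots,v_k\}$ with $Av_j=s_j(A)v_j$. In particular $s_1(A)=\|A\|$ is an eigenvalue of $A$ with a unit eigenvector $v_1$, so $\|Av_1\|=s_1(A)=\|A\|$, which means $A$ attains its operator norm at $v_1$; equivalently $A\in\cl N$. (Alternatively, one can iterate part (1) to descend from $k$ to $1$, observing that $\cl N_{(p,1)}=\cl N$ for positive operators by Proposition \ref{EasyPeasy_(p, k)}.)

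No step is an obstacle here: the entire content has been packaged into Theorem \ref{PositiveN_{(p, k)}}, and the corollary is really just the observation that the eigenvector condition descends from $k+1$ to $k$ and, in particular, to $1$. The only mild care needed is ensuring that one is permitted to truncate the orthonormal eigenvector family, which is immediate since any subset of an orthonormal set is orthonormal and the eigenvalue equations $Av_j=s_j(A)v_j$ are indexed individually.
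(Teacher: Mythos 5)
Your proof is correct and follows exactly the route the paper intends: the paper offers no written proof, simply declaring the corollary ``easy to deduce'' from Theorem \ref{PositiveN_{(p, k)}}, and your argument (truncating the orthonormal eigenvector family for part (1), and using the unit eigenvector for $s_1(A)=\|A\|$ for part (2)) is the standard deduction being alluded to.
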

Theorem \ref{TiffT*T_[k]} extends word for word to the family $\cl N_{(p, k)}$ (see \ref{TiffT*T_(p, k)}) and Theorem \ref{AN_{[k+1]}impliesAN_[k](general)} alongwith the Corollary \ref{AN_{[k+1]}impliesAN_[k]} extend to the family $\cl{AN}_{(p, k)}$ (see \ref{AN_{(p, k+1)}impliesAN_(p, k)(general)} and \ref{AN_{(p, k+1)}impliesAN_(p, k)} respectively).

\begin{thm}\label{TiffT*T_(p, k)}
Let $\cl{H}$ and $\cl{K}$ be complex Hilbert spaces, $T\in \cl{B}(\cl{H},\cl{K})$, $p\in [1,\infty)$, and $k\in\bb{N}$. Then the following statements are equivalent.
\begin{enumerate}
\item $T\in\cl{N}_{(p,k)}$.
\item $|T|\in\cl{N}_{(p,k)}$.
\item $T^*T\in\cl{N}_{(p,k)}$.
\end{enumerate}
\end{thm}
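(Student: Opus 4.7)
The plan is to imitate the proof of Theorem \ref{TiffT*T_[k]} almost verbatim, since the only structural ingredient used there that depends on the norm is the spectral characterization of positive norming operators (Theorem \ref{PositiveN_{[k]}}), which has now been extended to the $(p,k)$-setting as Theorem \ref{PositiveN_{(p, k)}}. Fix $p\in[1,\infty)$ and $k\in\bb{N}$ throughout.

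First I would prove $(1)\iff(2)$, which is purely formal. Since $s_j(T)=\lambda_j(|T|)=s_j(|T|)$ for every $j$, we have $\|T\|_{(p,k)}=\||T|\|_{(p,k)}$. Moreover, for every $x\in\cl H$,
\[
\|Tx\|^2=\langle T^*Tx,x\rangle=\langle|T|^2x,x\rangle=\||T|x\|^2,
\]
so $\|Tx\|=\||T|x\|$. Hence for any orthonormal set $\{x_1,\ldots,x_k\}\subseteq\cl H$, the identity $\|T\|_{(p,k)}^p=\sum_{j=1}^k\|Tx_j\|^p$ is equivalent to $\||T|\|_{(p,k)}^p=\sum_{j=1}^k\||T|x_j\|^p$. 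The same substitution handles the case $\dim\cl H<k$.

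Next I would prove $(2)\iff(3)$ by exploiting that $|T|$ and $T^*T=|T|^2$ are both positive, so Theorem \ref{PositiveN_{(p, k)}} applies. Suppose $|T|\in\cl N_{(p,k)}$. Then there exists an orthonormal set $\{v_1,\ldots,v_k\}\subseteq\cl H$ with $|T|v_j=s_j(|T|)v_j$ for each $j$. Applying $|T|$ again gives $T^*Tv_j=|T|^2v_j=s_j^2(|T|)v_j=s_j(|T|^2)v_j=s_j(T^*T)v_j$, so $\{v_1,\ldots,v_k\}$ is an orthonormal set of eigenvectors of $T^*T$ corresponding to its top $k$ singular values; Theorem \ref{PositiveN_{(p, k)}} then yields $T^*T\in\cl N_{(p,k)}$. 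Conversely, if $T^*T\in\cl N_{(p,k)}$, the same theorem produces an orthonormal $\{w_1,\ldots,w_k\}$ with $T^*Tw_j=s_j(T^*T)w_j=s_j^2(|T|)w_j$; since $|T|$ is the positive square root of $T^*T$, each $w_j$ lies in the spectral subspace of $|T|$ corresponding to $s_j(|T|)$, so $|T|w_j=s_j(|T|)w_j$. Invoking Theorem \ref{PositiveN_{(p, k)}} once more gives $|T|\in\cl N_{(p,k)}$. The cases $\dim\cl H<k$ are handled by the same argument with $r$ eigenvectors.

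The main subtle point is the transfer of eigenvectors between $|T|$ and $|T|^2=T^*T$; this is really just the continuous functional calculus $(\lambda\mapsto\lambda^2$ on $\sigma(|T|)\subseteq[0,\infty))$, ensuring that eigenspaces match up and that the ordering of singular values is preserved via $s_j(T^*T)=s_j^2(|T|)$. Once this is noted, everything reduces to two applications of Theorem \ref{PositiveN_{(p, k)}}, mirroring the calculation done in the proof of Theorem \ref{TiffT*T_[k]} with the $p$-th power inserted inside each summand. Since $p$ and $k$ were arbitrary, the equivalences hold for every $p\in[1,\infty)$ and every $k\in\bb N$.
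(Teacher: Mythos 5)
Your argument is correct, and the $(1)\iff(2)$ part coincides with the paper's. Where you diverge is in $(2)\iff(3)$: the paper does not re-run the eigenvector argument at all. It observes that $|T|$ and $T^*T$ are positive and that, by Theorem \ref{PositiveN_{(p, k)}}, the classes $\cl N_{[k]}$ and $\cl N_{(p,k)}$ coincide on positive operators; the equivalence $(2)\iff(3)$ is then inherited in one line from the already-proved Theorem \ref{TiffT*T_[k]}. You instead redo the transfer of eigenvectors between $|T|$ and $|T|^2=T^*T$ directly, invoking Theorem \ref{PositiveN_{(p, k)}} twice. Both routes are sound. The paper's reduction is shorter and makes explicit the structural point that for positive operators all these norming classes are the same set, so nothing new needs to be verified; your version is more self-contained and spells out the functional-calculus step ($\ker(|T|^2-\mu^2 I)=\ker(|T|-\mu I)$ and $s_j(T^*T)=s_j^2(|T|)$) that the paper's proof of Theorem \ref{TiffT*T_[k]} used but its proof of the present theorem silently bypasses. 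If you wanted to match the paper's economy, note that condition (3) of Theorem \ref{PositiveN_{(p, k)}} is identical to condition (2) of Theorem \ref{PositiveN_{[k]}}, so for positive operators membership in $\cl N_{(p,k)}$ is literally membership in $\cl N_{[k]}$, and the Ky Fan case of the theorem finishes the job.
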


\begin{proof}
It suffices to establish $(1)\iff (2)$; for then $|T|$ and $T^*T$ being positive and the sets $\cl{N}_{[k]}$ and $\cl{N}_{(p, k)}$ being identical for positive operators,  Theorem \ref{TiffT*T_[k]} yields the equivalence of $(2)$ and $(3)$. But $s_j(T)=s_j(|T|)$ for every $j$ and $\|Tx\|=\|\, |T|x\, \|$ for every $x\in \cl H$ which implies that $\sum_{j=1}^k\|Tx_j\|^p=\sum_{j=1}^ks_j^p(T)\iff\sum_{j=1}^k\|\, |T|x_j\, \|^p=\sum_{j=1}^ks_j^p(|T|)$. This establishes the equivalence of $(1)$ and $(2)$. 
\end{proof}

\begin{thm}\label{AN_{(p, k+1)}impliesAN_(p, k)(general)}
Let $A\in \cl B(\cl H, \cl K)$, $p\in [1,\infty)$, and $k\in \bb{N}$. If $A\in \cl{AN}_{(p,k+1)}$, then $A\in \cl{AN}_{(p,k)}$.
\end{thm}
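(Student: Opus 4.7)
The plan is to follow the exact same three-line strategy used in Theorem \ref{AN_{[k+1]}impliesAN_[k](general)}, with each of its three ingredients replaced by its $(p,k)$-analogue that has already been established. Concretely: $T \in \cl N_{(p,k)} \iff |T| \in \cl N_{(p,k)}$ is Theorem \ref{TiffT*T_(p, k)}; for positive operators the implication $\cl N_{(p,k+1)} \subseteq \cl N_{(p,k)}$ is Corollary \ref{N_{(p, k+1)}impliesN_{(p, k)}}; and the ``restriction-to-a-closed-subspace'' characterization of $\cl{AN}_{(p,k)}$ in terms of inclusion maps $V_{\cl M}$ is Lemma \ref{Trick_(p, k)}. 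These three pieces are precisely what powered the Ky Fan argument, so no new ideas are required.

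More precisely, I would fix a nontrivial closed subspace $\cl M \subseteq \cl H$ and chain the equivalences
\begin{align*}
AV_{\cl M} \in \cl N_{(p,k+1)} &\iff |AV_{\cl M}| \in \cl N_{(p,k+1)} \\
 &\implies |AV_{\cl M}| \in \cl N_{(p,k)} \\
 &\iff AV_{\cl M} \in \cl N_{(p,k)},
\end{align*}
where the first and third bi-implications invoke Theorem \ref{TiffT*T_(p, k)} (applied to the operator $AV_{\cl M}\in \cl B(\cl M,\cl K)$), and the middle implication uses Corollary \ref{N_{(p, k+1)}impliesN_{(p, k)}} since $|AV_{\cl M}|$ is a positive operator on $\cl M$. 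Starting from the hypothesis $A \in \cl{AN}_{(p,k+1)}$, Lemma \ref{Trick_(p, k)} tells me that $AV_{\cl M} \in \cl N_{(p,k+1)}$ for every such $\cl M$; the chain above then yields $AV_{\cl M} \in \cl N_{(p,k)}$ for every such $\cl M$; applying Lemma \ref{Trick_(p, k)} once more gives $A \in \cl{AN}_{(p,k)}$.

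There is essentially no obstacle in this argument: every step has already been done in the Ky Fan setting and the only genuine mathematical content, namely that the positive-operator descent $\cl N_{(p,k+1)} \subseteq \cl N_{(p,k)}$ holds, is isolated in Corollary \ref{N_{(p, k+1)}impliesN_{(p, k)}}, which in turn rests on Theorem \ref{PositiveN_{(p, k)}}. If anything at all warrants a moment's care, it is the book-keeping of the domain: the inclusion map $V_{\cl M}:\cl M \to \cl H$ forces us to apply Theorem \ref{TiffT*T_(p, k)} to the operator $AV_{\cl M}:\cl M \to \cl K$ rather than to $A$ itself, and to read Corollary \ref{N_{(p, k+1)}impliesN_{(p, k)}} on the Hilbert space $\cl M$ rather than $\cl H$; both are permitted since the statements are formulated for arbitrary complex Hilbert spaces. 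Consequently the entire proof is a three-line chain of already-proven facts, and I expect to present it essentially verbatim in the style of Theorem \ref{AN_{[k+1]}impliesAN_[k](general)}.
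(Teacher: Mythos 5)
Your proposal is correct and is exactly the argument the paper intends: the paper omits the proof with the remark that it is ``similar to that of Theorem \ref{AN_{[k+1]}impliesAN_[k](general)}'', whose proof is precisely the three-step chain you write down, with Theorem \ref{TiffT*T_(p, k)}, Corollary \ref{N_{(p, k+1)}impliesN_{(p, k)}}, and Lemma \ref{Trick_(p, k)} substituted for their Ky Fan counterparts. Your added remark about applying these results on the Hilbert space $\cl M$ rather than $\cl H$ is a correct and worthwhile clarification, but introduces no new ideas beyond the paper's route.
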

The proof of the above theorem is similar to that of the Theorem \ref{AN_{[k+1]}impliesAN_[k](general)} and the following corollary is easy to deduce from it.

 \begin{cor}\label{AN_{(p, k+1)}impliesAN_(p, k)}
Let $A\in \cl B(\cl H)$ be a positive operator, $p\in [1,\infty)$, and $k\in \bb{N}$. If $A\in \cl{AN}_{(p, k+1)}$, then $A\in \cl{AN}_{(p, k)}$. In particular, if $A\in \cl{AN}_{(p, k)}$, then $A\in\cl{AN}$.
 \end{cor}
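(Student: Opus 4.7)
The corollary has two assertions to prove, but both will be essentially immediate given the tools already developed, so the plan is short.

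First, for the opening implication --- that a positive $A\in\cl{AN}_{(p,k+1)}$ lies in $\cl{AN}_{(p,k)}$ --- I would simply specialize Theorem~\ref{AN_{(p, k+1)}impliesAN_(p, k)(general)}, which is stated for arbitrary bounded operators (not just positive ones) in $\cl B(\cl H,\cl K)$. The positivity hypothesis is not needed here; restricting to positive operators in $\cl B(\cl H)$ is a trivial specialization, and nothing further must be checked.

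For the ``in particular'' part I would proceed by a finite descending induction on $k$. Iterating the first implication gives the chain
\[
\cl{AN}_{(p,k)} \subseteq \cl{AN}_{(p,k-1)} \subseteq \cdots \subseteq \cl{AN}_{(p,1)},
\]
so it suffices to identify $\cl{AN}_{(p,1)}$ with $\cl{AN}$. This identification is immediate from the definitions: for any $T\in\cl B(\cl H,\cl K)$,
\[
\|T\|_{(p,1)} \;=\; \bigl(s_1(T)^p\bigr)^{1/p} \;=\; s_1(T) \;=\; \|T\|,
\]
so the condition in Definition~\ref{N_(p,k)Def} with $k=1$ (i.e.\ the existence of a unit vector $x$ with $\|T\|_{(p,1)}^p=\|Tx\|^p$) coincides with the defining condition for norming operators in Definition~\ref{Def1}. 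Hence $\cl N_{(p,1)}=\cl N$, and restricting this equality to every nontrivial closed subspace of $\cl H$ gives $\cl{AN}_{(p,1)}=\cl{AN}$. Combining with the chain of inclusions yields $A\in\cl{AN}$, as required.

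There is no real obstacle: the entire substance is absorbed by Theorem~\ref{AN_{(p, k+1)}impliesAN_(p, k)(general)} together with the trivial observation that the $(p,1)$-singular norm is just the operator norm. The only mild care needed is to explicitly note (once) that the equivalence of norming and $(p,1)$-norming passes from a single space to every restriction $T|_{\cl M}$, which is immediate since the statement ``$\|T|_{\cl M}\|_{(p,1)}=\|T|_{\cl M}x\|$ for some unit $x\in\cl M$'' and ``$\|T|_{\cl M}\|=\|T|_{\cl M}x\|$ for some unit $x\in\cl M$'' are literally the same condition.
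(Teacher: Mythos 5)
Your proposal is correct and follows essentially the route the paper intends: the paper gives no explicit proof, stating only that the corollary is easy to deduce from Theorem \ref{AN_{(p, k+1)}impliesAN_(p, k)(general)}, and your argument --- specializing that theorem for the first implication, then iterating it down to $k=1$ and observing that the $(p,1)$-singular norm coincides with the operator norm so that $\cl{AN}_{(p,1)}=\cl{AN}$ --- is exactly the natural filling-in of that deduction. No gaps.
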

The above corollary alongwith the forward implication of \cite[Theorem 5.1]{VpSp} yields the main result of this subsection as the following theorem.

\begin{thm}\label{forward_(p, k)}
Let $\cl{H}$ be a complex Hilbert space, $A$ be a positive operator on $\cl{H}$, $p\in [1,\infty)$, and $k\in \bb{N}$. If $A\in\cl{AN}_{(p, k)}$, then $A$ is of the form $A= \alpha I +K+F$, where $\alpha \geq 0, K$ is a positive compact operator and $F$ is self-adjoint finite-rank operator.
\end{thm}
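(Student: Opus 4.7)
The plan is to mimic the proof strategy already used for the two parallel forward spectral characterizations, Theorem \ref{forward_[k]} and Theorem \ref{forward_[pi, k]}. In each of those cases, the argument proceeds in exactly two moves: first one shows that membership in the refined class ($\cl{AN}_{[k]}$ or $\cl{AN}_{[\pi,k]}$) implies membership in the ordinary absolutely norming class $\cl{AN}$, and then one invokes the forward implication of \cite[Theorem 5.1]{VpSp}, which is the known spectral characterization of positive operators in $\cl{AN}$. I would follow the same two-step template here.

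Concretely, the first step is to note that the subsection has already done the work needed: Corollary \ref{AN_{(p, k+1)}impliesAN_(p, k)} asserts in its second clause that if $A$ is a positive operator in $\cl{AN}_{(p,k)}$, then $A \in \cl{AN}$. So from the hypothesis $A \in \cl{AN}_{(p,k)}$, I immediately conclude $A \in \cl{AN}$ by citing that corollary.

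The second step is then a direct appeal to the forward implication of \cite[Theorem 5.1]{VpSp}, which says that a positive absolutely norming operator on a complex Hilbert space is necessarily of the form $\alpha I + K + F$ with $\alpha \geq 0$, $K$ a positive compact operator, and $F$ a self-adjoint finite-rank operator. Applied to our $A$, this yields exactly the desired decomposition and completes the proof.

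There is no genuine obstacle in this step, and in particular no hard part: the entire content of the theorem is funneled through Corollary \ref{AN_{(p, k+1)}impliesAN_(p, k)}, whose proof rests on the chain \textit{$(p,k)$-norming $\Longrightarrow$ $(p,k{-}1)$-norming $\Longrightarrow \cdots \Longrightarrow$ $(p,1)$-norming $\Longrightarrow$ norming}, established using Theorem \ref{AN_{(p, k+1)}impliesAN_(p, k)(general)} together with Proposition \ref{EasyPeasy_(p, k)}. All real effort in characterizing positive operators in $\cl{AN}_{(p,k)}$ has therefore been done earlier in the section, and the present theorem is simply the payoff: the reduction \emph{$\cl{AN}_{(p,k)} \subseteq \cl{AN}$ for positive operators} plugged directly into the spectral theorem of \cite{VpSp}.
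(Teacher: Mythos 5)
Your proof is correct and is exactly the paper's argument: the paper derives Theorem \ref{forward_(p, k)} by combining the second clause of Corollary \ref{AN_{(p, k+1)}impliesAN_(p, k)} (positive operators in $\cl{AN}_{(p,k)}$ lie in $\cl{AN}$) with the forward implication of \cite[Theorem 5.1]{VpSp}. Nothing further is needed.
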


Theorem \ref{TVM-TPM-k} extends word for word to the family $\cl {AN}_{(p,k)}(\cl H, \cl K)$ and its proof is similar. This result will be needed in section \ref{symmetrically-normed-ideals}.

\begin{thm}\label{TVM-TPM-p-k}
For a closed linear subspace $\cl{M}$ of a complex Hilbert space $\cl{H}$ let $V_{\cl{M}}:\cl{M}\longrightarrow \cl{H}$ be the inclusion map from $\cl{M}$ to $\cl{H}$ defined as $V_{\cl{M}}(x) = x$ for each $x\in \cl{M}$, let $P_{\cl M}\in \cl B(\cl H)$ be the orthogonal projection of $\cl H$ onto $\cl M$, and let $T\in \cl{B}(\cl{H},\cl{K})$.
For any real number $p\in [1,\infty)$ and for any $k\in\bb{N}$, the following statements are equivalent.
\begin{enumerate}
\item $T\in \cl{AN}_{(p,k)}(\cl{H},\cl{K})$.
\item $TV_{\cl{M}}\in\cl{N}_{(p,k)}(\cl{M},\cl{K})$ for every nontrivial closed linear subspace $\cl{M}$ of $\cl{H}$.
\item $TP_{\cl{M}}\in\cl{N}_{(p,k)}(\cl{H},\cl{K})$ for every nontrivial closed linear subspace $\cl{M}$ of $\cl{H}$.
\end{enumerate} 
\end{thm}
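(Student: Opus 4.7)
The plan is to adapt the proof of Theorem \ref{TVM-TPM-k} essentially verbatim, substituting the $(p,k)$-singular norm for the Ky Fan $k$-norm and invoking the analogous prerequisites that are already in place for this family, namely Lemma \ref{Trick_(p, k)}, Theorem \ref{TiffT*T_(p, k)} and Theorem \ref{PositiveN_{(p, k)}}.

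The equivalence $(1) \iff (2)$ is immediate from Lemma \ref{Trick_(p, k)}, so the only work is $(1) \iff (3)$. Fix a nontrivial closed subspace $\cl M \subseteq \cl H$. As in the proof of Theorem \ref{TVM-TPM-k}, the key initial observation is that $\sigma(|T|_{\cl M}|) \setminus \{0\} = \sigma(|TP_{\cl M}|) \setminus \{0\}$, so the nonzero singular values of $T|_{\cl M}$ and $TP_{\cl M}$ coincide. Since the $(p,k)$-singular norm depends only on the top $k$ singular values, this yields $\|T|_{\cl M}\|_{(p,k)} = \|TP_{\cl M}\|_{(p,k)}$. Combined with the trivial identity $\|T|_{\cl M} x\| = \|TP_{\cl M} x\|$ for every $x \in \cl M$, this delivers $(1) \implies (3)$ painlessly.

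For $(3) \implies (1)$, assume $TP_{\cl M} \in \cl N_{(p,k)}(\cl H, \cl K)$. By Theorem \ref{TiffT*T_(p, k)} we obtain $|TP_{\cl M}| \in \cl N_{(p,k)}(\cl H)$, and then Theorem \ref{PositiveN_{(p, k)}} produces an orthonormal set $\{x_1, \ldots, x_k\} \subseteq \cl H$ with $|TP_{\cl M}| x_j = s_j(|TP_{\cl M}|) x_j$. Squaring gives $|TP_{\cl M}|^2 x_j = s_j^2(|TP_{\cl M}|) x_j$, and without loss of generality one may assume $s_j(|TP_{\cl M}|^2) \neq 0$ for each $j$ under consideration (the zero singular values contribute nothing to the $p$-th power sum and the associated eigenvectors can be reselected inside $\cl M$ if needed). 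Since $|TP_{\cl M}|^2 = P_{\cl M} T^* T P_{\cl M}$ annihilates $\cl M^\perp$, any eigenvector of $|TP_{\cl M}|^2$ associated with a nonzero eigenvalue must lie in $\cl M$; thus each such $x_j \in \cl M$. The chain
\begin{align*}
\|T|_{\cl M}\|_{(p,k)}^p
&= \|TP_{\cl M}\|_{(p,k)}^p = \|\,|TP_{\cl M}|\,\|_{(p,k)}^p = \sum_{j=1}^k \|\,|TP_{\cl M}| x_j\|^p \\
&= \sum_{j=1}^k \|TP_{\cl M} x_j\|^p = \sum_{j=1}^k \|T x_j\|^p = \sum_{j=1}^k \|T|_{\cl M} x_j\|^p
\end{align*}
then shows $T|_{\cl M} \in \cl N_{(p,k)}$, and since $\cl M$ was arbitrary, $T \in \cl{AN}_{(p,k)}$.

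Because every required intermediate result has already been transcribed to the $(p,k)$-setting, I expect no genuine mathematical obstacle; the proof is routine. The only mild bookkeeping issue is the "without loss of generality nonzero singular values" step, which needs slight care when $\dim \cl M < k$ or when some of the first $k$ singular values of $|TP_{\cl M}|$ vanish, since in those cases one must restrict to the indices $j$ with $s_j > 0$ and observe that the remaining terms in the sum vanish identically so that the orthonormal vectors can be completed arbitrarily inside $\cl M$ without affecting the equality.
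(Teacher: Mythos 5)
Your proposal is correct and follows exactly the route the paper intends: the paper gives no separate proof of Theorem \ref{TVM-TPM-p-k}, stating only that the proof of Theorem \ref{TVM-TPM-k} carries over, and your argument is precisely that adaptation, using Lemma \ref{Trick_(p, k)}, Theorem \ref{TiffT*T_(p, k)} and Theorem \ref{PositiveN_{(p, k)}} in place of their Ky Fan $k$-norm counterparts. Your added remark about handling vanishing singular values is a sensible clarification of a point the paper's own proof of Theorem \ref{TVM-TPM-k} also glosses over with a ``without loss of generality.''
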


\subsection{Sufficient Conditions for Positive Operators in $\cl {AN}_{(p,k)}$}

We now discuss the sufficient conditions for a positive operator to be absolutely $(p,k)$-norming for every $p\in[1,\infty)$ and for every $k \in \bb N$. We begin by stating a proposition that gives a sufficient condition for a positive operator to be $(p,k)$-norming for every $p\in[1,\infty)$ and for every $k \in \bb N$.
\begin{prop}
Let $K\in\cl{B}(\cl{H})$ be a positive compact operator, $F\in\cl{B}(\cl{H})$ be a self-adjoint finite-rank operator, and $\alpha \geq 0$ such that $\alpha I+K+F\geq 0$. Then $\alpha I+K+F\in\cl{N}_{(p,k)}$ for every $p\in[1,\infty)$ and for every $k\in \bb{N}$.
\end{prop}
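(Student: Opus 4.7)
The plan is to reduce this statement to two results already established earlier in the paper, which together yield the conclusion almost immediately.

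First I would observe that the operator $T := \alpha I + K + F$ is positive by hypothesis. By Proposition \ref{alphaI+K+FisN_[k]}, which was proved without any positivity requirement on the whole operator $T$, we already know that $T \in \cl{N}_{[k]}$ for every $k \in \bb{N}$. Thus the task reduces to passing from membership in $\cl{N}_{[k]}$ to membership in $\cl{N}_{(p,k)}$.

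Next I would invoke Theorem \ref{PositiveN_{(p, k)}}, which is precisely the bridge needed here: for any positive operator $A \in \cl{B}(\cl{H})$, any $p \in [1,\infty)$, and any $k \in \bb{N}$, the conditions $A \in \cl{N}_{[k]}$ and $A \in \cl{N}_{(p,k)}$ are equivalent (in fact both are equivalent to the existence of an orthonormal set of eigenvectors $v_1,\ldots,v_k$ with $Av_j = s_j(A) v_j$). Applying this equivalence to $T$ immediately yields $T \in \cl{N}_{(p,k)}$. Since $p$ and $k$ were arbitrary, the conclusion follows.

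There is essentially no main obstacle here; the work has already been done in Proposition \ref{alphaI+K+FisN_[k]} and Theorem \ref{PositiveN_{(p, k)}}. The only mild subtlety one should note is that the classes $\cl{N}_{[k]}$ and $\cl{N}_{(p,k)}$ coincide only for positive operators — the coincidence rests on the characterization via eigenvectors of $|A| = A$ — and this is exactly why the hypothesis $\alpha I + K + F \geq 0$ is the natural one. Without that positivity assumption, one would instead need to route through $|T|$ and $T^*T$ as in the analogous Lemma \ref{backward_[pi, k]strong(prereq)}, but under the stated hypothesis the direct argument above suffices.
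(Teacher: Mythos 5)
Your proposal is correct and is exactly the paper's argument: the paper remarks that this proposition ``follows immediately from the Proposition \ref{alphaI+K+FisN_[k]} and Theorem \ref{PositiveN_{(p, k)}},'' which is precisely your two-step reduction (first $\alpha I+K+F\in\cl{N}_{[k]}$ without needing positivity of the sum, then the equivalence $\cl{N}_{[k]}=\cl{N}_{(p,k)}$ for positive operators). Your closing remark about why positivity is the natural hypothesis is also consistent with how the paper handles the non-positive case in Lemma \ref{backward_(p, k)strong(prereq)}.
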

 The above result follows immediately from the Proposition \ref{alphaI+K+FisN_[k]} and Theorem \ref{PositiveN_{(p, k)}}. In fact, this proposition is a special case of the following theorem.

\begin{thm}\label{backward_(p, k)}
Let $K\in\cl{B}(\cl{H})$ be a positive compact operator, $F\in\cl{B}(\cl{H})$ be a self-adjoint finite-rank operator, and $\alpha \geq 0$ such that $\alpha I+K+F\geq 0$. Then $\alpha I+K+F\in\cl{AN}_{(p,k)}$ for every $p\in[1,\infty)$ and for every $k\in \bb{N}$.
\end{thm}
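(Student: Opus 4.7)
The plan is to follow exactly the template used in the proof of Theorem \ref{backward_[pi, k]}, with the weighted Ky Fan family $\cl N_{[\pi,k]}$ replaced by the $(p,k)$-singular family $\cl N_{(p,k)}$ throughout. Fix $p\in[1,\infty)$ and $k\in\bb N$, and set $T:=\alpha I+K+F$. By Proposition \ref{Tiff|T|_(p, k)} we have $T\in\cl{AN}_{(p,k)}$ if and only if $|T|\in\cl{AN}_{(p,k)}$, and by Lemma \ref{Trick_(p, k)} this is equivalent to $|T|V_{\cl M}\in\cl N_{(p,k)}(\cl M,\cl H)$ for every nontrivial closed subspace $\cl M\subseteq\cl H$, where $V_{\cl M}:\cl M\to\cl H$ is the inclusion.

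Fix such an $\cl M$. Using Theorem \ref{TiffT*T_(p, k)}, the assertion $|T|V_{\cl M}\in\cl N_{(p,k)}$ is equivalent to $(|T|V_{\cl M})^*(|T|V_{\cl M})=V_{\cl M}^*|T|^*|T|V_{\cl M}\in\cl N_{(p,k)}$. Since $\alpha I+K+F\geq 0$ we have $|T|=T$, so
\[
|T|^*|T|=T^2=(\alpha I+K+F)^2=\beta I+\tilde K+\tilde F,
\]
where $\beta=\alpha^2\geq 0$, $\tilde K=2\alpha K+K^2$ is a positive compact operator, and $\tilde F=2\alpha F+FK+KF+F^2$ is a self-adjoint finite-rank operator. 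Note also that $\beta I+\tilde K+\tilde F=T^2\geq 0$. Compressing to $\cl M$ gives
\[
V_{\cl M}^*(\beta I+\tilde K+\tilde F)V_{\cl M}=\beta I_{\cl M}+\tilde K_{\cl M}+\tilde F_{\cl M},
\]
which is an operator on the Hilbert space $\cl M$ that is a nonnegative scalar multiple of the identity plus a positive compact plus a self-adjoint finite-rank operator, with the whole sum still positive. By the preceding proposition (applied on the Hilbert space $\cl M$), this compression belongs to $\cl N_{(p,k)}$.

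Since $\cl M$ was an arbitrary nontrivial closed subspace, we conclude $|T|\in\cl{AN}_{(p,k)}$ and hence $T\in\cl{AN}_{(p,k)}$. Because $p\in[1,\infty)$ and $k\in\bb N$ were arbitrary, the conclusion holds for all such $p$ and $k$. No step here is a real obstacle: the computation of $|T|^2=\beta I+\tilde K+\tilde F$ is identical to the one in Theorem \ref{backward_[pi, k]} (and relies only on $T\geq 0$), and the structural reduction to the compression $V_{\cl M}^*(\cdot)V_{\cl M}$ is exactly the one used there. The only point to double-check is that the preceding proposition is available in the required generality on every closed subspace $\cl M$; this is fine because that proposition is stated for an arbitrary complex Hilbert space.
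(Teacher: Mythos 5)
Your proposal is correct and follows essentially the same route as the paper's own proof: reduce via Proposition \ref{Tiff|T|_(p, k)} and Lemma \ref{Trick_(p, k)} to compressions $|T|V_{\cl M}$, pass to $V_{\cl M}^*(|T|^*|T|)V_{\cl M}$ using Theorem \ref{TiffT*T_(p, k)}, write $|T|^*|T|=\beta I+\tilde K+\tilde F$, and invoke the preceding proposition on the Hilbert space $\cl M$. Your explicit remark that $|T|=T$ because $T\geq 0$ is a minor (and harmless) clarification of the same computation the paper performs.
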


\begin{proof}
Let us fix $p\in [1,\infty)$ and $k\in \bb{N}$, and let us define $T:= \alpha I+K+F$. Due to Proposition \ref{Tiff|T|_(p, k)}, $T\in\cl{AN}_{(p,k)}$ if and only if $|T|\in\cl{AN}_{(p,k)}$, which due to Lemma \ref{Trick_(p, k)}, is possible if and only if for every nontrivial closed linear subspace $\cl M$ of $\cl H,\ |T|V_{\cl M}\in\cl{N}_{(p,k)}$ for every $p\in[1,\infty)$ and for every $k\in \bb{N}$, where $V_{\cl M}:\cl M\longrightarrow \cl H$ is the inclusion map as defined earlier. We show the last of these equivalent statements. 

Notice that $|T|= |\alpha I+K+F|$ and  $|T|^*|T| = \beta I+\tilde K + \tilde F$ where $\beta=\alpha^2 \geq 0$, and, $\tilde K = 2\alpha K+K^2$ and $\tilde F = 2\alpha F + FK +KF +F^2$ are respectively positive compact and self-adjoint finite-rank operators. It is easy to see that $\beta I+\tilde K + \tilde F\geq 0$.  Next we fix a closed linear subspace $\cl M$ of $\cl H$ and observe that 
\begin{align*}
 &|T|V_\cl{M}\in\cl{N}_{(p,k)} \\
 \iff  &(|T|V_\cl{M})^*(|T|V_\cl{M}) \in \cl{N}_{(p,k)} \\ 
 \iff &V_\cl{M}^*(|T|^*|T|)V_\cl{M} \in\cl{N}_{(p,k)} \\
  \iff & V_\cl{M}^*( \beta I+\tilde K + \tilde F)V_\cl{M} \in \cl{N}_{(p,k)},
\end{align*}
where the first equivalence is due to the Theorem \ref{TiffT*T_(p, k)}.
It suffices to show that $V_\cl{M}^*( \beta I+\tilde K + \tilde F)V_\cl{M}\in\cl{N}_{(p,k)}$;for then, since $\cl M$ is arbitrary, the assertion immediately follows. To this end, notice that $V_\cl{M}^*( \beta I+\tilde K + \tilde F)V_\cl{M}:\cl{M}\longrightarrow \cl{M}$ is an operator on $\cl{M}$ and 
$$
V_\cl{M}^*( \beta I+\tilde K + \tilde F)V_\cl{M} = V_\cl{M}^*\beta IV_\cl{M} +V_\cl{M}^*\tilde K V_\cl{M} +V_\cl{M}^* \tilde FV_\cl{M} = \beta I_\cl{M} +\tilde K_\cl{M}+\tilde F_\cl{M}
$$
is the sum of a nonnegative scalar multiple of Identity, a positive compact operator and a self-adjoint finite-rank operator on the fixed Hilbert space $\cl{M}$ such that this sum is a positive operator on this Hilbert space $\cl{M}$ which, by the preceding proposition, belongs to $\cl{N}_{(p,k)}$. Since $p\in [1,\infty)$ and $k\in \bb{N}$ are arbitrary, it follows that an operator of the above form belongs to $\cl{AN}_{(p,k)}$ for every  $p\in [1,\infty)$ and for every $k\in \bb{N}$. This completes the proof.
\end{proof}

As an immediate consequence of the Theorem \ref{forward_(p, k)} and Theorem \ref{backward_(p, k)}, we get the following theorem which completely characterizes positive operators that are absolutely $(p,k)$-norming for any and every $p\in[1,\infty)$ and $k\in \bb{N}$.

\begin{thm}[Spectral Theorem for Positive Operators in $\cl{AN}_{(p, k)}$ ]\label{SpThPAN_(p, k)}
Let $\cl{H}$ be a complex Hilbert space of arbitrary dimension and $P$ be a positive operator on $\cl{H}$. 
Then the following statements are equivalent.
\begin{enumerate}
\item $P\in\cl{AN}_{(p,k)}$ for every $p\in[1,\infty)$ and for every $k\in \bb N$.
\item $P\in\cl{AN}_{(p,k)}$ for some $p\in[1,\infty)$ and for some $k\in \bb N$.
\item $P$ is of the form $P= \alpha I +K+F$, where $\alpha \geq 0, K$ is a positive compact operator and $F$ is self-adjoint finite-rank operator.
\end{enumerate}
\end{thm}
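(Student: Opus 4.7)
The proof is essentially a matter of invoking the two main theorems proved earlier in this section, so the proposal is quite short. The plan is to establish the chain $(1) \Longrightarrow (2) \Longrightarrow (3) \Longrightarrow (1)$, each step appealing to a result already in hand.

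First, $(1) \Longrightarrow (2)$ is immediate: specializing the universal quantifiers in $(1)$ to any particular choice of $p \in [1,\infty)$ and $k \in \bb N$ (for instance $p = 1$, $k = 1$) yields $(2)$. Next, for $(2) \Longrightarrow (3)$, fix some $p \in [1,\infty)$ and $k \in \bb N$ for which $P \in \cl{AN}_{(p,k)}$; then Theorem \ref{forward_(p, k)} directly gives the desired decomposition $P = \alpha I + K + F$ with $\alpha \geq 0$, $K$ a positive compact operator, and $F$ a self-adjoint finite-rank operator.

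Finally, for $(3) \Longrightarrow (1)$, suppose $P = \alpha I + K + F$ with the stated hypotheses. Since $P$ is assumed positive, the side condition $\alpha I + K + F \geq 0$ required by Theorem \ref{backward_(p, k)} is automatically satisfied, and that theorem then asserts precisely that $P \in \cl{AN}_{(p,k)}$ for every $p \in [1,\infty)$ and every $k \in \bb N$, which is $(1)$.

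There is really no obstacle here; all the work has already been done in Theorems \ref{forward_(p, k)} and \ref{backward_(p, k)}, and the only thing to note explicitly is that the positivity of $P$ is exactly what lets us apply the sufficient-condition theorem without the extra positivity hypothesis becoming a burden. Thus the proof reduces to three lines citing the prior results.
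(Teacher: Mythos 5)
Your proof is correct and follows exactly the route the paper intends: the paper states the theorem as an immediate consequence of Theorems \ref{forward_(p, k)} and \ref{backward_(p, k)}, which is precisely your chain $(1)\Rightarrow(2)\Rightarrow(3)\Rightarrow(1)$. Your explicit remark that the positivity of $P$ supplies the hypothesis $\alpha I + K + F \geq 0$ needed for Theorem \ref{backward_(p, k)} is a worthwhile detail the paper leaves implicit.
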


We have all that is required to move on to the next section and establish the result (see Theorem \ref{SpThAN_(p, k)}) which characterizes bounded operators that attain their $(p,k)$-singular norm on every closed subspace. 
Lemma \ref{backward_[pi, k]strong(prereq)} and Proposition \ref{backward_[pi, k]strong} carry over word for word to the operators in $\cl N_{(p,k)}$ and operators in $\cl{AN}_{(p,k)}$ respectively (see \ref{backward_(p, k)strong(prereq)} and \ref{backward_(p, k)strong}). The Proposition \ref{backward_(p, k)strong} addresses the question of whether an operator of the form $\alpha I +K+F$, which is not necessarily positive, belongs to $\cl{AN}_{(p,k)}$.  

\begin{lemma}\label{backward_(p, k)strong(prereq)}
Let $K\in\cl{B}(\cl{H})$ be a positive compact operator, $F\in\cl{B}(\cl{H})$ be a self-adjoint finite-rank operator, and $\alpha \geq 0$. Then $\alpha I+K+F\in\cl{N}_{(p,k)}$ for every $p\in[1,\infty)$ and for every $k\in \bb{N}$.
\end{lemma}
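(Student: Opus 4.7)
The plan is to mirror the strategy used in the proof of Lemma \ref{backward_[pi, k]strong(prereq)}. The key observation is that for positive operators, the classes $\cl N_{[k]}$ and $\cl N_{(p,k)}$ coincide (this is exactly Theorem \ref{PositiveN_{(p, k)}}), so one can transfer results from the Ky Fan $k$-norm setting to the $(p,k)$-singular norm setting by passing through the modulus.

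First I would fix $p\in [1,\infty)$ and $k\in \bb N$. Then, for an arbitrary operator $T\in \cl B(\cl H, \cl K)$, I would chain together the following equivalences:
\begin{align*}
T\in \cl N_{[k]} &\iff |T|\in \cl N_{[k]}\\
&\iff |T|\in \cl N_{(p,k)}\\
&\iff T\in \cl N_{(p,k)},
\end{align*}
where the first equivalence uses Theorem \ref{TiffT*T_[k]}, the second uses Theorem \ref{PositiveN_{(p, k)}} applied to the positive operator $|T|$, and the third uses Theorem \ref{TiffT*T_(p, k)}.

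Next I would invoke Proposition \ref{alphaI+K+FisN_[k]}, which asserts that $\alpha I + K + F \in \cl N_{[k]}$, to conclude via the above chain that $\alpha I + K + F \in \cl N_{(p,k)}$. Since the choice of $p\in [1,\infty)$ and $k\in \bb N$ was arbitrary, the assertion follows for every such $p$ and $k$.

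There is no real obstacle here: all the heavy lifting has already been done earlier in the paper. The only bookkeeping point worth emphasizing is that the chain of equivalences above is the $(p,k)$-analogue of the Remark \ref{N_[k]iffN_[pi, k]} observation, and it deserves to be stated explicitly so that later uses in Proposition \ref{backward_(p, k)strong} can simply quote it.
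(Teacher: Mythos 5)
Your proposal is correct and follows essentially the same route as the paper's own proof: the same chain of equivalences $T\in \cl N_{[k]} \iff |T|\in \cl N_{[k]} \iff |T|\in \cl N_{(p,k)} \iff T\in \cl N_{(p,k)}$, justified by Theorems \ref{TiffT*T_[k]}, \ref{PositiveN_{(p, k)}}, and \ref{TiffT*T_(p, k)}, followed by an application of Proposition \ref{alphaI+K+FisN_[k]}. The paper even records the same observation you flag at the end, in the remark immediately following Proposition \ref{backward_(p, k)strong}.
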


\begin{proof}
Fix $p\in [1,\infty)$ and $k\in \bb N$. For any bounded operator $T\in \cl B(\cl H, \cl K)$ we observe that
\begin{align*}
T\in \cl N_{[k]} &\iff |T| \in \cl N_{[k]}\\
&\iff |T|\in \cl N_{(p,k)} \\
&\iff T\in \cl N_{(p,k)},
\end{align*}
where the first equivalence is due to the Theorem \ref{TiffT*T_[k]}, the second equivalence is due to the Theorem \ref{PositiveN_{(p, k)}} and the last equivalence is due to the Theorem \ref{TiffT*T_(p, k)}. 
This observation when applied to the Proposition \ref{alphaI+K+FisN_[k]} proves that $\alpha I +K+F\in \cl N_{(p,k)}$. Since $p\in [1,\infty)$ and $k\in \bb N$ are arbitrary, it follows that $\alpha I +K+F\in \cl N_{(p,k)}$ for every $p\in[1,\infty)$ and for every $k\in \bb N$ which proves the assertion.
\end{proof}

\begin{prop}\label{backward_(p, k)strong}
Let $K\in\cl{B}(\cl{H})$ be a positive compact operator, $F\in\cl{B}(\cl{H})$ be a self-adjoint finite-rank operator, and $\alpha \geq 0$. Then $\alpha I+K+F\in\cl{AN}_{(p, k)}$ for every $p\in[1,\infty)$ and for every $k\in \bb{N}$.
\end{prop}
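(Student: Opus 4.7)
The plan is to imitate the proof of the positive case (Theorem \ref{backward_(p, k)}) and of its $[\pi,k]$-analogue (Proposition \ref{backward_[pi, k]strong}). Fix $p\in[1,\infty)$ and $k\in\bb N$, and write $T:=\alpha I+K+F$. By Proposition \ref{Tiff|T|_(p, k)} it is equivalent to show $|T|\in\cl{AN}_{(p,k)}$, which by Lemma \ref{Trick_(p, k)} reduces to verifying $|T|V_{\cl M}\in\cl N_{(p,k)}$ for every nontrivial closed subspace $\cl M$ of $\cl H$, where $V_{\cl M}:\cl M\longrightarrow\cl H$ is the inclusion. So I would fix such an $\cl M$ and aim at this last statement.

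The key computation is that, although $\alpha I+K+F$ need not be positive, the square always is of the required form:
$$|T|^*|T|=T^*T=(\alpha I+K+F)^2=\beta I+\tilde K+\tilde F,$$
where $\beta:=\alpha^2\geq 0$, $\tilde K:=2\alpha K+K^2$ is positive compact, and $\tilde F:=2\alpha F+FK+KF+F^2$ is self-adjoint and finite-rank. Theorem \ref{TiffT*T_(p, k)} then gives
$$|T|V_{\cl M}\in\cl N_{(p,k)}\iff (|T|V_{\cl M})^*(|T|V_{\cl M})=V_{\cl M}^*(\beta I+\tilde K+\tilde F)V_{\cl M}\in\cl N_{(p,k)},$$
so the problem reduces to showing that this compression lies in $\cl N_{(p,k)}$.

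Viewed as an operator on the Hilbert space $\cl M$, the compression splits additively as
$$V_{\cl M}^*(\beta I+\tilde K+\tilde F)V_{\cl M}=\beta I_{\cl M}+\tilde K_{\cl M}+\tilde F_{\cl M},$$
i.e.\ a nonnegative scalar multiple of the identity on $\cl M$, plus a positive compact operator on $\cl M$, plus a self-adjoint finite-rank operator on $\cl M$. Lemma \ref{backward_(p, k)strong(prereq)}, applied inside $\cl M$, then says this operator is $(p,k)$-norming. Since $\cl M$ was arbitrary, the chain of equivalences gives $T\in\cl{AN}_{(p,k)}$; and since $p$ and $k$ were arbitrary, the conclusion holds for every $p\in[1,\infty)$ and every $k\in\bb N$.

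The only genuine obstacle is the possible indefiniteness of $\alpha I+K+F$, which prevents a direct appeal to Theorem \ref{backward_(p, k)}. This is precisely what is bypassed by passing from $T$ to $|T|$ and then to $T^*T$, which is always a positive operator of the form $\beta I + (\text{positive compact}) + (\text{self-adjoint finite-rank})$; once that move is made, the rest is a bookkeeping repetition of the positive case, with Lemma \ref{backward_(p, k)strong(prereq)} supplying the final $(p,k)$-norming input.
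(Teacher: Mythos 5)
Your proposal is correct and follows essentially the same route as the paper's own proof: reduce via Proposition \ref{Tiff|T|_(p, k)} and Lemma \ref{Trick_(p, k)} to the compressions $|T|V_{\cl M}$, pass to $|T|^*|T|=\beta I+\tilde K+\tilde F$ via Theorem \ref{TiffT*T_(p, k)}, and invoke Lemma \ref{backward_(p, k)strong(prereq)} on the compressed operator $\beta I_{\cl M}+\tilde K_{\cl M}+\tilde F_{\cl M}$. No gaps.
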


\begin{proof}
This proof is very similar to the proof of the Theorem \ref{backward_(p, k)}. As before, let us define $T:= \alpha I+K+F$. We need to show that $T\in\cl{AN}_{(p,k)}$ for every $p\in[1,\infty)$ and for every $k\in \bb{N}$. Let us fix $p\in[1,\infty)$ and $k\in \bb{N}$.
The Proposition \ref{Tiff|T|_(p, k)}, together with the Lemma \ref{Trick_(p, k)} shows that it suffices to show that for every nontrivial closed linear subspace $\cl M$ of $\cl H,\ |T|V_{\cl M}\in\cl{N}_{(p,k)}$ for every $p\in[1,\infty)$ and for every $k\in \bb{N}$, where $V_{\cl M}:\cl M\longrightarrow \cl H$ is the inclusion map as defined earlier. Next we fix a closed linear subspace $\cl M$ of $\cl H$ and observe that
\begin{align*}
 |T|V_\cl{M} \in\cl{N}_{(p,k)} 
  \iff  V_\cl{M}^*( \beta I+\tilde K + \tilde F)V_\cl{M} \in \cl{N}_{(p,k)},
\end{align*}
where 
$\beta I+\tilde K + \tilde F=|T|^*|T|$ with $\beta=\alpha^2 \geq 0$, $\tilde K = 2\alpha K+K^2$ and $\tilde F = 2\alpha F + FK +KF +F^2$. 
All that remains to be shown is that $V_\cl{M}^*( \beta I+\tilde K + \tilde F)V_\cl{M}\in\cl{N}_{(p,k)}$. To this end, notice that $V_\cl{M}^*( \beta I+\tilde K + \tilde F)V_\cl{M}:\cl{M}\longrightarrow \cl{M}$ is an operator on $\cl{M}$ and 
$
V_\cl{M}^*( \beta I+\tilde K + \tilde F)V_\cl{M} = \beta I_\cl{M} +\tilde K_\cl{M}+\tilde F_\cl{M}
$
is the sum of a nonnegative scalar multiple of Identity, a positive compact operator and a self-adjoint finite-rank operator on the fixed Hilbert space $\cl{M}$ which, by the preceding lemma, belongs to $\cl{N}_{(p,k)}$. Finally, since $p\in[1,\infty)$ and $k\in \bb{N}$ are arbitrary, the result holds for every $p\in[1,\infty)$ and for every $k\in \bb{N}$ and thus an operator of the above form belongs to $\cl{AN}_{(p,k)}$ for every $p\in[1,\infty)$ and for every $k\in \bb{N}$. This completes the proof.
\end{proof}

\begin{remark}
The proof of the Lemma \ref{backward_(p, k)strong(prereq)} uses an interesting result which deserves to be stated for its intrinsic interest. If $T\in \cl B(\cl H, \cl K),\,p\in[1,\infty)$, and $k\in \bb N$, then $T\in \cl N_{[k]} 
\iff T\in \cl N_{(p,k)}$. This result, together with the result stated in Remark \ref{N_[k]iffN_[pi, k]} yields the following:

Suppose $T\in \cl B(\cl H, \cl K),\,\pi\in \Pi,\,p\in[1,\infty)$, and $k\in \bb N$. Then the following statements are equivalent. 
\begin{enumerate}
\item $T\in \cl N_{[k]}$.
\item $T\in \cl N_{(p,k)}$.
\item $T\in \cl N_{[\pi, k]}$.
\end{enumerate}
\end{remark}

\section{Spectral Characterization of Operators in $\cl {AN}_{(p,k)}$}
By Proposition \ref{Tiff|T|_(p, k)}, the polar decomposition theorem (see Theorem \ref{PDT}) and the spectral theorem for positive operators $\cl{AN}_{(p, k)}$ (see Theorem \ref{SpThPAN_(p, k)}), we can safely consider the following theorem to be fully proved.

\begin{thm}[Spectral Theorem for Operators in $\cl{AN}_{(p, k)}$]\label{SpThAN_(p, k)}
Let $\cl{H}$ and $\cl{K}$ be complex Hilbert spaces of arbitrary dimensions, let $T\in \cl{B}(\cl{H},\cl{K})$ and let $T=U|T|$ be its polar decomposition.
Then the following statements are equivalent.
\begin{enumerate}
\item $T\in\cl{AN}_{(p,k)}$ for every $p\in[1,\infty)$ and for every $k\in \bb N$.
\item $T\in\cl{AN}_{(p,k)}$ for some $p\in[1,\infty)$ and for some $k\in \bb N$.
\item $|T|$ is of the form $|T|= \alpha I+K+F$, where $\alpha \geq 0, K$ is a positive compact operator and $F$ is self-adjoint finite-rank operator. 
\end{enumerate}
\end{thm}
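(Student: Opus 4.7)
The plan is to deduce this theorem as an immediate packaging of three earlier results: the modulus equivalence in Proposition \ref{Tiff|T|_(p, k)}, the polar decomposition (Theorem \ref{PDT}), which guarantees that $|T|$ is a well-defined positive operator whose spectral form controls $T$, and the spectral characterization of positive operators in $\cl{AN}_{(p,k)}$ established in Theorem \ref{SpThPAN_(p, k)}. The strategy is to close the loop $(1)\Longrightarrow(2)\Longrightarrow(3)\Longrightarrow(1)$.

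The implication $(1)\Longrightarrow(2)$ is trivial, since a universal statement over $p\in[1,\infty)$ and $k\in\bb N$ specializes to an existential one. For $(2)\Longrightarrow(3)$, I would fix a pair $(p_0,k_0)$ for which $T\in\cl{AN}_{(p_0,k_0)}$. By Proposition \ref{Tiff|T|_(p, k)}, this is equivalent to $|T|\in\cl{AN}_{(p_0,k_0)}$. Since $|T|\geq 0$, the equivalence $(2)\Longleftrightarrow(3)$ in Theorem \ref{SpThPAN_(p, k)} immediately yields the desired decomposition $|T|=\alpha I+K+F$ with $\alpha\geq 0$, $K$ positive compact, and $F$ self-adjoint finite-rank.

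For $(3)\Longrightarrow(1)$, I would start from an expression $|T|=\alpha I+K+F$ of the stated form. Since this puts $|T|$ into the class identified by condition $(3)$ of Theorem \ref{SpThPAN_(p, k)}, the reverse direction of that theorem gives $|T|\in\cl{AN}_{(p,k)}$ simultaneously for every $p\in[1,\infty)$ and every $k\in\bb N$. Applying Proposition \ref{Tiff|T|_(p, k)} once more (this time in the other direction, for each fixed $(p,k)$) transfers the conclusion to $T$ itself, giving $T\in\cl{AN}_{(p,k)}$ universally. The polar decomposition $T=U|T|$ plays only a background role here, ensuring that $|T|$ is intrinsic to $T$ and that the modulus construction used in Proposition \ref{Tiff|T|_(p, k)} is unambiguous.

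There is no genuine obstacle to this proof: all the hard work has already been absorbed into Proposition \ref{Tiff|T|_(p, k)}, which reduces the problem to positive operators, and into Theorem \ref{SpThPAN_(p, k)}, which contains the spectral content. The present theorem is thus essentially a bookkeeping statement recording that the characterization of the positive case is stable under the reduction $T\mapsto |T|$, and that the characterization does not depend on the choice of $p$ or $k$.
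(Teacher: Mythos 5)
Your proposal is correct and follows exactly the route the paper takes: the paper's own (one-line) proof cites precisely Proposition \ref{Tiff|T|_(p, k)}, the polar decomposition theorem, and Theorem \ref{SpThPAN_(p, k)}, and your argument simply spells out the resulting cycle $(1)\Rightarrow(2)\Rightarrow(3)\Rightarrow(1)$ in detail. No gaps; your write-up is, if anything, more explicit than the paper's.
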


\section{Absolutely Norming Operators on Symmetrically-Normed Ideals}\label{symmetrically-normed-ideals}

All the spectral characterization theorems we established in previous sections exhibit a common phenomenon: if an operator in $\cl B(\cl H, \cl K)$ belongs to one of the families, it belongs to all of them and its absolute value is of the form $\alpha I + K + F$, where $\alpha \geq 0, K$ is a positive compact operator and $F$ is self-adjoint finite-rank operator. Conversely, any operator in $\cl B(\cl H, \cl K)$ with its absolute value of the form $\alpha I + K + F$ is absolutely norming with respect to each of the norms we discussed. As a corollary of these results, we have that every positive operator of the form $\alpha I + K + F$ belongs to each of the families $\cl{AN}_{[k]}\cl B(\cl H), \cl{AN}_{[\pi,k]}\cl B(\cl H)$ and $\cl{AN}_{(p,k)}\cl B(\cl H)$. So, it might appear at this stage that with respect to \textit{every} symmetric norm $\|\cdot\|_s$ on $\cl B(\cl H)$, the positive operators on $\cl B(\cl H)$, that are of the above form,  are ``absolutely $s$-norming''. 


If this were true, if the end result of the analysis of absolutely norming operators with respect to various symmetric norms were that they are all of the same form, then this theory would be relatively straightforward. But this is not the case, for we prove the existence of a symmetric norm $\|\cdot\|_{\Phi_\pi^*}$ on $\cl B(\ell^2(\bb N))$ with respect to which the identity operator does not attain its norm. 

In order to discover this not-so-usual symmetric norm we need to put down some definitions and collect some facts that we will be using in the remaining portion of this paper. Henceforth, we assume $\cl H$ to be a separable Hilbert space. 

\begin{defn}[Symmetrically-Normed Ideals]
An ideal $\ff S$ of the algebra $\cl{B}(\cl{H})$ of operators on a complex Hilbert space is said to be a \textit{symmetrically-normed ideal} (or an \textit{s.n.ideal}) of $\cl{B}(\cl{H})$ if there is defined on it a symmetric norm $\|.\|_{\cl{\ff S}}$ which makes $\ff S$ a Banach space.
\end{defn}

We say that two ideals $\ff S_I$ and $\ff S_{II}$ \textit{coincide elementwise} if $\ff S_I$ and $\ff S_{II}$ consist of the same elements.
If the s.n. ideals $(\ff S_I,\|\cdot\|_I)$ and $(\ff S_{II}, \|\cdot\|_{II})$ coincide elementwise, then their norms are topologically equivalent.

\begin{defn}[Symmetric norming function]\cite[Chapter 3, Page $71$]{GK}
A function $\Phi:c_{00}\longrightarrow [0,\infty)$ is said to be \textit{symmetric norming function} (or, \textit{s.n. function}) if it satisfies the following properties.
 \begin{enumerate}
\item $\Phi(\xi)\geq 0$ for every $\xi:=(\xi_j)_j\in c_{00}$.
\item $\Phi(\xi)= 0 \iff \xi=0$ 
\item $\Phi(\alpha \xi)=|\alpha| \Phi(\xi)$ for every scalar $\alpha\in \bb R$ and $\xi\in c_{00}.$
\item $\Phi(\xi+\psi)\leq \Phi(\xi)+\Phi(\psi)$ for every $\xi,\psi\in c_{00}.$
\item $\Phi(1,0,0,...)=1.$
\item $\Phi(\xi_1,\xi_2,...,\xi_n,0,0,...)=\Phi(|\xi_{j_1}|,|\xi_{j_2}|,...,|\xi_{j_n}|,0,0,...)$ for every\linebreak $\xi\in c_{00}$ and $n\in \bb N$, where $j_1,j_2,...,j_n$ is any permutation of the integers $1,2,...,n$.
\end{enumerate}
 \end{defn}
 
 \begin{defn}[Equivalence of s.n. functions]\cite[Chapter 3, Page $76$]{GK}
Two s.n. functions $\Phi$ and $\Psi$ are said to be \emph{equivalent} if 
$$
\sup_{\xi\in c_{00}}\frac{\Phi(\xi)}{\Psi(\xi)}<\infty \, \, \text{ and }\, \, \sup_{\xi\in c_{00}} \frac{\Psi(\xi)}{\Phi(\xi)}<\infty.
$$
\end{defn}

\begin{defn}
Let $\Phi$ and $\Psi$ be two s.n. functions.
We say that $\Phi\leq \Psi$ if for every every $\xi \in c_{00}$, we have $\Phi(\xi)\leq \Psi(\xi)$.
\end{defn}

In Gohberg and Krein's text \cite{GK}, the reader can find a nice exposition on the theory of s.n. ideals, and since we will be dealing with this theory extensively, we have adopted their text as a reader's companion to the remaining portion of this article. Consequently, we have attempted to duplicate their notation wherever possible. We also will frequently use objects that are not defined here but whose definitions may be found in \cite{GK}.

 \begin{notation}[Notations and Terminologies]
 
 Consider the algebra $\cl B(\cl H)$ of operators on a separable Hilbert space $\cl H$. We use $\cl{B}_{00}(\cl{H})$ to denote the set of all finite-rank operators on $\cl{H}$. $\cl B_0(\cl{H})$ and $\cl B_1(\cl{H})$ are, respectively, used to denote the set of all compacts and trace class operators on $\cl H$. The trace norm is denoted by $\|\cdot\|_1$. These are indeed s.n. ideals. We denote by $c_0$ the space of all convergent sequences of real numbers with limit $0$. We let $c_{00}\subseteq c_0$ denote the linear subspace of  $c_0$ consisting of all sequences with a finite number of nonzero terms. By $c_{00}^+$ we denote the positive cone of $c_{00}$. Finally, we let $c_{00}^*\subseteq c_{00}^+$ denote the cone of all nonincreasing nonnegative sequences from $c_{00}$. To every vector $\xi=(\xi_j)_j\in c_{00}$, we associate the unique vector $\xi^*=(\xi^*_j)_j\in c_{00}^*$, where $\xi^*_j=|\xi_{n_j}|$ for every $j\in \bb N$ and $n_1,n_2,...,n_j,...$ is a permutation of the positive integers such that the sequence $(|\xi_{n_j}|)_j$ is nonincreasing. Since for any s.n. function $\Phi$,  we have 
$\Phi(\xi)=\Phi(\xi^*)\text{ for every }\xi\in c_{00},$ it follows that an s.n. function can be uniquely defined by its values on the cone $c_{00}^*$.
Consider the function $\Phi_\infty:c_{00}^*\longrightarrow [0,\infty)$ defined by $
\Phi_\infty(\xi)=\xi_1 \text{ for every } \xi=(\xi_j)_j\in c_{00}^*.$
This is an s.n. function and is called the \textit{minimal} s.n. function. Next we consider the function $\Phi_1:c_{00}^*\longrightarrow [0,\infty)$ defined by $
\Phi_1(\xi)=\sum_{j}\xi_j \text{ for every } \xi=(\xi_j)_j\in c_{00}^*.$
This is also an s.n. function and is called the \textit{maximal} s.n. function. If $\Phi$ is any s.n. function, then it has been shown that $\Phi_\infty\leq \Phi \leq \Phi_1$  (see \cite[Chapter 3, Section 3, Relation $3.12$, Page $76$]{GK}), which justifies the name ``minimal'' and ``maximal'' given to the s.n. functions $\Phi_\infty$ and $\Phi_1$ respectively.

   We would like to mention few classical results on s.n. ideals generated by an s.n. function. Let $\Phi$ be an arbitrary s.n. function and $c_{\Phi}$ be its natural domain (see \cite[Chapter 3, Page $80$]{GK} for the definition of natural domain of an s.n. function). To this function we associate the set $\ff S_{\Phi}$ of all operators $X\in \cl B_0(\cl{H})$ for which $s(X)=(s_j(X))_j\in c_\Phi$. Next we define a (symmetric) norm $\|\cdot\|_\Phi$ on $\ff S_\Phi$ by $\|X\|_\Phi:=\Phi(s(X))$ for every $X\in \ff S_\Phi$. $\ff S_{\Phi}$ thus denotes the s.n. ideal associated to $\Phi$. If $\Phi,\Psi$ are s.n. functions and $\ff S_\Phi,\ff S_\Psi$ are the s.n. ideals generated by these s.n. functions respectively, then $\ff S_\Phi$ and $\ff S_\Psi$ \emph{coincide elementwise} (that is, consist of the same elements) if and only if $\Phi$ and $\Psi$ are equivalent. In particular, if $\Phi$ is an s.n. function equivalent to  $\Phi_1$, then  $\ff S_{\Phi}$ and $\cl B_1(\cl H)$ coincide elementwise and when $\Phi$ is equivalent to $\Phi_\infty$, $\ff S_{\Phi}$ and $\cl B_0(\cl H)$ coincide elementwise. For a given s.n. function there is a notion of its adjoint. The \emph{adjoint} $\Phi^*$ of the s.n. function $\Phi$ is given
 by 
$$
\Phi^*(\eta)=\max \left\{\sum_{j}\eta_j\xi_j: \xi\in c_{00}^*, \Phi(\xi)=1 \right\}, \text{ for every }\eta \in c_{00}^*,
$$ 
and is itself an s.n. function. The adjoint of $\Phi^*$ is $\Phi$. In particular, the minimal and maximal s.n. functions are the adjoint of each other, that is, 
$\Phi_1^*=\Phi_{\infty}$ and $\Phi_\infty^*=\Phi_1.$ Therefore, when an s.n. function is equivalent to the maximal(minimal) one, its adjoint is equivalent to the minimal(maximal) one. In \cite[Chapter 3, section 14]{GK} there are examples of s.n. ideals in which the set $\cl B_{00}(\cl H)$ of finite-rank operators is not dense. This circumstance suggests the necessity of introducing the subspace $\ff S^{(0)}_\Phi$, the norm closure of the set $\cl B_{00}(\cl H)$ in the norm of $\ff S_\Phi$, that is, 
$$\ff S^{(0)}_\Phi:=\text{clos}_{\|\cdot\|_{\Phi}}[\cl B_{00}(\cl H)].$$
 \end{notation}
 
In our exposition we will need the following elementary piece of folklore from \cite{GK}, whose proof we leave to the reader.

 \begin{prop}\cite[Chapter 3, Theorems 12.2 and 12.4]{GK}\label{Gohb-Krein}
 Let $\Phi$ be an arbitrary s.n. function. 
 \begin{enumerate} 
 \item If $\Phi$ is not equivalent to the maximal s.n. function, then the general form of a continuous linear functional $f$ on the separable space $\ff S^{(0)}_\Phi$ is given by $f(X)=\tr(AX)$ for some $A\in \ff S_{\Phi^*}$ and 
 $$\|f\|:=\sup\{|\tr(AX)|:X\in \ff S^{(0)}_\Phi, \|X\|_\Phi\leq 1\}=\|A\|_{\Phi^*}.$$ Thus, the space adjoint to the space $\ff S^{(0)}_{\Phi}$ is isometrically isomorphic to $\ff S_{\Phi^*}$, that is, $\ff S_\Phi^{(0)^*}\cong \ff S_{\Phi^*}.$ In particular, if both functions $\Phi$ and $\Phi^*$ are mononormalizing, the space $\ff S_{\Phi}$ is reflexive.\\
  \item If $\Phi$ is equivalent to the maximal s.n. function, then the general form of a continuous linear functional $f$ on the separable space $\ff S_\Phi$ is given by $f(X)=\tr(AX)$ for some $A\in \cl B(\cl H)$ and 
 $$\|f\|:=\sup\{|\tr(AX)|:X\in \ff S_\Phi, \|X\|_\Phi\leq 1\}=\|A\|_{\Phi^*}.$$ Thus, the dual space $\ff S_\Phi^*$ is isometrically isomorphic to
$(\cl B(\cl H),\|\cdot\|_{\Phi^*}),$ that is, 
$\ff S_\Phi^*\cong (\cl B(\cl H),\|\cdot\|_{\Phi^*}).$
 \end{enumerate}
 \end{prop}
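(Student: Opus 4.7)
The plan is to realize $\Phi_\pi$ as a Lorentz-type s.n.\ function with a strictly decreasing weight sequence that stays bounded below, to pass to its adjoint via Proposition \ref{Gohb-Krein}(2) in order to obtain a symmetric norm on the full algebra $\cl B(\ell^2(\bb N))$, and to exhibit the strict inequality $\|I\|_{\Phi_\pi^*} > \|I\| = 1$; once this gap is established, no unit vector $x \in \ell^2(\bb N)$ can satisfy $\|Ix\| = \|I\|_{\Phi_\pi^*}$, so $I$ cannot be $\Phi_\pi^*$-norming.

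Concretely, I fix a strictly decreasing sequence $\pi = (\pi_n)_{n \in \bb N}$ of positive reals with $\pi_1 = 1$ and $\lim_n \pi_n =: \pi_\infty \in (0, 1)$; for instance $\pi_n = \tfrac{1}{2}(1 + 1/n)$. Define $\Phi_\pi : c_{00} \to [0, \infty)$ by $\Phi_\pi(\xi) := \sum_n \pi_n \xi_n^*$, where $\xi^*$ is the nonincreasing rearrangement of $(|\xi_n|)_n$. Axioms (1)--(3), (5), (6) of an s.n.\ function are immediate from the definition; the only nontrivial point is the triangle inequality, which is the classical subadditivity of Lorentz-type functionals with nonincreasing weights and is covered in \cite[Chapter~3]{GK}. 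Termwise bounds $\pi_\infty \xi_n^* \leq \pi_n \xi_n^* \leq \xi_n^*$ give $\pi_\infty \Phi_1 \leq \Phi_\pi \leq \Phi_1$, so $\Phi_\pi$ is equivalent to the maximal s.n.\ function $\Phi_1$, and hence $\ff S_{\Phi_\pi}$ coincides elementwise with $\cl B_1(\ell^2(\bb N))$.

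Next, by Proposition \ref{Gohb-Krein}(2), the adjoint $\Phi_\pi^*$ defines a symmetric norm on all of $\cl B(\ell^2(\bb N))$ via
$$\|T\|_{\Phi_\pi^*} \;=\; \sup\bigl\{\,|\tr(TX)| : X \in \ff S_{\Phi_\pi},\ \|X\|_{\Phi_\pi} \leq 1\,\bigr\}.$$
Because both the trace and the $\Phi_\pi$-norm depend only on singular values, computing $\|I\|_{\Phi_\pi^*}$ reduces to maximizing $\sum_n s_n(X)$ over positive trace class $X$ constrained by $\sum_n \pi_n s_n(X) \leq 1$. The upper bound $\sum_n s_n(X) \leq \pi_\infty^{-1} \sum_n \pi_n s_n(X) \leq 1/\pi_\infty$ is immediate from $\pi_n \geq \pi_\infty$. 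For the matching lower bound, test against $X_N := \bigl(\sum_{n=1}^N \pi_n\bigr)^{-1} P_N$, where $P_N$ is an arbitrary rank-$N$ orthogonal projection: then $\|X_N\|_{\Phi_\pi} = 1$ and $\tr(X_N) = N / \sum_{n=1}^N \pi_n \to 1/\pi_\infty$ by Stolz--Ces\`aro. Therefore $\|I\|_{\Phi_\pi^*} = 1/\pi_\infty > 1$.

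Finally, since $\|Ix\| = \|x\| \leq 1 < 1/\pi_\infty = \|I\|_{\Phi_\pi^*}$ for every unit vector $x$, no unit vector realizes the $\Phi_\pi^*$-norm of $I$, which gives $I \notin \cl N_{\Phi_\pi^*}(\ell^2(\bb N))$. The main technical obstacles are not the inequalities themselves but rather the two foundational steps: verifying that the Lorentz-type functional $\Phi_\pi$ is a genuine s.n.\ function (the triangle inequality) and that Proposition \ref{Gohb-Krein}(2) in fact supplies a symmetric norm --- in the sense of the paper's six-axiom definition --- on the whole of $\cl B(\ell^2(\bb N))$; once these are in place, the strict gap $\|I\|_{\Phi_\pi^*} > \|I\|$ forces the non-norming conclusion essentially for free.
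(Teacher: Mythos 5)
Your proposal does not prove the statement at hand. The statement is Proposition \ref{Gohb-Krein}, the Gohberg--Krein duality theorem for symmetrically-normed ideals: it asserts that every continuous linear functional on $\ff S^{(0)}_\Phi$ (respectively on $\ff S_\Phi$ when $\Phi$ is equivalent to the maximal s.n.\ function) has the form $X\mapsto \tr(AX)$ with $\|f\|=\|A\|_{\Phi^*}$, so that $\ff S_\Phi^{(0)^*}\cong \ff S_{\Phi^*}$ (respectively $\ff S_\Phi^*\cong (\cl B(\cl H),\|\cdot\|_{\Phi^*})$). What you have written is instead an argument for Proposition \ref{Identity-nonnorming} (the existence of a symmetric norm for which the identity is non-norming), and indeed your argument \emph{invokes} Proposition \ref{Gohb-Krein}(2) as one of its ingredients, so as a proof of \ref{Gohb-Krein} it would be circular. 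Nothing in your text addresses the actual content of the duality theorem: the construction of $A$ from a given functional $f$, the verification that $A\in \ff S_{\Phi^*}$ (or $A\in\cl B(\cl H)$), and the isometry $\|f\|=\|A\|_{\Phi^*}$. For the record, the paper itself offers no proof of this proposition either; it is quoted from \cite[Chapter 3, Theorems 12.2 and 12.4]{GK} and explicitly left to the reader.

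Even judged as a proof of Proposition \ref{Identity-nonnorming}, your final step misidentifies the relevant notion of norm attainment. In the s.n.-ideal setting of this paper (Definition \ref{PhiNorming}), $I\in \cl N_{\Phi_\pi^*}$ means there exists $K\in\ff S_{\Phi_\pi}$ with $\|K\|_{\Phi_\pi}=1$ and $|\tr(IK)|=\|I\|_{\Phi_\pi^*}$; it is not the condition that some unit vector $x$ satisfies $\|Ix\|=\|I\|_{\Phi_\pi^*}$. The gap $\|I\|_{\Phi_\pi^*}=1/\pi_\infty>1=\sup_{\|x\|=1}\|Ix\|$ therefore does not by itself rule out attainment; you would need to show that no trace-class $K$ with $\|K\|_{\Phi_\pi}=1$ achieves $|\tr K|=1/\pi_\infty$ (which does follow from the strict inequalities $\pi_n>\pi_\infty$, or from the paper's perturbation argument producing a better $\tilde K$ from any candidate $K_0$), but that is not the argument you gave.
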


Let $\Phi$ be arbitrary s.n. function equivalent to the maximal one. The remaining part of this section is intended to establish the notion of ``$\Phi^*$-norming'' operators on the s.n. ideal $(\cl B(\cl H),\|\cdot\|_{\Phi^*})$ which agrees with the Definitions \ref{N_[k]Def}, \ref{N_[pi,k]Def}, and \ref{N_(p,k)Def}, and in essentially the same spirit, generalizes the concept. To meet this purpose we need to establish a sequence of propositions which provide us with the machinery required to convert the concept of norming operators in the language of s.n. ideals. 
 
\subsection{Norming operators on $\cl B(\cl H)$} 

\begin{thm}\label{NisDone}
Let $T\in \cl B(\cl H)$. Then $T\in\cl N$ in the sense of the Definition \ref{Def1} if and only if there exists an operator $K\in \cl B_1(\cl H)$ with $\|K\|_1=1$ such that $|\tr(TK)|=\|T\|$.
\end{thm}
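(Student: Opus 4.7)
The plan is to prove both implications via the standard trace duality $\cl B_1(\cl H)^* \cong \cl B(\cl H)$, realized concretely by rank-one operators in one direction and by the Schmidt decomposition in the other. The forward implication will be a one-line rank-one construction; the backward implication reduces to a three-step chain of inequalities.

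For the forward implication, assume $T \in \cl N$ attains its norm at a unit vector $x$, i.e.\ $\|Tx\| = \|T\|$ (the case $T = 0$ being trivial). I would set $y := Tx/\|Tx\|$ and take $K$ to be the rank-one operator $z \mapsto \inner{z}{y}\,x$. Its unique nonzero singular value is $\|x\|\|y\| = 1$, so $\|K\|_1 = 1$, and a direct evaluation of the trace in any orthonormal basis gives $\tr(TK) = \inner{Tx}{y} = \|Tx\| = \|T\|$.

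For the backward implication, starting with $K \in \cl B_1(\cl H)$, $\|K\|_1 = 1$ and $|\tr(TK)| = \|T\|$, the key step is the Schmidt decomposition of the compact operator $K$, namely $K = \sum_{j} s_j(K)\,\ket{f_j}\bra{e_j}$ for orthonormal systems $\{e_j\}, \{f_j\} \subseteq \cl H$ with $\sum_j s_j(K) = \|K\|_1 = 1$. Since this series converges in trace norm and $\tr(T\,\cdot\,)$ is continuous on $\cl B_1(\cl H)$, I get $\tr(TK) = \sum_j s_j(K)\,\inner{Tf_j}{e_j}$, from which the chain
\begin{align*}
\|T\| = |\tr(TK)| &\leq \sum_{j} s_j(K)\,|\inner{Tf_j}{e_j}| \\
&\leq \sum_{j} s_j(K)\,\|Tf_j\| \\
&\leq \|T\|\sum_{j} s_j(K) = \|T\|
\end{align*}
(successively by the triangle inequality, Cauchy--Schwarz with $\|e_j\|=1$, and $\|Tf_j\| \leq \|T\|$) must collapse to equality throughout. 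Picking any index $j_0$ with $s_{j_0}(K) > 0$, which exists since $\|K\|_1 = 1$, the saturation of the last inequality forces $\|Tf_{j_0}\| = \|T\|$ with $\|f_{j_0}\| = 1$, so $T$ is norming with witness $f_{j_0}$.

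No serious obstacle is anticipated. The only technical care is the justification of the Schmidt decomposition of a trace-class operator and the associated term-by-term trace identity; both are classical facts about $\cl B_1(\cl H)$ already implicit in the duality machinery (Proposition \ref{Gohb-Krein}) invoked earlier in this section.
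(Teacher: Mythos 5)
Your proof is correct and follows essentially the same route as the paper's: the forward direction via the rank-one operator $x\otimes y$ with $y=Tx/\|Tx\|$, and the backward direction via the Schmidt expansion of $K$ and the collapse of the resulting chain of inequalities to equality. The only cosmetic difference is that the paper concludes $\|Tx_j\|=\|T\|$ for every index $j$ in the expansion, whereas you extract a single witness $f_{j_0}$ with $s_{j_0}(K)>0$; both suffice.
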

\begin{proof}
We first assume that $T\in\cl N$. Then there exists $x$ in the unit sphere of $\cl H$ such that $\|Tx\|=\|T\|.$ Let $y=\frac{Tx}{\|Tx\|}$ and define a rank-one operator $K_0:=x\otimes y\in \cl B_1(\cl H)$. Notice that $\|K_0\|_1=1$ and $
|\tr(TK_0)|=|\tr(T(x\otimes y))|=|\tr(Tx\otimes y)|=|\inner{Tx}{y}|=\|Tx\|=\|T\|$ which proves the forward implication. To see the backward implication, we assume that there exists an operator $K\in \cl B_1(\cl H)$ such that $\| K\|_1=1$ and $|\tr(TK)|=\|T\|$. Since $\cl B_1(\cl H)\subseteq \cl B_0(\cl H)$, Schmidt expansion allows us to write $ K=\sum_{j=1}^{\text{rank } K}s_j(K)(x_j\otimes y_j)$, where $\{x_j\}$
is an orthonormal basis of $\text{clos}[\text{ran }{K}]$ and $\{y_j\}$ is an orthonormal basis of $\text{clos}[\text{ran }{|K|}]$. We now have
\begin{multline*}
\|T\|=|\tr(TK)|=\left|\tr\left(T\left(\sum_{j=1}^{\text{rank } K}s_j(K)(x_j\otimes y_j)\right)\right)\right|\\
=\left|\sum_{j=1}^{\text{rank } K}s_j(K)\tr(T x_j\otimes y_j)\right|=\left|\sum_{j=1}^{\text{rank } K}s_j(K)\inner{Tx_j}{y_j}\right|\\
\leq \sum_{j=1}^{\text{rank } K}s_j(K)|\inner{Tx_j}{y_j}|\leq \sum_{j=1}^{\text{rank } K}s_j(K)\|T\|=\|T\|\|K\|_1=\|T\|,
\end{multline*}
which forces all inequalities to be equalities, so 
$$
\sum_{j=1}^{\text{rank } K}s_j(K)\|Tx_j\|= \sum_{j=1}^{\text{rank } K}s_j(K)\|T\|,
$$
which implies that $\sum_{j=1}^{\text{rank } K}s_j(K)(\|T\|-\|Tx_j\|)=0$. Notice that, for every $j$, $s_j(K)>0$ and $\|T\|-\|Tx_j\|\geq 0$. Thus for every $j$ we have $\|T\|=\|Tx_j\|$ which implies that $T\in \cl N.$ This completes the proof.
\end{proof}
\subsection{$[2]$-norming operators on $\left(\cl B(\cl H), \|\cdot\|_{[2]}\right)$}

\begin{lemma}
If $\Phi$ and $\Psi$ are s.n. functions defined as
\begin{align*}
\Phi(\eta)&=\max \left\{ \eta_1, \frac{\sum_{j}\eta_j}{2}\right\};\\
\Psi(\xi) &=\xi_1+\xi_2;
\end{align*}
with $\eta=(\eta_i)_{i\in \bb N}, \text{ and } \xi=(\xi_j)_{j\in \bb N} \in c_{00}^*$, then $\Phi$ and $\Psi$ are mutually adjoint, that is, $\Phi^*=\Psi$ and $\Psi^*=\Phi$.
\end{lemma}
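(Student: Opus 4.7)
The plan is to use the involution property of the adjoint operation, $(\Phi^*)^* = \Phi$, which reduces the problem to verifying a single identity, say $\Psi^* = \Phi$. Unpacking the definition, I must show that for every $\eta = (\eta_j) \in c_{00}^*$,
\[
\max\left\{\sum_{j} \eta_j \xi_j : \xi \in c_{00}^*,\ \xi_1 + \xi_2 = 1\right\} = \max\left\{\eta_1,\, \tfrac{1}{2}\sum_j \eta_j\right\}.
\]
I would fix $n$ so that $\eta_j = 0$ for $j > n$; then competing $\xi$'s beyond index $n$ contribute nothing, so I may restrict the optimization to $\xi$ supported on $\{1,\dots,n\}$.

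The key reduction is to argue that the maximum is attained on a one-parameter family of $\xi$'s. First, the monotonicity $\xi_1 \geq \xi_2$ together with $\xi_1 + \xi_2 = 1$ forces $\xi_1 \in [1/2, 1]$. Second, for fixed $\xi_1 = a$ and $\xi_2 = 1-a$, since the $\eta_j$ are nonnegative and the only upper constraints on $\xi_3,\dots,\xi_n$ are $\xi_2 \geq \xi_3 \geq \cdots \geq \xi_n \geq 0$, the sum $\sum_{j \geq 3} \eta_j \xi_j$ is maximized by pushing each $\xi_j$ up to $\xi_2 = 1-a$. Thus it suffices to optimize over $\xi = (a, 1-a, 1-a,\dots, 1-a, 0, 0, \dots)$ with $a \in [1/2, 1]$.

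Writing $S = \sum_{j=1}^n \eta_j$, the objective becomes the affine function
\[
g(a) = a\eta_1 + (1-a)(S - \eta_1) = (2\eta_1 - S)\,a + (S - \eta_1),
\]
whose slope is $2\eta_1 - S$. Hence $\max_{a \in [1/2,1]} g(a) = g(1) = \eta_1$ when $2\eta_1 \geq S$, and $= g(1/2) = S/2$ otherwise; in either regime this equals $\max\{\eta_1, S/2\} = \Phi(\eta)$, establishing $\Psi^* = \Phi$. Invoking the involution $(\Psi^*)^* = \Psi$ then yields $\Phi^* = \Psi$.

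I do not expect a serious obstacle: the only subtlety is confirming that the reduction to the family $(a, 1-a, \dots, 1-a, 0, \dots)$ is genuinely without loss of generality, which follows from the sign and monotonicity of $\eta$. One minor care point is ensuring the candidate $\xi$ indeed lies in $c_{00}^*$ (finite support, nonincreasing), which it does by construction since $a \geq 1-a \geq 1-a \geq \cdots$ and the tail is zero. Everything else is a one-variable linear maximization.
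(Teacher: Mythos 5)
Your proof is correct and follows essentially the same route as the paper: both reduce the computation of $\Psi^*(\eta)$ to a one-variable linear maximization over $\xi_1\in[\tfrac12,1]$ after observing that the tail entries of $\xi$ should be pushed up to $\xi_2$, and both evaluate the affine objective at the endpoints according to the sign of its slope. You make explicit two points the paper leaves implicit (the justification for the reduction to the family $(a,1-a,\dots,1-a,0,\dots)$ and the appeal to the involution $(\Psi^*)^*=\Psi$ to recover $\Phi^*=\Psi$), which is a welcome clarification but not a different argument.
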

\begin{proof}
Recall that the adjoint $\Psi^*$ of the s.n. function $\Psi$ is given
 by $$\Psi^*(\eta)=\max \left\{\sum_{j}\eta_j\xi_j: \xi\in c_{00}^*, \Psi(\xi)=1 \right\}, \text{ for every }\eta \in c_{00}^*,$$
 which can be rewritten as
\begin{align*}
\Psi^*(\eta)
=\max \left\{\sum_{j}\eta_j\xi_j: \xi\in c_{00}^*, \xi_1+\xi_2=1 \right\}.
\end{align*}
Since $\xi \in c_{00}^*$, it is a nonincreasing sequence which allows us to infer that  
\begin{align*}
\Psi^*(\eta)&=\max \left\{\eta_1\xi_1+\left(\sum_{j \neq 1}\eta_j\right)\xi_2:  \xi_1\geq \xi_2, \xi_1+\xi_2=1 \right\}\\
&=\max \left\{\left(\eta_1-\left(\sum_{j\neq 1}\eta_j\right)\right)\xi_1+\left(\sum_{j\neq 1}\eta_j\right):  \xi_1\in \left[\frac{1}{2}, 1\right] \right\}\\
&=\begin{cases} 
     \eta_1 & \text{ if } \eta_1\geq \sum_{j\neq 1}\eta_j  \\
      \frac{\sum_{j}\eta_j}{2} & \text{ if } \eta_1\leq \sum_{j\neq 1}\eta_j
   \end{cases}\\
   &=\max \left\{ \eta_1, \frac{\sum_{j}\eta_j}{2}\right\},
\end{align*}  
which is indeed equal to $\Phi(\eta)$; here the penultimate equality is a direct consequence of the fact that a function of the form $ax+b, x\in [\frac{1}{2},1]$ achieves its maximum at $x=1$ if $a> 0$ and at $x=\frac{1}{2}$ if $a<0$. The final equality arrives from the following simple calculation: $\eta_1\geq \sum_{j\neq 1}\eta_j \iff \eta_1 \geq \frac{\sum_{j}\eta_j}{2}$ and  $\eta_1\leq \sum_{j\neq 1}\eta_j \iff \eta_1 \leq \frac{\sum_{j}\eta_j}{2}$. This completes the proof.
\end{proof}

\begin{remark}
   It is easy to see that $\Psi$ is equivalent to the minimal s.n. function and that it corresponds to the Ky Fan $2$-norm on $\cl B(\cl H)$.   
Notice that
$$
\sup_n\left\{\frac{n}{\Psi^*(\underbrace{1,1,...,1}_{n },0,0,...)}\right\}=
\sup_n\left\{\frac{n}{\max\{1, \frac{n}{2}\}}\right\}=2<\infty,
$$
which implies that $\Phi=\Psi^*$ is equivalent to the maximal s.n. function $\Phi_1$.
Consequently, the dual $\ff{S}_{\Phi}^*$ of the s.n. ideal $\ff{S}_{\Phi}$ is isometrically isomorphic to the space  ($\cl B(\cl H), \|\cdot\|_{\Phi^*}$), that is, $\ff{S}_{\Phi}^*\cong  (\cl B(\cl H), \|\cdot\|_{\Phi^*}).$
Moreover, the s.n. ideal $\ff S_{\Phi}$ generated by $\Phi$ and the ideal $\cl B_1(\cl H)$ of trace class operators coincide elementwise. Clearly, $\Phi$ and $\Phi^*$ are s.n. functions considered on their natural domain instead of merely $c_{00}^*$.
\end{remark}

\begin{thm}\label{KyFan2IsDone}
Let $T\in \cl B(\cl H), \Phi$ be an s.n. function equivalent to the maximal s.n. function, defined by 
$$
\Phi(\eta)=\max \left\{ \eta_1, \frac{\sum_{j}\eta_j}{2}\right\},
$$ 
where $\eta=(\eta_i)_{i\in \bb N}\in c_{\Phi}$, and let $\Phi^*$ be its dual norm so that
$$
\ff{S}_{\Phi}^*\cong  (\cl B(\cl H), \|\cdot\|_{\Phi^*}),
$$
 with $\|T\|_{\Phi^*}=\|T\|_{[2]}$ for every $T\in \cl B(\cl H)$. 
Then $T\in\cl N_{[2]}$ in the sense of the Definition \ref{N_[k]Def} if and only if there exists an operator $K\in \ff S_\Phi=\cl B_1(\cl H)$ with $\|K\|_{\Phi}=1$ such that $|\tr(TK)|=\|T\|_{\Phi^*}$.
\end{thm}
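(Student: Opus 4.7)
The plan is to prove both directions by mirroring the proof of Theorem \ref{NisDone}, replacing the rank-$1$ construction there by a rank-$2$ partial isometry adapted to the top two singular values of $T$.

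For the forward direction, suppose $T\in\cl N_{[2]}$. By Theorem \ref{TiffT*T_[k]} we also have $|T|\in\cl N_{[2]}$, so Theorem \ref{PositiveN_{[k]}} produces orthonormal $x_1,x_2\in\cl H$ with $|T|x_j=s_j(T)x_j$. Using the polar decomposition $T=U|T|$, the vectors $y_j:=Ux_j$ form an orthonormal pair (since each $x_j$ lies in the initial space $\overline{\text{ran}\,|T|}$ on which $U$ is isometric) and $Tx_j=s_j(T)y_j$. Setting $K:=x_1\otimes y_1+x_2\otimes y_2$, a direct computation shows $K^*K$ is the orthogonal projection onto $\text{span}\{y_1,y_2\}$, giving $s(K)=(1,1,0,\ldots)$, $\|K\|_\Phi=\max\{1,\tfrac{1+1}{2}\}=1$, and
$$
\tr(TK)=\inner{Tx_1}{y_1}+\inner{Tx_2}{y_2}=s_1(T)+s_2(T)=\|T\|_{\Phi^*}.
$$
The degenerate case $s_2(T)=0$ is handled by $K:=x_1\otimes y_1$, which satisfies $\|K\|_\Phi=\max\{1,\tfrac{1}{2}\}=1$ and $\tr(TK)=\|T\|$.

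For the backward direction, given $K\in\cl B_1(\cl H)$ with $\|K\|_\Phi=1$ and $|\tr(TK)|=\|T\|_{[2]}$, after multiplying $K$ by a unimodular scalar assume $\tr(TK)=\|T\|_{[2]}>0$ (the case $T=0$ being trivial). Writing the Schmidt expansion $K=\sum_j s_j(K)(x_j\otimes y_j)$ with orthonormal $\{x_j\},\{y_j\}$, so that $\tr(TK)=\sum_j s_j(K)\inner{Tx_j}{y_j}$, I would combine Abel summation with Ky Fan's dominance principle $\sum_{i=1}^j|\inner{Tx_i}{y_i}|\leq\|T\|_{[j]}$ and a H\"older-type pairing for the s.n.\ functions $\Phi$ and $\Phi^*$ to get
\begin{align*}
\|T\|_{[2]}=\tr(TK)\leq\sum_j s_j(K)|\inner{Tx_j}{y_j}|\leq\sum_j s_j(T)s_j(K)\leq\|T\|_{\Phi^*}\cdot\|K\|_\Phi=\|T\|_{[2]}.
\end{align*}
All inequalities therefore collapse to equalities. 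The rightmost equality $\sum_j s_j(T)s_j(K)=s_1(T)+s_2(T)$, combined with the constraints $s_1(K)\leq1$ and $\sum_j s_j(K)\leq2$ built into $\Phi(s(K))=1$, forces $s_1(K)=1$ and $\sum_{j\geq2}s_j(K)=1$, with the extra mass supported only on indices where $s_j(T)=s_2(T)$; generically $K=x_1\otimes y_1+x_2\otimes y_2$ is a rank-$2$ partial isometry.

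To extract the norming vectors, equality in the Abel/Ky Fan step yields $\sum_{i=1}^{2}|\inner{Tx_i}{y_i}|=\|T\|_{[2]}$ for the orthonormal pair $\{x_1,x_2\},\{y_1,y_2\}$. Invoking the equality case of Ky Fan's dominance principle at $k=2$ (a standard consequence of the SVD) forces $\{x_1,x_2\}$ to diagonalize $|T|$ at eigenvalues $s_1(T),s_2(T)$, and each $y_j$ to coincide (up to phase) with the corresponding left singular vector $Tx_j/\|Tx_j\|$. Consequently $\|Tx_1\|+\|Tx_2\|=s_1(T)+s_2(T)=\|T\|_{[2]}$, so $T\in\cl N_{[2]}$; the degenerate case $s_2(T)=0$ reduces directly to Theorem \ref{NisDone}. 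The main obstacle will be extracting this eigenvector structure from Ky Fan's equality case cleanly, particularly when $s_2(T)$ has multiplicity greater than one, since some care is then needed to identify the Schmidt vectors of the extremal $K$ with an appropriate top-$2$ singular subspace of $T$.
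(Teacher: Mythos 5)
Your forward direction is essentially the paper's construction: the paper takes the norming pair $x_1,x_2$, sets $y_j=Tx_j/\|Tx_j\|$ and $K=\sum_{j=1}^2 x_j\otimes y_j$; your detour through the polar decomposition merely makes the orthonormality of $y_1,y_2$ explicit, and is fine. The backward direction is where the trouble lies, and it is also where you diverge from the paper. The paper pairs $(s_j(K))_j$ against $a_j:=|\inner{Tx_j}{y_j}|$ directly via the duality inequality $\sum_j s_j(K)a_j\le\Phi(s(K))\,\Phi^*((a_j)_j)$ followed by the Ky Fan bound $\Phi^*((a_j)_j)\le\|T\|_{\Phi^*}$; squeezing forces $\Phi^*((a_j)_j)=\|T\|_{[2]}$, i.e.\ some pair satisfies $a_i+a_j=\|T\|_{[2]}$, and then $\|Tx_i\|+\|Tx_j\|\ge a_i+a_j$ finishes at once --- no eigenvector structure of $|T|$ is ever needed. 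You instead insert the intermediate bound $\sum_j s_j(K)a_j\le\sum_j s_j(K)s_j(T)$ via Abel summation and try to read the conclusion off the equality case of that step.

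That is where the gap is. Equality in the Abel step only yields $\sum_{i\le j}a_i=\|T\|_{[j]}$ at those $j$ where $s_j(K)>s_{j+1}(K)$; it does not yield it at $j=2$ when $s_2(K)=s_3(K)>0$. Moreover your claim that equality forces $s_1(K)=1$ and $\sum_{j\ge2}s_j(K)=1$ is false: take $T$ a rank-three orthogonal projection and $K=\tfrac{2}{3}T$, so that $s(K)=(\tfrac23,\tfrac23,\tfrac23,0,\dots)$, $\|K\|_\Phi=\max\{\tfrac23,1\}=1$ and $|\tr(TK)|=2=\|T\|_{[2]}$, yet $s_1(K)=\tfrac23$. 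In such a situation your equality analysis gives only $a_1+a_2+a_3=\|T\|_{[3]}$, and passing from this to a norming \emph{pair} requires a further argument that you flag as ``the main obstacle'' but do not supply; the appeal to the equality case of Ky Fan's dominance principle to force $x_1,x_2$ to be eigenvectors of $|T|$ is both unproved and stronger than what is needed. So the backward direction as written is genuinely incomplete precisely in the tie case. Replacing your Abel step by the paper's direct $\Phi$--$\Phi^*$ pairing (and noting that \emph{any} orthonormal pair with $a_i+a_j=\|T\|_{[2]}$ already witnesses $T\in\cl N_{[2]}$) closes the gap.
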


\begin{proof}
First we assume that $T\in\cl N_{[2]}$. There exist $x_1,x_2\in \cl H$ with $\|x_1\|=\|x_2\|=1$ and $x_1\perp x_2$ such that $\|T\|_{\Phi^*}=\|T\|_{[2]}=\|Tx_1\|+\|Tx_2\|.$ Let 
$$
y_1=\frac{Tx_1}{\|Tx_1\|},\,\, y_2=\frac{Tx_2}{\|Tx_2\|} \text{ and define } K:=\sum_{j=1}^2x_j\otimes y_j.
$$
That $K\in \cl B_1(\cl H)$ and $s_1(K)=s_2(K)=1$ is obvious, so $\|K\|_\Phi=1$. Then 
\begin{multline*}
|\tr(TK)|=|\tr(T(\sum_{j=1}^2x_j\otimes y_j))|=|\tr(\sum_{j=1}^2Tx_j\otimes y_j)|\\
=|\sum_{j=1}^2\inner{Tx_j}{y_j}|=\left|\sum_{j=1}^2\inner{Tx_j}{\frac{Tx_j}{\|Tx_j\|}}\right|=\sum_{j=1}^2\|Tx_j\|=\|T\|_{\Phi^*},
\end{multline*}
finishes the proof of the forward implication. To see the backward implication, we assume that there exists an operator $K\in \cl B_1(\cl H)$ with $\|K\|_\Phi=1$ such that  $|\tr(TK)|=\|T\|_{\Phi^*}$. Let $\alpha:=\|T\|_{\Phi^*}=\|T\|_{[2]}=s_1(T)+s_2(T)$. Consequently,
\begin{multline*}
\alpha =\|T\|_{\Phi^*}=|\tr(TK)|
=\left|\tr\left(T\left(\sum_{j=1}^{\text{rank } K}s_j(K)(x_j\otimes y_j)\right)\right)\right|\\
=\left|\sum_{j=1}^{\text{rank } K}s_j(K)\inner{Tx_j}{y_j}\right|
\leq \sum_{j=1}^{\text{rank } K}s_j(K)|\inner{Tx_j}{y_j}|\\
=\inner{\begin{bmatrix}
s_1(K)\\
\vdots\\
s_j(K)\\
\vdots
\end{bmatrix}}{\begin{bmatrix}
|\inner{Tx_1}{y_1}|\\
\vdots\\
|\inner{Tx_j}{y_j}|\\
\vdots
\end{bmatrix}}
\leq \Phi ((s_j(K))_j)\Phi^* ((|\inner{Tx_j}{y_j}|)_j)\\
=\Phi^* ((|\inner{Tx_j}{y_j}|)_j)
\leq\|T\|_{\Phi^*}=\alpha.
\end{multline*}
This forces $\Phi^* ((|\inner{Tx_j}{y_j}|)_j)=\alpha$. That is, $\|(|\inner{Tx_j}{y_j}|)_j\|_{\Phi^*}=\alpha$. This observation along with the fact that the sequence $(s_j(K))_j$ is nonincreasing implies that the sequence $(|\inner{Tx_j}{y_j}|)_j$ is also nonincreasing, that is, $|\inner{Tx_1}{y_1}|\geq |\inner{Tx_2}{y_2}|\geq ... \geq |\inner{Tx_j}{y_j}|\geq ...$; for if it is not, then there exists $\ell \in \bb N$ such that $|\inner{Tx_\ell}{y_\ell}|<|\inner{Tx_{\ell+1}}{y_{\ell+1}}|$ which yields 
\begin{multline*}
\alpha=
\inner{\begin{bmatrix}
\vdots\\
s_\ell(K)\\
s_{\ell+1}(K)\\
\vdots
\end{bmatrix}}{\begin{bmatrix}
\vdots\\
|\inner{Tx_\ell}{y_\ell}|\\
|\inner{Tx_{\ell+1}}{y_{\ell+1}}|\\
\vdots
\end{bmatrix}}
<\inner{\begin{bmatrix}
\vdots\\
s_\ell(K)\\
s_{\ell+1}(K)\\
\vdots
\end{bmatrix}}{\begin{bmatrix}
\vdots\\
|\inner{Tx_{\ell+1}}{y_{\ell+1}}|\\
|\inner{Tx_\ell}{y_\ell}|\\
\vdots
\end{bmatrix}}\\
\leq \Phi \left(\begin{bmatrix}
\vdots\\
s_\ell(K)\\
s_{\ell+1}(K)\\
\vdots
\end{bmatrix}\right)\Phi^*\left(\begin{bmatrix}
\vdots\\
|\inner{Tx_{\ell+1}}{y_{\ell+1}}|\\
|\inner{Tx_\ell}{y_\ell}|\\
\vdots
\end{bmatrix}\right)\\
= \Phi \left(\begin{bmatrix}
\vdots\\
s_\ell(K)\\
s_{\ell+1}(K)\\
\vdots
\end{bmatrix}\right)\Phi^*\left(\begin{bmatrix}
\vdots\\
|\inner{Tx_\ell}{y_\ell}|\\
|\inner{Tx_{\ell+1}}{y_{\ell+1}}|\\
\vdots
\end{bmatrix}\right)=\alpha,
\end{multline*}
which is indeed a contradiction.
Consequently, 
\begin{multline*}
\alpha=\|(|\inner{Tx_j}{y_j}|)_j\|_{\Phi^*}=|\inner{Tx_1}{y_1}| + |\inner{Tx_2}{y_2}|\\
\leq \|Tx_1\|+\|Tx_2\|\leq s_1(T)+s_2(T)=\alpha,
\end{multline*}
which forces $\|Tx_1\|+\|Tx_2\|=s_1(T)+s_2(T)$ thereby establishing that $T\in \cl N_{[2]}$. This completes the proof.
\end{proof}

\subsection{$[\pi, 2]$-norming operators on $\left(\cl B(\cl H), \|\cdot\|_{[\pi, 2]}\right)$}

\begin{lemma}
Given $\pi=(\pi_j)_{j\in \bb N}\in \Pi$, if $\Phi$ and $\Psi$ are s.n. functions defined as
\begin{align*}
\Phi(\eta)&=\max \left\{ \eta_1, \frac{\sum_{j}\eta_j}{1+\pi_2}\right\};\\
\Psi(\xi) &=\xi_1+\pi_2\xi_2;
\end{align*}
where $\eta=(\eta_i)_{i\in \bb N},\text{ and } \xi=(\xi_j)_{j\in \bb N} \in c_{00}^*$, then $\Phi$ and $\Psi$ are mutually adjoint, that is, $\Phi^*=\Psi$ and $\Psi^*=\Phi$. Moreover, $\phi$ is equivalent to the maximal s.n. function and $\Psi$ to the minimal.
\end{lemma}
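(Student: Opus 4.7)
The approach would mirror the preceding lemma (which is the case $\pi_2 = 1$). First I would compute $\Psi^*$ directly from its variational definition and show that it equals $\Phi$; the companion identity $\Phi^* = \Psi$ would then follow at once from the involutivity $\Psi^{**} = \Psi$ of the adjoint operation on s.n.\ functions.

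For the computation of $\Psi^*(\eta)$ with $\eta \in c_{00}^*$, the plan is to reduce
$$
\Psi^*(\eta) = \max\!\left\{\sum_j \eta_j \xi_j \ :\ \xi \in c_{00}^*,\ \xi_1 + \pi_2 \xi_2 = 1\right\}
$$
to a two-variable linear program. The key observation is that any $\xi \in c_{00}^*$ is nonincreasing with $\xi_j \le \xi_2$ for $j \ge 2$; since $\eta$ has finite support, I can saturate this inequality on the support of $\eta$ (setting $\xi_j = \xi_2$ there and $0$ thereafter, still inside $c_{00}^*$) without decreasing the objective. The problem then collapses to maximizing
$$
\eta_1 \xi_1 + \Bigl(\textstyle\sum_{j \ge 2} \eta_j\Bigr)\xi_2 \quad \text{subject to } \ \xi_1 \ge \xi_2 \ge 0,\ \xi_1 + \pi_2 \xi_2 = 1.
$$

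Writing $A := \eta_1$, $B := \sum_{j\ge 2} \eta_j$, and eliminating $\xi_1 = 1-\pi_2\xi_2$, the objective becomes the affine function $A + (B - A\pi_2)\xi_2$ on the interval $\xi_2 \in \bigl[0, (1+\pi_2)^{-1}\bigr]$, whose maximum is attained at one of the endpoints depending on the sign of $B - A\pi_2$. I would then verify that the two endpoint values $\eta_1$ and $(1+\pi_2)^{-1}\sum_j \eta_j$ are precisely the two arguments of the $\max$ defining $\Phi(\eta)$, and that the correct branch is selected in each case by the equivalence $B \ge A \pi_2 \iff (1+\pi_2)^{-1}\sum_j\eta_j \ge \eta_1$. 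This will yield $\Psi^*(\eta) = \Phi(\eta)$.

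For the equivalence claims, I would use the elementary two-sided bound $\Phi_\infty(\xi) = \xi_1 \le \Psi(\xi) \le (1+\pi_2)\xi_1 = (1+\pi_2)\Phi_\infty(\xi)$ on $c_{00}^*$ to conclude that $\Psi$ is equivalent to the minimal s.n.\ function $\Phi_\infty$; the analogous estimate for $\Phi$ (or, equivalently, the fact that adjunction swaps the minimal and maximal s.n.\ functions up to equivalence) then gives $\Phi \sim \Phi_1$. Concretely, $\Phi$ evaluated on $(\underbrace{1,\ldots,1}_n,0,\ldots)$ equals $\max\{1, n/(1+\pi_2)\} \ge n/(1+\pi_2)$, directly witnessing equivalence to the maximal. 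I expect the only subtle step to be the saturation argument in the reduction to the two-variable LP: one must carefully use the finite support of $\eta$ to justify pushing all the $\xi_j$ (for $j \ge 2$) up to their common cap $\xi_2$ without leaving $c_{00}^*$. Everything else should be routine case analysis.
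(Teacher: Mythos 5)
Your proposal is correct and follows essentially the same route as the paper: reduce the variational definition of $\Psi^*$ to a one-parameter affine optimization over the constraint $\xi_1+\pi_2\xi_2=1$ with $\xi_1\ge\xi_2\ge 0$, evaluate at the two endpoints to recover the two arguments of the $\max$ defining $\Phi$, and dispose of the equivalence claims by elementary bounds (the paper parametrizes by $t=\xi_1$ rather than by $\xi_2$, which is an immaterial difference). Your explicit saturation argument fills in the step the paper dismisses with ``without much hassle,'' but the underlying idea is the same.
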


\begin{proof}
Without much hassle it can be shown that
$$\Psi^*(\eta)=\max \left\{\eta_1t+\frac{(\sum_{j\neq 1}\eta_j)(1-t)}{\pi_2}:  t\in \left[\frac{1}{1+\pi_2}, 1\right] \right\}.$$
But 
\begin{align*}
\eta_1t+\frac{(\sum_{j\neq 1}\eta_j)(1-t)}{\pi_2}&= \left(\eta_1-\left(\frac{\sum_{j\neq 1}\eta_j}{\pi_2}\right)\right)t+\left(\frac{\sum_{j\neq 1}\eta_j}{\pi_2}\right)\\
&=\begin{cases} 
     \eta_1 & \text{ when } t=1  \\
     \frac{\sum_{j}\eta_j}{1+\pi_2} & \text{ when } t=\frac{1}{1+\pi_2},
   \end{cases}
\end{align*}
which implies that 
$$
\Phi(\eta)=\Psi^*(\eta)=\max \left\{ \eta_1, \frac{\sum_{j}\eta_j}{1+\pi_2}\right\}.
$$
The final part of the assertion is trivial.
\end{proof}
Using this result we establish the following theorem.
\begin{thm}\label{KyFanPi2IsDone}
Given $\pi=(\pi_j)_{j\in \bb N}\in \Pi$, let $T\in \cl B(\cl H)$ and $\Phi$ be an s.n. function equivalent to the maximal s.n. function, defined by 
$$\Phi(\eta)=\max \left\{ \eta_1, \frac{\sum_{j}\eta_j}{1+\pi_2}\right\},$$ where $\eta=(\eta_i)_{i\in \bb N}\in c_{\Phi}$, and let $\Phi^*$ be its dual norm so that 
$$
\ff{S}_{\Phi}^*\cong  (\cl B(\cl H), \|\cdot\|_{\Phi^*}),
$$
 with $\|T\|_{\Phi^*}=\|T\|_{[\pi, 2]}$ for every $T\in \cl B(\cl H)$. 
Then $T\in\cl N_{[\pi,2]}$ in the sense of the Definition \ref{N_[pi,k]Def} if and only if there exists an operator $K\in \ff S_\Phi=\cl B_1(\cl H)$ with $\|K\|_{\Phi}=1$ such that $|\tr(TK)|=\|T\|_{\Phi^*}$.
\end{thm}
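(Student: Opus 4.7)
The plan is to mirror the proof of Theorem~\ref{KyFan2IsDone} line by line, using the preceding lemma to identify $\Phi$ and $\Phi^*$ as a mutually adjoint pair with $\Phi^*(\xi)=\xi_1+\pi_2\xi_2$; the hypothesis that $\Phi$ is equivalent to the maximal s.n.\ function supplies the duality $\ff S_\Phi^*\cong(\cl B(\cl H),\|\cdot\|_{\Phi^*})$ identifying $\|T\|_{\Phi^*}$ with $\|T\|_{[\pi,2]}$.

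For the forward implication, assume $T\in\cl N_{[\pi,2]}$. By Theorem~\ref{TiffT*T_[pi, k]} combined with Theorem~\ref{PositiveN_{[pi, k]}} applied to $|T|$, there exist orthonormal $x_1,x_2\in\cl H$ with $|T|x_j=s_j(T)x_j$; then $\|Tx_j\|=s_j(T)$ and $\inner{Tx_1}{Tx_2}=\inner{|T|^2x_1}{x_2}=s_1(T)^2\inner{x_1}{x_2}=0$, so the vectors $y_j:=Tx_j/\|Tx_j\|$ (assuming $s_2(T)>0$; the degenerate case is handled similarly with a rank-one $K$) form an orthonormal pair. Set
$$K:=(x_1\otimes y_1)+\pi_2(x_2\otimes y_2).$$
A direct computation of $KK^*$ in the basis $\{x_1,x_2\}$ shows that $K$ has singular values $\{1,\pi_2\}$, so $\|K\|_\Phi=\max\{1,(1+\pi_2)/(1+\pi_2)\}=1$, and
$$|\tr(TK)|=|\inner{Tx_1}{y_1}+\pi_2\inner{Tx_2}{y_2}|=\|Tx_1\|+\pi_2\|Tx_2\|=\|T\|_{\Phi^*}.$$

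For the backward implication, let $K\in\ff S_\Phi$ satisfy $\|K\|_\Phi=1$ and $|\tr(TK)|=\|T\|_{\Phi^*}=:\alpha=s_1(T)+\pi_2 s_2(T)$. Writing the Schmidt decomposition $K=\sum_j s_j(K)(x_j\otimes y_j)$ with orthonormal $\{x_j\}$ and $\{y_j\}$, I chain the inequalities
$$\alpha \leq \sum_j s_j(K)|\inner{Tx_j}{y_j}| \leq \Phi\bigl((s_j(K))_j\bigr)\,\Phi^*\bigl((|\inner{Tx_j}{y_j}|)_j\bigr) \leq \|T\|_{\Phi^*}=\alpha,$$
where the middle step is the noncommutative H\"older inequality for the adjoint pair $(\Phi,\Phi^*)$ and the final step uses the Ky-Fan bound $\sum_{j=1}^{k}|\inner{Tx_j}{y_j}|\leq\sum_{j=1}^{k}s_j(T)$ for orthonormal families together with the explicit form of $\Phi^*$. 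Equality throughout forces $\Phi^*((|\inner{Tx_j}{y_j}|)_j)=\alpha$; the swap-based rearrangement argument from Theorem~\ref{KyFan2IsDone} then ensures $(|\inner{Tx_j}{y_j}|)_j$ is itself nonincreasing, so
$$\alpha=|\inner{Tx_1}{y_1}|+\pi_2|\inner{Tx_2}{y_2}|\leq\|Tx_1\|+\pi_2\|Tx_2\|\leq s_1(T)+\pi_2 s_2(T)=\alpha,$$
and the chain of equalities delivers orthonormal $x_1,x_2$ witnessing $\|T\|_{[\pi,2]}=\|Tx_1\|+\pi_2\|Tx_2\|$, i.e.\ $T\in\cl N_{[\pi,2]}$.

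The hard part will be the rearrangement step in the backward direction. Since $\Phi^*$ sees only the first two entries of its argument---and in a \emph{weighted} way when $\pi_2<1$---I must argue that the sequence $(|\inner{Tx_j}{y_j}|)_j$ cannot already be out of order. The fix is the same swap trick used in Theorem~\ref{KyFan2IsDone}: transposing the values at an adjacent pair of indices where the pairing is misaligned strictly enlarges the H\"older product $\Phi((s_j(K))_j)\cdot\Phi^*((|\inner{Tx_j}{y_j}|)_j)$ while leaving $\Phi$ unchanged, contradicting the equality extracted above. Once this monotonicity is in place, the rest of the backward implication is a direct transcription.
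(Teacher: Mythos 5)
Your proof is correct and follows the paper's own argument essentially verbatim: the same weighted rank-two trace-class witness $K=(x_1\otimes y_1)+\pi_2(x_2\otimes y_2)$ in the forward direction, and the same H\"older--duality chain followed by the swap/rearrangement step in the backward direction. The one point where you are more careful than the paper is the forward implication, where choosing $x_1,x_2$ to be eigenvectors of $|T|$ (via Theorems \ref{TiffT*T_[pi, k]} and \ref{PositiveN_{[pi, k]}}) guarantees $y_1\perp y_2$ and hence that $K$ genuinely has singular values $1$ and $\pi_2$ (and you also note the degenerate case $s_2(T)=0$), facts the paper's proof asserts as obvious.
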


\begin{proof}
Let $\{x_1,x_2\}\in \cl H$ be an orthonormal set such that $\|T\|_{\Phi^*}=\|T\|_{[\pi, 2]}=\|Tx_1\|+\pi_2\|Tx_2\|$ and let $$y_1=\frac{Tx_1}{\|Tx_1\|},\,\, y_2=\frac{Tx_2}{\|Tx_2\|}.$$
Define $ K:=(x_1\otimes y_1)+\pi_2(x_2\otimes y_2)$. Clearly, $K\in \cl B_1(\cl H)$ and $s_1(K)=1,\, s_2(K)=\pi_2$ with $\|K\|_\Phi=1$. Then
\begin{multline*}
|\tr(TK)|
=\left|\inner{Tx_1}{\frac{Tx_1}{\|Tx_1\|}}+\pi_2\inner{Tx_2}{\frac{Tx_2}{\|Tx_2\|}}\right|\\
=\|Tx_1\|+\pi_2\|Tx_2\|=\|T\|_{\Phi^*},
\end{multline*}
proves the forward implication. 
Next we assume that there exists an operator $K\in \cl B_1(\cl H)$ with $\|K\|_\Phi=1$ such that  $|\tr(TK)|=\|T\|_{\Phi^*}$. Let $\alpha:=\|T\|_{\Phi^*}=\|T\|_{[\pi, 2]}=s_1(T)+\pi_2s_2(T)$. Slightly tweaking the proof of Theorem \ref{KyFan2IsDone} allows us to infer 
$\|(|\inner{Tx_j}{y_j}|)_j\|_{\Phi^*}=\alpha$  and that the sequence $(|\inner{Tx_j}{y_j}|)_j$ is nonincreasing. This yields
\begin{multline*}
\alpha=\|(|\inner{Tx_j}{y_j}|)_j\|_{\Phi^*}=|\inner{Tx_1}{y_1}| +\pi_2 |\inner{Tx_2}{y_2}|\\
\leq \|Tx_1\|+\pi_2\|Tx_2\|\leq s_1(T)+\pi_2s_2(T)=\alpha,
\end{multline*}
which forces $\|Tx_1\|+\pi_2\|Tx_2\|=s_1(T)+\pi_2s_2(T)$ thereby establishing that $T\in \cl N_{[\pi, 2]}$. This completes the proof.
\end{proof}

Given an arbitrary s.n. function $\Phi$ that is equivalent to the maximal s.n. function, we are now ready to establish the definition of operators in $\cl B(\cl H)$ that attain their $\Phi^*$-norm.

\begin{defn}\label{PhiNorming}
Let $\Phi$ be an s.n. function equivalent to the maximal s.n. function. An operator $T\in (\cl B(\cl H),\|\cdot\|_{\Phi^*})$ is said to be \emph{$\Phi^*$-norming} if there exists an operator $K\in \ff S_\Phi=\cl B_1(\cl H)$ with $\|K\|_{\Phi}=1$ such that $|\tr(TK)|=\|T\|_{\Phi^*}.$ We let $\cl N_{\Phi^*}(\cl H)$ denote the set of $\Phi^*$-norming operators in $\cl B(\cl H)$.
\end{defn}

The following proposition is a trivial observation and its prinicipal significance lies in the fact that it can be taken as a new equivalent definition of $\Phi^*$-norming operators in $\cl B(\cl H)$.

\begin{prop}\label{PhiNorming-alternative}
Let $\Phi$ be an s.n. function equivalent to the maximal s.n. function and let $\Phi^*$ be its dual norm so that $\ff S_{\Phi}^*\cong(\cl B(\cl H), \|\cdot\|_{\Phi^*})$. If $T\in \cl B(\cl H)$ is identified with $f_T\in \ff S_{\Phi}^*$, then the following statements are equivalent.
\begin{enumerate}
\item $T\in \cl N_{\Phi^*}(\cl H)$.
\item $f_T$ attains its norm.
\end{enumerate}
\end{prop}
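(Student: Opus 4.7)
The plan is to unpack both statements via the isometric isomorphism $\ff S_\Phi^* \cong (\cl B(\cl H), \|\cdot\|_{\Phi^*})$ furnished by Proposition \ref{Gohb-Krein}(2). Under this identification, to each $T \in \cl B(\cl H)$ corresponds the functional $f_T : \ff S_\Phi \longrightarrow \bb C$ given by $f_T(K) = \tr(TK)$, and the isometry asserts $\|f_T\| = \|T\|_{\Phi^*}$. Since $\ff S_\Phi$ coincides elementwise with $\cl B_1(\cl H)$, the quantity $\|f_T\|$ can be written as
\[
\|f_T\| = \sup\{|\tr(TK)| : K \in \cl B_1(\cl H),\ \|K\|_\Phi \leq 1\}.
\]

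First I would verify the forward implication $(1)\Rightarrow (2)$. If $T\in \cl N_{\Phi^*}(\cl H)$, then by Definition \ref{PhiNorming} there exists $K \in \ff S_\Phi$ with $\|K\|_\Phi = 1$ and $|\tr(TK)| = \|T\|_{\Phi^*}$. But then $|f_T(K)| = |\tr(TK)| = \|T\|_{\Phi^*} = \|f_T\|$ with $K$ in the unit sphere of $\ff S_\Phi$, so $f_T$ attains its norm.

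For $(2)\Rightarrow (1)$, suppose $f_T$ attains its norm. The case $T=0$ is trivial (every $K$ works), so assume $T \neq 0$, whence $\|f_T\| = \|T\|_{\Phi^*} > 0$. By hypothesis there is some $K_0 \in \ff S_\Phi$ with $\|K_0\|_\Phi \leq 1$ and $|f_T(K_0)| = \|f_T\|$. Necessarily $K_0 \neq 0$; moreover since $|f_T(K_0)| \leq \|f_T\| \|K_0\|_\Phi \leq \|f_T\|$, equality forces $\|K_0\|_\Phi = 1$. Then $K := K_0$ satisfies $\|K\|_\Phi = 1$ and $|\tr(TK)| = |f_T(K)| = \|f_T\| = \|T\|_{\Phi^*}$, so $T \in \cl N_{\Phi^*}(\cl H)$ by Definition \ref{PhiNorming}.

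There is no real obstacle here; the proposition is essentially a restatement of Definition \ref{PhiNorming} in functional-analytic language, and the entire content is the supremum-achievement equivalence under the canonical duality $\ff S_\Phi^* \cong (\cl B(\cl H), \|\cdot\|_{\Phi^*})$. The only substantive point to be careful about is the passage between $\|K\|_\Phi \leq 1$ (the standard definition of norm attainment for a functional) and $\|K\|_\Phi = 1$ (the condition appearing in Definition \ref{PhiNorming}), which is handled by the simple scaling argument above.
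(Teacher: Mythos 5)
Your proof is correct; the paper itself offers no proof, dismissing the proposition as ``a trivial observation,'' and your argument is exactly the intended unpacking of Definition \ref{PhiNorming} through the isometry $\|f_T\|=\|T\|_{\Phi^*}$. The scaling remark reconciling $\|K\|_\Phi\leq 1$ with $\|K\|_\Phi=1$ is a sensible piece of care that the paper leaves implicit.
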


\begin{notation}
We let $\cl N(\ff S_\Phi, \bb C)$ denote the set of functionals on $\ff S_{\Phi}$ that attain their norm. 
\end{notation}

Notice that Theorem \ref{TVM-TPM-k} is a reformulation of the definition of an absolutely $[k]$-norming operators in $\cl B(\cl H,\cl K)$ by identifying $T|_{\cl M}\in \cl B(\cl M, \cl K)$ with $TP_{\cl M}\in \cl B(\cl H, \cl K)$; and so are Theorems \ref{TVM-TPM-Pi-k} and \ref{TVM-TPM-p-k} for absolutely $[\pi, k]$-norming  and absolutely $(p,k)$-norming operators, respectively. These reformulations motivate the following definition.

\begin{defn}\label{Absolutely-PhiNorming}
Let $\Phi$ be an s.n. function equivalent to the maximal s.n. function. An operator $T\in (\cl B(\cl H),\|\cdot\|_{\Phi^*})$ is said to be \emph{absolutely $\Phi^*$-norming} if for every nontrivial closed subspace $\cl M$ of $\cl H$, $TP_{\cl M}\in \cl B(\cl H)$ is $\Phi^*$-norming. We let $\cl{AN}_{\Phi^*}(\cl H)$ denote the set of absolutely $\Phi^*$-norming operators in $\cl B(\cl H)$.
\end{defn}

\begin{exam}
For any $\pi\in \Pi$ and for any $k\in \bb N$, choose $\Phi$ to be the s.n. function such that $\Phi^*=\|\cdot\|_{[\pi, k]}$. Then $T\in (\cl B(\cl H),\|\cdot\|_{\Phi^*})$ belongs to $\cl{AN}_{\Phi^*}(\cl H)$ if and only if $|T|$ is of the form $|T|= \alpha I +F+K$, where $\alpha \geq 0, K$ is a positive compact operator and $F$ is self-adjoint finite-rank operator.

Given $p\in [1,\infty)$ and $k\in \bb N$, choose $\Phi$ to be the s.n. function such that $\Phi^*=\|\cdot\|_{(p,k)}$. Then $T\in \cl{AN}_{\Phi^*}(\cl H)$ if and only if $|T|$ is of the form $|T|= \alpha I +F+K$, where $\alpha \geq 0, K$ is a positive compact operator and $F$ is self-adjoint finite-rank operator. 
\end{exam}

\begin{proof}[\textbf{Proof of Proposition \ref{Identity-nonnorming}}]
Let $\{e_i\}_{i\in \mathbb{N}}$ be the canonical orthonormal basis of the Hilbert space $\ell^2(\mathbb{N})$, and let $\pi=(\pi_n)_{n\in \mathbb{N}}$ be a strictly decreasing convergent sequence of positive numbers with $\pi_1=1$ such that $\lim_{n}\pi_n>0$.
Let us define a symmetrically norming function $\Phi_\pi$ by $\Phi_\pi(\xi_1,\xi_2,...)=\sum_{j}\pi_j\xi_j$. Notice that for every $n\in \bb N,$ we have
$$\frac{n}{\Phi_{\pi}(\underbrace{1,...,1}_{n \text{ times}},0,0,...)}= \frac{n}{1+\pi_2+...+\pi_n}<\frac{1}{\lim_{n}\pi_n},$$ which implies
$$
\sup_n\left\{\frac{n}{\Phi_{\pi}(\underbrace{1,...,1}_{n \text{ times}},0,0,...)}\right\}< \sup_n\left\{\frac{1}{\lim_{n}\pi_n}\right\}<\infty.
$$
$\Phi_\pi$ is thus equivalent to the maximal symmetric norming function $\Phi_1.$ The dual $\ff{S}_{\Phi_\pi}^*$ of the symmetrically normed ideal $\ff{S}_{\Phi_\pi}$ is thus isometrically isomorphic to ($\cl B(\ell^2), \|\cdot\|_{\Phi_\pi^*}$), that is, $\ff{S}_{\Phi_\pi}^*\cong  (\cl B(\ell^2), \|\cdot\|_{\Phi_\pi^*}),$
and the $\|\cdot\|_{\Phi_\pi^*}$ norm for any operator $T\in \cl B(\ell^2)$ is given by 
$\|T\|_{\Phi_\pi^*}= \sup\{|\tr(TK)|:K\in \ff S_{\Phi_\pi},\, \|K\|_{\Phi_\pi}= 1\}.$ But the ideal $\cl B_1(\cl H)$ and $\ff S_{\Phi_\pi}$ coincide elementwise and hence 
$\|T\|_{\Phi_\pi^*}= \sup\{|\tr(TK)|:K\in \cl B_1(\ell^2),\, \|K\|_{\Phi_\pi}= 1\}.$
We will show that $I$ does not attain its $\Phi_\pi^*$-norm in $\cl B(\ell^2)$. To show this, we assume that $I\in \cl N_{\Phi_\pi^*}(\ell^2)$, and we deduce a contradiction from this assumption. 

We first claim that $
\alpha:=\sup\{|\tr(K)|:K\in \cl B_1(\ell^2),\, \|K\|_{\Phi_\pi}=1\}=\sup\{|\tr(K)|:K\in \cl B_1(\ell^2),\,K=\text{diag}\{s_1(K),s_2(K),...\},\, \|K\|_{\Phi_\pi}= 1\}=:\beta.
$
That $\beta \leq \alpha$, is a trivial observation. Let us choose an operator $T\in \cl B_1(\ell^2)$ with $\|T\|_{\Phi_\pi}= 1.$ We define 
$$
\tilde T:=\begin{pmatrix}
s_1(T)\\
&s_2(T)&&\text{\huge 0}\\
&&\ddots&&\\
&\text{\huge 0}& & s_j(T)&\\
&&&&\ddots
\end{pmatrix}.
$$
 Notice that for every $j$, we have $s_j(\tilde T)=s_j(T)$ and thus $\|\tilde T\|_{\Phi_\pi} =\|T\|_{\Phi_\pi}$ which implies that $\tilde T\in \cl B_1(\ell^2)$. Consequently,
 $
 |\tr(T)|\leq |\tr(\tilde T)|
 \leq \sup\{|\tr(K)|:K\in \cl B_1(\ell^2),\,K=\text{diag}\{s_1(K),s_2(K),...\},\, \|K\|_{\Phi_\pi}= 1\}
=\beta.
 $
 It follows then that $\alpha \leq \beta$, thereby establishing our claim. Next we observe that since the trace of a positive trace class diagonal operator is precisely the sum of its singular values, we have 
$
\sup\{|\tr(K)|:K\in \cl B_1(\ell^2),\,K=\text{diag}\{s_1(K),s_2(K),...\},\, \|K\|_{\Phi_\pi}= 1\}=\sup\{\sum_js_j(K):K\in \cl B_1(\ell^2),\,K=\text{diag}\{s_1(K),s_2(K),...\},\, \|K\|_{\Phi_\pi}= 1\}.
$
 
 The above two observations leads us to realize that if $I\in \cl N_{\Phi_\pi^*}$, then the supremum, 
 $\sup\{\sum_{j}s_j(K):K\in \cl B_1(\ell^2),\, K=\text{diag}\{s_1(K),s_2(K),...\},\,\|K\|_{\Phi_\pi}=1\},$ 
 is attained, that is, 
  there exists $K_0=\text{diag}\{s_1(K_0),s_2(K_0),...\}\in \cl B_1(\cl H)$ with $\sum_{j}a_js_j(K_0)=1$ such that $\|I\|_{\Phi_\pi^*}=|\tr(K_0)|=\sum_js_j(K_0).$
Since $K_0\in \cl B_1(\ell^2)\subseteq \cl B_0(\ell^2)$, we have $\lim_{j\rightarrow \infty} s_j(K_0)=0$. This forces the existence of a natural number $M$ such that $s_M(K_0)>s_{M+1}(K_0).$ All that remains is to show the existence of an operator $\tilde K\in \cl B_1(\ell^2), \,\|\tilde K\|_{\Phi_\pi}=1$ of the form $\tilde K=\text{diag}\{s_1(\tilde K),s_2(\tilde K),...\}$ such that $\sum_is_i(\tilde K)> \sum_js_j(K_0)$. 
If we define a sequence $(t_i)_{i\in \bb N}$ by 
$$
t_i= \begin{cases} 
    \frac{\sum_{j=M}^{M+1}\pi_js_j(K_0)}{\sum_{j=M}^{M+1}\pi_j} & \text{ if } M\leq i\leq  M+1, \\
      s_i & \text{ if } i<  M \text{ or }i> M+1,
   \end{cases}
$$
then it follows that $s_{M+1}(K_0)< t_M=t_{M+1}<s_M(K_0),$ and that $
\sum_i \pi_it_i=\sum_j\pi_js_j(K_0)=1$ so that
\begin{align*}
\sum_{i=M}^{M+1}t_i =2\left(\frac{\sum_{j=M}^{M+1}\pi_js_j(K_0)}{\sum_{j=M}^{M+1}\pi_j}\right)>\sum_{j=M}^{M+1}s_j(K_0),
\end{align*}
which implies that 
$$
\sum_it_i=\sum_{i=1}^{M+1}t_i+\sum_{i>M+1}t_i
>\sum_{j=1}^{M+1}s_j(K_0)+\sum_{j>M+1}s_j(K_0)
=\sum_js_j(K_0).
$$
Setting $$
\tilde K:=\begin{pmatrix}
t_1\\
&t_2&&\text{\huge 0}\\
&&\ddots&&\\
&\text{\huge 0}& & t_i&\\
&&&&\ddots
\end{pmatrix},
$$
we observe that $\|\tilde K\|_{\Phi_\pi}=1<\infty$ so that $\tilde K\in \cl B_1(\ell^2)$ and that it is of the form $\tilde K=\text{diag}(s_1(\tilde K),s_2(\tilde K),...)$ where 
$s_i(\tilde K)=t_i$ for every $i$. But then
$
|\tr(\tilde K)|=\sum_is_i(\tilde K)=\sum_it_i>\sum_js_j(K_0)=|\tr(K_0)|=\|I\|_{\Phi_\pi^*},
$
which contradicts the assumption that $|\tr(K_0)|$ is the supremum of the set 
$$
\left\{\sum_{j}s_j(K):K\in \ff S_{\Phi_\pi},\, K=\text{diag}\{s_1(K),s_2(K),...\}\,\|K\|_{\Phi_\pi}=1\right\}.
$$ Since the operator $K_0$ with which we began our discussion is arbitrary, it follows that for any given operator in $\ff S_{\Phi_\pi}$ with unit norm, one can find another operator in $ \ff S_{\Phi_\pi}$ with unit norm with  trace of larger magnitude and hence the supremum of the above set can never be attained. This shows that the identity operator $I$ does not attain its norm.   
\end{proof}

We wish to prove Theorem \ref{compacts-are-norming}, which can be thought of as an analogue of Proposition \ref{compacts-are-absolutely[k]norming} except that we are in the setting of $(\cl B(\cl H),\|\cdot\|_{\Phi^*})$ instead of $(\cl B(\cl H, \cl K), \|\cdot\|)$ with $\Phi$ being an s.n. function equivalent to the maximal s.n.function. Before proving this theorem we will need the following lemma.

 \begin{lemma}\label{prereq-doubledual}
 Let $\Phi$ be an arbitrary s.n. function equivalent to the maximal s.n. function. Then 
$$
 (\cl B_0(\cl H), \|\cdot\|_{\Phi^*})^{**}\cong (\cl B(\cl H),\|\cdot\|_{\Phi^*}).
$$
  \end{lemma}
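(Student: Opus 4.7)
The plan is to obtain both dualities from Proposition~\ref{Gohb-Krein}, applied once to the s.n.\ function $\Phi^*$ and once to $\Phi$, and then compose the resulting isometric isomorphisms. Before doing so I would first transfer the standing assumption on $\Phi$ to $\Phi^*$ and identify $\cl B_0(\cl H)$ with the right subspace of $\ff S_{\Phi^*}$. Since $\Phi$ is equivalent to $\Phi_1$, the text tells us that $\Phi^*$ is equivalent to $\Phi_1^* = \Phi_\infty$; hence $\ff S_{\Phi^*}$ coincides elementwise with $\cl B_0(\cl H)$ and $\|\cdot\|_{\Phi^*}$ is topologically equivalent to the operator norm on it. Because $\cl B_{00}(\cl H)$ is operator-norm dense in $\cl B_0(\cl H)$, it is $\|\cdot\|_{\Phi^*}$-dense, so
$$
\ff S_{\Phi^*}^{(0)} = \operatorname{clos}_{\|\cdot\|_{\Phi^*}}[\cl B_{00}(\cl H)] = \cl B_0(\cl H).
$$
I would also note that $\Phi^*$ is \emph{not} equivalent to the maximal s.n.\ function (evaluating both $\Phi_\infty$ and $\Phi_1$ on $(\underbrace{1,\ldots,1}_{n},0,0,\ldots)$ shows the ratio is unbounded in $n$), which is precisely the hypothesis required by Proposition~\ref{Gohb-Krein}(1).

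With these checks in hand, I would invoke Proposition~\ref{Gohb-Krein}(1) with the s.n.\ function $\Phi^*$ to obtain
$$
\bigl(\cl B_0(\cl H), \|\cdot\|_{\Phi^*}\bigr)^* \;=\; \bigl(\ff S_{\Phi^*}^{(0)}\bigr)^* \;\cong\; \ff S_{(\Phi^*)^*} \;=\; \ff S_\Phi,
$$
where the last equality uses the involutivity $(\Phi^*)^* = \Phi$ recorded in the excerpt. Next I would apply Proposition~\ref{Gohb-Krein}(2) to $\Phi$ (this is exactly where we need $\Phi$ to be equivalent to the maximal s.n.\ function) to get
$$
\ff S_\Phi^{\,*} \;\cong\; \bigl(\cl B(\cl H),\|\cdot\|_{\Phi^*}\bigr).
$$
Composing the two isometric isomorphisms yields
$$
\bigl(\cl B_0(\cl H),\|\cdot\|_{\Phi^*}\bigr)^{**} \;\cong\; \ff S_\Phi^{\,*} \;\cong\; \bigl(\cl B(\cl H),\|\cdot\|_{\Phi^*}\bigr),
$$
which is the claimed identification.

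The main obstacle — and really the only nontrivial point — is the first step, namely verifying the two hypotheses of Proposition~\ref{Gohb-Krein}(1) for $\Phi^*$: that the dualized space is genuinely $\ff S_{\Phi^*}^{(0)}$ rather than the whole of $\ff S_{\Phi^*}$ (which reduces to the density of $\cl B_{00}(\cl H)$ in $\cl B_0(\cl H)$ in the $\|\cdot\|_{\Phi^*}$-norm, guaranteed by the equivalence with the operator norm) and that $\Phi^*$ is not equivalent to the maximal s.n.\ function (guaranteed by its equivalence to $\Phi_\infty$). Once these are settled, everything else is a direct composition of known duality theorems from \cite{GK}.
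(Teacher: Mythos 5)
Your proposal is correct and follows essentially the same route as the paper: both apply Proposition \ref{Gohb-Krein} twice (part (2) to $\Phi$ to get $\ff S_\Phi^{\,*}\cong(\cl B(\cl H),\|\cdot\|_{\Phi^*})$, part (1) to $\Phi^*$ to get $(\cl B_0(\cl H),\|\cdot\|_{\Phi^*})^*\cong\ff S_\Phi$) and compose. You are in fact more careful than the paper, which silently identifies $\ff S_{\Phi^*}^{(0)}$ with $\ff S_{\Phi^*}=\cl B_0(\cl H)$; your verification via the equivalence of $\|\cdot\|_{\Phi^*}$ with the operator norm is a worthwhile addition, not a deviation.
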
 
\begin{proof}
Since $\Phi$ is equivalent to  the maximal s.n. function $\Phi_1$, the first part of the Proposition \ref{Gohb-Krein} guarantees $ (\ff S_\Phi, \|\cdot\|_\Phi)^*\cong (\cl B(\cl H),\|\cdot\|_{\Phi^*})$ and $(\ff S_{\Phi^*}, \|\cdot\|_{\Phi^*})^*\cong (\ff S_\Phi,\|\cdot\|_{\Phi})$.
Moreover, $\ff S_\Phi$ and $\cl B_1(\cl H)$ coincide elementwise and so does 
$\ff S_{\Phi^*}$ and $\cl B_0(\cl H)$. Consequently, 
$(\cl B_1(\cl H), \|\cdot\|_\Phi)^*\cong (\cl B(\cl H),\|\cdot\|_{\Phi^*})$ and $(\cl B_0(\cl H), \|\cdot\|_{\Phi^*})^*\cong (\cl B_1(\cl H),\|\cdot\|_{\Phi})$ which implies $(\cl B_0(\cl H), \|\cdot\|_{\Phi^*})^{**}\cong (\cl B(\cl H),\|\cdot\|_{\Phi^*}).$ This completes the proof.
\end{proof}

\begin{thm}\label{compacts-are-norming}
Let $\Phi$ be an arbitrary s.n. function equivalent to the maximal s.n. function. If $T\in (\cl B(\cl H),\|\cdot\|_{\Phi^*})$ is a compact operator, then $T\in \cl{AN}_{\Phi^*}.$
\end{thm}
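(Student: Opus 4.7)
My plan is to reduce the statement to a Banach--Alaoglu argument using the duality encoded in Lemma \ref{prereq-doubledual}. Fix an arbitrary nontrivial closed subspace $\cl M$ of $\cl H$. Since $T$ is compact and $P_{\cl M}$ is bounded, the product $TP_{\cl M}$ is compact, so $TP_{\cl M}\in \cl B_0(\cl H)$. By Definition \ref{Absolutely-PhiNorming} together with Proposition \ref{PhiNorming-alternative}, it is enough to show that the functional $f_{TP_{\cl M}}\in \ff S_{\Phi}^*$ given by $f_{TP_{\cl M}}(K)=\tr(TP_{\cl M}K)$ attains its norm on the closed unit ball of $(\ff S_{\Phi},\|\cdot\|_{\Phi})$.

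The key structural input is that $\ff S_{\Phi}$ is itself a dual space, with predual $(\cl B_0(\cl H),\|\cdot\|_{\Phi^*})$. To see this I would apply Proposition \ref{Gohb-Krein}(1) to the s.n.\ function $\Phi^*$: since $\Phi$ is equivalent to the maximal s.n.\ function, $\Phi^*$ is equivalent to the minimal one, hence not equivalent to the maximal one, and the ideal $\ff S_{\Phi^*}$ coincides elementwise with $\cl B_0(\cl H)$; moreover $\ff S_{\Phi^*}^{(0)}=\cl B_0(\cl H)$ because finite-rank operators are operator-norm dense in the compacts and the two norms are topologically equivalent. Proposition \ref{Gohb-Krein}(1) then yields $(\cl B_0(\cl H),\|\cdot\|_{\Phi^*})^*\cong (\ff S_{\Phi^{**}},\|\cdot\|_{\Phi^{**}})=(\ff S_{\Phi},\|\cdot\|_{\Phi})$, with the isomorphism implemented by the trace pairing.

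With this identification in hand, Banach--Alaoglu tells me that the closed unit ball $B:=\{K\in \ff S_{\Phi}:\|K\|_{\Phi}\leq 1\}$ is compact in the weak-$*$ topology induced by $\cl B_0(\cl H)$. By construction, every element of the predual $\cl B_0(\cl H)$ acts as a weak-$*$ continuous linear functional on $\ff S_{\Phi}$ via the trace pairing; in particular $f_{TP_{\cl M}}$ is weak-$*$ continuous on $\ff S_{\Phi}$, hence so is $|f_{TP_{\cl M}}|$. A continuous real-valued function on a compact set attains its supremum, yielding $K_0\in B$ with $|\tr(TP_{\cl M}K_0)|=\|TP_{\cl M}\|_{\Phi^*}$. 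A trivial rescaling argument then lets me assume $\|K_0\|_{\Phi}=1$ (and if $TP_{\cl M}=0$ any unit-norm $K$ works). Hence $TP_{\cl M}\in \cl N_{\Phi^*}$, and since $\cl M$ was arbitrary, $T\in \cl{AN}_{\Phi^*}$.

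The main conceptual point---and essentially the only place one has to be careful---is the identification of the compact operator $TP_{\cl M}$ with a \emph{weak-$*$} continuous functional on $\ff S_{\Phi}$, as opposed to a merely norm-continuous one. This is precisely what fails for a noncompact operator such as the identity (compare Proposition \ref{Identity-nonnorming}), and is exactly what makes compactness the right hypothesis here.
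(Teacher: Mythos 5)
Your proof is correct, and it rests on exactly the same structural fact as the paper's: the trace-pairing identification of $(\cl B_0(\cl H),\|\cdot\|_{\Phi^*})^*$ with $(\ff S_\Phi,\|\cdot\|_\Phi)=(\cl B_1(\cl H),\|\cdot\|_\Phi)$ coming from Proposition \ref{Gohb-Krein} (this is the content of Lemma \ref{prereq-doubledual}, and your careful handling of the $\ff S^{(0)}_{\Phi^*}$ versus $\ff S_{\Phi^*}$ point is actually slightly more explicit than the paper's). The difference is only in the finishing move. The paper embeds $(\cl B_0(\cl H),\|\cdot\|_{\Phi^*})$ canonically into its bidual $\cong(\cl B(\cl H),\|\cdot\|_{\Phi^*})$ and invokes the Hahn--Banach fact that an element of a Banach space, viewed as a functional on its dual, always attains its norm; the norming functional $\varphi_0$ is then realized as $\tr(A_0\,\cdot)$ for some $A_0\in\cl B_1(\cl H)$ with $\|A_0\|_\Phi=1$. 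You instead exploit the same duality from the other side: $\ff S_\Phi$ is a dual space, its unit ball is weak-$*$ compact by Banach--Alaoglu, and $f_{TP_{\cl M}}$ is weak-$*$ continuous precisely because $TP_{\cl M}$ lies in the predual $\cl B_0(\cl H)$. The two mechanisms are classical equivalents and both isolate the same reason compactness is essential (a noncompact $T$ gives only a norm-continuous, not weak-$*$ continuous, functional, which is why Proposition \ref{Identity-nonnorming} can fail for $I$); your version arguably makes that last point more transparent, while the paper's is marginally shorter since it never needs to discuss topologies on $\ff S_\Phi$.
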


\begin{proof}
If $T\in \cl B_0(\cl H)$, then $TP_{\cl M}\in \cl B_0(\cl H)$ for any closed subspace $\cl M$ of $\cl H$. So it suffices to show that $T\in \cl N_{\Phi^*}(\cl H)$. Since $(\cl B_0(\cl H), \|\cdot\|_{\Phi^*})$ and $(\cl B_0(\cl H), \|\cdot\|_{\Phi^*})^{**}$ are Banach spaces, the Banach space theory guarantees the existence of the canonical map $\wedge:(\cl B_0(\cl H), \|\cdot\|_{\Phi^*})\longrightarrow (\cl B_0(\cl H), \|\cdot\|_{\Phi^*})^{**}$ given by $T\mapsto \hat T$ and $\|\hat T\|=\max\{|\hat T(\varphi)|:\varphi\in (\cl B_0(\cl H), \|\cdot\|_{\Phi^*})^*, \|\varphi\|=1\}$ so that there exists $\varphi_0\in (\cl B_0(\cl H), \|\cdot\|_{\Phi^*})^*$ with $\|\varphi_0\|=1$ such that $\|\hat T\|=|\hat T(\varphi_0)|=|\varphi_0(T)|$. Corresponding to this $\varphi_0$ there exists a unique $A_0\in (\cl B_1(\cl H), \|\cdot\|_{\Phi})$ so that $\varphi_0(X)=\tr(A_0X)$ for every $X\in (\cl B_0(\cl H), \|\cdot\|_{\Phi^*})$ with $\|\varphi_0\|=\|A_0\|_\Phi$. 
Since the diagram below commutes
$$
\begin{tikzcd}[font=\large]
(\cl B_0(\cl H), \|\cdot\|_{\Phi^*})\arrow{rr}{\wedge} \arrow[hookrightarrow]{ddrr}{i} & & (\cl B_0(\cl H), \|\cdot\|_{\Phi^*})^{**}\arrow{dd}{f} \\
& & \\
& & (\cl B(\cl H), \|\cdot\|_{\Phi^*})
\end{tikzcd}
$$
where $\wedge$ is the canonical map between the space $(\cl B_0(\cl H), \|\cdot\|_{\Phi^*})$ and its double dual, $f$ is the isometric isomorphism resulting from Lemma \ref{prereq-doubledual}, and $i$ is the inclusion map. The operator in $(\cl B(\cl H), \|\cdot\|_{\Phi^*})$ associated with $\hat T\in (\cl B_0(\cl H), \|\cdot\|_{\Phi^*})^{**}$ is the operator $T$ itself. So, $\|T\|_{\Phi^*}=\|\hat T\|$ which implies that there exists  $A_0\in (\cl B_1(\cl H), \|\cdot\|_{\Phi})$ such that $\|\hat T\|=|\tr(A_0T)|$ with $\|A_0\|_\Phi=1.$ This proves that $T\in \cl N_{\Phi^*}(\cl H)$.
\end{proof}

\section{Acknowledgement}

Results in this paper were announced at the Analysis Seminar at the Department of Pure Mathematics at University of Waterloo in July 2016. The author would like to thank Kenneth R. Davidson, the organizer of this seminar, for providing the opportunity to present this work. We are very much indebted to Laurent W. Marcoux for his critical reading of the paper and his many helpful suggestions for improvements. The author would also like to express his deep gratitude for the guidance of his Ph.D. advisor, Vern I. Paulsen. His suggestions have been invaluable and without his guidance none of this would have been possible.

\providecommand{\bysame}{\leavevmode\hbox to3em{\hrulefill}\thinspace}
\providecommand{\MR}{\relax\ifhmode\unskip\space\fi MR }
\providecommand{\MRhref}[2]{%
  \href{http://www.ams.org/mathscinet-getitem?mr=#1}{#2}
}
\providecommand{\href}[2]{#2}

\hfill Satish K. Pandey

\hfill Department of Pure Mathematics

\hfill University of Waterloo

\hfill Waterloo, Ontario, N2L 3G1, Canada

\hfill \emph{E-mail address:} satish.pandey@uwaterloo.ca

\end{document}